\DeclareMathOperator{\E}{\mathbb{E}}
\newcommand{\norm}[1]{\left\lVert#1\right\rVert}
\newcommand{\pref}[1]{(\ref{#1})}
\newcommand{\product}[1]{\left\langle#1\right\rangle}
\newcommand{\ma}{\mathcal{M}^A}
\newcommand{\mbc}{\mathcal{M}^{BC}}
\newcommand{\md}{\mathcal{M}^D}
\newcommand{\Wa}{\mathcal{W}^A}
\newcommand{\Wbc}{\mathcal{W}^{BC}}
\newcommand{\even}{\Gamma_{\text{even}}}
\newcommand{\evenN}{\Gamma_{\text{even};\,N}}
\newcommand{\NCeven}{NC^{\text{even}}}
\theoremstyle{plain}
\newtheorem{theorem}{Theorem}[section]
\newtheorem{lemma}[theorem]{Lemma}
\newtheorem{proposition}[theorem]{Proposition}
\newtheorem{corollary}[theorem]{Corollary}
\newtheorem{conjecture}[theorem]{Conjecture}
\theoremstyle{remark}
\newtheorem{remark}[theorem]{Remark}
\newtheorem{example}[theorem]{Example}
\theoremstyle{definition}
\newtheorem{definition}[theorem]{Definition}
\crefname{subsection}{subsection}{subsections}
\Crefname{subsection}{Subsection}{Subsections}
\crefname{definition}{definition}{definitions}
\Crefname{definition}{Definition}{Definitions}
\crefname{lemma}{lemma}{lemmas}
\Crefname{lemma}{Lemma}{Lemmas}
\crefname{proposition}{proposition}{propositions}
\Crefname{proposition}{Proposition}{Propositions}
\crefname{corollary}{corollary}{corollaries}
\Crefname{corollary}{Corollary}{Corollaries}
\crefname{conjecture}{conjecture}{conjectures}
\Crefname{conjecture}{Conjecture}{Conjectures}
\crefname{remark}{remark}{remarks}
\Crefname{remark}{Remark}{Remarks}
\crefname{example}{example}{examples}
\Crefname{example}{Example}{Examples}
\title{Approximating the coefficients of the Bessel functions}
\author{Andrew Yao}
\address{Massachusetts Institute of Technology, Cambridge, Massachusetts}
\email{ajyao@mit.edu}
\begin{document}

\begin{abstract} 
We determine equivalent conditions between the asymptotic coefficients of the Bessel generating functions of a sequence of probability measures and the asymptotic expected values of power sums when their inputs are sampled from these measures. We establish these conditions over the $|\theta N| \rightarrow\infty$ regime for the type A and D root systems and over the $|\theta_0 N|\rightarrow \infty, \frac{\theta_1}{\theta_0 N}\rightarrow c\in\mathbb{C}$ regime for the type BC root system. We also establish equivalent conditions over the $\theta N \rightarrow c\in\mathbb{C}$ regime for the type A and D root systems and over the $\theta_0 N\rightarrow c_0\in\mathbb{C}, \frac{\theta_1}{\theta_0 N}\rightarrow c_1\in\mathbb{C}$ regime for the type BC root system that generalize existing results. Furthermore, we determine the asymptotics of the coefficients of the Bessel functions over the regimes that we have mentioned.
\end{abstract}

\maketitle

\section{Introduction}

For positive integers $N\geq 2$, we study the asymptotics of the coefficients of the Bessel functions associated to the irreducible root systems $A^{N-1}$, $B^N$, $C^N$, and $D^N$. We denote the Bessel function associated to a finite root system $\mathcal{R}$ and multiplicity function $\theta: \mathcal{R}\rightarrow\mathbb{C}$ by $J^{\mathcal{R}(\theta)}_a(x)$, where $a,x\in\mathbb{C}^N$, provided that the function exists. It is an eigenfunction of the Dunkl operators associated to $\mathcal{R}$ and $\theta$, which are introduced in \cite{dunkloperators}. Furthermore, the function is symmetric with respect to the reflection group generated by $\mathcal{R}$ in the variables $a$ and $x$.

For certain choices of $\theta$, the paper \cite{dunklbessel} shows that $J^{\mathcal{R}(\theta)}_a(x)$ exists, is unique, and is holomorphic over $a,x\in\mathbb{C}^N$, see \Cref{thm:dunkl_eigenfunction_symmetry}. The paper also discusses a nonsymmetric eigenfunction $E_a^{\mathcal{R}(\theta)}(x)$, see \Cref{thm:dunkl_eigenfunction}; we have that $J^{\mathcal{R}(\theta)}_a(x)$ is the average of $E_{ga}^{\mathcal{R}(\theta)}(x)$ when $g$ is selected uniformly at random from the finite reflection group generated by $\mathcal{R}$.

A goal of this paper is to analyze the coefficients of $J_a^{\mathcal{R}(\theta)}(x)$ when $a$ is fixed and $\mathcal{R}$ is one of $A^{N-1}$, $B^N$, $C^N$, and $D^N$. The multiplicity function $\theta$ varies with $N$ and should be viewed as a function of $N$, although we denote it as $\theta$ rather than $\theta(N)$ for brevity.

To compute the coefficients of $J_a^{\mathcal{R}(\theta)}(x)$, it is equivalent to compute the values of 
\[
f(\mathcal{D}_1(\mathcal{R}(\theta)),\ldots,\mathcal{D}_N(\mathcal{R}(\theta)))g(x_1,\ldots,x_N)
\]
for all $f,g\in\mathbb{C}[x_1,\ldots,x_N]$ that are homogeneous, have the same degree, and are symmetric with respect to the reflection group generated by $\mathcal{R}$; for an explanation of why this is the case, see \Cref{lemma:eigenvectorformula}. We compute the asymptotics of these values in \Cref{thm:main}.

\subsection{Notation}

We introduce some notation required for stating the main result. For additional definitions and notation, see \Cref{sec:def}. 

For $N\geq 1$, $\Gamma_N$ consists of partitions with length at most $N$, $\Gamma\triangleq \bigcup_{N\geq 1}\Gamma_N$, and $\even$ and $\evenN$ are the elements of $\Gamma$ and $\Gamma_N$, respectively, with all parts of even size; for $\lambda\in\Gamma$, $p_\lambda(x_1,\ldots,x_N)\triangleq\prod_{i=1}^{\ell(\lambda)} \sum_{j=1}^N x_j^{\lambda_i}$. Additionally, $e(x_1,\ldots,x_N)\triangleq x_1\cdots x_N$. Furthermore, for $k\geq 1$, $NC(k)$ denotes the set of noncrossing partitions of $[k]$ and $\NCeven(2k)$ denotes the set of noncrossing partitions of $[2k]$ with all even block sizes.

We outline the notation we use that is relevant to the $A^{N-1}$, $B^N$, $C^N$, and $D^N$ root systems and the corresponding multiplicity functions. Note that $e_i\triangleq[\mathbf{1}\{i=j\}]_{j\in[N]}^T$ for $i\in [N]$.
\begin{itemize}
\item For $\theta\in\mathbb{C}$, we let $A^{N-1}(\theta)$ denote the root system $A^{N-1}$ with multiplicity function assigning $\theta$ to the roots $e_i-e_j$ for distinct $i,j\in [N]$.
\item For $\theta_0,\theta_1\in\mathbb{C}$, we let $BC^N(\theta_0,\theta_1)$ denote the root system $B^N$ or $C^N$ with multiplicity function assigning $\theta_0$ to the roots $e_i-e_j$, $e_j-e_i$, $e_i+e_j$, and $-e_i-e_j$ for $i,j\in [N]$ such that $i<j$ and $\theta_1$ to the roots that are scalar multiples of $e_i$ for $i\in [N]$.
\item For $\theta\in\mathbb{C}$, we let $D^N(\theta)$ denote the root system $D^N$ with multiplicity function assigning $\theta$ to the roots $e_i-e_j$, $e_j-e_i$, $e_i+e_j$, and $-e_i-e_j$ for $i,j\in [N]$ such that $i<j$.
\end{itemize}
Furthermore, we define the Dunkl operators associated with these root systems:
\begin{align*}
& \mathcal{D}_i(A^{N-1}(\theta)) \triangleq \partial_i + \theta\sum_{j\in[N]\backslash\{i\}} \frac{1-s_{ij}}{x_i-x_j}, \\
& \mathcal{D}_i(BC^N(\theta_0,\theta_1)) \triangleq \partial_i + \theta_1 \frac{1-\tau_i}{x_i} + \theta_0\sum_{j\in[N]\backslash\{i\}} \left(\frac{1-s_{ij}}{x_i-x_j} + \frac{1-\tau_i\tau_j s_{ij}}{x_i+x_j}\right), \\
& \mathcal{D}_i(D^N(\theta)) \triangleq \partial_i + \theta\sum_{j\in[N]\backslash\{i\}} \left(\frac{1-s_{ij}}{x_i-x_j} + \frac{1-\tau_i\tau_j s_{ij}}{x_i+x_j}\right)
\end{align*}
for $i\in [N]$, where $s_{ij}$ switches the $i$th and $j$th entries of an element of $\mathbb{C}^N$ for distinct $i,j\in [N]$ and $\tau_i$ flips the sign of the $i$th entry of an element of $\mathbb{C}^N$ for $i\in [N]$.

\subsection{Bessel generating functions and related works}
\label{subsec:bgf}
In addition to considering the Bessel function $J_a^{\mathcal{R}(\theta)}(x)$ for a single value of $a$, we consider the average of $J_a^{\mathcal{R}(\theta)}(x)$ when $a$ is randomly sampled. The resulting function is referred to as the Bessel generating function and has been studied previously in \cites{gaussianfluctuation,matrix,limits_prob_measures,airy_beta,rectangularmatrix}. This notion is also related to the Dunkl transform introduced in \cite{dunklbound}. We discuss results related to Bessel generating functions in \Cref{subsec:expdecay}.

\begin{definition}
\label{def:bgf}
Suppose $\mu$ is a Borel probability measure over $\mathbb{C}^N$. Then, the \textit{Bessel generating function} associated to $\mu$ and $\mathcal{R}(\theta)$ is
\[
G_\mu^{\mathcal{R}(\theta)}(x) \triangleq \mathbb{E}_{a\sim\mu}[J_a^{\mathcal{R}(\theta)}(x)],
\]
provided that the expected value converges.
\end{definition}

For $k\geq 1$, the type A Bessel function satisfies 
\[
\sum_{i=1}^N \mathcal{D}_i(A^{N-1}(\theta))^k J_a^{A^{N-1}(\theta)}(x) = \sum_{i=1}^N a_i^k J_a^{A^{N-1}(\theta)}(x).
\]
Therefore, after assuming regularity conditions on $\mu$, we have that 
\[
[1]\sum_{i=1}^N \mathcal{D}_i(A^{N-1}(\theta))^k G_\mu^{A^{N-1}(\theta)}(x) = \mathbb{E}_{a\sim\mu}\left[\sum_{i=1}^N a_i^k\right], 
\]
see \Cref{lemma:exponent_decay} for more details. As $N\rightarrow\infty$, in order to analyze the moments of $\mu$, it suffices to analyze $[1]\sum_{i=1}^N \mathcal{D}_i(A^{N-1}(\theta))^k G_\mu^{\mathcal{R}(\theta)}(x)$. For the type A root system, the paper \cite{gaussianfluctuation} addresses this question for $\theta=1$ and \cite{matrix} does so for the $\theta N\rightarrow c \geq 0$ regime. For the type BC root system, \cite{rectangularmatrix} does so for the $\theta_0 N \rightarrow c_0\geq 0$ and $\theta_1 \rightarrow c_1\geq 0$ regime. We address this question for both regimes and the type A, BC, and D root systems, see \Cref{thm:main}.

The paper \cite{gaussianfluctuation} defines the type A Bessel generating function to be the average of $\frac{J_a^{A^{N-1}(\theta)}(x)}{J_a^{A^{N-1}(\theta)}(\rho)}$ when $a$ is sampled from a probability ensemble; we consider when $\rho=0$. In order to determine the moments of the probability ensemble, the paper computes the value of the Bessel generating function after applying Dunkl operators at $x=\rho$, where $\rho$ converges to a fixed distribution after rescaling as $N\rightarrow\infty$.

For the $|\theta N|\rightarrow\infty$ regime, the paper \cite{limits_prob_measures} introduces a framework for analyzing the asymptotic moments when the coefficients of the logarithms of the type A Bessel generating functions with two or more variables can have nonzero $N\rightarrow\infty$ limits after scaling by $(\theta N)^{-1}$. The paper \cite{domino_tiling} studies Schur generating functions and considers a similar setting.

The converse statement is that we can determine the asymptotics of the coefficients of $G_\mu^{\mathcal{R}(\theta)}$ as $N\rightarrow\infty$ from the asymptotic values of $\mathbb{E}_{a\sim\nu}[p_\nu(a)]$ for $\nu\in\Gamma$. The paper \cite{matrix} establishes this statement for the type A root system and the $\theta N\rightarrow c\geq 0$ regime and conjectures that it is true for the $|\theta N|\rightarrow\infty$ regime. We establish the converse statement for both regimes and the type A, BC, and D root systems.

The papers \cites{representations, fluctuations, unitary,llnclt,qmk,jackgenfunc,perelomov-popov,domino_tiling} similarly analyze the coefficients of Schur and Jack generating functions to establish a law of large numbers. The papers \cites{unitary,llnclt,jackgenfunc,rectangularmatrix} also prove converse statements and \cites{fluctuations,unitary,llnclt,gaussianfluctuation,domino_tiling} consider the central limit theorem in addition to the law of large numbers.

\subsection{Main result}

We state the main result of this paper, which is over the regime $|\theta N| \rightarrow \infty$ for the $A^{N-1}$ and $D^N$ root systems and the regime $|\theta_0 N| \rightarrow\infty$, $\frac{\theta_1}{\theta_0 N} \rightarrow c\in\mathbb{C}$ for the $BC^N$ root system. Observe that the first regime includes the case where $\theta$ is a fixed element of $\mathbb{C}^\times$ and the second regime includes the case where $\theta_0$ is a fixed element of $\mathbb{C}^\times$ and $\frac{\theta_1}{\theta_0 N} \rightarrow c\in\mathbb{C}$. The main application of this result is when $F_N^A=G_\mu^{A^{N-1}(\theta)}$, $F_N^{BC}=G_\mu^{BC^N(\theta_0, \theta_1)}$, and $F_N^D=G_\mu^{D^N(\theta)}$ are Bessel generating functions.

\begin{theorem}
\label{thm:main}
Assume that $\lim_{N\rightarrow\infty} |\theta N|=\infty$, $\lim_{N\rightarrow\infty} |\theta_0 N|=\infty$, and $\lim_{N\rightarrow\infty} \frac{\theta_1}{\theta_0 N} = c\in\mathbb{C}$.
\\ (A): Suppose $F^A_N(x_1,\ldots,x_N) = \exp\left(\sum_{\lambda\in\Gamma_N} c_\lambda(N) p_\lambda\right)\in\mathbb{C}[[x_1,\ldots,x_N]]$ for $N\geq 2$. Then, the following are equivalent.
    \begin{enumerate}
        \item[(a)] For all $\lambda\in\Gamma$, $\lim_{N\rightarrow\infty} \frac{c_\lambda(N)}{\theta N} = c_\lambda \in \mathbb{C}$ if $\ell(\lambda)=1$ and $\lim_{N\rightarrow\infty} \frac{c_\lambda(N)}{(\theta N)^{\ell(\lambda)}}=0$ if $\ell(\lambda)\geq 2$. 
        \item[(b)] For all $\nu\in\Gamma$,
\[
\lim_{N\rightarrow\infty}\frac{[1]\prod_{i=1}^{\ell(\nu)} \sum_{j=1}^{N} \mathcal{D}_j(A^{N-1}(\theta))^{\nu_i} F_N^A}{(\theta N)^{|\nu|} N^{\ell(\nu)}}  = \prod_{i=1}^{\ell(\nu)} \sum_{\pi\in NC(\nu_i)} \prod_{B\in \pi} |B|c_{(|B|)}.
\]
    \end{enumerate}
(B): Suppose $F^{BC}_N(x_1,\ldots,x_N) = \exp\left(\sum_{\lambda\in\evenN} c_{\lambda}(N)p_\lambda\right)\in\mathbb{C}[[x_1^2,\ldots,x_N^2]]$ for $N\geq 2$. Consider the following statements.
\begin{enumerate}
\item[(c)] For all $\lambda\in\even$, $\lim_{N\rightarrow\infty} \frac{c_\lambda(N)}{\theta_0 N}=c_{\lambda} \in\mathbb{C}$ if $\ell(\lambda)=1$ and $\lim_{N\rightarrow\infty} \frac{c_\lambda(N)}{(\theta_0 N)^{\ell(\lambda)}}=0$ if $\ell(\lambda)\geq 2$.
\item[(d)] For all $\nu\in\even$,
\begin{align*}
& \lim_{N\rightarrow\infty}\frac{[1]\prod_{i=1}^{\ell(\nu)} \sum_{j=1}^{N} \mathcal{D}_j(BC^N(\theta_0,\theta_1))^{\nu_i} F_N^{BC}}{(\theta_0 N)^{|\nu|} N^{\ell(\nu)}} \\
& = \prod_{i=1}^{\ell(\nu)} \sum_{\pi\in \NCeven(\nu_i)} (1+c)^{o(\pi)}\prod_{B\in \pi} 2^{|B|-1}|B|c_{(|B|)}.
\end{align*}
\end{enumerate}
Then, (c) implies (d) and if $c\not=-1$, then (d) implies (c).
\\ (C): Suppose 
\begin{align*}
F_N^D(x_1,\ldots,x_N)& =\exp\left(\sum_{\lambda\in\evenN} c_\lambda(N)p_\lambda\right) + ed_0(N) + e\exp\left(\sum_{\lambda\in\evenN}d_\lambda(N)p_\lambda\right) \\
& \in\mathbb{C}[[x_1^2,\ldots,x_N^2]] + e\mathbb{C}[[x_1^2,\ldots,x_N^2]]
\end{align*}
for $N\geq 2$. Consider the following statements.
\begin{enumerate}
\item[(e)] For all $\lambda\in\even$, $\lim_{N\rightarrow\infty} \frac{c_\lambda(N)}{\theta N}=c_\lambda\in\mathbb{C}$ if $\ell(\lambda)=1$ and $\lim_{N\rightarrow\infty} \frac{c_\lambda(N)}{(\theta N)^{\ell(\lambda)}}=0$ if $\ell(\lambda)>1$. 
\item[(f)] It is the case that $\lim_{N\rightarrow\infty}d_0(N)=d_0\in\mathbb{C}$ and for all $\lambda\in\even$, $\lim_{N\rightarrow\infty} \frac{d_\lambda(N)}{\theta N}=d_\lambda\in\mathbb{C}$ if $\ell(\lambda)=1$ and $\lim_{N\rightarrow\infty} \frac{d_\lambda(N)}{(\theta N)^{\ell(\lambda)}}=0$ if $\ell(\lambda)>1$.
\item[(g)] For all $\nu\in\even$,
\[
\lim_{N\rightarrow\infty}\frac{[1]\prod_{i=1}^{\ell(\nu)} \sum_{j=1}^{N} \mathcal{D}_j(D^N(\theta))^{\nu_i} F_N^D}{N^{\ell(\nu)}(\theta N)^{|\nu|} } = \prod_{i=1}^{\ell(\nu)} \sum_{\pi\in \NCeven(\nu_i)} \prod_{B\in \pi} 2^{|B|-1}|B|c_{(|B|)}.
\]
\item[(h)] It is the case that
\[
\lim_{N\rightarrow\infty}\frac{[1]\prod_{j=1}^N\mathcal{D}_j(D^N(\theta))F_N^D}{\prod_{j=1}^N (1+2(j-1)\theta)} = d_0+1
\]
and for all $\nu\in\even$,
\begin{align*}
& \lim_{N\rightarrow\infty}\frac{[1]\prod_{j=1}^N\mathcal{D}_j(D^N(\theta))\prod_{i=1}^{\ell(\nu)} \sum_{j=1}^{N} \mathcal{D}_j(D^N(\theta))^{\nu_i} F_N^D}{N^{\ell(\nu)}(\theta N)^{|\nu|}\prod_{j=1}^N (1+2(j-1)\theta)} \\ 
& = \prod_{i=1}^{\ell(\nu)} \sum_{\pi\in \NCeven(\nu_i)} \prod_{B\in \pi} 2^{|B|-1}|B|d_{(|B|)}.
\end{align*}
\end{enumerate}
Then, (e) and (g) are equivalent.

Assume that if $N$ is sufficiently large, then $\prod_{j=1}^N (1+2(j-1)\theta)\not=0$. Then, (f) and (h) are equivalent.
\end{theorem}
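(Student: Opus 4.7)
My plan exploits that the Dunkl operators $\mathcal{D}_j(D^N(\theta))$ coincide with $\mathcal{D}_j(BC^N(\theta,0))$, so part (B) with $(\theta_0,\theta_1) = (\theta, 0)$ (hence $c = 0$, and $(1+c)^{o(\pi)} = 1$) matches the right-hand sides of (g) and (h) up to labeling of coefficients. The shared preliminary is a symmetry calculation: for $k \neq j$, a direct check shows that $\tau_k$-conjugation swaps the two fractional summands of $\mathcal{D}_j(D^N(\theta))$ and hence fixes $\mathcal{D}_j$, while $\tau_j \mathcal{D}_j = -\mathcal{D}_j \tau_j$. Consequently, for $\nu \in \even$, the operator $T_\nu := \prod_{i=1}^{\ell(\nu)} \sum_{j=1}^N \mathcal{D}_j^{\nu_i}$ commutes with every $\tau_k$ (each $\nu_i$ is even), while $\prod_{j=1}^N \mathcal{D}_j$ anti-commutes with every $\tau_k$.

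For (e) $\Leftrightarrow$ (g): The function $e \exp(\sum d_\lambda p_\lambda)$ is $\tau_k$-anti-invariant for each $k$; since $T_\nu$ commutes with each $\tau_k$, its image is likewise $\tau_k$-anti-invariant, hence divisible by every $x_k$, and vanishes at the origin. Therefore the numerator of (g) equals the analogous expression applied to $\exp(\sum c_\lambda p_\lambda) \in \mathbb{C}[[x_1^2, \ldots, x_N^2]]$, and applying part (B) with $c = 0$ yields the equivalence.

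For (f) $\Leftrightarrow$ (h): Commutativity of Dunkl operators makes the numerator of (h) equal to $[1]\, T_\nu (\prod_j \mathcal{D}_j F_N^D)$. The anti-commutation property implies $\prod_j \mathcal{D}_j \exp(\sum c_\lambda p_\lambda) \in e\, \mathbb{C}[[x_1^2, \ldots, x_N^2]]$, while $\prod_j \mathcal{D}_j (e \exp(\sum d_\lambda p_\lambda)) =: G^{(2)} \in \mathbb{C}[[x_1^2, \ldots, x_N^2]]$. The vanishing argument from the previous paragraph eliminates the first contribution, so (h) reduces to $[1]\, T_\nu G^{(2)} / (N^{\ell(\nu)} (\theta N)^{|\nu|} \prod_j (1+2(j-1)\theta))$. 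Since $\prod_j \mathcal{D}_j (e)|_{x=0} = \prod_j(1+2(j-1)\theta)$, the constant term of $G^{(2)}$ equals $\prod_j(1+2(j-1)\theta)$ (nonzero by assumption), so one can write $G^{(2)}/\prod_j(1+2(j-1)\theta) = \exp(\sum \tilde{d}_\lambda p_\lambda)$. The task reduces to proving $d_\lambda$ satisfies (f) iff $\tilde{d}_\lambda$ satisfies (e) with single-part limits $d_{(|\lambda|)}$; then applying (e) $\Leftrightarrow$ (g) to the quotient yields (h).

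I expect the main obstacle to be this final identification. A direct $N = 2$ computation shows $\prod_j \mathcal{D}_j$ does not act as a scalar on $e \exp(\sum d_\lambda p_\lambda)$: with $F^{D,\text{odd}} = 1 + d\, p_2 + \cdots$, one computes $\prod_j \mathcal{D}_j(e F^{D,\text{odd}}) = (1+2\theta) + d(3+2\theta) p_2 + \cdots$, giving $\tilde{d}_{(2)} = d \cdot (3+2\theta)/(1+2\theta)$. Crucially the rescaling factor $(3+2\theta)/(1+2\theta) \to 1$ as $|\theta| \to \infty$, so the asymptotic condition in (f) is preserved. I would verify analogous cancellations for general $\lambda$ and $N$ by expanding $\prod_j \mathcal{D}_j$ using the $D^N$ commutators $[\mathcal{D}_j, x_j] = 1 + \theta \sum_{l \neq j}(1 + \tau_j \tau_l) s_{jl}$ and $[\mathcal{D}_j, x_k] = \theta(\tau_j \tau_k - 1) s_{jk}$ for $k \neq j$ to slide $\prod_j \mathcal{D}_j$ past $e$, then showing that every correction to $\prod_j(1+2(j-1)\theta) \cdot \exp(\sum d_\lambda p_\lambda)$ contributes at subleading order in the $|\theta N| \to \infty$ regime.
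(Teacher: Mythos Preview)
Your treatment of (e)$\Leftrightarrow$(g) is correct and coincides with the paper's: the odd summand $e\exp(\sum d_\lambda p_\lambda)$ is killed at the origin by the $\tau_k$-parity argument, and what remains is exactly part (B) with $\theta_1=0$ (the paper records this as a remark after proving the type D even case).

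For (f)$\Leftrightarrow$(h) your route diverges from the paper's, and the gap you flag is real and not minor. First, note that your reduction collapses at the level of bilinear forms: since Dunkl operators commute,
\[
[p_\nu,\,G^{(2)}]_{D^N(\theta)} \;=\; [1]\,\mathcal{D}(p_\nu)\,\mathcal{D}(e)\bigl(e\,F^{D,\text{odd}}\bigr) \;=\; [ep_\nu,\,eF^{D,\text{odd}}]_{D^N(\theta)},
\]
so ``(g)-type for $G^{(2)}/C$'' is literally the statement (h). Thus your step 4 --- that the log-coefficients $\tilde d_\lambda$ satisfy the (e)-type condition iff the $d_\lambda$ satisfy (f), with matching single-part limits --- is not a reduction of the problem but a reformulation of exactly (f)$\Leftrightarrow$(h). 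Whatever method proves step 4 must carry the full weight of the theorem.

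The paper does this by computing the leading order of $[ep_\lambda,ep_\nu]_{D^N(\theta)}$ directly: it factors as $\prod_{i=1}^{N-k}(1+2(i-1)\theta)$ times a polynomial in $(N,\theta)$ whose top term is governed by the \emph{same} matrix $\md$ that controls $[p_\lambda,p_\nu]_{D^N(\theta)}$. The analysis tracks which Dunkl operators in $\mathcal{D}(e)$ merely strip a factor $x_j$ from $e$ (contributing the product) versus which interact nontrivially with $p_\nu$, and shows via a noncrossing-partition bijection that the surviving combinatorics is identical to the even case. With this in hand, the equivalence follows by the same upper-triangular matrix inversion used for (e)$\Leftrightarrow$(g).

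Your commutator-expansion sketch is aimed at the same factorisation, but two points deserve care. First, sliding $\prod_j\mathcal D_j$ past $e$ via $[\mathcal D_j,x_k]$ generates reflection operators $s_{jl}$ and $\tau_j\tau_l s_{jl}$ that do not commute with the remaining $x_m$'s, so the expansion proliferates; organising it into a leading term plus controlled remainder is essentially the path-counting the paper does. Second, your $N=2$ check is not evidence for the asymptotic claim: the ratio $(3+2\theta)/(1+2\theta)$ is a fixed-$N$ quantity, and its $|\theta|\to\infty$ limit says nothing about the regime $|\theta N|\to\infty$ with $N\to\infty$ (where $\theta$ may be fixed or even tend to zero). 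What you actually need is that for each fixed $\lambda\in\even$ the correction to $\tilde d_\lambda - d_\lambda$ is $o\bigl((\theta N)^{\ell(\lambda)}\bigr)$ as $N\to\infty$, uniformly in the allowed $\theta$; this is precisely the content of the paper's leading-order theorem and does not follow from a small-$N$ calculation.
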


\begin{proof}
See \Cref{cor:equivalence_a1,cor:equivalence_bc1,cor:equivalence_odd1} for the proofs of (A), (B), and (C), respectively.
\end{proof}

First, we note that we actually prove the generalizations where the $N\rightarrow\infty$ limits of $\frac{c_\lambda(N)}{(\theta N)^{\ell(\lambda)}}$, $\frac{c_\lambda(N)}{(\theta_0N)^{\ell(\lambda)}}$, and $\frac{d_\lambda(N)}{(\theta N)^{\ell(\lambda)}}$ can be nonzero when $\ell(\lambda)>1$, see \Cref{thm:equivalence_a1,thm:equivalence_bc1,thm:equivalence_odd1}. We present this corollary because it is more applicable to the setting where the functions $F_N^A$, $F_N^{BC}$, and $F_N^D$ are set as Bessel generating functions over some sufficiently small neighborhood of the origin where the functions converge and are nonzero so that their logarithms are holomorphic.

For example, in the case where $F_N^A\triangleq J_{a(N)}^{A^{N-1}(\theta)}$, condition (a) implies that for $\nu\in\Gamma$,
\[
\lim_{N\rightarrow\infty} \frac{\prod_{i=1}^{\ell(\nu)} \sum_{j=1}^N \left(\frac{a(N)_j}{\theta N}\right)^{\nu_i}}{N^{\ell(\nu)}} = \prod_{i=1}^{\ell(\nu)} \sum_{\pi\in NC(\nu_i)}\prod_{B\in \pi} |B|c_{(|B|)}.
\]
Then, we can deduce the convergence of the sequence $\{\frac{a(N)}{\theta N}\}_{N\geq 2}$ in terms of moments. Conversely, if the sequence $\{\frac{a(N)}{\theta N}\}_{N\geq 2}$ converges in terms of moments, then we can determine the asymptotics of the coefficients of the Bessel functions $J_{a(N)}^{A^{N-1}(\theta)}$. 

As we mentioned previously, \Cref{thm:main} resolves the conjecture posed in the appendix of the paper \cite{matrix}. The idea of scaling the coefficients by varying powers of $N$ while $\theta$ is fixed that we use is mentioned in the paper. In particular, we generalize the results of \cite{limits_prob_measures}, where the coefficients are only scaled by $(\theta N)^{-1}$. 

As we mention in \Cref{sec:a}, a method to prove part (A) of \Cref{thm:main} is to prove \Cref{thm:leadingorder_a} using the results of \cite{limits_prob_measures}. However, this method is not applicable to parts (B) and (C), so we develop a new approach which can be used in these settings in \Cref{subsec:proofa}. This approach is applicable to the $\theta N\rightarrow c\in\mathbb{C}$ regime as well.

We discuss additional implications of \Cref{thm:main} for when $F_N^A$, $F_N^{BC}$, and $F_N^D$ are the Bessel generating functions of measures that are not point masses. First, we define the notion of convergence in terms of power sums, which implies convergence in terms of moments.

\begin{definition}
\label{def:converge_powersum}
For $N\geq 1$, suppose $\eta_N$ is a Borel probability measure over $\mathbb{C}^N$. Then, $\{\eta_N\}_{N\geq 1}$ converges in terms of power sums if there exists $m_k\in \mathbb{C}$ for $k\geq 1$ such that for all $\nu\in\Gamma$,
\[
\lim_{N\rightarrow\infty}\frac{1}{N^{\ell(\nu)}}\mathbb{E}_{a\sim\eta_N}\left[p_\nu(a)\right] = \prod_{i=1}^{\ell(\nu)} m_{\nu_i}.
\]
If $\eta$ is a Borel probability measure over $\mathbb{C}$ with $k$th moment equal to $m_k$ for $k\geq 1$, then $\{\eta_N\}_{N\geq 1}$ converges to $\eta$ in terms of power sums.
\end{definition}

It is clear that if $\eta_N$ is given by a delta mass at $a_N\in\mathbb{C}^N$ such that $a_N$ converges to $\eta$ in terms of moments, then $\{\eta_N\}_{N\geq 1}$ converges to $\eta$ in terms of power sums. On the other hand, we have that if $\eta_N=\otimes^N \eta$ and $\eta$ has finite moments, then $\eta_N$ converges to $\eta$ in terms of power sums as $N\rightarrow\infty$. This is because for all $\nu\in\Gamma$,
\[
\mathbb{E}_{a\sim\eta_N}\left[p_\nu(a)\right] = \frac{N!\prod_{i=1}^{\ell(\nu)}\mathbb{E}_{a\sim\mu}[a^{\nu_i}]}{(N-\ell(\nu))!} + O(N^{\ell(\nu)-1}),
\]
since $\eta$ has finite moments. Moreover, observe that when $\eta_N$ is distributed as a point mass for all $N\geq 1$, convergence in terms of power sums is equivalent to convergence in terms of moments.

Converging in terms of moments corresponds to requiring that the condition of \Cref{def:converge_powersum} is satisfied when $\nu\in \Gamma$ has length one. Another weaker condition that is often studied is to require that the condition is satisfied when $\nu\in\Gamma$ has length one or consists of two equal parts. This would imply that $\frac{1}{N^{\ell(\nu)}}p_\nu(\eta_N)$ converges to $\prod_{i=1}^{\ell(\nu)} \mathbb{E}_{a\sim \eta} [a^{\nu_i}]$ in probability for all $\nu\in\Gamma$.

Furthermore, if $\eta$ is determined by its moments, then after applying \cite{billingsley}*{Theorem 30.2}, convergence in terms of power sums implies that $\mathbb{E}_{a\sim \eta_N}\left[\frac{1}{N}\sum_{i=1}^N \delta(a_i)\right]$ converges to $\eta$ in distribution as $N\rightarrow\infty$. In particular, Carleman's condition implies that $\eta$ is determined by its moments when it is compactly supported.

Observe that we also consider requiring that the condition of \Cref{def:converge_powersum2} is satisfied when $\nu\in\even$. This is equivalent to requiring that $\{\eta_N(x\mapsto x^2)\}_{N\geq 1}$ converges in terms of power sums. If this is the case, then $\{\frac{1}{2}\eta_N + \frac{1}{2}\eta_N(x\mapsto -x)\}_{N\geq 1}$ converges in terms of power sums, see \Cref{lemma:powersum_even}. We use this implication to prove \Cref{cor:freeconv,cor:freeconv_rectangular,cor:lr_bc,cor:lr_d}.

For a probability measure $\mu_N$ that is exponentially decaying, see \Cref{def:exponent_decay}, we can apply part (A) of \Cref{thm:main} to the Bessel generating function $F_N^A\triangleq G_{\mu_N}^{A^{N-1}(\theta)}$. Afterwards, we deduce that if $\theta$ has nonnegative real part and $|\theta N|\rightarrow \infty$, then $\{\mu_N(x\mapsto \frac{x}{\theta N}\}_{N\geq 2}$ converges in terms of power sums if and only if the normalized coefficients of $\ln(F_N^A)$ converge, see \Cref{cor:lln_satisfaction_a}. Note that $\mu_N\left(x\mapsto \frac{x}{\theta N}\right)$ is the pushforward of $\mu_N$ with respesct to the map $x\mapsto \frac{x}{\theta N}$, see \Cref{subsec:pushforward}, and we previously discuss when $\mu_N=\delta_{a(N)}$. Moreover, we can state similar results for the high temperature regime as well as the type BC and D root systems, see \Cref{cor:lln_satisfaction_bc,cor:lln_satisfaction_d}. For applications of this framework to analyzing the convergence of various probability ensembles, see \Cref{subsec:prob_application}.

\subsection{The high temperature regime}
\label{subsec:regime}
Observe that in \Cref{thm:main}, we consider the following regimes:
\begin{enumerate}
\item[1.]The $N\rightarrow\infty$ limit of $|\theta N|$ is $\infty$ for $A^{N-1}(\theta)$ and $D^N(\theta)$.
\item[2.]The $N\rightarrow\infty$ limit of $|\theta_0 N|$ is $\infty$ and the $N\rightarrow\infty$ limit of $\frac{\theta_1}{\theta_0 N}$ is $c\in\mathbb{C}$ for $BC^N(\theta_0,\theta_1)$.
\end{enumerate}
We consider the following regimes in addition to those listed above:
\begin{enumerate}
\item[3.] The $N\rightarrow\infty$ limit of $\theta N$ is $c\in\mathbb{C}$ for $A^{N-1}(\theta)$ and $D^N(\theta)$.
\item[4.] The $N\rightarrow\infty$ limit of $\theta_0 N$ is $c_0\in\mathbb{C}$ and the $N\rightarrow\infty$ limit of $\theta_1$ is $c_1\in\mathbb{C}$ for $BC^N(\theta_0,\theta_1)$.
\end{enumerate}
The approach that we use to deduce \Cref{thm:main} over regimes 1 and 2 is adaptable to deducing similar results over regimes 3 and 4. Results that we obtain over regimes 3 and 4 include \Cref{thm:equivalence_a2,thm:equivalence_bc2}, respectively, which generalize the results of \cites{matrix,rectangularmatrix}. We also justify \Cref{thm:equivalence_odd2} for regime 3 and the type D root system.

\subsection{Proof method}

We summarize the ideas of the proof of \Cref{thm:main}, which build upon the methods of \cites{matrix,limits_prob_measures}. The most significant difference is that we utilize the power sum basis rather than the monomial basis.

For the proof for the type A root system, the main idea is that for both the low and high temperature regimes, when $\prod_{i=1}^{\ell(\lambda)}\sum_{j=1}^N\mathcal{D}_j(A^{N-1}(\theta))^{\lambda_i}$
is applied to $p_\nu$ for some $\lambda,\nu\in\Gamma$ such that $|\lambda|=|\nu|$, only certain types of sequences of operators contribute to the leading order term of the resulting constant. 

First, observe that the input is in $\mathbb{C}^{H(A^{N-1})}[x_1,\ldots,x_N]$ and that this ring is closed under the operation of $\sum_{j=1}^N \mathcal{D}_j(A^{N-1}(\theta))^k$ for $k\geq 1$. Therefore, by \Cref{lemma:first_op}, the first operator of $\mathcal{D}_j(A^{N-1}(\theta))^{\lambda_i}$ must be $\partial_j$ for $1\leq i\leq \ell(\lambda)$ and $1\leq j\leq N$. In other words, we may replace $\prod_{i=1}^{\ell(\lambda)}\sum_{j=1}^N\mathcal{D}_j(A^{N-1}(\theta))^{\lambda_i}$ with $\prod_{i=1}^{\ell(\lambda)}\sum_{j=1}^N\mathcal{D}_j(A^{N-1}(\theta))^{\lambda_i-1}\partial_j$.

Next, to contribute to the leading order term, the sequence of operators must be of the form $\prod_{i=1}^{\ell(\lambda)} \mathcal{D}_{j_i}(A^{N-1}(\theta))^{\lambda_i-1}\partial_{j_i}$ such that the $j_i$ are distinct for $1\leq i\leq \ell(\lambda)$. If the $j_i$ are not distinct, then the sequence does not contribute to the leading order term. Without loss of generality, assume that $j_i=i$ for $1\leq i\leq \ell(\lambda)$.

Initialize $S$ to be $[\ell(\nu)]$. We first apply the subsequence $\mathcal{D}_1(A^{N-1}(\theta))^{\lambda_1-1}\partial_1$ of operators. The operator $\partial_1$ is applied to $p_{(\nu_j)}$ for some $j\in S=[\ell(\lambda)]$, since $p_\nu$ is symmetric. Afterwards, we remove $j$ from $S$ and replace $p_\nu$ with $\nu_j x_1^{\nu_j-1} \frac{p_{\nu}}{p_{(\nu_j)}}$. Next, we decide how we apply the subsequence $\mathcal{D}_1(A^{N-1}(\theta))^{\lambda_1-1}$ of operators. For each of the operators $\mathcal{D}_1(A^{N-1}(\theta))$, we either diminish the degree of $x_1$ by one or apply $\partial_1$ to $p_{(\nu_j)}$ for some $j\in S$ and remove $j$ from $S$.

The first step is to select which of the derivatives $\partial_1$ we apply to $p_{(\nu_j)}$ for some remaining element $j\in S$ and to assign the values of $j$ to these derivatives. Next, we analyze the operators we apply to diminish the degree of $x_1$.

Recall that 
\[
\mathcal{D}_1(A^{N-1}(\theta))=\partial_1 + \theta\sum_{j\in[N]\backslash\{i\}} \frac{1-s_{1j}}{x_1-x_j}.
\]
To diminish the degree of $x_1$ by one, we either apply $\partial_1$ or the switch $\theta \frac{1-s_{1j}}{x_1-x_j}$ for some $j\in [N]\backslash\{1\}$. In the $|\theta N|\rightarrow\infty$ regime, the derivatives are assigned lower weights than the switches, so they do not contribute to the leading order term. However, in the $\theta N\rightarrow c$ regime, the derivatives contribute to the leading order term.

After we apply the operators $\mathcal{D}_1(A^{N-1}(\theta))^{\lambda_1-1}\partial_1$, we apply the operators $\mathcal{D}_i(A^{N-1}(\theta)\\)^{\lambda_i-1}\partial_i$ in the same way from $i=2$ to $i=\ell(\lambda)$. To contribute to the leading order term, we require the indices $j$ of the switches $\theta \frac{1-s_{ij}}{x_i-x_j}$ that we apply, where $i\in [\ell(\lambda)]$, to be distinct and greater than $\ell(\lambda)$. As a result of this, after applying $\mathcal{D}_1(A^{N-1}(\theta))^{\lambda_1-1}\partial_1$, we require the degree of $x_1$ is zero. Similarly, after applying $\mathcal{D}_i(A^{N-1}(\theta))^{\lambda_i-1}\partial_i$, we require the degree of $x_i$ to be zero for $2\leq i\leq \ell(\lambda)$.

For $1\leq i\leq \ell(\lambda)$, we let $S_i$ denote the set of $j\in [\ell(\nu)]$ such that a derivative $\partial_i$ is assigned to $p_{(\nu_j)}$, so that $S_1\sqcup \cdots \sqcup S_{\ell(\lambda)}= [\ell(\nu)]$. It should be the case that $\sum_{j\in S_i} |\nu_j|=\lambda_i$ for all $i\in [\ell(\lambda)]$, so that the degree of $x_i$ is zero after applying $\mathcal{D}_i(A^{N-1}(\theta))^{\lambda_i-1}\partial_i$. For an example, see \Cref{example:lambda_nu}.

If we condition on the values of the $S_i$, then the contribution to the leading order term associated to each value of $i\in [\ell(\lambda)]$ is independent. In other words, we can deduce that the contribution to the leading order term for each of the choices of $S_i$ is the leading order term of
\[
\prod_{i=1}^{\ell(\lambda)} \left[p_{\lambda_i}, \prod_{j\in S_i} p_{(\nu_j)}\right]_{A^{N-1}(\theta)}.
\]
Afterwards, we compute the leading order term of $\left[p_{\lambda_i}, \prod_{j\in S_i} p_{(\nu_j)}\right]_{A^{N-1}(\theta)}$ by formulating a mapping from sequences of operators to noncrossing partitions for $i\in [\ell(\lambda)]$. Summing over the choices for the $S_i$ gives the leading order term of $[p_\lambda, p_\nu]_{A^{N-1}(\theta)}$. 

For a description of the mapping for the low temperature regime and type A root system, see \Cref{lemma:allocation}. Also, see \Cref{fig:mapping} for an illustration.

After formulating a mapping with noncrossing partitions, we assign each noncrossing partition a weight and sum over the weights to compute the leading order term of $[p_{\lambda_i}, \prod_{j\in S_i} p_{(\nu_j)}]_{A^{N-1}(\theta)}$. The difference between the low and high temperature regimes arises in the weights. As mentioned previously, the switches have a higher weight than the derivatives in the low temperature regime while they have the same weight in the high temperature regime. However, it is not challenging to convert from one weighting of the partitions to the other.

The proof for the type BC root system over the low and high temperature regimes is similar. The most significant differences are that the noncrossing partitions have all even block sizes and we use different weightings of the partitions. For the type D root system, by the results of \cites{heckman,dunkl_singular_poly,demni_type_d}, it suffices to consider the type BC root system.

\begin{example}
\label{example:lambda_nu}

We describe an example for $\lambda=(6,5,3)$ and $\nu=(5,3,2,2,1,1)$, which is displayed in \Cref{fig:lambda_nu}.

\begin{figure}[!ht]
\begin{center}
\begin{tikzpicture}
    \fill[red!30] (0,0) rectangle (6,1);
    \fill[blue!30] (6,0) rectangle (11, 1);
    \fill[green!30] (11, 0) rectangle (14,1);
    \fill[blue!30] (0,-4) rectangle (5, -3);
    \fill[red!30] (5,-4) rectangle (8, -3);
    \fill[green!30] (8, -4) rectangle (10, -3);
    \fill[red!30] (10,-4) rectangle (12, -3);
    \fill[red!30] (12,-4) rectangle (13,-3);
    \fill[green!30] (13,-4) rectangle (14,-3);
    \draw (0,0) grid (14,1);
    \draw (0, -4) grid (14, -3);
    \node at (0.5, 0.5) {$\partial$};
    \node at (3.5, 0.5) {$\partial$};
    \node at (5.5, 0.5) {$\partial$};
    \node at (6.5, 0.5) {$\partial$};
    \node at (11.5, 0.5) {$\partial$};
    \node at (12.5, 0.5) {$\partial$};

    \node at (0.5, -3.5) {$1$};
    \node at (5.5, -3.5) {$2$};
    \node at (8.5, -3.5) {$3$};
    \node at (10.5, -3.5) {$4$};
    \node at (12.5, -3.5) {$5$};
    \node at (13.5, -3.5) {$6$};

    \draw (0.5,0) -- (10.5,-3);
    \draw (3.5,0) -- (5.5,-3);
    \draw (5.5, 0) -- (12.5, -3);
    \draw (6.5, 0) -- (0.5, -3);
    \draw (11.5, 0) -- (8.5, -3);
    \draw (12.5, 0) -- (13.5, -3);
\end{tikzpicture}
\end{center}

\caption{In the first row, the squares that are marked with $\partial$ indicate a derivative while the unmarked squares indicate a switch. Moreover, each color in the first row corresponds to a part of $\lambda=(6,5,3)$. The squares that are colored red, blue, and green correspond to a choice of the operators in $\mathcal{D}_i(A^{N-1}(\theta))^{\lambda_i-1}\partial_i$ from $i=1$ to $i=3$, respectively. In the second row, the square marked with the number $i$ indicates the start of part $i$ of $\nu=(5,3,2,2,1,1)$ so that the first part consists of the first $\nu_1$ squares, the second part consists of the next $\nu_2$ squares, and so forth. A line between a square marked with $\partial$ and a square marked with a number indicates that the derivative that corresponds to the former is assigned to the part of $\nu$ that corresponds to the latter. Moreover, the squares contained in the $i$th part of $\nu$ are colored with the color of the part of $\lambda$ that contains the derivative that is assigned to $i$th part of $\nu$.}
\label{fig:lambda_nu}
\end{figure}

The sequence 
\[
\partial_1 S_{1,4} S_{1,5} \partial_1 S_{1,6} \partial_1 \partial_2 S_{2,7} S_{2,8} S_{2,9} S_{2,10} \partial_3 \partial_3 S_{3,11}
\]
of operators corresponds to the first row, where $S_{i,j}$ indicates the operator $\theta \frac{1-s_{ij}}{x_i-x_j}$ for $i\not=j$. While counting the contribution to the leading order term, recall that we require the indices $j$ of the switches $\theta \frac{1-s_{ij}}{x_i-x_j}$ that we apply, where $i\in [\ell(\lambda)]$, to be distinct and greater than $\ell(\lambda)$. Given this condition, there are multiple choices for the switches $S_{i,j}$ in the sequence of operators that we list previously, although all of these choices are equivalent.

Moreover, if we assign the derivatives with the numbers $1$ through $6$ from left to right, derivatives $1$ through $6$ are assigned to parts $4$, $2$, $5$, $1$, $3$, and $6$ of $\nu$, respectively. We have that $S_1=\{2, 4, 5\}$, $S_2=\{1\}$, and $S_3=\{3, 6\}$ in this case, since the first three derivatives are associated to $\lambda_1$, the next derivative is associated to $\lambda_2$, and the last two are associated to $\lambda_3$. Observe that $\sum_{j\in S_i} \nu_j = \lambda_i$ for all $i \in [3]$. Equivalently, the numbers of squares that are colored with each color are the same in the first and second rows.
\end{example}

\subsection{Probabilistic applications}
\label{subsec:prob_application}

As an example of an application, we prove that if $\theta N\rightarrow \infty$ and the normalized initial distribution of the Dyson Brownian motion (DBM) converges in terms of power sums to $\eta$, then the normalized observation of the DBM at timestamp $\alpha \theta N$ converges in terms of power sums to the free convolution of $\eta$ and the semicircle law multiplied by $\sqrt{\alpha}$, see \Cref{cor:lln_dbm1}. We prove a similar result for the $\theta N\rightarrow c\geq 0$ regime, although we do not specify the limiting measure, see \Cref{cor:lln_dbm2}. For this regime, the time stamp of the observation is fixed as $N\rightarrow\infty$. For the $\beta$-Laguerre ensemble, we also discuss the law of large numbers as $\theta N\rightarrow \infty$ and reprove a result of \cite{betalaguerre}. 

We can deduce similar results for the type BC and type D DBMs using the same method. For the definitions of the DBM associated to any root system and multiplicity function, see \cites{gallardo_yor,skew-product,demni_sde}. We state the relevant results for the type D DBM in the low and high temperature regimes in \Cref{cor:lln_dbm_d1,cor:lln_dbm_d2}, respectively. To state these results, we generalize \Cref{def:converge_powersum} to signed Borel measures, see \Cref{def:converge_powersum2}. 

The paper \cite{airy_beta} also uses Dunkl operators to analyze the type A DBM. The paper shows that multiple observations of the largest particles of the DBM converge to the Airy$_\beta$ line ensemble.

In the following subsection, we describe another probabilistic application of the main results to integral representations of the products of two Bessel functions. The Littlewood-Richardson coefficients for Schur polynomials induce probability measures over the partitions and we can similarly consider integral representations for the products of two Bessel functions, although it is a conjecture that the measure we integrate over is nonnegative for all choices of the multiplicity function, see \Cref{conjecture}. We also discuss representations for the Bessel functions as integrals of exponential functions over the intertwining measures.

\subsection{Integral representations of the Bessel functions}

First, we introduce the intertwining measure, which induces a representation of the Bessel function as an integral of exponential functions. For a finite root system $\mathcal{R}$, $H(\mathcal{R})$ denotes the finite reflection group generated by $r_\alpha$ for $\alpha\in\mathcal{R}$ and for $a\in\mathbb{R}^N$, $H(\mathcal{R})a\triangleq \{ha: h\in H(\mathcal{R})\}$. Moreover, $\theta(\mathcal{R})$ is the set of multiplicity functions over $\mathcal{R}$.

\begin{theorem}[\cite{rosler_positivity}]
\label{thm:positivity}
Suppose $N\geq 1$, $\mathcal{R}\subset\mathbb{R}^N$ is a finite root system, and $\theta\in\theta(\mathcal{R})$ is nonnegative. Suppose $a\in\mathbb{R}^N$. There exists a unique Borel probability measure $\nu^{\mathcal{R}(\theta)}_a$ whose support is contained in the convex hull of $H(\mathcal{R})a $ such that 
\[
E_a^{\mathcal{R}}(x) = \int_{\mathbb{R}^N} e^{\sum_{i=1}^N x_i\epsilon_i} d\nu^{\mathcal{R}(\theta)}_a(\epsilon)
\]
for all $x\in\mathbb{C}^N$. Furthermore, $\text{supp}(\nu_a^{\mathcal{R}(\theta)})\cap H(\mathcal{R})a$ is nonempty and the Borel probability measure 
\[
\nu_a^{\text{sym; }\mathcal{R}(\theta)} \triangleq \frac{1}{|H(\mathcal{R})|}\sum_{h\in H(\mathcal{R})} \nu_{ha}^{\mathcal{R}(\theta)}
\]
is invariant with respect to the action of $H(\mathcal{R})$ and satisfies
\[
J^{\mathcal{R}(\theta)}_a(x) = \int_{\mathbb{R}^N} e^{\sum_{i=1}^N x_i\epsilon_i}d\nu_a^{\text{sym; } \mathcal{R}(\theta)}(\epsilon)
\]
for all $x\in\mathbb{C}^N$. 
\end{theorem}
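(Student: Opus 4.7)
The plan is to deduce the existence of the measures $\mu_a$ and $\mu_a^{\text{sym}}$ from a positive integral representation of the Dunkl intertwining operator $V = V_\theta$. Recall that $V$ is the unique linear operator on polynomials that preserves each homogeneous degree, satisfies $V 1 = 1$, and intertwines $\mathcal{D}_i V = V \partial_i$ for every $i \in [N]$. A standard consequence is $E_a^{\mathcal{R}}(x) = V_x\bigl[e^{\langle a, \cdot\rangle}\bigr]$, which will be the bridge between polynomial identities for $V$ and the desired integral formula for the nonsymmetric eigenfunction.

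First, I would establish the key positivity statement: for $\theta \geq 0$ and each fixed $x \in \mathbb{R}^N$, there exists a unique Borel probability measure $\nu_x$ on $\mathbb{R}^N$ with $\mathrm{supp}(\nu_x) \subset \mathrm{conv}(H(\mathcal{R})x)$ such that $V f(x) = \int f(\xi)\, d\nu_x(\xi)$ for every polynomial $f$. The route I would follow is to introduce the Dunkl heat semigroup $T_t = \exp\bigl(\tfrac{t}{2}\sum_i \mathcal{D}_i^2\bigr)$, verify that its generalized heat kernel is nonnegative (which ultimately uses the nonnegativity of $\theta$ via a maximum principle for the Dunkl Laplacian), and extract $V$ from the relationship between $T_t$ and the classical Gaussian semigroup. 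The support constraint $\mathrm{supp}(\nu_x) \subset \mathrm{conv}(H(\mathcal{R})x)$ then comes from an envelope argument: a polynomial that is nonpositive on the convex hull must satisfy $Vf(x) \leq 0$, which rules out any mass of $\nu_x$ outside that convex hull.

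Second, specializing to $f_a(\xi) = e^{\langle a, \xi\rangle}$ and using continuity of $V$ on entire functions of exponential type yields $E_a^{\mathcal{R}}(x) = \int e^{\langle a, \xi\rangle}\, d\nu_x(\xi)$. Swapping the roles of $a$ and $x$ via the kernel symmetry $E_a^{\mathcal{R}}(x) = E_x^{\mathcal{R}}(a)$ produces $\mu_a := \nu_a$, with support in $\mathrm{conv}(H(\mathcal{R}) a)$; uniqueness follows because bilateral Laplace transforms of compactly supported measures are entire and injective. To obtain $\mathrm{supp}(\mu_a) \cap H(\mathcal{R}) a \neq \emptyset$, I would track exponential growth: for $x$ strictly inside a fixed Weyl chamber and $t \to \infty$, the integral $\int e^{t\langle x, \xi\rangle}\, d\mu_a(\xi)$ grows like $e^{t \max_h \langle x, ha\rangle}$ by a Laplace-type estimate on the compact support, forcing $\mu_a$ to place mass at at least one extremal orbit point. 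The Bessel identity finally follows from $J_a^{\mathcal{R}(\theta)} = |H(\mathcal{R})|^{-1}\sum_h E_{ha}^{\mathcal{R}}$ (see \Cref{thm:dunkl_eigenfunction_symmetry}), giving $\mu_a^{\text{sym}} = |H(\mathcal{R})|^{-1}\sum_h \mu_{ha}$, which is manifestly $H(\mathcal{R})$-invariant upon reindexing the sum.

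The main obstacle is the positivity of $V$ together with the tight support containment in $\mathrm{conv}(H(\mathcal{R})x)$. Proving nonnegativity of the Dunkl heat kernel, or equivalently constructing $V$ as a positive operator, is where the hypothesis $\theta \geq 0$ is essential; in \cite{rosler_positivity} this is handled through an abstract semigroup/martingale framework rather than by case-by-case root-system analysis. The sharper assertion $\mathrm{supp}(\mu_a) \cap H(\mathcal{R}) a \neq \emptyset$, rather than merely support inside the convex hull, is a secondary subtlety resolved by the exponential-growth argument above.
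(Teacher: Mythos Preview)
The paper does not provide its own proof of this theorem; it is quoted verbatim from \cite{rosler_positivity} and used as a black box throughout the paper (in \Cref{cor:conjecture_support}, \Cref{lemma:besselupper_1}, \Cref{lemma:besselupper_2}, \Cref{lemma:besselupper_3}, and \Cref{thm:converge2}). There is therefore no ``paper's own proof'' to compare against.

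That said, your outline is a faithful sketch of the strategy in R\"osler's original paper: positivity of the intertwiner $V$ via the Dunkl heat semigroup, the identity $E_a^{\mathcal{R}}(x) = V_x[e^{\langle a,\cdot\rangle}]$, the support constraint by testing against affine functionals nonpositive on the convex hull, and the symmetrization $J_a = |H|^{-1}\sum_h E_{ha}$ for the Bessel statement. One small point of caution: your exponential-growth argument for $\mathrm{supp}(\mu_a)\cap H(\mathcal{R})a \neq \emptyset$ is more delicate than you indicate. Knowing that $\int e^{t\langle x,\xi\rangle}d\mu_a(\xi)$ grows like $e^{t\max_h\langle x,ha\rangle}$ only tells you the support touches the face of the convex hull where $\langle x,\cdot\rangle$ is maximized, not that it hits an orbit point; to land on an actual $ha$ you need to vary $x$ over all chambers or use an extremal-point argument. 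R\"osler's actual proof of this nonemptiness runs through a separate induction on the Coxeter structure rather than pure growth estimates.
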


\begin{remark}
\label{remark:invertible}
From \cites{dunkloperators,dunklbessel}, if $\theta\in\theta(\mathcal{R})$ such that $\theta(r)$ has nonnegative real part for all $r\in\mathcal{R}$, then $\theta\in\Theta(\mathcal{R})$, or equivalently, $\mathcal{D}(\mathcal{R}(\theta))$ is invertible. See \Cref{subsec:dunkl} for the definitions of these notions. The invertibility of the Dunkl operator implies the existence of it associated eigenfunctions, see \Cref{thm:dunkl_eigenfunction,thm:dunkl_eigenfunction_symmetry}. In particular, it implies that the eigenfunctions exist in the context of \Cref{thm:positivity}.
\end{remark}

The intertwining measure $\nu_a^{\mathcal{R}(\theta)}$ is equivalent to the intertwining operator between the Dunkl operator associated to $\mathcal{R}$ and $\theta$ and $\partial$, the derivative operator which is equivalent to setting $\theta=0$. For the definition of an intertwining operator, see \Cref{def:intertwine} and for the explanation of this equivalence, see \Cref{example:dunkl}. We study intertwiners in a more general setting in \Cref{thm:equivalences}.

Another probabilistic application is derived from the interpretation of the Littlewood-Richardson coefficients for the Bessel functions. The paper \cite{representations} shows that if $\lambda(N),\\\mu(N)\in\Gamma_N\cup\{0\}$ and $(\lambda(N)_i+N-i, i\in[N])$ and $(\mu(N)_i+N-i, i\in[N])$ converge in terms of moments after normalization, then the measure over $\Gamma_N$ that is induced by the Littlewood-Richardson coefficients associated to $\lambda(N)$ and $\mu(N)$ also converges in terms of moments after normalization. Since $\lambda(N)$ and $\mu(N)$ are point masses, converging in terms of moments is equivalent to converging in terms of power sums.

We can similarly analyze the product of two Bessel functions, although it is unknown whether the analogues of the Littlewood-Richardson coefficients are nonnegative. This is the content of the following conjecture about the existence of an integral representation of such products; it can easily be extended to the product of an arbitrary number of Bessel functions. The conjecture similarly assumes that the multiplicity function is nonnegative.

\begin{conjecture}
\label{conjecture}
Suppose $N\geq 1$, $\mathcal{R}\subset\mathbb{R}^N$ is a finite root system, and $\theta\in\theta(\mathcal{R})$ is nonnegative. Suppose $a_1,a_2\in\mathbb{R}^N$. There exists a nonnegative Borel probability measure $\mu^{\mathcal{R}(\theta)}_{a_1,a_2}$ over $\mathbb{R}^N$ such that 
\[
J^{\mathcal{R}(\theta)}_{a_1}(x)J^{\mathcal{R}(\theta)}_{a_2}(x) = \int_{\mathbb{R}^N} J^{\mathcal{R}(\theta)}_a(x) d\mu^{\mathcal{R}(\theta)}_{a_1,a_2}(a)
\]
for all $x\in\mathbb{C}^N$.
\end{conjecture}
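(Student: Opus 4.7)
The plan is to reduce the conjecture to proving positivity of the $H(\mathcal{R})$-symmetrized Dunkl generalized translation operator. First, by \Cref{thm:positivity}, I would write $J_{a_i}^{\mathcal{R}(\theta)}(x) = \int_{\mathbb{R}^N} e^{\sum_j x_j\epsilon_j}\, d\mu_{a_i}^{\mathrm{sym}}(\epsilon)$ for $i=1,2$, so that
\[
J_{a_1}^{\mathcal{R}(\theta)}(x)\, J_{a_2}^{\mathcal{R}(\theta)}(x) = \int_{\mathbb{R}^N\times\mathbb{R}^N} e^{\sum_j x_j(\epsilon_j+\eta_j)}\, d\mu_{a_1}^{\mathrm{sym}}(\epsilon)\, d\mu_{a_2}^{\mathrm{sym}}(\eta).
\]
This yields an integral representation of the product against a compactly supported, $H(\mathcal{R})$-invariant probability measure, but in terms of plain exponentials rather than the Bessel kernel, so the next task is to transfer the integral from the exponential kernel to the Bessel kernel.

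Second, I would invoke the generalized translation operator $\tau_y$ associated with $\mathcal{R}(\theta)$, characterized on the Dunkl-transform side by $\widehat{\tau_y f}(a) = J_a^{\mathcal{R}(\theta)}(y)\, \widehat{f}(a)$ (with the appropriate Plancherel measure dictated by $\theta$). If one can show that the symmetrized version of $\tau_y$ is positivity-preserving, equivalently that the Bessel convolution $\mu \ast_{\mathcal{R}(\theta)} \nu$ of two positive Borel measures is again a positive measure, then applying this to $\mu = \delta_{a_1}$ and $\nu = \delta_{a_2}$ produces the required probability measure $\mu^{\mathcal{R}(\theta)}_{a_1,a_2}$. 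The total mass equals one by evaluating the product formula at $x = 0$, and the support of $\mu^{\mathcal{R}(\theta)}_{a_1,a_2}$ should lie in the image of the convex hull of $(H(\mathcal{R})a_1) + (H(\mathcal{R})a_2)$ under projection to a fundamental domain, by combining the support statement of \Cref{thm:positivity} applied twice with $H(\mathcal{R})$-invariance.

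Third, to establish the positivity step, I would pursue an analytic-continuation strategy in the multiplicity parameter. For the dense subset of $\theta$ corresponding to spherical functions on Riemannian symmetric spaces of Euclidean type (so that $J_a^{\mathcal{R}(\theta)}$ is a genuine spherical function), the product-of-spherical-functions formula is automatically given by convolution of $K$-invariant probability measures, so positivity is immediate. To extend to general nonnegative $\theta\in\theta(\mathcal{R})$, I would write the candidate measure $\mu^{\mathcal{R}(\theta)}_{a_1,a_2}$ as the Dunkl transform (in $a$) of $J_{a_1}^{\mathcal{R}(\theta)} J_{a_2}^{\mathcal{R}(\theta)}$ against the Plancherel weight, verify holomorphic dependence of its moments in $\theta$, and then use a Carlson-type rigidity argument together with uniform tightness from moment bounds to conclude that positivity persists throughout the nonnegative cone.

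The main obstacle is the last step: the positivity of the Dunkl convolution is exactly the content of the conjecture and is genuinely open beyond type $A$ (Rösler) and a handful of special multiplicity values. Rösler's proof for type $A$ crucially exploits the identification with hypergeometric functions of matrix argument, and for the $BC^N$ and $D^N$ root systems an analogous integral representation at generic $\theta$ is unknown. Even granting the symmetric-space cases, pushing the analytic-continuation argument through would require showing that no cancellation develops in the signed Dunkl kernel of \cite{rosler_positivity} when two copies are convolved, and I do not see how to obtain such a bound without a new structural input beyond what \Cref{thm:positivity} provides.
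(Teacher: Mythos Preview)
The statement you are attempting to prove is labeled \textbf{Conjecture} in the paper, not Theorem or Proposition, and the paper does not supply a proof. On the contrary, the surrounding text explicitly treats it as open: the paper notes that \cite{trimeche} produces a signed measure satisfying the integral identity, that \cite{rosler_product} settles only the radially symmetric case, and that the analogous statement for the nonsymmetric kernel is known to be false. Every downstream result that depends on it (\Cref{cor:conjecture_support}, \Cref{cor:product}, \Cref{cor:freeconv}, \Cref{cor:freeconv_rectangular}) carries the hypothesis ``Assume that \Cref{conjecture} is true.'' There is therefore no proof in the paper to compare your proposal against.

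Your own proposal is internally honest about this: the first two steps merely reformulate the conjecture as positivity of the symmetrized Dunkl translation (equivalently, of the Bessel convolution $\delta_{a_1}\ast_{\mathcal{R}(\theta)}\delta_{a_2}$), which is the standard restatement and not a reduction to something easier. Your third step --- interpolating from the discrete set of symmetric-space multiplicities via analytic continuation and a Carlson-type argument --- is where the actual content would have to live, and you correctly flag that it fails: positivity is not a property that propagates under analytic continuation of moments without an additional structural input, and no such input is available for general $\mathcal{R}$ and $\theta$. In short, your proposal does not contain a gap so much as it correctly locates the gap that makes this a conjecture rather than a theorem; there is nothing in the paper that closes it.
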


The measure $\mu_{a_1,a_2}^{\mathcal{R}(\theta)}$ is clearly not unique, by the symmetry of $J_a^{\mathcal{R}(\theta)}(x)$. For simplicity, we may assume that it is symmetric. By \cite{trimeche}, there exists a distribution $\mu^{\mathcal{R}(\theta)}_{a_1,a_2}$ supported over $B(0, \norm{a_1}_2+\norm{a_2}_2)$ such that the equation in \Cref{conjecture} is satisfied. It remains to determine whether the symmetric version of this distribution is nonnegative.

\Cref{conjecture} is known to be true for $A^{N-1}(\theta)$ when $\theta\in\{\frac{1}{2}, 1, 2\}$. The paper \cite{rosler_product} shows that the conjecture holds for additional root systems and multiplicity function as well as in the context of radially symmetric Bessel functions. Moreover, \cite{bessel_convolution} establishes the conjecture for $BC^N(\theta_0,\theta_1)$ when $\theta_0\in \{\frac{1}{2}, 1, 2\}$ and either $\theta_1=\theta_0(p-N+1)$ for an integer $p\geq N$ or $\theta_1 \geq \theta_0 N - \frac{1}{2}$.

In addition to the Schur polynomials, analogues of \Cref{conjecture} have been established for the Hall-Littlewood polynomials, zonal polynomials, quaternionic zonal polynomials, and Schur's $Q$-functions, see \cite{macdonald}. However, the conjecture is false in the context of the nonsymmetric eigenfunctions of the Dunkl operators, see \cite{convolution_maximal}. 

As corollaries of the main results of this paper, we prove the following results about the measures mentioned in \Cref{conjecture} converging in terms of power sums. For the proofs of the corollaries, see \Cref{subsec:conjecture_applications}.

\begin{corollary}
\label{cor:freeconv}
Assume that $\theta\geq 0$ for all $N\geq 2$ and $\lim_{N\rightarrow\infty} \theta N = \infty$. Let $\mu_a$ and $\mu_b$ be Borel probability measures over $\mathbb{R}$ with finite moments. Suppose $a(N),b(N)\in \mathbb{R}^N$ for $N\geq 2$ such that $\delta_{a(N)}(x\mapsto \frac{x}{\theta N})\rightarrow\mu_a$ and $\delta_{b(N)}(x\mapsto \frac{x}{\theta N})\rightarrow\mu_b$ in terms of moments as $N\rightarrow\infty$. Let $\mu$ be the free convolution of $\mu_a$ and $\mu_b$. 

Furthermore, define $\mu_a^-$ and $\mu_b^-$ by $\mu_a^-(B)\triangleq \mu_a(-B)$ and $\mu_b^-(B)\triangleq\mu_b(-B)$, respectively, for all open subsets $B$ of $\mathbb{R}$. Let $\tilde{\mu}$ be the free convolution of $\frac{1}{2}(\mu_a+\mu_a^-)$ and $\frac{1}{2}(\mu_b+\mu_b^-)$.

\begin{enumerate}
\item[(A)] Assume that for all $N\geq 2$, the measure $\mu_{a(N),\,b(N)}^{A^{N-1}(\theta)}$ satisfies the conditions of \Cref{conjecture}. Then, $\mu_{a(N),\,b(N)}^{A^{N-1}(\theta)}(x\mapsto \frac{x}{\theta N})$ converges to $\mu$ as $N\rightarrow\infty$ in terms of power sums. 
\item[(B)] Assume that for all $N\geq 2$, the measure $\mu_{a(N),\,b(N)}^{D^N(\theta)}$ satisfies the conditions of \Cref{conjecture}. Then, $\frac{1}{2}\mu_{a(N),\,b(N)}^{D^N(\theta)}(x\mapsto \frac{x}{\theta N}) + \frac{1}{2}\mu_{a(N),\,b(N)}^{D^N(\theta)}(x\mapsto -\frac{x}{\theta N})$ converges to $\tilde{\mu}$ as $N\rightarrow\infty$ in terms of power sums. 
\end{enumerate}
\end{corollary}

The free convolution is introduced in \cite{voiculescu1} and is defined for pairs of probability measures with finite variance in \cite{maassen} as well as for all pairs of probability measures in \cite{free_convolution_general}. However, the results of this paper require $\mu_a$ and $\mu_b$ to have finite moments. For a discussion of the concepts of free probability, see \cite{nica_speicher_2006}.

We can prove a weaker version of part (A) of \Cref{cor:freeconv} when $\theta\in\{\frac{1}{2},1,2\}$ using the following argument. By \cite{crystalrandommat}*{Proposition 2.3}, the measure $\mu_{a(N),\,b(N)}^{A^{N-1}(\theta)}$ is given by the spectrum of the addition of two random matrices, one with spectrum given by $a(N)$ and the other with spectrum given by $b(N)$. Moreover, when $\theta=\frac{1}{2}$, $1$, and $2$, the matrix is uniformly random and self-adjoint with real, complex, and quaternion entries, respectively. Then, by \cite{voiculescu2}, see also \cite{matrix}*{Theorem 1.1}, we can deduce that the measure weakly converges to the free convolution of $\mu_a$ and $\mu_b$. However, this does not prove convergence in terms of power sums.

For the type BC root system, we deduce convergence to the rectangular free convolution introduced in \cite{rectangular_free_convolution}. 

\begin{corollary}
\label{cor:freeconv_rectangular}
Assume that $\theta_0, \theta_1\geq 0$ for all $N\geq 2$, $\lim_{N\rightarrow\infty} \theta_0 N = \infty$, and $\lim_{N\rightarrow\infty}\frac{\theta_1}{\theta_0 N}\\ = c\in\mathbb{R}_{\geq 0}$. Let $\mu_a$ and $\mu_b$ be Borel probability measures over $\mathbb{R}$ with finite moments. Suppose $a(N),b(N)\in \mathbb{R}^N$ for $N\geq 2$ such that $\delta_{a(N)}(x\mapsto\frac{x}{\theta_0 N})\rightarrow\mu_a$ and $\delta_{b(N)}(x\mapsto\frac{x}{\theta_0N})\rightarrow\mu_b$ in terms of moments as $N\rightarrow\infty$. 

Furthermore, define $\mu_a^-$ and $\mu_b^-$ by $\mu_a^-(B)\triangleq \mu_a(-B)$ and $\mu_b^-(B)\triangleq\mu_b(-B)$, respectively, for all open subsets $B$ of $\mathbb{R}$. Let $\mu$ be the rectangular free convolution with $\lambda$ set as $\frac{1}{1+c}$ of $\frac{1}{2}(\mu_a+\mu_a^-)$ and $\frac{1}{2}(\mu_b+\mu_b^-)$ as defined in \cite{rectangular_free_convolution}*{Proposition-Definition 2.1}. 

Assume that for all $N\geq 2$, the measure $\mu_{a(N),\,b(N)}^{BC^N(\theta_0,\theta_1)}$ satisfies the conditions of \Cref{conjecture}. Then, $\frac{1}{2}\mu_{a(N),\,b(N)}^{BC^N(\theta_0,\theta_1)}(x\mapsto \frac{x}{\theta_0 N}) + \frac{1}{2}\mu_{a(N),\,b(N)}^{BC^N(\theta_0,\theta_1)}(x\mapsto -\frac{x}{\theta_0 N})$ converges to $\mu$ as $N\rightarrow\infty$ in terms of power sums.
\end{corollary}

We also note that analogues of part (A) of \Cref{cor:freeconv} and \Cref{cor:freeconv_rectangular} have been established in the $\theta N \rightarrow c\geq 0$ and $\theta_0 N\rightarrow c_0\geq 0, \theta_1\rightarrow c_1\geq 0$ regimes, respectively, see \cites{matrix,rectangularmatrix}. We can similarly establish the analogue of part (B) of \Cref{cor:freeconv} in the $\theta N \rightarrow c\geq 0$ regime; its statement is almost the same as that of the analogue of part (A) of the corollary. As expected, we can recover the free convolution from the convolution for the $\theta N \rightarrow c\geq 0$ regime by taking $c\rightarrow\infty$, see \cite{matrix}*{Theorem 8.7}.

Furthermore, we discuss generalizations of \Cref{cor:freeconv,cor:freeconv_rectangular} in \Cref{subsec:expdecay} when $a(N)$ and $b(N)$ are sampled from exponentially decaying measures rather than chosen deterministically. In particular, we use this framework to analyze the DBM with random initial values in \Cref{example:dbm}. In this case, $a(N)$ would be sampled from the Hermite $\theta$-ensemble while $b(N)$ would be sampled from the initial distribution of the DBM.

A difference between the statements of these corollaries and those of the results of \cite{representations} arises from the fact that it is unknown whether the measure $\mu_{a,b}^{\mathcal{\mathcal{R}(\theta)}}$ from \Cref{conjecture} is nonnegative. Due to this, we must assume that such a measure exists in the corollaries.

In addition to \cite{representations}, the papers \cites{llnclt,jackgenfunc} discuss the convergence of the measure formed by the Littlewood-Richardson coefficients for the product of two Jack generating functions in the low and high temperature regimes. Similarly to the setting of Bessel functions, it is a conjecture that these coefficients are nonnegative for all values of the Jack parameter $\alpha\triangleq \theta^{-1}$, where Schur polynomials correspond to $\alpha=1$.

\subsection{Uniform convergence of the Bessel functions}

For a fixed value of $\lambda\in\Gamma$, we can compute the asymptotics of the coefficients of $p_\lambda(x)$ in the Taylor series expansions of $J_a^{A^{N-1}(\theta)}(x)$, $J_a^{BC^N(\theta_0,\theta_1)}(x)$, and $J_a^{D^N(\theta)}(x)$, which are homogeneous polynomials of degree $|\lambda|$ in $a\in\mathbb{C}^N$. These computations are included in \Cref{sec:bessel_coeff}. 

Assume that the sequence $\{a(N)\}_{N\geq 2}$ satisfies the property that $\lim_{N\rightarrow\infty} \frac{\sum_{i=1}^N a(N)_i^k}{N^k}$ exists for all $k\in\mathbb{N}$, where $a(N)\in\mathbb{C}^N$ for all $N\geq 2$. Such sequences are also referred to as Vershik-Kerov sequences and have been studied in \cites{assiotis,rank_infinity}. In the case where $\theta\in\mathbb{C}^\times$ is fixed or $\theta_0\in\mathbb{C}^\times$ is fixed and $\lim_{N\rightarrow\infty} \frac{\theta_1}{\theta_0 N}=c\in\mathbb{C}\backslash\{-1\}$, we can compute the asymptotic coefficients of $J_{a(N)}^{A^{N-1}(\theta)}(x)$ and $J_{a(N)}^{BC^N(\theta_0,\theta_1)}(x)$ and the asymptotic coefficients of the terms of $J_{a(N)}^{D^N(\theta)}(x)$ with all even degrees as $N\rightarrow\infty$. For the type BC and D Bessel functions, we only require $\lim_{N\rightarrow\infty} \frac{\sum_{i=1}^N a(N)_i^k}{N^k}$ to converge for all even $k\in\mathbb{N}$. See Theorems \ref{thm:vk_a}, \ref{thm:vk_bc}, and \ref{thm:vk_d} for these computations.

These computations recover the results of \cite{assiotis} for the type A Bessel function and \cite{rank_infinity} for the type A and BC Bessel functions. A similar setting is considered for Jack symmetric polynomials in \cite{okounkov_olshanki}.

If we are given that $\lim_{N\rightarrow\infty} \frac{e(a(N))}{\prod_{i=1}^N (1+2(i-1)\theta)}$ converges and $\lim_{N\rightarrow\infty} \frac{\sum_{i=1}^N a(N)_i^{2k}}{ N^{2k}}$ converges for all $k\in\mathbb{N}$, then we can compute the asymptotic coefficients of the terms of $J_{a(N)}^{D^N(\theta)}(x)$ with all odd degrees as $N\rightarrow\infty$. See \Cref{thm:vk_d} for these computations.

Furthermore, in the case where $\lim_{N\rightarrow\infty} \theta N = c\in\mathbb{C}$ or $\lim_{N\rightarrow\infty} \theta_0 N = c_0\in\mathbb{C}$ and $\lim_{N\rightarrow\infty} \theta_1 = c_1\in\mathbb{C}$, we can compute the asymptotic coefficients when $\lim_{N\rightarrow\infty} \frac{\sum_{i=1}^N a(N)_i^k}{N}$ exists for all $k\in\mathbb{N}$. This setting is more general than that of the Vershik-Kerov sequences although $\theta$ is not fixed. Additionally, we require that $c_0$ is not a negative integer and $2c_0+2c_1$ is not a negative odd integer.

When $|\theta N|\rightarrow\infty$ and $\lim_{N\rightarrow\infty} \frac{\sum_{i=1}^N a(N)_i^k}{N(\theta N)^k}$ exists for all $k\in\mathbb{N}$, the coefficients will no longer converge after normalizing by a single factor; for example, in part (A) of \Cref{thm:main}, $c_\lambda(N)$ has order $(\theta N)^{\ell(\lambda)}$ for $\lambda\in\Gamma$. In this case, we can still approximate the coefficients.

However, even if the coefficients of the sequence of Bessel functions converge, we have not yet determined that the sequence of functions uniformly converges. In order to prove uniform convergence over compact subsets of an open and simply connected domain, we follow the argument of \cite{rank_infinity} and first prove that the Bessel functions are uniformly bounded and then apply Montel's theorem, see \Cref{subsec:uniformconverge,subsec:vk}.

To prove that the Bessel functions are bounded, we assume that $\theta,\theta_0,\theta_1\in\mathbb{R}_{\geq 0}$ so that we can apply \Cref{thm:positivity}. An interesting direction for future research is to generalize these arguments to whenever $\theta,\theta_0,\theta_1\in\mathbb{C}$ such that the corresponding Bessel functions exist.

\subsection[A graded ring of operators]{Applying a graded ring of operators to a graded vector field}

In Sections \ref{sec:intro_graded}, \ref{sec:invertible}, and \ref{sec:bijection}, we discuss the applications of a graded ring of operators to a graded vector field. The paper \cite{dunkl_singular_poly} discusses the applications of operators to a graded vector field. We extend this notion by considering a graded ring of operators. The results that we obtain are relevant to Dunkl operators and in particular the Dunkl bilinear form, see \Cref{example:dunkl}; for the definition of the Dunkl bilinear form, see \Cref{subsec:dunkl}. 

We also study the notion of invertible operators, which is the focus of \cite{dunkl_singular_poly}; see \Cref{def:invertible} for the definition of invertibility. In particular, we prove \Cref{thm:equivalences}, which establishes many equivalent conditions for the invertibility of an operator. By the equivalence of conditions (a) and (i) of the theorem, for a fixed multiplicity function, the Bessel function exists if and only if the Dunkl operator is invertible. Due to this, the content of \Cref{sec:invertible} is useful for analyzing the Bessel functions. In particular, in \Cref{sec:bessel_coeff}, we obtain the asymptotic coefficients of the Bessel functions by computing the asymptotics of the inverses of the matrices that store the values of the Dunkl bilinear form. Moreover, in \Cref{sec:bijection}, we justify the existence of invertible operators after certain conditions are satisfied by presenting a bijection between them and sequences of invertible matrices.

\subsection[Non-exponential holomorphic functions]{Equivalence conditions for non-exponential holomorphic functions}

An exponential formal power series is a formal power series $\exp(F_N)$ for some $F_N\in\mathbb{C}[[x_1,\ldots,x_N\\]]$. Analyzing exponential formal power series is useful since the Bessel function $J_a^{\mathcal{R}(\theta)}(x)$ equals one when evaluated at $x=0$. Due to this, the Bessel generating function $\E_{a\sim\mu}[J_a^{\mathcal{R}(\theta)}(x)]$ can be expressed as an exponential formal power series in a neighborhood of the origin that it converges in, assuming that the neighborhood exists.

We have established equivalence conditions for exponential formal power series. However, it is also straightforward to deduce analogous results for non-exponential formal power series, which we describe in the following list.
\begin{itemize}
\item In \Cref{thm:equivalence_a1,thm:equivalence_a2}, we set $F_N(x_1,\ldots,x_N)=\sum_{\lambda\in\Gamma_N} c_\lambda(N)p_\lambda$ and replace $\exp\left(\sum_{\gamma\in\Gamma}c_\gamma p_\gamma\right)$ with $\sum_{\gamma\in\Gamma}c_\gamma p_\gamma$.
\item In \Cref{thm:equivalence_bc1,thm:equivalence_bc2}, we set $F_N(x_1,\ldots,x_N)=\sum_{\lambda\in\evenN} c_\lambda(N)p_\lambda$ and replace $\exp\left(\sum_{\gamma\in\even}c_\gamma p_\gamma\right)$ with $\sum_{\gamma\in\even}c_\gamma p_\gamma$.
\item In \Cref{thm:equivalence_odd1,thm:equivalence_odd2}, we set $F_N(x_1,\ldots,x_N)=\sum_{\lambda\in\evenN} (c_\lambda(N)+ed_\lambda(N))p_\lambda$, replace $\exp\left(\sum_{\gamma\in\even}c_\gamma p_\gamma\right)$ with $\sum_{\gamma\in\even}c_\gamma p_\gamma$, and replace $\exp\left(\sum_{\gamma\in\even}d_\gamma p_\gamma\right)$ with $\sum_{\gamma\in\even}d_\gamma p_\gamma$.
\end{itemize}

\subsection{Paper organization}
In \Cref{sec:def}, we define the Dunkl operators and notation regarding partitions and noncrossing partitions. In \Cref{sec:intro_graded}, we introduce the setting of applying a graded ring of operators to a graded vector field and in \Cref{sec:invertible}, we define the notion of an invertible graded ring of operators. Afterwards, in \Cref{sec:bijection}, we discuss a representation of invertible graded rings of operators as sequences of invertible matrices and connect the framework introduced in \Cref{sec:intro_graded} to the Dunkl operators. In Sections \ref{sec:a}, \ref{sec:bc}, and \ref{sec:d}, we discuss the leading order terms of the Dunkl bilinear form and prove \Cref{thm:main} for the $A^{N-1}$, $BC^N$, and $D^N$ root systems, respectively. Following this, in \Cref{sec:bessel_coeff}, we determine the asymptotics of the coefficients of the terms of the Bessel functions that are homogeneous with a fixed degree. In \Cref{sec:applications}, we discuss applications of the results of this paper. Following this, in \Cref{sec:combinatorics}, we present combinatorial expressions for the Dunkl bilinear form.

\subsection{Acknowledgments}

The author would like to thank Vadim Gorin for giving comments on the paper.

\section{Basic definitions and notation}
\label{sec:def}

\subsection{Dunkl operators}
\label{subsec:dunkl}

Suppose $N\geq 1$ and that $\mathcal{R}\subset\mathbb{R}^N$ is a finite root system. For $\alpha\in\mathcal{R}$, we let $r_\alpha$ denote the reflection $r_\alpha: x\mapsto x-2\product{x,\alpha}\norm{\alpha}_2^{-2}\alpha$. Let $H(\mathcal{R})$ be the finite reflection group generated by $r_\alpha$ for $\alpha\in\mathcal{R}$. For a function $f$ over $\mathbb{C}^N$, we define the action of $h\in H(\mathcal{R})$ over $f$ by $hf(x)\triangleq f(hx)$.

Let $\mathcal{R}^+$ be a set of positive roots in $\mathcal{R}$. Furthermore, let $\theta(\mathcal{R})$ be the set of multiplicity functions $\theta:\mathcal{R}\rightarrow \mathbb{C}$ such that $\theta(\alpha_1)=\theta(\alpha_2)$ for all $\alpha_1,\alpha_2\in\mathcal{R}$ such that $r_{\alpha_1}$ and $r_{\alpha_2}$ are conjugates in $H$. When we write $\mathcal{R}$, we assume that $\mathcal{R}$ is a finite root system and when we write $\mathcal{R}(\theta)$ to denote a root system and a multiplicity function, it is implicit that $\theta\in \theta(\mathcal{R})$. Unless stated otherwise, $N$ is a positive integer and $\mathcal{R}\subset\mathbb{R}^N$.

Next, we define the \textit{Dunkl operator} introduced in \cite{dunkloperators}. For $u\in\mathbb{R}^N$, define the operator $\mathcal{D}_u(\mathcal{R}(\theta))$ over the ring $\mathbb{C}[[x_1,\ldots,x_N]]$ of complex formal power series with variables $x_1,\ldots,x_N$ by $\mathcal{D}_u(\mathcal{R}(\theta)): f\mapsto \product{\nabla_\theta f, u}$, where 
\[
\nabla_\theta f (x)\triangleq \nabla f(x) + \sum_{\alpha\in\mathcal{R}^+} \theta(\alpha)\frac{f(x)-f(r_\alpha x)}{\product{x,\alpha}}\alpha.
\]
The definition of the Dunkl operator does not depend on the choice of $\mathcal{R}^+$, see \cite{dunklbound}*{Remark 2.4}. Additionally, it is well known that the Dunkl operators are commutative, which is stated in the following lemma.

\begin{lemma}[\cite{dunkloperators}]
\label{lemma:commutative}
For $u_1,u_2\in\mathbb{R}^N$, $\mathcal{D}(\mathcal{R}(\theta))_{u_1}\mathcal{D}(\mathcal{R}(\theta))_{u_2}=\mathcal{D}(\mathcal{R}(\theta))_{u_2}\mathcal{D}(\mathcal{R}(\theta))_{u_1}$ . 
\end{lemma}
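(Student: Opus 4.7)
The plan is to decompose $\mathcal{D}_u(\mathcal{R}(\theta)) = \partial_u + R_u$, where $R_u$ is the ``residual'' part $R_u f(x) = \sum_{\alpha\in\mathcal{R}^+}\theta(\alpha)\langle u,\alpha\rangle L_\alpha f(x)$ with $L_\alpha f(x) \triangleq (f(x)-f(r_\alpha x))/\langle x,\alpha\rangle$ the divided difference associated to $\alpha$. Since $[\partial_{u_1},\partial_{u_2}]=0$, the commutator splits as
\[
[\mathcal{D}_{u_1}(\mathcal{R}(\theta)),\mathcal{D}_{u_2}(\mathcal{R}(\theta))] = [\partial_{u_1},R_{u_2}] - [\partial_{u_2},R_{u_1}] + [R_{u_1},R_{u_2}],
\]
so it suffices to kill each of the three brackets on the right.

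First I would compute $[\partial_u,L_\alpha]$ using the chain-rule identity $\partial_u(f\circ r_\alpha) = (\partial_{r_\alpha u} f)\circ r_\alpha$ and the quotient rule on $1/\langle x,\alpha\rangle$. Since $r_\alpha u - u = -2\langle u,\alpha\rangle\|\alpha\|^{-2}\alpha$, the resulting expression factors as $\langle u,\alpha\rangle$ times an operator that does \emph{not} depend on $u$. Consequently, every summand of $[\partial_{u_1},R_{u_2}]$ carries the symmetric scalar factor $\theta(\alpha)\langle u_1,\alpha\rangle\langle u_2,\alpha\rangle$, and the antisymmetrization $[\partial_{u_1},R_{u_2}]-[\partial_{u_2},R_{u_1}]$ vanishes term-by-term. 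This handles the mixed brackets and reduces the problem to showing $[R_{u_1},R_{u_2}]=0$.

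The main step is therefore to verify $[R_{u_1},R_{u_2}]=0$. Expanding as a double sum over $(\alpha,\beta)\in\mathcal{R}^+\times\mathcal{R}^+$, I would compute $L_\alpha L_\beta$ by pushing the inner divided difference past the outer one using the key identity $L_\alpha\circ r_\beta = r_\beta\circ L_{r_\beta\alpha}$, which holds because $\langle r_\beta x,\alpha\rangle = \langle x,r_\beta\alpha\rangle$. This lets me rewrite the cross terms in terms of roots of the form $r_\beta\alpha$. Combining this with the fact that $r_\beta$ permutes $\mathcal{R}$ (up to sign) and that the hypothesis $\theta\in\theta(\mathcal{R})$ forces $\theta$ to be constant on $H(\mathcal{R})$-orbits of reflections, I can reindex the double sum by these orbits. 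After collecting, every contribution pairs off with a partner that flips under $u_1\leftrightarrow u_2$, so the antisymmetrization produces total cancellation.

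The main obstacle is the bookkeeping in this final step: one must simultaneously track the ``diagonal'' contribution from $\alpha=\beta$ and the ``off-diagonal'' pairing of $(\alpha,\beta)$ with its reflected partner $(r_\beta\alpha,\beta)$ (equivalently, with $(r_\alpha\beta,\alpha)$), and then invoke the conjugacy invariance of $\theta$ to equate the coefficients of paired terms. Once this bookkeeping is done cleanly, the cancellation is transparent and commutativity follows; this is the content of the original argument in \cite{dunkloperators}, which can simply be cited.
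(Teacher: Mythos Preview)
The paper does not give its own proof of this lemma; it merely states the result with a citation to \cite{dunkloperators} and moves on. Your proposal goes further and sketches the structure of Dunkl's original argument, and the sketch is correct: the decomposition $\mathcal{D}_u=\partial_u+R_u$, the factorization $[\partial_u,L_\alpha]=\langle u,\alpha\rangle\cdot(\text{operator independent of }u)$ that kills the antisymmetrized mixed bracket, and the root-system reindexing (via $L_\alpha\circ r_\beta=r_\beta\circ L_{r_\beta\alpha}$ together with the $H(\mathcal{R})$-invariance of $\theta$) that makes $[R_{u_1},R_{u_2}]$ vanish. Since you ultimately defer to \cite{dunkloperators} for the delicate bookkeeping in that last step, your write-up is strictly more detailed than the paper's treatment but terminates at the same citation.
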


For $1\leq i\leq N$, define $\mathcal{D}_i(\mathcal{R}(\theta))\triangleq \mathcal{D}_{[\mathbf{1}\{i=j\}]_{1\leq j\leq N}^T}(\mathcal{R}(\theta))$. Furthermore, for $f\in\mathbb{C}[x_1,\ldots,x_N]$, we define $\mathcal{D}(\mathcal{R}(\theta))(f)$ to be the operator $f(\mathcal{D}_1,\ldots,\mathcal{D}_N)$; note that this operator is well defined by \Cref{lemma:commutative}. The following lemma is also well known.

\begin{lemma}[\cite{dunkloperators}]
\label{lemma:equivariance}
Suppose $h\in H(\mathcal{R})$. Then, for all $f\in\mathbb{C}[x_1,\ldots,x_N]$, $h\mathcal{D}(\mathcal{R}(\theta))\\(f)h^{-1}=\mathcal{D}(\mathcal{R}(\theta))(hf)$.
\end{lemma}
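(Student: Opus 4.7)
The plan is to reduce the statement to its degree-one case and then verify that case by unfolding the definition of $\mathcal{D}_u(\mathcal{R}(\theta))$ and using orthogonality of $h \in H(\mathcal{R})$. Concretely, I first aim to prove the key identity $h\,\mathcal{D}_u(\mathcal{R}(\theta))\,h^{-1} = \mathcal{D}_{h^{-1}u}(\mathcal{R}(\theta))$ for every $u \in \mathbb{R}^N$. Once this is in hand, for a monomial $f(x) = x_1^{\alpha_1}\cdots x_N^{\alpha_N}$, the polynomial $(hf)(x) = \prod_i (\sum_j h_{ij} x_j)^{\alpha_i}$ is sent by $\mathcal{D}(\mathcal{R}(\theta))$ to $\prod_i \mathcal{D}_{h^{-1}e_i}^{\alpha_i}$, which by the key identity equals $\prod_i (h\,\mathcal{D}_i\,h^{-1})^{\alpha_i} = h\,\mathcal{D}_1^{\alpha_1}\cdots\mathcal{D}_N^{\alpha_N}\,h^{-1} = h\,\mathcal{D}(\mathcal{R}(\theta))(f)\,h^{-1}$; the commutativity assertion of \Cref{lemma:commutative} is what makes the middle equality unambiguous. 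Extending linearly then recovers the statement for an arbitrary $f \in \mathbb{C}[x_1,\ldots,x_N]$.

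For the key identity, I would pick $g \in \mathbb{C}[[x_1,\ldots,x_N]]$ arbitrary and compute $(h\,\mathcal{D}_u\,h^{-1}g)(x) = (\mathcal{D}_u h^{-1}g)(hx)$ directly from the definition of $\nabla_\theta$. The chain rule applied to $(h^{-1}g)(y) = g(h^{-1}y)$, together with the orthogonality $(h^{-1})^T = h$, gives $\nabla(h^{-1}g)(hx) = h\,\nabla g(x)$, so the gradient contribution becomes $\langle h\,\nabla g(x),u\rangle = \langle \nabla g(x), h^{-1}u\rangle$. In the reflection sum, substituting $x \mapsto hx$ and using the conjugation identity $h^{-1}r_\alpha h = r_{h^{-1}\alpha}$ rewrites the $\alpha$-summand as $\theta(\alpha)\,\frac{g(x) - g(r_{h^{-1}\alpha}x)}{\langle x, h^{-1}\alpha\rangle}\,\langle h^{-1}\alpha, h^{-1}u\rangle$, after using preservation of the inner product to move $h$ across both the denominator and the coefficient. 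Reindexing by $\beta = h^{-1}\alpha$ and applying the multiplicity invariance $\theta(h\beta) = \theta(\beta)$ (which holds because $r_\beta$ and $r_{h\beta}$ are $H(\mathcal{R})$-conjugate, so belong to the same orbit under the definition of $\theta(\mathcal{R})$) puts the sum into the form of the reflection part of $\mathcal{D}_{h^{-1}u}(\mathcal{R}(\theta))(g)(x)$, but with the summation over the positive system $h^{-1}\mathcal{R}^+$ rather than $\mathcal{R}^+$. The independence of the Dunkl operator on the choice of positive roots, noted in the excerpt, then closes the argument.

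The main subtlety, rather than an obstacle, is this reindexing step: one must verify that $h^{-1}\mathcal{R}^+$ is itself a valid choice of positive roots and that the re-summed expression really defines the same operator as the one computed with $\mathcal{R}^+$. Beyond this, the proof relies only on the orthogonality of elements of $H(\mathcal{R})$, the conjugation invariance of $\theta$, and the commutativity assertion of \Cref{lemma:commutative}; no new machinery is required, and the passage from degree-one polynomials to arbitrary elements of $\mathbb{C}[x_1,\ldots,x_N]$ is purely algebraic once the degree-one case is established.
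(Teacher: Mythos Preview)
The paper does not give its own proof of this lemma; it is quoted from \cite{dunkloperators} as a known result. Your argument is correct and is the standard one: establish the degree-one identity $h\,\mathcal{D}_u\,h^{-1}=\mathcal{D}_{h^{-1}u}$ by direct computation (using orthogonality of $h$, the conjugation formula $h^{-1}r_\alpha h=r_{h^{-1}\alpha}$, $H(\mathcal{R})$-invariance of $\theta$, and independence of the Dunkl operator from the choice of positive system), and then extend to arbitrary polynomials multiplicatively via \Cref{lemma:commutative}.
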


By the previous lemma, we have that for all $h\in H(\mathcal{R})$, $f\in\mathbb{C}[x_1,\ldots,x_N]$, and $g\in\mathbb{C}[[x_1,\ldots,x_N]]$, $h\mathcal{D}(\mathcal{R}(\theta))(f)g = \mathcal{D}(\mathcal{R}(\theta))(hf)hg$. We use this result later in the paper, for example to prove that a function exhibits symmetries after applications of Dunkl operators.

Furthermore, we have that $\mathcal{D}$ defines a bilinear form which is introduced in \cite{dunkl_integralkernel}. For $f,g\in\mathbb{C}[x_1,\ldots,x_N]$, the \textit{Dunkl bilinear form} is defined as
\[
[f,g]_{\mathcal{R}(\theta)} \triangleq [1]\mathcal{D}(\mathcal{R}(\theta))(f)g.
\]

\begin{theorem}[\cite{dunkl_integralkernel}]
For all $f,g\in\mathbb{C}[x_1,\ldots,x_N]$, $[f,g]_{\mathcal{R}(\theta)} = [g,f]_{\mathcal{R}(\theta)}$. 
\end{theorem}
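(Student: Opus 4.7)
The plan is to reduce by bilinearity of $[\cdot,\cdot]_{\mathcal{R}(\theta)}$ in both arguments to monomials $f = x^\alpha$ and $g = x^\beta$, observe that both sides vanish unless $|\alpha| = |\beta|$ (since each $\mathcal{D}_i$ lowers polynomial degree by exactly one, so $f(\mathcal{D})g$ is homogeneous of degree $|\beta|-|\alpha|$), and then induct on the common degree $k = |\alpha| = |\beta|$. The base case $k=0$ gives $[1,1]_{\mathcal{R}(\theta)} = 1$ on both sides.

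The engine of the induction is the pair of ``adjoint'' relations
\[
\text{(R1)}\quad [x_i f, g]_{\mathcal{R}(\theta)} = [f, \mathcal{D}_i g]_{\mathcal{R}(\theta)}, \qquad \text{(R2)}\quad [\mathcal{D}_i f, g]_{\mathcal{R}(\theta)} = [f, x_i g]_{\mathcal{R}(\theta)}.
\]
Identity (R1) is immediate from \Cref{lemma:commutative}: passing the polynomial identity $(x_i f)(x) = x_i\, f(x)$ through the substitution $x \mapsto \mathcal{D}$ yields $(x_i f)(\mathcal{D}) = \mathcal{D}_i\, f(\mathcal{D}) = f(\mathcal{D})\,\mathcal{D}_i$, whence $[x_i f, g] = [1]\, f(\mathcal{D})\mathcal{D}_i g = [f, \mathcal{D}_i g]$. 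Granted both relations, the inductive step picks $i$ with $\alpha_i \geq 1$ and computes
\[
[x^\alpha, x^\beta] = [x_i x^{\alpha - e_i}, x^\beta] \stackrel{\text{(R1)}}{=} [x^{\alpha - e_i}, \mathcal{D}_i x^\beta] \stackrel{\text{IH}}{=} [\mathcal{D}_i x^\beta, x^{\alpha - e_i}] \stackrel{\text{(R2)}}{=} [x^\beta, x_i x^{\alpha - e_i}] = [x^\beta, x^\alpha],
\]
where the induction hypothesis applies because both $x^{\alpha - e_i}$ and $\mathcal{D}_i x^\beta$ are homogeneous of degree $k-1$, and symmetry on monomials extends to polynomials by bilinearity.

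The main obstacle is relation (R2), which does not follow from commutativity of the Dunkl operators alone. To establish it I would first compute the commutator on polynomials directly from the definition of $\mathcal{D}_i$,
\[
[\mathcal{D}_i, x_j]\, p = \delta_{ij}\, p + 2 \sum_{\gamma \in \mathcal{R}^+} \theta(\gamma)\, \frac{\gamma_i \gamma_j}{\|\gamma\|_2^2}\, r_\gamma p,
\]
by expanding the difference quotient using $(r_\gamma x)_j = x_j - 2\langle x, \gamma\rangle \|\gamma\|_2^{-2}\, \gamma_j$ and collecting terms. The essential feature is that this commutator is \emph{symmetric in $i$ and $j$}. I then prove (R2) by induction on $\deg f$: the cases $\deg f \in \{0,1\}$ are verified directly, and for the step, writing $f = x_j f'$ gives
\[
[\mathcal{D}_i f, g]_{\mathcal{R}(\theta)} = \delta_{ij}\, [f', g] + 2 \sum_{\gamma \in \mathcal{R}^+} \theta(\gamma)\, \frac{\gamma_i \gamma_j}{\|\gamma\|_2^2}\, [r_\gamma f', g] + [x_j \mathcal{D}_i f', g].
\]
The last term is handled by (R1) followed by the induction hypothesis at $\deg f'$, while the reflection terms are rewritten via the $H(\mathcal{R})$-invariance $[hp, hq]_{\mathcal{R}(\theta)} = [p,q]_{\mathcal{R}(\theta)}$ of the form (a consequence of \Cref{lemma:equivariance}, since $[1]\,hq = q(0)$ for $h\in H(\mathcal{R})$), which yields $[r_\gamma f', g] = [f', r_\gamma g]$. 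Matching this expansion against the analogous expansion of $[f, x_i g]_{\mathcal{R}(\theta)} = [x_j f', x_i g]_{\mathcal{R}(\theta)} \stackrel{\text{(R1)}}{=} [f', \mathcal{D}_j(x_i g)]_{\mathcal{R}(\theta)}$, using the commutator formula once more for $\mathcal{D}_j(x_i g)$, gives term-by-term equality and closes the induction for (R2).
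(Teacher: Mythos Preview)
The paper does not supply a proof of this theorem; it is quoted as a result of Dunkl \cite{dunkl_integralkernel}. Your argument is correct and is in fact the classical one from that reference: the symmetry of the commutator $[\mathcal{D}_i,x_j]=\delta_{ij}+2\sum_{\gamma\in\mathcal{R}^+}\theta(\gamma)\,\|\gamma\|_2^{-2}\gamma_i\gamma_j\,r_\gamma$ in $(i,j)$, together with the $H(\mathcal{R})$-invariance $[hp,hq]_{\mathcal{R}(\theta)}=[p,q]_{\mathcal{R}(\theta)}$ coming from \Cref{lemma:equivariance}, is exactly what drives the induction for (R2), and (R1)+(R2) then give the symmetry of the form. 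One cosmetic point: in the inductive step for (R2) the factorisation $f=x_jf'$ is only available after reducing by linearity to $f$ a monomial, which you should state explicitly; once that reduction is made, the term-by-term matching you describe goes through without issue.
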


Due to the symmetry of $[\cdot,\cdot]_{\mathcal{R}(\theta)}$, it is straightforward to define and compute the values of $[f,g]_{\mathcal{R}(\theta)}$ and $[g,f]_{\mathcal{R}(\theta)}$ when $f\in\mathbb{C}[x_1,\ldots,x_N]$ and $g\in\mathbb{C}[[x_1,\ldots,x_N]]$. When $f,g\in\mathbb{C}[[x_1,\ldots,x_N]]$, the value of $[f,g]_{\mathcal{R}(\theta)}$ does not necessarily converge.

\begin{definition}
\label{def:dunkl_invertible}
The function $\mathcal{D}(\mathcal{R}(\theta))$ is \textit{invertible} if for all $k\geq 1$, there does not exist $f\in\mathbb{C}[x_1,\ldots,x_N]$ such that $f$ is homogeneous of degree $k$ and $[f,g]_{\mathcal{R}(\theta)}=0$ for all $g\in\mathbb{C}[x_1,\ldots,x_N]$ that is homogeneous of degree $k$. If $\mathcal{D}(\mathcal{R}(\theta))$ is not invertible, then it is \textit{singular}. Let $\Theta(\mathcal{R})$ be the set of $\theta\in\theta(\mathcal{R})$ such that $\mathcal{D}(\mathcal{R}(\theta))$ is invertible.
\end{definition}

Given $\mathcal{R}$, the paper \cite{dunkl_singular_poly} computes all $\theta\in\theta(\mathcal{R})$ such that $\mathcal{D}(\mathcal{R}(\theta))$ is invertible. Note that the statements of the definition of invertibility in this paper and \cite{dunkl_singular_poly} are not the same. However, the definitions are equivalent, see \Cref{thm:equivalences} where we state equivalent conditions for invertibility in the more general setting of applying a graded ring of operators to a graded vector field. 

Moreover, in \Cref{sec:bijection}, we state a bijection between invertible operators and sequences of invertible matrices over the base field. In particular, in \Cref{cor:bijection2}, we show that an invertible operator exists given that the base field commutes with elements of the ring and a sequence of invertible matrices exists.

In this paper, we focus on the asymptotics of $[\cdot,\cdot]_{\mathcal{R}(\theta)}$ for the root systems $A_{N-1}$, $B_N$, $C_N$, and $D_N$ as the number of variables $N$ increases to infinity. We use these asymptotics to determine the asymptotics of the eigenfunctions of $\mathcal{D}(\mathcal{R}(\theta))$, which we define next.

\begin{theorem}[\cite{dunklbessel}]
\label{thm:dunkl_eigenfunction}
Suppose $\theta\in\Theta(\mathcal{R})$. Then, there exists a unique function $E_a^{\mathcal{R}(\theta)}(x)$ that is holomorphic over the domain $\mathbb{C}^N\times\mathbb{C}^N$ for $(a,x)$ and satisfies
\[
\begin{cases}
\mathcal{D}(\mathcal{R}(\theta))(f) E_a^{\mathcal{R}(\theta)}(x) = f(a) E_a^{\mathcal{R}(\theta)}(x) & \forall f\in\mathbb{C}[x_1,\ldots,x_N], \\
E_a^{\mathcal{R}(\theta)}(0)=1.
\end{cases}
\]
Furthermore, $E_a^{\mathcal{R}(\theta)}(x)$ is holomorphic over the domain $\mathbb{C}^N\times \mathbb{C}^N\times \Theta(\mathcal{R})$ for $(a,x,\theta)$.
\end{theorem}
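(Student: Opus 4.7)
The plan is to construct $E_a^{\mathcal{R}(\theta)}(x)$ degree-by-degree as a formal power series in $x$, with the invertibility hypothesis $\theta \in \Theta(\mathcal{R})$ playing the role of making the recursion well-posed. For uniqueness, write any candidate solution as $E_a(x) = \sum_{m \geq 0} E_a^{(m)}(x)$ with $E_a^{(m)}$ homogeneous of degree $m$ in $x$. Since $\mathcal{D}(\mathcal{R}(\theta))(f)$ lowers degree by $\deg f$, extracting the degree-zero component of the eigenvalue equation for $f$ homogeneous of degree $m$ yields
\[
[f, E_a^{(m)}]_{\mathcal{R}(\theta)} = f(a),
\]
which, combined with $E_a^{(0)} = 1$, determines each $E_a^{(m)}$ uniquely by the nondegeneracy of $[\cdot, \cdot]_{\mathcal{R}(\theta)}$ on the homogeneous degree-$m$ subspace, i.e. by $\theta \in \Theta(\mathcal{R})$.

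For existence I would take this characterization as a definition: pick any basis $\{p_i\}$ of the homogeneous degree-$m$ polynomials with dual basis $\{q_i\}$ with respect to the Dunkl bilinear form, and set
\[
E_a^{(m)}(x) = \sum_i p_i(a)\, q_i(x).
\]
This is manifestly a polynomial in both $a$ and $x$. Moreover, since the Gram matrix of $[\cdot, \cdot]_{\mathcal{R}(\theta)}$ in a fixed basis depends polynomially on $\theta$, its inverse (hence the dual basis) depends rationally on $\theta$ with no poles on $\Theta(\mathcal{R})$, so $E_a^{(m)}$ is holomorphic in $\theta$ on $\Theta(\mathcal{R})$. The full eigenvalue equation then follows degree-by-degree: for $f$ and $g$ homogeneous of degrees $r$ and $m$, respectively, \Cref{lemma:commutative} together with the defining property gives
\[
[g, \mathcal{D}(\mathcal{R}(\theta))(f) E_a^{(m+r)}]_{\mathcal{R}(\theta)} = [gf, E_a^{(m+r)}]_{\mathcal{R}(\theta)} = (gf)(a) = [g, f(a) E_a^{(m)}]_{\mathcal{R}(\theta)},
\]
and nondegeneracy forces $\mathcal{D}(\mathcal{R}(\theta))(f) E_a^{(m+r)} = f(a) E_a^{(m)}$, which sums to the claimed identity on $E_a$.

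The main obstacle is showing that the formal series $\sum_{m \geq 0} E_a^{(m)}(x)$ converges locally uniformly on $\mathbb{C}^N \times \mathbb{C}^N \times \Theta(\mathcal{R})$, so that Weierstrass's theorem delivers a jointly holomorphic limit. The target is a bound of the form
\[
|E_a^{(m)}(x)| \leq \frac{C(K)\, R(K)^m}{m!}
\]
uniform on each compact $K \subset \mathbb{C}^N \times \mathbb{C}^N \times \Theta(\mathcal{R})$. A natural avenue is to exploit the first-order recursion $\mathcal{D}_i E_a^{(m+1)} = a_i E_a^{(m)}$ obtained by specializing the eigenvalue equation to $f = x_i$, whose consistency across $i$ is guaranteed by \Cref{lemma:commutative}; inverting this recursion in each degree is exactly what the dual-basis construction above achieves. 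The technical heart of the argument is to control the norm of this inverse as $m \to \infty$, uniformly for $\theta$ in compact subsets of $\Theta(\mathcal{R})$. One could, for instance, use a basis adapted to the Dunkl structure (such as a nonsymmetric Jack-type basis in the classical root systems) to triangularize the Dunkl form and read off the required factorial-type decay from the explicit eigenvalues of the associated Cherednik operators. Once such a uniform estimate is in place, joint holomorphy on $\mathbb{C}^N \times \mathbb{C}^N \times \Theta(\mathcal{R})$ follows at once.
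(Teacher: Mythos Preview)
This theorem is \emph{cited} in the paper (from \cite{dunklbessel}) rather than proved there, so there is no full ``paper's own proof'' to compare against. What the paper does contribute is the abstract framework of Sections~\ref{sec:intro_graded}--\ref{sec:bijection}: \Cref{lemma:eigenvectorformula} and \Cref{cor:eigenvectorformula2} establish existence and uniqueness of the $\Psi$-eigenvector as a \emph{formal} power series, via exactly the mechanism you describe---inverting the Gram matrix $M^{i;L}$ of the Dunkl bilinear form degree by degree. Your dual-basis formula $E_a^{(m)}(x)=\sum_i p_i(a)\,q_i(x)$ is precisely the content of \Cref{lemma:eigenvectorformula}, and your verification of the full eigenvalue relation via commutativity and nondegeneracy is the same argument appearing in that lemma's proof. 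So for the algebraic part your approach and the paper's coincide.

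The genuine gap in your proposal is the one you yourself flag: convergence of $\sum_m E_a^{(m)}(x)$ locally uniformly in $(a,x,\theta)$. The paper does not attempt this either; it simply invokes \cite{dunklbessel}. Your suggested route---diagonalizing the Dunkl form in a nonsymmetric Jack/Cherednik basis and extracting factorial decay from the explicit norms---is indeed how the classical proof goes (this is essentially Opdam's argument, and the paper alludes to the relevant diagonalizations in \Cref{example:dunkl}, citing \cites{okounkov_olshanki_2,baker,baker2}). But turning that into a uniform bound on compact subsets of $\Theta(\mathcal{R})$ requires real work: one needs the explicit product formula for the Dunkl norms of the nonsymmetric Jack polynomials and a careful estimate showing the resulting series is dominated by an exponential. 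Without that step, what you have is the formal-power-series statement only, which is exactly the scope of the paper's own framework.
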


The following lemma contains some properties about the eigenfunction $E_a^{\mathcal{R}(\theta)}(x)$. See \Cref{lemma:besselupper_1} for an upper bound on the symmetric eigenfunction that is similar to part (e) of the lemma.

\begin{lemma}[\cite{dunklbound}]
\label{lemma:dunkl_properties}
Suppose $\theta\in\Theta(\mathcal{R})$.
\begin{enumerate}
\item[(a)] For all $h\in H(\mathcal{R})$, $E_{ha}^{\mathcal{R}(\theta)}(hx)=E_a^{\mathcal{R}(\theta)}(x)$.
\item[(b)] $E_a^{\mathcal{R}(\theta)}(x)=E_x^{\mathcal{R}(\theta)}(a)$.
\item[(c)] Suppose $c\in\mathbb{C}$. Then, $E_a^{\mathcal{R}(\theta)}(cx)=E_{ca}^{\mathcal{R}(\theta)}(x)$.
\item[(d)] $\overline{E_a^{\mathcal{R}(\theta)}(x)} = E_{\overline{a}}^{\mathcal{R}(\overline{\theta})}(\overline{x})$.
\item[(e)] If $\text{Re}(\theta(r))\geq 0$ for all $r\in\mathcal{R}$, then $|E_a^{\mathcal{R}(\theta)}(x)| \leq \sqrt{|H(\mathcal{R})|}\exp(\max_{h\in H(\mathcal{R})} \\\text{Re}(\product{ha, x}))$.
\end{enumerate}
\end{lemma}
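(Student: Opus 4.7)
My plan is to prove each of (a)--(e) by invoking the uniqueness statement in \Cref{thm:dunkl_eigenfunction}: for each claim I construct a candidate function $F(x)$ and verify $\mathcal{D}(\mathcal{R}(\theta))(f)F=f(a)F$ for all $f\in\mathbb{C}[x_1,\ldots,x_N]$ together with $F(0)=1$, which forces $F=E_a^{\mathcal{R}(\theta)}$.

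For (a) I would take $F(x):=E_{ha}^{\mathcal{R}(\theta)}(hx)$, viewed as $h\cdot E_{ha}^{\mathcal{R}(\theta)}$ under the action $(h\cdot g)(x)=g(hx)$. Rewriting \Cref{lemma:equivariance} as $\mathcal{D}(\mathcal{R}(\theta))(f)\circ h=h\circ\mathcal{D}(\mathcal{R}(\theta))(h^{-1}f)$ and using $(h^{-1}f)(ha)=f(a)$ gives $\mathcal{D}(\mathcal{R}(\theta))(f)F=h\,\mathcal{D}(\mathcal{R}(\theta))(h^{-1}f)E_{ha}^{\mathcal{R}(\theta)} = f(a)F$, and $F(0)=1$ is immediate. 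For (c), $F(x):=E_a^{\mathcal{R}(\theta)}(cx)$ works using that for homogeneous $f$ of degree $k$ the operator $\mathcal{D}(\mathcal{R}(\theta))(f)$ lowers $x$-degree by $k$, so $\mathcal{D}(\mathcal{R}(\theta))(f)[g(c\,\cdot)](x)=c^k[\mathcal{D}(\mathcal{R}(\theta))(f)g](cx)$, giving $\mathcal{D}(\mathcal{R}(\theta))(f)F(x)=c^kf(a)E_a^{\mathcal{R}(\theta)}(cx)=f(ca)F(x)$. For (d), a direct computation on each summand of $\mathcal{D}_u$ shows that complex conjugation of coefficients and of the input variable intertwines $\mathcal{D}_u(\mathcal{R}(\theta))$ with $\mathcal{D}_u(\mathcal{R}(\bar\theta))$; hence $F(x):=\overline{E_{\bar a}^{\mathcal{R}(\bar\theta)}(\bar x)}$ satisfies $\mathcal{D}(\mathcal{R}(\theta))(f)F(x)=\overline{\mathcal{D}(\mathcal{R}(\bar\theta))(\bar f)E_{\bar a}^{\mathcal{R}(\bar\theta)}(\bar x)}=\overline{\bar f(\bar a)}F(x)=f(a)F(x)$.

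Property (b) is not a direct uniqueness statement since it swaps the two arguments, so I would exploit the bilinear form structure instead. Expand $E_a^{\mathcal{R}(\theta)}(x)=\sum_{n\geq 0}E^{(n)}(a,x)$ into $x$-homogeneous pieces; property (c), already proved above, shows $E^{(n)}(a,x)$ is also homogeneous of degree $n$ in $a$, and joint holomorphy forces it to be a polynomial in $a$ of degree $n$. Matching degrees in the eigenvalue equation for homogeneous $f$ of degree $k$ yields $\mathcal{D}(\mathcal{R}(\theta))(f)E^{(n)}(a,\cdot)=f(a)E^{(n-k)}(a,\cdot)$, and setting $n=k$ and taking constant terms gives
\[
[f, E^{(k)}(a,\cdot)]_{\mathcal{R}(\theta)}=f(a) \quad\text{for every homogeneous } f \text{ of degree }k.
\]
Substituting $f=E^{(k)}(b,\cdot)$ and applying the symmetry of $[\,\cdot\,,\,\cdot\,]_{\mathcal{R}(\theta)}$ yields $E^{(k)}(b,a)=E^{(k)}(a,b)$ for every $k$, and summing recovers (b).

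For (e), the real/nonnegative slice is immediate from \Cref{thm:positivity}: since $\mu_a$ is supported in the convex hull of $H(\mathcal{R})a$ and a linear functional on a polytope attains its maximum at a vertex,
\[
|E_a^{\mathcal{R}(\theta)}(x)|\leq\int_{\mathbb{R}^N}e^{\real\product{x,\epsilon}}\,d\mu_a(\epsilon)\leq\exp\!\bigl(\max_{h\in H(\mathcal{R})}\real\product{ha,x}\bigr)
\]
for $a\in\mathbb{R}^N$, $x\in\mathbb{C}^N$, and $\theta\geq 0$. To extend to complex $a$, I would use that $E_a^{\mathcal{R}(\theta)}(x)$ is entire in $a$ of exponential type controlled on the real axis and apply a Phragmén--Lindelöf argument along imaginary directions. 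To extend to complex $\theta$ with $\real\theta(r)\geq 0$, I would use the joint holomorphy of $E_a^{\mathcal{R}(\theta)}(x)$ in $(a,\theta)$ asserted in \Cref{thm:dunkl_eigenfunction}, interpolating between the nonnegative-real $\theta$ values where the Rösler bound holds and passing to the symmetrized kernel via $J_a^{\mathcal{R}(\theta)}=\frac{1}{|H(\mathcal{R})|}\sum_h E_a^{\mathcal{R}(\theta)}(h\,\cdot)$; the $\sqrt{|H(\mathcal{R})|}$ factor then arises from a Cauchy--Schwarz estimate that balances the pointwise $E$-values against their $H(\mathcal{R})$-average. I expect (e) in the full complex-$\theta$ generality to be the main obstacle, since the probabilistic interpretation of \Cref{thm:positivity} is unavailable outside the nonnegative-real regime and the exponential bound has to be propagated purely through the holomorphic extension.
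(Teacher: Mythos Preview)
The paper does not prove this lemma; it is quoted from \cite{dunklbound} without proof, so there is no in-paper argument to compare against.

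Your treatments of (a)--(d) are correct and are essentially the standard uniqueness-based proofs. In particular, the use of the bilinear-form symmetry for (b) is the usual route.

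For (e) there is a genuine gap. The part you call ``immediate'' is fine: for $\theta\geq 0$ and $a\in\mathbb{R}^N$, \Cref{thm:positivity} indeed yields $|E_a^{\mathcal{R}(\theta)}(x)|\leq \exp(\max_h\real\product{ha,x})$ (even without the $\sqrt{|H(\mathcal{R})|}$). The problems begin with your two extensions. A Phragm\'en--Lindel\"of argument in $a$ needs an a~priori growth bound along imaginary directions, which you have not supplied and which is not obvious from anything in the paper. Likewise, ``interpolating'' in $\theta$ over the half-plane $\real\theta(r)\geq 0$ requires control on the boundary $\real\theta(r)=0$ and some subexponential growth in $\theta$; joint holomorphy from \Cref{thm:dunkl_eigenfunction} gives neither. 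Finally, the claim that the factor $\sqrt{|H(\mathcal{R})|}$ ``arises from a Cauchy--Schwarz estimate'' relating $E$ to $J$ is circular: bounding $E$ by the $H(\mathcal{R})$-average $J$ would need the individual $E_a(hx)$ already controlled. In the literature the $\sqrt{|H(\mathcal{R})|}$ bound is obtained by a different mechanism (an $L^2$/Plancherel estimate for the Dunkl kernel, or direct operator estimates on the intertwiner), not by bootstrapping from the R\"osler positivity representation. As written, your sketch for (e) does not close.
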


We can also consider the symmetric analogue of \Cref{thm:dunkl_eigenfunction} after averaging over $H(\mathcal{R})$. First, we define $\mathbb{C}^{H(\mathcal{R})}[x_1,\ldots,x_N]$ to be the set of $f\in\mathbb{C}[x_1,\ldots,x_N]$ that are fixed under the action of $H(\mathcal{R})$. The following lemma is basic.

\begin{lemma}
\label{lemma:first_op}
Suppose $f\in\mathbb{C}^{H(\mathcal{R})}[x_1,\ldots,x_N]$. Then, for $i\in [N]$, $\mathcal{D}_i(\mathcal{R}(\theta))f=\partial_i f$.
\end{lemma}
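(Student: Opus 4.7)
The plan is to unpack the definition of $\mathcal{D}_i(\mathcal{R}(\theta))$ and exploit the $H(\mathcal{R})$-invariance of $f$ to kill every reflection term in $\nabla_\theta f$. By definition, $\mathcal{D}_i(\mathcal{R}(\theta)) = \mathcal{D}_{e_i}(\mathcal{R}(\theta))$, so $\mathcal{D}_i(\mathcal{R}(\theta)) f = \langle \nabla_\theta f, e_i \rangle$ where
\[
\nabla_\theta f(x) = \nabla f(x) + \sum_{\alpha\in\mathcal{R}^+} \theta(\alpha) \frac{f(x) - f(r_\alpha x)}{\langle x, \alpha\rangle} \alpha.
\]
So it suffices to show that the sum on the right-hand side is zero whenever $f \in \mathbb{C}^{H(\mathcal{R})}[x_1,\ldots,x_N]$.

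The key observation is that $H(\mathcal{R})$-invariance means $hf = f$ for every $h\in H(\mathcal{R})$, i.e., $f(hx) = f(x)$ for all $x \in \mathbb{C}^N$. In particular, taking $h = r_\alpha$ for any $\alpha \in \mathcal{R}^+$, we get $f(r_\alpha x) = f(x)$, so the numerator $f(x) - f(r_\alpha x)$ vanishes identically as a polynomial. Each summand in the reflection sum is thus zero as a formal rational expression, and hence $\nabla_\theta f = \nabla f$. Taking the inner product with $e_i$ then yields $\mathcal{D}_i(\mathcal{R}(\theta)) f = \partial_i f$, as required.

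The only mild subtlety is justifying that $\frac{f(x) - f(r_\alpha x)}{\langle x, \alpha\rangle}$ is a bona fide polynomial (rather than a formal quotient) so that setting its numerator to zero gives the zero polynomial; this is standard since $f(x) - f(r_\alpha x)$ is always divisible by $\langle x, \alpha\rangle$ as a polynomial. There is no real obstacle here — the lemma is essentially an immediate consequence of the definition combined with the invariance hypothesis, and no deeper machinery is needed.
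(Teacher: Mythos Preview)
Your proof is correct and is the standard argument. The paper itself does not supply a proof for this lemma; it simply states the result as well-known, so there is nothing further to compare.
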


The next result is the symmetric analogue of \Cref{thm:dunkl_eigenfunction}. See \cite{dunkl_singular_poly} for elaboration on the proof of the result.

\begin{theorem}[\cite{dunklbessel}]
\label{thm:dunkl_eigenfunction_symmetry}
Suppose $\theta\in\Theta(\mathcal{R})$. Then, there exists a unique function $J_a^{\mathcal{R}(\theta)}(x)$ that is holomorphic over the domain $\mathbb{C}^N\times\mathbb{C}^N$ for $(a,x)$ and satisfies
\[
\begin{cases}
\mathcal{D}(\mathcal{R}(\theta))(f) J_a^{\mathcal{R}(\theta)}(x) = f(a) J_a^{\mathcal{R}(\theta)}(x) & \forall f\in\mathbb{C}^{H(\mathcal{R})}[x_1,\ldots,x_N], \\
J_a^{\mathcal{R}(\theta)}(0)=1.
\end{cases}
\]
Furthermore, $J_a^{\mathcal{R}(\theta)}(x)$ is holomorphic over the domain $\mathbb{C}^N\times \mathbb{C}^N\times \Theta(\mathcal{R})$ for $(a,x,\theta)$ and 
\[
J_a^{\mathcal{R}(\theta)}(x) = \frac{1}{|H(\mathcal{R})|}\sum_{h\in H(\mathcal{R})} h E_a^{\mathcal{R}(\theta)}(x).
\]
\end{theorem}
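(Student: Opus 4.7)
The plan is to construct $J_a^{\mathcal{R}(\theta)}(x)$ explicitly via the symmetrization formula in the statement and verify each required property directly. Define
\[
J_a^{\mathcal{R}(\theta)}(x) := \frac{1}{|H(\mathcal{R})|}\sum_{h\in H(\mathcal{R})} h E_a^{\mathcal{R}(\theta)}(x).
\]
Since $E_a^{\mathcal{R}(\theta)}(x)$ is holomorphic on $\mathbb{C}^N\times\mathbb{C}^N\times\Theta(\mathcal{R})$ by \Cref{thm:dunkl_eigenfunction} and the sum is finite, $J_a^{\mathcal{R}(\theta)}(x)$ inherits holomorphicity on the same domain. The normalization $J_a^{\mathcal{R}(\theta)}(0)=1$ follows from $(hE_a^{\mathcal{R}(\theta)})(0)=E_a^{\mathcal{R}(\theta)}(0)=1$ and averaging, and $H(\mathcal{R})$-invariance of $J_a^{\mathcal{R}(\theta)}$ in $x$ is immediate from reindexing the sum over $h$.

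For the eigenvalue property, fix $f\in\mathbb{C}^{H(\mathcal{R})}[x_1,\ldots,x_N]$. Because $f$ is $H(\mathcal{R})$-invariant, \Cref{lemma:equivariance} yields $h\mathcal{D}(\mathcal{R}(\theta))(f)h^{-1}=\mathcal{D}(\mathcal{R}(\theta))(hf)=\mathcal{D}(\mathcal{R}(\theta))(f)$ for every $h\in H(\mathcal{R})$, so $\mathcal{D}(\mathcal{R}(\theta))(f)$ commutes with the action of $H(\mathcal{R})$. Hence
\[
\mathcal{D}(\mathcal{R}(\theta))(f)J_a^{\mathcal{R}(\theta)}=\frac{1}{|H(\mathcal{R})|}\sum_{h\in H(\mathcal{R})} h\,\mathcal{D}(\mathcal{R}(\theta))(f)E_a^{\mathcal{R}(\theta)}=\frac{1}{|H(\mathcal{R})|}\sum_{h\in H(\mathcal{R})} h\bigl(f(a)E_a^{\mathcal{R}(\theta)}\bigr)=f(a)J_a^{\mathcal{R}(\theta)},
\]
using the nonsymmetric eigenvalue relation from \Cref{thm:dunkl_eigenfunction}.

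For uniqueness, let $\tilde J_a(x)$ be another $H(\mathcal{R})$-invariant holomorphic function satisfying the same conditions (uniqueness is only meaningful within $H$-invariant functions, since $E_a^{\mathcal{R}(\theta)}$ itself also satisfies the symmetric eigenvalue equations without equalling $J_a^{\mathcal{R}(\theta)}$). Set $K:=\tilde J-J$ and decompose $K=\sum_{n\geq 0} K^{(n)}$ into parts homogeneous of degree $n$ in $x$; the normalization gives $K^{(0)}\equiv 0$. For $f$ symmetric of degree $n$, applying $[1]$, the extraction of the constant term in $x$, to the eigenvalue equation $\mathcal{D}(\mathcal{R}(\theta))(f)K=f(a)K$ yields $[f,K^{(n)}(a,\cdot)]_{\mathcal{R}(\theta)}=f(a)K^{(0)}=0$. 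The Dunkl pairing is $H(\mathcal{R})$-invariant under the diagonal action (from \Cref{lemma:equivariance} combined with $H$-invariance of the constant term), so symmetric polynomials pair to zero against non-trivial isotypic components, and the assumption $\theta\in\Theta(\mathcal{R})$ via \Cref{def:dunkl_invertible} then implies that the restriction of the pairing to symmetric polynomials of each degree remains nondegenerate. Since $K^{(n)}(a,\cdot)$ is symmetric in $x$, this forces $K^{(n)}\equiv 0$ for every $n$, and hence $\tilde J=J$.

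The main obstacle is uniqueness. The eigenvalue equations range only over symmetric $f$, so the system is genuinely underdetermined on the full space of holomorphic functions in $x$. The crucial technical step is upgrading the hypothesis that $\mathcal{D}(\mathcal{R}(\theta))$ is invertible, which amounts to nondegeneracy of the Dunkl pairing on all homogeneous polynomials of each degree, to nondegeneracy on the $H(\mathcal{R})$-invariant subspace; this is handled via the $H$-invariance of the pairing, which makes the symmetric and non-symmetric isotypic components orthogonal. Existence, holomorphicity, and the explicit formula, by contrast, descend almost mechanically from the corresponding properties of the nonsymmetric eigenfunction $E_a^{\mathcal{R}(\theta)}$.
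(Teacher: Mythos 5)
Your proof is correct, and your parenthetical remark about $E_a^{\mathcal{R}(\theta)}$ is an important catch: since $E_a^{\mathcal{R}(\theta)}$ is also holomorphic, satisfies the same eigenvalue equations against all symmetric $f$, and has $E_a^{\mathcal{R}(\theta)}(0)=1$, the theorem as literally transcribed in the paper cannot have a unique solution unless one also imposes $H(\mathcal{R})$-invariance in $x$. The paper leaves this constraint implicit (it is built into the abstract framework via the choice $V=R=\mathbb{C}^{H(\mathcal{R})}[x_1,\ldots,x_N]$ in \Cref{example:dunkl}), and you have correctly made it explicit and supplied it as a hypothesis for the uniqueness argument. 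Your existence argument (symmetrizing $E$, using \Cref{lemma:equivariance} to commute $\mathcal{D}(f)$ past $h$ when $f$ is $H$-invariant) matches the standard route.

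For uniqueness, the paper leans on its general machinery: \Cref{lemma:restrictinv1} passes invertibility of $\mathcal{D}(\mathcal{R}(\theta))$ to invertibility of the symmetric restriction $\mathcal{D}_H(\mathcal{R}(\theta))$ by an averaging argument ($L(g)f = L(g)\frac{1}{|H|}\sum_h hf$), and then \Cref{cor:eigenvectorformula2}/\Cref{lemma:eigenvectorformula} gives uniqueness of the $\Psi$-eigenvector with $E^0=1$. You instead argue directly on the Dunkl pairing: it is $H$-invariant under the diagonal action, so the trivial isotypic block of each $P_k$ is orthogonal to the others, and nondegeneracy of the full pairing (that is, $\theta\in\Theta(\mathcal{R})$ via \Cref{def:dunkl_invertible}) therefore restricts to nondegeneracy on the symmetric subspace. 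This is logically equivalent to the paper's averaging argument but phrased via isotypic orthogonality; one small advantage of your phrasing is that the only input from representation theory is that the trivial representation is self-dual, which avoids any discussion of ambivalence of $H(\mathcal{R})$. One minor point to be careful about: when you write $[f,K^{(n)}(a,\cdot)]_{\mathcal{R}(\theta)}=0$, the conclusion $K^{(n)}(a,\cdot)=0$ requires quantifying over all symmetric $f$ of degree $n$ and then invoking the restricted nondegeneracy, which you do state; I mention it only because the intermediate sentence reads as if a single $f$ would suffice.
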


A generalization of these two results is included in \Cref{thm:equivalences}, which concerns applying a graded ring of operators to a graded vector space.

Furthermore, we let $\mathcal{D}_H(\mathcal{R}(\theta))$ denote the function such that for $f\in \mathbb{C}^{H(\mathcal{R})}[x_1,\ldots,x_N]$, $\mathcal{D}_H(\mathcal{R}(\theta))(f)$ is the restriction of $\mathcal{D}(\mathcal{R}(\theta))(f)$ to $\mathbb{C}^{H(\mathcal{R})}[[x_1,\ldots,x_N]]$, which is the set of $f\in\mathbb{C}[[x_1,\ldots,x_N]]$ that are fixed under the action of $H(\mathcal{R})$. Then, $\mathcal{D}_H(\mathcal{R}(\theta))$ is a symmetric version of $\mathcal{D}(\mathcal{R}(\theta))$.

Furthermore, for $k\geq 1$, we let $E_a^{\mathcal{R}(\theta)}[k](x)$ and $J_a^{\mathcal{R}(\theta)}[k](x)$ denote the sums of the terms of the Taylor expansions of $E_a^{\mathcal{R}(\theta)}(x)$ and $J_a^{\mathcal{R}(\theta)}(x)$, respectively, that are homogeneous of degree $k$ in $x$. This notation is used in \Cref{sec:bessel_coeff}.

We mention that if it is clear what root system and multiplicity function we are considering, then we often do not include $\mathcal{R}(\theta)$ in the notation. For example, if this is the case then we would denote $\mathcal{D}_i(\mathcal{R}(\theta))$ by $\mathcal{D}_i$ and $\mathcal{D}(\mathcal{R}(\theta))$ by $\mathcal{D}$.

For $i\in [N]$, we let $d_i$ denote the operator that lowers the degree in $x_i$ by one. That is, $d_i$ maps $x_i^k$ to $x_i^{k-1}$ for $k\geq 1$ and $1$ to zero.

As discussed earlier, we focus on the irreducible root systems $A^{N-1}$, $B^N$, $C^N$, and $D^N$, which are subsets of $\mathbb{R}^N$ for $N\geq 2$. We define these root systems.

\textbf{The definition of $A^{N-1}$.} For $i\in [N]$, we define $e_i\triangleq [\mathbf{1}\{i=j\}]_{j\in [N]}^T\in\mathbb{R}^N$. Let $A^{N-1}\triangleq \{e_i-e_j: i,j\in[N],\,i\not=j\}$. Each $\theta\in \theta(A^{N-1})$ is constant over the root system, so we let $A^{N-1}(\theta)$ for $\theta\in\mathbb{C}$ denote the choice of $A^{N-1}$ as the root system and $\theta$ as the multiplicity function. 

The reflection group $H(A^{N-1})$ permutes the entries of $\mathbb{C}^N$. For $i\in [N]$, the associated Dunkl operator is 
\[
\mathcal{D}_i(A^{N-1}(\theta)) \triangleq \partial_i + \theta\sum_{j\in[N]\backslash\{i\}} \frac{1-s_{ij}}{x_i-x_j},
\]
where $s_{ij}$ switches the $i$th and $j$th entries of an element of $\mathbb{C}^N$ for distinct $i,j\in[N]$.

Furthermore, $\mathbb{C}^{H(A^{N-1})}[x_1,\ldots,x_N]$ is the set of symmetric functions in $\mathbb{C}[x_1,\ldots,x_N]$. Equivalently, it is the span of $\{1\}\cup \{p_\lambda: \lambda\in \Gamma_N\}$ and $\{1\}\cup \{M_\epsilon: \epsilon\in\Gamma_N\}$.

\textbf{The definitions of $B^N$ and $C^N$.} Let $B^N\triangleq \bigcup_{i,j\in[N],\,i<j} \{e_i-e_j,e_j-e_i,e_i+e_j,-e_i-e_j\}\bigcup_{i\in[N]}\{e_i,-e_i\}$ and $C^N\triangleq \bigcup_{i,j\in[N],\,i<j} \{e_i-e_j,e_j-e_i,e_i+e_j,-e_i-e_j\}\bigcup_{i\in[N]}\{2e_i,-2e_i\}$. A multiplicity function $\theta\in \theta(B^N)$ is constant over the roots of length $\sqrt{2}$ and over the roots of length $1$. Similarly, a multiplicity function $\theta\in \theta(C^N)$ is constant over the roots of length $\sqrt{2}$ and over the roots of length $2$.

We have that $B^N$ and $C^N$ are dual root systems such that $H(B^N)=H(C^N)$. Furthermore, we always have that $\mathcal{D}(B^N(\theta))=\mathcal{D}(C^N(\theta))$. Since we do not need to differentiate between these two root systems in this paper, we let $BC^N$ denote the root system $B^N$ or $C^N$. Furthermore, for $\theta_0,\theta_1\in\mathbb{C}$, we let $BC^N(\theta_0,\theta_1)$ denote the choice of $B^N$ or $C^N$ as the root system and the function that assigns $\theta_1$ to the scalar multiplies of $e_i$ and $\theta_0$ to the remaining roots as the multiplicity function.

We have that $H(BC^N)$ permutes the entries of $\mathbb{C}^N$ and applies sign flips to any number of entries. Furthermore, for $i\in[N]$, the associated Dunkl operator is 
\[
\mathcal{D}_i(BC^N(\theta_0,\theta_1)) \triangleq \partial_i + \theta_1 \frac{1-\tau_i}{x_i} + \theta_0\sum_{j\in[N]\backslash\{i\}} \left(\frac{1-s_{ij}}{x_i-x_j} + \frac{1-\tau_i\tau_j s_{ij}}{x_i+x_j}\right),
\]
where $\tau_i$ flips the sign of the $i$th entry of an element of $\mathbb{C}^N$ for $i\in [N]$. When we are working in the context of type BC root systems, we refer to $\theta_0\left(\frac{1-s_{ij}}{x_i-x_j} + \frac{1-\tau_i\tau_j s_{ij}}{x_i+x_j}\right)$ for distinct $i,j\in [N]$ as a type 0 switch and $\theta_1\frac{1-\tau_i}{x_i}$ for $i\in [N]$ as a type 1 switch. 

Furthermore, $\mathbb{C}^{H(BC^N)}[x_1,\ldots,x_N]$ is the set of symmetric functions in $\mathbb{C}[x_1,\ldots,x_N]$ that have all even degrees. Equivalently, it is the span of $\{1\}\cup \{p_\lambda: \lambda\in \evenN\}$.

\textbf{The definition of $D^N$.} Let $D^N\triangleq \bigcup_{i,j\in[N],i<j} \{e_i-e_j,e_j-e_i,e_i+e_j,-e_i-e_j\}$. A multiplicity function $\theta\in \theta(D^N)$ is constant over the root system, so we let $D^N(\theta)$ for $\theta\in\mathbb{C}$ denote the choice of $D^N$ as the root system and $\theta$ as the multiplicity function.

We have that $H(D^N)$ permutes the entries of $\mathbb{C}^N$ and applies sign flips to an even number of entries. Furthermore, for $i\in[N]$, the associated Dunkl operator is 
\[
\mathcal{D}_i(D^N(\theta)) \triangleq \partial_i + \theta\sum_{j\in[N]\backslash\{i\}} \left(\frac{1-s_{ij}}{x_i-x_j} + \frac{1-\tau_i\tau_j s_{ij}}{x_i+x_j}\right).
\]

Additionally, $\mathbb{C}^{H(D^N)}[x_1,\ldots,x_N]$ is the set of symmetric functions in $\mathbb{C}[x_1,\ldots,x_N]$ that are sums of monomials that have all degrees of the same parity. Equivalently, it is the span of $\{1\}\cup \{p_\lambda: \lambda\in \evenN\}\cup \{ep_\lambda:\lambda\in\evenN\}$.

\subsection{Partitions}
\label{subsec:partitions}

Suppose $N\geq 1$. Let $\Gamma_N$ denote the set of nonempty partitions with at most $N$ parts. Furthermore, define $\Gamma\triangleq\bigcup_{N\geq 1}\Gamma_N$. Note that we do not assume that $\Gamma$ contains the empty partition.

For $\lambda=(\lambda_1\geq \cdots\geq\lambda_m)\in\Gamma$, let $|\lambda|\triangleq \sum_{i=1}^m \lambda_i$ and $\ell(\lambda)\triangleq m$. Also, we define $\evenN$ (resp. $\even$) to be the set of $\lambda\in\Gamma_N$ (resp. $\Gamma$) such that $\lambda_i$ is even for all $i\in [\ell(\lambda)]$.

Suppose $k\geq 1$. Define $\Gamma_N[k]\triangleq \{\lambda\in\Gamma_N:|\lambda|=k\}$ and define $\evenN[k]$, $\Gamma[k]$, and $\even[k]$ analogously. Furthermore, for a set $S$ and $M\in S^{\Gamma\times\Gamma}$ (resp. $S^{\even\times\even}$), we define $M[k] \in S^{\Gamma[k]\times\Gamma[k]}$ (resp. $S^{\even[k]\times\even[k]}$) to be $M$ with rows and columns restricted to $\Gamma[k]$ (resp. $\even[k]$).

For a positive integer $m$ and a sequence $s=(a_1,\ldots,a_m)$ of nonnegative integers, define $\gamma(s)$ to be the element of $\Gamma$ that is a permutation of the sequence formed from $s$ after deleting the entries that equal zero. Furthermore, for $\nu\in \Gamma$, define $\pi(\nu)$ to be $\ell(\nu)!$ divided by the number of permutations of $\nu$ and $\nu!\triangleq\prod_{i=1}^{\ell(\nu)}\nu_i!$; if $\nu$ contains $n_i$ copies of $i$ for all $i\geq 1$, then $\pi(\nu)=\prod_{i\geq 1} n_i!$. We similarly define $\pi(s)\triangleq \pi(\gamma(s))$ and $s!\triangleq \gamma(s)!$. For $x\in\mathbb{C}^m$, we define $x^s\triangleq \prod_{i=1}^m x_i^{a_i}$.

We also consider the sums of partitions. For $\lambda_1,\ldots,\lambda_k\in\Gamma$, we define $\lambda_1+\cdots+\lambda_k\triangleq\gamma((\lambda_1,\ldots,\lambda_k))$ for all $k\geq 2$, where $(\lambda_1,\ldots,\lambda_k)$ denotes the tuple formed by combining the entries of $\lambda_1,\ldots,\lambda_k$.

For $k\geq 1$, define $p_{(k)}(x_1,\ldots,x_N)\triangleq x_1^k+\cdots +x_N^k$ and for $\lambda\in\Gamma$, define $p_\lambda(x_1,\ldots,x_N)\triangleq \prod_{i=1}^{\ell(\lambda)}p_{(\lambda_i)}(x_1,\ldots,x_N)$. Furthermore, for $\epsilon\in\Gamma_N$, define
\[
M_\epsilon(x_1,\ldots,x_N)\triangleq \sum_{\substack{(a_1,\ldots,a_N)\in\mathbb{Z}_{\geq 0}^N, \\ \gamma((a_1,\ldots,a_N))=\epsilon}} \prod_{i=1}^N x_i^{a_i} = \sum_{\substack{(a_1,\ldots,a_N)\in\mathbb{Z}_{\geq 0}^N, \\ \gamma((a_1,\ldots,a_N))=\epsilon}} (x_1,\ldots,x_N)^{(a_1,\ldots,a_N)}.
\]
Also, define $e(x_1,\ldots,x_N)\triangleq x_1\cdots x_N$.

The following two lemmas are straightforward to deduce, but are essential components of this paper.
\begin{lemma}
Suppose $N\geq 1$ and $c_{(k)}\in\mathbb{C}$ for all $k\geq 1$. Then,
\[
\exp\left(\sum_{k\geq 1} c_{(k)}p_{(k)}(x_1,\ldots,x_N)\right) = 1+\sum_{\lambda\in\Gamma} \pi(\lambda)^{-1} \prod_{i=1}^{\ell(\lambda)} c_{(\lambda_i)} p_\lambda(x_1,\ldots,x_N)
\]
over $\mathbb{C}[[x_1,\ldots,x_N]]$.
\end{lemma}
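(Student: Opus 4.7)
The plan is to expand the exponential via its Taylor series and then apply the multinomial theorem to each power. Since each $p_{(k)}(x_1,\ldots,x_N)$ has zero constant term, the formal series $\sum_{k\geq 1} c_{(k)}p_{(k)}$ has zero constant term, so its exponential is well-defined in $\mathbb{C}[[x_1,\ldots,x_N]]$ and we may write
\[
\exp\!\left(\sum_{k\geq 1} c_{(k)}p_{(k)}\right) = \sum_{n\geq 0}\frac{1}{n!}\left(\sum_{k\geq 1}c_{(k)}p_{(k)}\right)^{\!n}.
\]
The convergence in the formal topology is not an issue because for each fixed total degree only finitely many terms contribute.

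Next I would expand the $n$th power by the multinomial theorem. Writing multi-indices $(n_1,n_2,\ldots)\in\mathbb{Z}_{\geq 0}^\infty$ with finite support and $\sum_k n_k=n$,
\[
\left(\sum_{k\geq 1}c_{(k)}p_{(k)}\right)^{\!n} = \sum_{\substack{(n_1,n_2,\ldots)\\ \sum_k n_k=n}} \frac{n!}{\prod_k n_k!}\prod_{k\geq 1}\bigl(c_{(k)}p_{(k)}\bigr)^{n_k},
\]
so after dividing by $n!$ and summing over $n\geq 0$, the $n=0$ term yields $1$ and the remaining terms give
\[
\sum_{\substack{(n_1,n_2,\ldots)\neq 0}} \frac{1}{\prod_k n_k!}\prod_{k\geq 1}\bigl(c_{(k)}p_{(k)}\bigr)^{n_k}.
\]

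Finally I would reindex this sum by partitions. The map sending a nonzero multi-index $(n_1,n_2,\ldots)$ to the partition $\lambda\in\Gamma$ that contains $n_k$ copies of $k$ for each $k\geq 1$ is a bijection. Under this bijection $\prod_{k\geq 1}p_{(k)}^{n_k} = p_\lambda$ by the definition of $p_\lambda$, and $\prod_{k\geq 1}c_{(k)}^{n_k}=\prod_{i=1}^{\ell(\lambda)}c_{(\lambda_i)}$. Moreover, by the definition of $\pi(\lambda)$ given just before the lemma, $\pi(\lambda)=\prod_{k\geq 1}n_k!$, so the coefficient becomes exactly $\pi(\lambda)^{-1}\prod_{i=1}^{\ell(\lambda)}c_{(\lambda_i)}$, yielding the claimed identity. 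There is no real obstacle here; the only point to check carefully is the bookkeeping between multi-indices and partitions, which is the reason the combinatorial factor $\pi(\lambda)^{-1}$ appears.
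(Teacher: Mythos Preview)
Your proof is correct and is exactly the natural argument; the paper omits the proof entirely, calling the lemma ``straightforward to deduce,'' and your expansion via the exponential series, multinomial theorem, and reindexing by the bijection between nonzero finite-support multi-indices and partitions is precisely what is intended. The only thing worth noting is that you correctly invoked the paper's identity $\pi(\lambda)=\prod_{k\geq 1} n_k!$ when $\lambda$ has $n_k$ parts equal to $k$, which is what makes the coefficient match.
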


\begin{lemma}
Suppose $N\geq 1$ and $a,b\in\mathbb{C}^N$. Then,
\[
\exp(\product{a,b}) = 1+ \sum_{\epsilon\in\mathbb{Z}^N_{\geq 0},\, \epsilon\not=0} \frac{a^\epsilon b^\epsilon}{\epsilon!}.
\]
\end{lemma}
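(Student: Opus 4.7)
The plan is to reduce the identity to a routine reindexing. Since $\product{a,b} = \sum_{i=1}^N a_i b_i$, I would first use the standard additivity of $\exp$ to factor
\[
\exp(\product{a,b}) = \prod_{i=1}^N \exp(a_i b_i) = \prod_{i=1}^N \sum_{k_i \geq 0} \frac{(a_i b_i)^{k_i}}{k_i!},
\]
and then distribute the product over the sums to obtain
\[
\exp(\product{a,b}) = \sum_{\epsilon \in \mathbb{Z}_{\geq 0}^N} \prod_{i=1}^N \frac{a_i^{\epsilon_i} b_i^{\epsilon_i}}{\epsilon_i!}.
\]

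Next I would reconcile the notation on the right-hand side. By definition $a^\epsilon = \prod_i a_i^{\epsilon_i}$ and $b^\epsilon = \prod_i b_i^{\epsilon_i}$, so the numerator is $a^\epsilon b^\epsilon$. For the denominator I need $\prod_{i=1}^N \epsilon_i! = \epsilon!$. Unwinding the definitions given in \Cref{subsec:partitions}, we have $\epsilon! = \gamma(\epsilon)! = \prod_{j=1}^{\ell(\gamma(\epsilon))} \gamma(\epsilon)_j!$, and since $\gamma(\epsilon)$ is obtained from $(\epsilon_1,\ldots,\epsilon_N)$ by discarding the zero entries and permuting, while each discarded zero contributes the factor $0! = 1$ to $\prod_i \epsilon_i!$, the two expressions are equal. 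Finally, splitting off the $\epsilon = 0$ summand (which contributes $1$) gives the claimed identity.

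The only substantive step is the verification that $\epsilon! = \prod_i \epsilon_i!$, and this is purely a matter of unpacking the convention that $\gamma(\epsilon)$ ignores zero entries; there is no genuine obstacle here.
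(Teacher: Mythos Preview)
Your argument is correct. The paper does not actually give a proof of this lemma; it merely states that it is ``straightforward to deduce'' and moves on. Your expansion of $\exp(\product{a,b})$ as a product of one-variable exponentials followed by distribution is exactly the intended routine computation, and your check that $\epsilon! = \prod_i \epsilon_i!$ via the convention $s! \triangleq \gamma(s)!$ (with $0! = 1$ accounting for the deleted zero entries) is the right way to reconcile the notation.
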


For $k\geq 1$, it is clear that a basis for the set of elements of $\mathbb{C}[x_1,\ldots,x_N]$ that are symmetric and homogeneous of degree $k$ is $\{M_\epsilon(x_1,\ldots,x_N): \epsilon\in\Gamma_N[k])$. The following lemma is also well-known.

\begin{lemma}
Suppose $k\geq 1$. Then, $\mathbb{C}$-linear bases for the set of elements of $\mathbb{C}[x_1,\ldots,\\x_N]$ that are:
\begin{enumerate}
\item symmetric and homogeneous of degree $k$;
\item symmetric, homogeneous of degree $2k$, and have all even degrees;
\item symmetric, homogeneous of degree $2k+N$, and have all odd degrees
\end{enumerate}
are $\{p_\lambda(x_1,\ldots,x_N): \lambda\in\Gamma_N[k]\}$, $\{p_\lambda(x_1,\ldots,x_n): \lambda\in\evenN[2k]\}$, and $\{e(x_1,\ldots,x_N)\times\\p_\lambda(x_1,\ldots,x_N): \lambda\in\evenN[2k]\}$, respectively.
\end{lemma}

Additionally, define the unique coefficients $c_{\epsilon\lambda}\in\mathbb{Q}$ for $\epsilon,\lambda\in\Gamma$ such that 
\[
M_\epsilon = \sum_{\lambda\in\Gamma} c_{\epsilon\lambda}p_\lambda.
\]
We define $c_{\lambda\epsilon}^{\text{inv}}$ for $\epsilon,\lambda\in\Gamma$ such that 
\[
p_\lambda = \sum_{\epsilon\in\Gamma} c_{\lambda\epsilon}^{\text{inv}}M_\epsilon.
\]
\begin{remark}
When we write $M_\epsilon = \sum_{\lambda\in\Gamma} c_{\epsilon\lambda} p_\lambda$, we mean that $M_\epsilon(x) = \sum_{\lambda\in\Gamma} c_{\epsilon\lambda} p_\lambda(x)$, where $x=(x_i)_{i\geq 1}$ consists of an infinite number of variables. We could equivalently state that $M_\epsilon(x_1,\ldots,x_N) = \sum_{\lambda\in\Gamma} c_{\epsilon\lambda} p_\lambda(x_1,\ldots,x_N)$ for all $N\geq 1$. This is similarly the case when we write $p_\lambda = \sum_{\epsilon\in\Gamma} c_{\lambda\epsilon}^{\text{inv}}M_\epsilon$. It is clear that $c_{\lambda \lambda} = \frac{1}{\pi(\lambda)}$ and $c_{\lambda\lambda}^{\text{inv}} = \pi(\lambda)$.
\end{remark}

\subsection{Non-crossing partitions}
\label{subsec:noncrossing}

Suppose $k\geq 1$. Let $NC(k)$ denote the set of noncrossing partitions of $[k]$. Recall that a partition $B_1\sqcup \cdots \sqcup B_m$ of $[k]$ is noncrossing if there does not exist distinct $i,j\in [m]$, $a,c\in B_i$, and $b,d\in B_j$ such that $a<b<c<d$. Furthermore, for $k\geq 1$, define $\NCeven(2k)$ to be the set of $\pi\in NC(2k)$ such that each block of $\pi$ has even size.

Suppose $\pi=B_1\sqcup\cdots\sqcup B_m\in NC(k)$ such that the minimal element of $B_q$ is less than the minimal element of $B_{q+1}$ for $1\leq q\leq m-1$. For $q\in [m]$ and $i\in B_q$, define $b(i;\,\pi)\triangleq \mathbf{1}\{i=\min(B_q)\}$ and 
\[
d(i;\,\pi)\triangleq \left|\left(\bigcup_{r=1}^q B_r\right) \bigcap \{i,i+1,\ldots,k\}\right|.
\]
Furthermore, define $o(\pi)$ to be the number of $i\in [k]$ such that $b(i;\,\pi)=0$ and $d(i;\,\pi)$ is odd. Also, define $\gamma(\pi)\triangleq \gamma((|B_1|,\ldots,|B_m|))$. For an illustration of a noncrossing partition and the statistics that we have defined, see \Cref{fig:noncrossing}.

\begin{figure}[!ht]
\begin{center}
\begin{tikzpicture}
    \fill[red!30] (0,0) rectangle (3,1);
    \fill[blue!30] (3,0) rectangle (4, 1);
    \fill[green!30] (4, 0) rectangle (6,1);
    \fill[blue!30] (6,0) rectangle (8,1);
    \fill[yellow!30] (8, 0) rectangle (9,1);
    \fill[cyan!30] (9, 0) rectangle (11,1);
    \fill[red!30] (11,0) rectangle (13,1);
    \draw (0,0) grid (13,1);
    \node at (0.5, 0.5) {5};
    \node at (1.5, 0.5) {4};
    \node at (2.5, 0.5) {3};
    \node at (3.5, 0.5) {5};
    \node at (4.5, 0.5) {6};
    \node at (5.5, 0.5) {5};
    \node at (6.5, 0.5) {4};
    \node at (7.5, 0.5) {3};
    \node at (8.5, 0.5) {3};
    \node at (9.5, 0.5) {4};
    \node at (10.5, 0.5) {3};
    \node at (11.5, 0.5) {2};
    \node at (12.5, 0.5) {1};
\end{tikzpicture}
\end{center}

\caption{An illustration of the element 
\[
\pi = \{1, 2, 3, 12, 13\} \sqcup \{4, 7, 8\} \sqcup \{5, 6\} \sqcup \{9\} \sqcup \{10, 11\}
\]
of $NC(13)$. Each color corresponds to a block of $\pi$ and square $i$ displays the value of $d(i; \pi)$. Observe that $o(\pi)=5$.}
\label{fig:noncrossing}
\end{figure}

\subsection{Additional notation}
\label{subsec:pushforward}
For a measure $\mu$ over $\mathbb{R}^N$ and measurable $f: \mathbb{R}^N \rightarrow \mathbb{R}^N$, we let $\mu(f)$ denote the pushforward of $\mu$ with respect to the map $f$. For example, for a point mass $\delta_a$ located at $a\in\mathbb{R}^N$, $\delta_a(f)$ denotes the point mass $\delta_{f(a)}$. If $f:\mathbb{R}\rightarrow\mathbb{R}$ is measurable, then $\mu(f)$ denotes the pushforward of $\mu$ with respect to the map $(a_1,\ldots,a_N)\mapsto (f(a_1),\ldots,f(a_N))$.

\section[An introduction to graded rings of operators]{An introduction to applying a graded ring of operators to a graded vector field}
\label{sec:intro_graded}

The paper \cite{dunkl_singular_poly} considers the applications of operators to a graded vector field $V\triangleq \oplus_{i\geq 0} V_i$, where the application of an operator to an element of $V_i$ outputs an element of $V_{i-1}$ for $i\geq 1$. When $V=\mathbb{C}[x_1,\ldots,x_N]$ and $V_i$ is the set of elements of $V$ that are homogeneous of degree $i$ for all $i\geq 0$, we have that $\partial_i$, $\mathcal{D}(\mathcal{R}(\theta))_i$, and $d_i$ for $i\in [N]$ are examples of such an operator. In this section, we introduce a similar setting that is motivated by the Dunkl bilinear form $[\cdot,\cdot]_{\mathcal{R}(\theta)}$ and we continue to discuss this setting in \Cref{sec:invertible,sec:bijection}. See \Cref{example:dunkl} for the application of the framework that we introduce to the context of Dunkl operators.

Let $K$ be a field. Assume that $V\triangleq\oplus_{i\geq 0} V_i$ is a graded vector space such that $V_0=K$ and $V_i$ is a finite dimensional $K$-vector space for $i\geq 1$. Furthermore, assume that $R\triangleq\oplus_{i\geq 0} R_i$ is a graded ring such that $R_i$ is a finite dimensional $K$-vector space for $i\geq 0$. From the definition of a graded ring, recall that $R_i$ is an additive abelian group for $i\geq 0$ and $R_{i_1}R_{i_2}\subset R_{i_1+i_2}$ for all $i_1,i_2\geq 0$.

For $k\in K$, let $k^*$ denote the element $\{v\mapsto kv\}$ of $\text{End}_K(V)$. Let $\mathcal{L}$ be the set of $K$-linear ring homomorphisms $L: R\rightarrow \text{End}_K(V)$ such that for $i_1,i_2\geq 0$, $f\in V_{i_2}$, and $g\in R_{i_1}$,
\[
\begin{cases}
L(g)f \in V_{i_2-i_1} & \text{if } i_1\leq i_2,
\\ L(g)f = 0 & \text{if } i_2 < i_1.
\end{cases}
\]

\begin{remark}
By the definition of a ring homomorphism, $L(gh)=L(g)L(h)$ and $L(g+h)=L(g)+L(h)$ for all $g,h\in R$. Furthermore, $L(0)=0^*$, $L(1)=1^*$, and the $K$-linearity condition implies that $L(kr)=kL(r)$ for all $r\in R$ and $k\in K$.
\end{remark}

\begin{lemma}
\label{lemma:center}
Suppose $L\in \mathcal{L}$.
\begin{enumerate}
\item[(A)] For all $k\in K$, $L(k)=k^*$.
\item[(B)] For all $r\in R$ and $k\in K$, $L(rk-kr)=0^*$.
\end{enumerate}
\end{lemma}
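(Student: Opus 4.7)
The plan is to derive both parts directly from the two defining properties of $L$: that it is a unital ring homomorphism (so in particular $L(1_R)=1^*$) and that it is $K$-linear (so $L(kr)=kL(r)$ for all $k\in K$ and $r\in R$). Each $R_i$ is a $K$-vector space and $1_R\in R_0$, so the embedding $k\mapsto k\cdot 1_R$ places $K$ inside $R_0$, which is what allows me to speak of $L(k)$, $rk$, and $kr$ for $k\in K$.

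For part (A), I would write $k=k\cdot 1_R$ and apply $K$-linearity:
\[
L(k) \;=\; L(k\cdot 1_R) \;=\; k\,L(1_R) \;=\; k\cdot 1^* \;=\; k^*.
\]

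For part (B), I combine part (A) with multiplicativity. Using that $L$ is a ring homomorphism,
\[
L(rk) \;=\; L(r)\,L(k) \;=\; L(r)\,k^*, \qquad L(kr) \;=\; L(k)\,L(r) \;=\; k^*\,L(r).
\]
The key observation is that $L$ takes values in $\mathrm{End}_K(V)$, so every $L(r)$ is a $K$-linear endomorphism of $V$; since $k^*$ is just scalar multiplication by $k\in K$, it commutes with every $K$-linear endomorphism. Hence $L(r)\,k^* = k^*\,L(r)$, and subtracting gives $L(rk-kr)=0^*$.

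There is no real technical obstacle here; the lemma is essentially a bookkeeping observation, recording that $K$ is forced into the center of $\mathrm{End}_K(V)$ under $L$ even if $K\cdot 1_R$ is not central in $R$. The only conceptual point to keep straight is the dual role of $k\in K$, both as a scalar (for the $K$-linearity axiom) and as the ring element $k\cdot 1_R\in R_0$ (for the products $rk$ and $kr$).
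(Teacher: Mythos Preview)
Your proof is correct and follows essentially the same argument as the paper's: part (A) uses $K$-linearity applied to $1_R$, and part (B) uses multiplicativity together with part (A) and the observation that $k^*$ commutes with every $K$-linear endomorphism of $V$.
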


\begin{proof}
For (A), by the $K$-linearity condition, $L(k)=kL(1)=k^*$ for all $k\in K$. For (B), observe that 
\[
L(rk) = L(r)k^*= k^*L(r) = L(kr) 
\]
for all $r\in R$ and $k\in K$, because $L(r)$ is a $K$-linear endomorphism of $V$.
\end{proof}

\begin{remark}
We do not assume that $K\subset\text{center}(R)$ despite part (B) of \Cref{lemma:center}.
\end{remark}

For $i\geq 0$, let $A_i$ and $B_i$ be bases of $V_i$ and $R_i$, respectively, as $K$-vector spaces. Define the isomorphisms $a_i: V_i\rightarrow K^{A_i}$ by $a_i(r)=[\mathbf{1}\{s=r\}]_{s\in A_i}^T$ for $r\in A_i$ and $b_i: R_i\rightarrow K^{B_i}$ by $b_i(r)=[\mathbf{1}\{s=r\}]_{s\in B_i}^T$ for $r\in B_i$. Furthermore, for $A\in \text{End}_K(V_i)$, define $a_i(A)\in K^{A_i\times A_i}$ to be the matrix with column $r$ equal to $a_i(Ar)$ for $r\in A_i$. Then, we have that
\[
a_i(Af) = a_i(A)a_i(f)
\]
for all $f\in A_i$. Note that we denote $a_i$ and $b_i$ by $a$ and $b$ if it is clear that the input is in $V_i$ and $R_i$, respectively.

Furthermore, for $L\in\mathcal{L}$ and $i\geq 0$, let $M^{i; L} \in K^{B_i\times A_i}$ denote the matrix such that
\[
M^{i; L}_{rs} = L(r)s
\]
for $r\in B_i$ and $s\in A_i$. 

\begin{lemma}
\label{lemma:commutative2}
Suppose $R$ is commutative. Then, the operators $L(g)$ for $g\in R$ are commutative.
\end{lemma}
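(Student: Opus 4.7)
The plan is to exploit the fact that $L$ is a ring homomorphism directly. Since $L \in \mathcal{L}$, by the definition given just before the lemma, $L$ is a $K$-linear ring homomorphism from $R$ to $\text{End}_K(V)$, so in particular $L(gh) = L(g)L(h)$ for all $g, h \in R$. If $R$ is commutative, then for any $g, h \in R$ we have $gh = hg$ as elements of $R$, so applying $L$ to both sides and using the ring homomorphism property twice gives
\[
L(g)L(h) = L(gh) = L(hg) = L(h)L(g),
\]
which is precisely the statement that the operators $L(g)$ and $L(h)$ commute in $\text{End}_K(V)$. Since $g, h$ were arbitrary elements of $R$, this establishes the lemma.

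This argument is essentially one line and does not require any of the grading machinery, the bases $A_i$ and $B_i$, or the matrix representations $M^{i;L}$ introduced just before; the commutativity of $L(R)$ is a formal consequence of $L$ being a ring homomorphism out of a commutative ring. There is no real obstacle here, and no case analysis or computation is needed. The lemma will be used later as a compatibility statement that justifies treating $L(g)$ for $g \in R$ as a commuting family of operators, analogous to \Cref{lemma:commutative} for the Dunkl operators, which is the motivating example from \Cref{example:dunkl}.
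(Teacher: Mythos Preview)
Your proof is correct and is essentially identical to the paper's own proof, which consists of the same one-line chain of equalities $L(g)L(h)=L(gh)=L(hg)=L(h)L(g)$.
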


\begin{proof}
Suppose $g,h\in R$. Then, $L(g)L(h)=L(gh)=L(hg)=L(h)L(g)$.
\end{proof}

\begin{lemma}
\label{lemma:matrixeq}
Suppose $i\geq 0$, $f\in V_i$, and $g\in R_i$. Then,
\[
L(g)f =  b(g)^TM^{i;L}a(f).
\]
\end{lemma}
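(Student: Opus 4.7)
The plan is to prove this directly by expanding $g$ and $f$ in the chosen bases and then using the two layers of $K$-linearity available. First I would write $g = \sum_{r \in B_i} b(g)_r \, r$ and $f = \sum_{s \in A_i} a(f)_s \, s$, where $b(g)_r, a(f)_s \in K$ are the coordinate entries read off from the definitions of $b_i$ and $a_i$.

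Next, I would invoke $K$-linearity of $L$ viewed as a map $R \to \mathrm{End}_K(V)$ to pull out $b(g)_r$, and then $K$-linearity of each operator $L(r) \in \mathrm{End}_K(V)$ to pull out $a(f)_s$, giving
\[
L(g)f \;=\; \sum_{r \in B_i} \sum_{s \in A_i} b(g)_r \, a(f)_s \, L(r)s.
\]
A small point that deserves a sentence is that although $K$ is not assumed central in $R$, the $K$-linearity condition on $L$ still allows us to move scalars through; part (B) of \Cref{lemma:center} is exactly the statement that no issue arises from the noncommutativity. Then by the very definition of $M^{i;L}$, $L(r)s = M^{i;L}_{rs}$, so the double sum becomes
\[
L(g)f \;=\; \sum_{r \in B_i} \sum_{s \in A_i} b(g)_r \, M^{i;L}_{rs} \, a(f)_s \;=\; b(g)^T M^{i;L} a(f),
\]
where the last equality is the standard bilinear-form expression of a matrix product.

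There is essentially no obstacle here: the statement is a bookkeeping identity unpacking the definitions of $a$, $b$, and $M^{i;L}$. The only thing worth being careful about is confirming that $L(g)f$ indeed lies in $V_0 = K$ (so that it makes sense to equate it to a scalar $b(g)^T M^{i;L} a(f)$), which is immediate from the grading condition $L(R_i)V_i \subset V_0$ imposed in the definition of $\mathcal{L}$.
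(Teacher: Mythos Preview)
Your proof is correct and follows essentially the same approach as the paper: expand $f$ and $g$ in the bases $A_i$ and $B_i$, use bilinearity to reduce to $L(r)s = M^{i;L}_{rs}$, and recognize the resulting double sum as the matrix product $b(g)^T M^{i;L} a(f)$. Your added remarks about $K$-linearity and the codomain $V_0=K$ are harmless elaborations on what is fundamentally the same bookkeeping argument.
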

\begin{proof}
First, observe that
\[
f = \sum_{r\in A_i} a(f)_rr \text{ and } g = \sum_{s\in B_i} b(g)_ss.
\]
We therefore have that
\begin{align*}
L(g)f = \sum_{r\in A_i,\,s\in B_i} b(g)_sa(f)_rL(s)r = \sum_{r\in A_i,\,s\in B_i} b(g)_s M^{i;L}_{sr}a(f)_r = b(g)^TM^{i;L}a(f).
\end{align*}
\end{proof}

\begin{definition}
\label{def:degreepreserve}
Suppose $\mathcal{V}\in \text{End}_K(V)$. Then, $\mathcal{V}$ is \textit{degree-preserving} if $\mathcal{V}V_i\subset V_i$ for all $i\geq 0$.
\end{definition}

For $\mathcal{V}\in\text{End}_K(V)$ that is degree-preserving, let $\mathcal{V}_i$ denote its restriction to $V_i$ for all $i\geq 0$. Then, it is evident that $(a(\mathcal{V}_i))_{i\geq 0}$ is a representation for the action of $\mathcal{V}$ on $V$. A degree-preserving operator we consider is an intertwining operator, which is well studied in the context of Dunkl operators.

\begin{definition}
\label{def:intertwine}
Suppose $L_1,L_2\in\mathcal{L}$. Then, $(L_1, L_2)$ \textit{intertwines} with $\mathcal{V} \in \text{End}_K(V)$ if:
\begin{enumerate}
\item The operator $\mathcal{V}$ is degree-preserving.
\item For $k\in V_0\triangleq K$, $\mathcal{V}k = k$.
\item For all $f\in V$ and $g\in R$, $L_1(g)\mathcal{V} f = \mathcal{V}L_2(g)f$ 
\end{enumerate}
Furthermore, if these conditions are satisfied, then $(L_1, L_2)$ is \textit{intertwining}.
\end{definition}

\begin{remark}
If $(L_1,L_2)$ is intertwining, we do not necessarily have that $(L_2,L_1)$ is intertwining, although this is the case if $L_2$ is invertible, see \Cref{lemma:intertwine_reorder}. We discuss invertibility in the next section.
\end{remark}

Let $\mathcal{F}(V)$ denote the abelian group of formal power series $(E^i)_{i\geq 0}$ such that $E^i\in V_i$ for all $i\geq 0$. For the addition operation, we have that $(E_1^i)_{i\geq0}+(E_2^i)_{i\geq0}\triangleq(E_1^i+E_2^i)_{i\geq0}$.

Suppose $\Psi\in \text{Hom}_K(R,K)$, which is the set of $K$-linear ring homomorphisms from $R$ to $K$. The formal power series $E\triangleq (E^i)_{i\geq 0} \in \mathcal{F}(V)$ is a \textit{$\Psi$-eigenvector} of $L\in\mathcal{L}$ if for all $f\in R$, we have that
\[
L(f)E = \Psi(f) E.
\]
More specifically, $E$ is a \textit{$\Psi$-eigenvector} of $L$ if for all $j\geq i\geq 0$ and $f\in R_i$,
\[
L(f) E^j = \Psi(f) E^{j-i}.
\]

\begin{definition}
\label{def:kernel}
Suppose $\mathcal{S}\subset \text{Hom}_K(R,K)$. Define the \textit{kernel} of $\mathcal{S}$, which we denote as $\text{ker}(\mathcal{S})$, to be the set of $r\in R$ such that $\Psi(r)=0$ for all $\Psi\in\mathcal{S}$.
\end{definition}

In the next section, we introduce the notion of invertible elements of $\mathcal{L}$, which is related to intertwining pairs and eigenvectors, see \Cref{thm:equivalences}.

\section{Invertible graded rings of operators}
\label{sec:invertible}

In this section, we introduce and characterize the invertible elements of $\mathcal{L}$. The notion of invertibility generalizes the notion of singular Dunkl operators arising from certain multiplicity functions that has been studied in \cites{dunklbessel,dunkl_singular_poly}, with a singular Dunkl operator corresponding to a non-invertible element of $\mathcal{L}$. First, we define when an element of $\mathcal{L}$ is invertible; the definition is analogous to \Cref{def:dunkl_invertible}.

\begin{definition} 
\label{def:invertible}
The homomorphism $L\in\mathcal{L}$ is \textit{invertible} if $M^{i;L}$ is an invertible square matrix with nonzero dimensions for all $i\geq 0$. 
\end{definition}

\begin{corollary}
If $L\in\mathcal{L}$ is invertible, then $\text{dim}(R_i)=\text{dim}(V_i) \geq 1\Leftrightarrow |A_i|=|B_i| \geq 1$ for all $i\geq 0$.
\end{corollary}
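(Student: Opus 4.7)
The plan is to observe that this corollary follows almost immediately from unpacking the definition of invertibility in the preceding definition together with the construction of the matrix $M^{i;L}$.

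First, I would recall that by construction, $M^{i;L}\in K^{B_i\times A_i}$, so $M^{i;L}$ has $|B_i|$ rows indexed by $B_i$ and $|A_i|$ columns indexed by $A_i$. The hypothesis that $L\in\mathcal{L}$ is invertible says that for every $i\geq 0$, the matrix $M^{i;L}$ is a square invertible matrix with nonzero dimensions. Squareness forces the row and column index sets to have the same cardinality, so $|A_i|=|B_i|$, and the nonzero-dimension requirement gives $|A_i|=|B_i|\geq 1$.

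Next, I would translate this back into dimensions of vector spaces. Since $A_i$ and $B_i$ are $K$-linear bases of $V_i$ and $R_i$ respectively, we have $\dim_K(V_i)=|A_i|$ and $\dim_K(R_i)=|B_i|$. Combining with the previous paragraph yields $\dim(R_i)=\dim(V_i)\geq 1$, and the biconditional $\dim(R_i)=\dim(V_i)\geq 1\Longleftrightarrow |A_i|=|B_i|\geq 1$ is then a direct restatement via these basis cardinalities.

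There is essentially no obstacle here, since the corollary is a pure definition-chase: the content is that the matrix $M^{i;L}$ is indexed by the two basis sets and invertibility requires it to be square. The only thing worth being slightly careful about is the implicit use of the fact that, in the chosen conventions, the dimension of a finite-dimensional $K$-vector space equals the cardinality of any $K$-basis, which is part of the standing assumption that $V_i$ and $R_i$ are finite-dimensional $K$-vector spaces.
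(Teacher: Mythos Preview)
Your proof is correct and matches the paper's approach: the paper states this corollary without proof immediately after the definition of invertibility, treating it as an obvious consequence of the fact that $M^{i;L}\in K^{B_i\times A_i}$ must be square with nonzero dimensions. Your definition-chase is exactly what the reader is expected to fill in.
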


\begin{corollary}
\label{cor:zero}
Suppose $L\in\mathcal{L}$, $A_0=\{a_0\}$, and $B_0=\{b_0\}$ for $a_0,b_0\in K^{\times}$. Then, $M^{0;L}=[a_0b_0]$.
\end{corollary}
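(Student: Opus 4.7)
The plan is to compute $M^{0;L}$ directly from its defining formula. By definition, $M^{0;L}$ is a matrix with rows indexed by $B_0$ and columns indexed by $A_0$, and under the hypothesis $A_0=\{a_0\}$, $B_0=\{b_0\}$ it reduces to a $1\times 1$ matrix whose sole entry is $L(b_0)\,a_0$. So the task is to identify this single scalar.

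First I would unpack what $b_0$ is. Since $R$ is a graded ring with multiplicative identity $1\in R_0$, we have $K\cdot 1\subset R_0$, and because $B_0=\{b_0\}$ is a $K$-basis of $R_0$, the space $R_0$ is one-dimensional, so $R_0=K\cdot 1$. The hypothesis $b_0\in K^\times$ is then shorthand for identifying $b_0$ with its image in $R_0$ under the canonical embedding $K\hookrightarrow R_0,\ k\mapsto k\cdot 1$. In particular $b_0$ lies in (the copy of) $K$ inside $R$, which is what lets us invoke the next step.

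Next I would apply part (A) of \Cref{lemma:center}, which states that $L(k)=k^*$ for every $k\in K$. Taking $k=b_0$ gives $L(b_0)=b_0^*$, i.e.\ the operator on $V$ that multiplies by $b_0$. Since $a_0\in V_0=K$, it follows that $L(b_0)a_0=b_0\cdot a_0=a_0 b_0$, where the product is taken inside $K$. This yields $M^{0;L}=[a_0 b_0]$, as claimed.

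There is no real obstacle: the entire content of the corollary is unwinding the definitions of $M^{i;L}$ and of the degree-zero pieces $V_0=K$ and $R_0=K\cdot 1$, together with the single input that $L$ acts by scalar multiplication on constants (\Cref{lemma:center}(A)). The only thing to be careful about is the implicit identification of $b_0$ with an element of $K$, which the hypothesis $b_0\in K^\times$ is designed to make.
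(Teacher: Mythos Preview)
Your proof is correct and takes essentially the same approach as the paper, which simply cites part (A) of \Cref{lemma:center}. Your version spells out in detail the identifications (of $b_0$ with an element of $K$, and the computation $L(b_0)a_0=b_0\cdot a_0$) that the paper leaves implicit.
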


\begin{proof}
This follows from part (A) of \Cref{lemma:center}.
\end{proof}

We define the notion of $R$ being left- and right-spanned by $S\subset R\backslash R_0$. The case of $R$ being right-spanned by $S=R_1$ is mentioned as a condition in \Cref{thm:equivalences}, the main result of this section, and later the case of $R$ being left-spanned by $S=R_1$ is mentioned as a condition in \Cref{thm:notinvertible}.

\begin{definition}
Suppose $S\subset R\backslash R_0$. Then, $R$ is \textit{left-spanned} by $S$ if $R\subset R_0\cup SR$ and is \textit{right-spanned} by $S$ if $R\subset R_0\cup RS$.
\end{definition}

The following result generalizes ideas that have appeared previously in the study of Dunkl operators in \cites{dunklbessel,dunkl_singular_poly,dunkl_complex}. In particular, \cite{dunkl_singular_poly}*{Section 2} discusses a similar setting regarding operators over a graded vector space and the existence of intertwiners for these operators. We extend this idea by considering a graded ring of operators that acts on a graded vector space and connecting the invertibility of an operator with the existence of an intertwiner. Furthermore, we consider the existence of eigenvectors, which leads to additional applications such as the existence of unique eigenfunctions for the complex Dunkl operators introduced in \cite{dunkl_complex}, although we do not study this direction further.

The key contribution that we discuss in this section while proving \Cref{thm:equivalences} is the analysis of the matrices $\{M^{i;L}\}_{i\geq 0}$. This method allows for straightforward proofs that are applicable in a general setting.

\begin{theorem}
\label{thm:equivalences}
Assume that $\text{dim}(R_i)=\text{dim}(V_i) \geq 1$, $A_i=\{a_{ij}\}_{1\leq j\leq |A_i|}$, and $B_i=\{b_{ij}\}_{1\leq j\leq |A_i|}$ for all $i\geq 0$. The following are equivalent.
\begin{enumerate}
\item[(a)] The homomorphism $L\in\mathcal{L}$ is invertible, which by definition is equivalent to the matrix $M^{i;L}$ being invertible for all $i\geq 0$.
\item[(b)] For all $i\geq 0$, there does not exist nonzero $f\in V_i$ such that $L(g)f=0$ for all $g\in R_i$.
\item[(c)] For all $i\geq 0$, there does not exist nonzero $g\in R_i$ such that $L(g)f=0$ for all $f\in V_i$.
\item[(d)] The equation $\sum_{j=1}^{|A_i|} (L(b_{ij})f)a_{ij} =0$ has no nonzero solutions $f\in V_i$ for all $i\geq 0$.
\item[(e)] For all $i\geq 0$, there exist unique $\text{inv-row}_{ij}\in R_i$ for $1\leq j\leq |A_i|$ such that $L(\text{inv-row}_{ij_1})a_{ij_2} = \mathbf{1}\{j_1=j_2\}$ for $j_1,j_2\in [|A_i|]$.
\item[(f)] For all $i\geq 0$, there exist unique $\text{inv-col}_{ij}\in V_i$ for $1\leq j\leq |A_i|$ such that $L(b_{ij_1}) \text{inv-col}_{ij_2} = \mathbf{1}\{j_1=j_2\}$ for $j_1,j_2\in [|A_i|]$. 
\item[(g)] For some invertible homomorphism $L'\in\mathcal{L}$, $(L,L')$ is intertwining.
\end{enumerate}
Assume that $K\subset\text{center}(R)$. Then, (h) is equivalent to (a).
\begin{enumerate}
\item[(h)] For all homomorphisms $L'\in\mathcal{L}$, $(L,L')$ intertwines with a unique intertwiner.
\end{enumerate}
Assume that $\text{ker}(\text{Hom}_K(R,K))=\{0\}$. Then, (i) and (j) are equivalent to (a).
\begin{enumerate}
\item[(i)] For some $\mathcal{S}\subset \text{Hom}_K(R,K)$ such that $\text{ker}(\mathcal{S})=\{0\}$, there exists a $\Psi$-eigenvector $E$ such that $E^0=1$ for all $\Psi\in\mathcal{S}$.
\item[(j)] For all $\Psi\in \text{Hom}_K(R,K)$, there exists a unique $\Psi$-eigenvector $E$ such that $E^0=1$.
\end{enumerate}
Assume that $R$ is right-spanned by $R_1$. Then, (k) is equivalent to (a).
\begin{enumerate}
\item[(k)] There does not exist $f\in V\backslash{V_0}$ such that $L(g)f=0$ for all $g\in R_1$.
\end{enumerate}
\end{theorem}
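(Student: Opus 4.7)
My plan is to reduce everything to the matrix representation of \Cref{lemma:matrixeq}, which says $L(g)f = b(g)^T M^{i;L} a(f)$ for $f\in V_i$ and $g\in R_i$. Since $\dim(V_i)=\dim(R_i)$, each $M^{i;L}$ is square, so the first block of equivalences (a)--(f) is pure linear algebra at each degree $i$. Condition (b) is triviality of the right kernel of $M^{i;L}$; (c) is triviality of the left kernel; (d) encodes the same right-kernel condition after pairing with the basis $\{a_{ij}\}$; and (e), (f) demand left- and right-inverse families for the pairing $(r,s)\mapsto L(r)s$, which is literally invertibility of $M^{i;L}$. Since a square matrix is invertible iff either kernel is trivial, (a)--(f) collapse together.

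For (g) and (h), note first that (a) $\Rightarrow$ (g) is trivial by taking $L'=L$ and $\mathcal{V}=\mathrm{id}_V$, which intertwines by \Cref{def:intertwine}. Conversely, if $(L,L')$ intertwines with $L'$ invertible through some $\mathcal{V}$, then evaluating in $V_0=K$ gives $M^{i;L}a(\mathcal{V}_i)=M^{i;L'}$ for each $i$, and invertibility of $M^{i;L'}$ together with squareness forces $M^{i;L}$ invertible. For (a) $\Rightarrow$ (h) under $K\subset\text{center}(R)$, I would define $\mathcal{V}$ degree by degree by $a(\mathcal{V}_i)=(M^{i;L})^{-1}M^{i;L'}$, so top-degree intertwining holds by construction; for $g\in R_j$ with $j<i$ and $f\in V_i$, applying any $L(h)$ with $h\in R_{i-j}$ to both sides yields $L(hg)\mathcal{V}_i f=L'(hg)f=L'(h)L'(g)f=L(h)\mathcal{V}_{i-j}L'(g)f$, so the difference $L(g)\mathcal{V}_i f-\mathcal{V}_{i-j}L'(g)f\in V_{i-j}$ lies in the common right kernel of $L(R_{i-j})$ and vanishes by (b). Uniqueness is immediate: the difference $\mathcal{W}$ of two intertwiners vanishes on $V_0$, and for $f\in V_i, g\in R_i$ satisfies $L(g)\mathcal{W}f=\mathcal{W}L'(g)f=0$, so (b) again forces $\mathcal{W}=0$.

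The argument for (i) and (j) is parallel. For (a) $\Rightarrow$ (j), set $E^0=1$ and inductively define $a(E^j)=(M^{j;L})^{-1}[\Psi(r)]_{r\in B_j}$, which forces the top-degree eigenvector equations $L(f)E^j=\Psi(f)$ for $f\in R_j$. For $f\in R_i$ with $i<j$, applying $L(h)$ for $h\in R_{j-i}$ gives $L(h)L(f)E^j=L(hf)E^j=\Psi(hf)=\Psi(h)\Psi(f)=L(h)\Psi(f)E^{j-i}$, using commutativity of $K$ and the top-degree equations at degrees $j$ and $j-i$; the difference $L(f)E^j-\Psi(f)E^{j-i}$ then lies in the common right kernel of $L(R_{j-i})$ and vanishes by (b). The implication (j) $\Rightarrow$ (i) is trivial with $\mathcal{S}=\text{Hom}_K(R,K)$. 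For (i) $\Rightarrow$ (a), if (a) failed then (c) would supply a nonzero $g\in R_i$ with $L(g)V_i=0$; evaluating on any $\Psi$-eigenvector $E$ from $\mathcal{S}$ yields $\Psi(g)=L(g)E^i=0$ for all $\Psi\in\mathcal{S}$, contradicting $\ker(\mathcal{S})=\{0\}$.

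Finally, under right-spanning by $R_1$, (a) $\Rightarrow$ (k) follows because if $f\in V_i$ with $i\geq 1$ has $L(g_1)f=0$ for all $g_1\in R_1$, then any $g\in R_i\subset R_{i-1}R_1$ expands as $\sum_a h_a g_1^{(a)}$ and $L(g)f=\sum_a L(h_a)L(g_1^{(a)})f=0$, so (b) forces $f=0$. For (k) $\Rightarrow$ (a) I would induct on $i$: the cases $i=0,1$ are immediate, and for $i\geq 2$ I take $f\in V_i$ with $L(g)f=0$ for all $g\in R_i$; since $R_i\subset R_{i-1}R_1$, the identities $L(h)L(g_1)f=L(hg_1)f=0$ for $h\in R_{i-1}$ and $g_1\in R_1$ together with the inductive invertibility of $M^{i-1;L}$ give $L(g_1)f=0$ for all $g_1\in R_1$, and then (k) yields $f\in V_0\cap V_i=\{0\}$. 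The main technical obstacle is the cross-degree consistency in (a) $\Rightarrow$ (h) and (a) $\Rightarrow$ (j): the matrix formulas determine $\mathcal{V}_i$ and $E^j$ uniquely from the top-degree equations, and verifying that they automatically satisfy all lower-degree equations relies on the ring-homomorphism properties of $L$ and $\Psi$ together with invertibility of $M^{i;L}$ at lower degrees.
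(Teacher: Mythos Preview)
Your approach is essentially the paper's: reduce (a)--(f) to linear algebra on the square matrices $M^{i;L}$ via \Cref{lemma:matrixeq}, build intertwiners and eigenvectors degree by degree from $(M^{i;L})^{-1}$, and then propagate the top-degree identities to lower degrees by applying $L(h)$ for $h$ of complementary degree and invoking (b). Your arguments for (a)--(g), (i), (j), and (k) are correct and match the paper's lemmas in Section~5 almost step for step.

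There is one genuine gap: you prove (a)$\Rightarrow$(h) but never (h)$\Rightarrow$(a), and this is exactly where the hypothesis $K\subset\text{center}(R)$ is needed. The paper's argument is: given (h), choose some \emph{invertible} $L'\in\mathcal{L}$; then (h) applied to this particular $L'$ yields an intertwiner, so (g) holds and hence (a). But the existence of an invertible $L'$ is not free---it is equivalent to $K\subset\text{center}(R)$ (necessity is \Cref{cor:invertible_center}, sufficiency is the forward reference to \Cref{cor:bijection2}). Without producing such an $L'$, statement (h) applied only to non-invertible $L'$ gives no leverage on $L$. Incidentally, your construction for (a)$\Rightarrow$(h) does not actually use $K\subset\text{center}(R)$; the hypothesis enters only through the converse.

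One minor remark on (k)$\Rightarrow$(a): the clause ``since $R_i\subset R_{i-1}R_1$'' is unnecessary there. You only need $hg_1\in R_i$ to conclude $L(hg_1)f=0$, and that holds by gradedness alone. Right-spanning is used only in the other direction (a)$\Rightarrow$(k); the paper in fact proves (k)$\Rightarrow$(a) without any spanning hypothesis (\Cref{lemma:deg1}(A)).
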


In the remaining portions of this section, we assume that $\text{dim}(R_i)=\text{dim}(V_i) \geq 1$ for all $i\geq 0$. Observe that $\text{dim}(R_0)=\text{dim}(V_0)=1$, so $R_0=K$. The goal of this section is to prove \Cref{thm:equivalences}. We also prove some additional results relating the singular value decompositions of the matrices $\{M^{i;L}\}_{i\geq 0}$ to eigenvectors, see \Cref{prop:svd}, and the structure of non-invertible operators, see \Cref{thm:notinvertible}.

\begin{lemma}
The following is true: $(a)\Leftrightarrow (b) \Leftrightarrow (c)$.
\end{lemma}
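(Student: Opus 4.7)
The plan is to translate conditions (b) and (c) into statements about the (co)kernel of the square matrix $M^{i;L}$ and then conclude by standard linear algebra.

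Fix $i \geq 0$. By \Cref{lemma:matrixeq}, for every $f \in V_i$ and $g \in R_i$ we have
\[
L(g)f = b(g)^T M^{i;L} a(f).
\]
Since $a_i \colon V_i \to K^{A_i}$ and $b_i \colon R_i \to K^{B_i}$ are $K$-linear isomorphisms, identifying $R_i$ with $V_i$ through the chosen bases (which is consistent with the way (d) is phrased, and is possible because the standing hypothesis forces $|A_i| = |B_i|$), the condition in (b) translates to the nonexistence of a nonzero $v \in K^{A_i}$ with $w^T M^{i;L} v = 0$ for all $w \in K^{B_i}$, i.e., with $M^{i;L} v = 0$. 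Thus (b) is equivalent to $\ker(M^{i;L}) = 0$ for every $i$. Analogously, condition (c) translates to the nonexistence of a nonzero $w \in K^{B_i}$ with $w^T M^{i;L} v = 0$ for all $v \in K^{A_i}$, i.e., to $\ker((M^{i;L})^T) = 0$ for every $i$.

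The standing assumption $\dim(R_i) = \dim(V_i) \geq 1$ guarantees that $M^{i;L}$ is a square matrix of positive dimension over the field $K$. For such a matrix, the conditions ``$M^{i;L}$ is invertible'', ``$\ker(M^{i;L}) = 0$'', and ``$\ker((M^{i;L})^T) = 0$'' are all equivalent, via the rank-nullity theorem together with $\mathrm{rank}(M^{i;L}) = \mathrm{rank}((M^{i;L})^T)$. Taking the conjunction over all $i \geq 0$ gives (a) $\Leftrightarrow$ (b) $\Leftrightarrow$ (c).

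The only mildly delicate point is the notational identification between $R_i$ and $V_i$ used in reading the statements of (b) and (d); beyond this, the argument is purely linear-algebraic and presents no real obstacle, since all three conditions collapse to the same rank condition on a single square matrix in each degree.
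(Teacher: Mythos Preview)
Your proposal is correct and is exactly the argument the paper has in mind: the paper gives no proof for this lemma, treating the equivalences as immediate from \Cref{lemma:matrixeq} and the fact that a square matrix over a field is invertible iff it has trivial kernel iff it has trivial cokernel. Your observation about the $R_i$ versus $V_i$ identification in the statement of (b) (and (c), (d)) is well taken; the paper's phrasing is slightly loose here, and your reading via the bases is the intended one.
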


\begin{corollary}
\label{cor:invertible_center}
If $L\in\mathcal{L}$ is invertible, then $K\subset\text{center}(R)$.
\begin{proof}
See part (B) of \Cref{lemma:center} and condition (c).
\end{proof}
\end{corollary}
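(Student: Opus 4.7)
The plan is to fix arbitrary $k\in K$ and $r\in R$ and show $kr=rk$. Because $R$ is graded and $k\in K=R_0$, the commutator $rk-kr$ splits into a sum of graded pieces: if $r=\sum_i r_i$ with $r_i\in R_i$, then $rk-kr=\sum_i(r_ik-kr_i)$, with each summand lying in $R_i$ (since $R_0 R_i,\,R_i R_0\subset R_i$). So it suffices to show $rk=kr$ when $r$ is homogeneous of some degree $i\geq 0$.

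Fix such a homogeneous $r\in R_i$ together with $k\in K$, and set $g:=rk-kr\in R_i$. The key input is part (B) of Lemma \ref{lemma:center}, which immediately gives $L(g)=0^*$, i.e., $L(g)$ is the zero endomorphism of $V$. In particular $L(g)f=0$ for every $f\in V_i$. Using Lemma \ref{lemma:matrixeq}, I rewrite this as $b(g)^T M^{i;L}a(f)=0$ for all $f\in V_i$; letting $f$ range over the basis $A_i$, so that $a(f)$ ranges over the standard basis of $K^{A_i}$, I conclude $b(g)^T M^{i;L}=0$. Since $L$ is invertible, $M^{i;L}$ is an invertible square matrix, so $b(g)=0$ and hence $g=0$, i.e., $rk=kr$, as desired.

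The argument is quite short, and I do not anticipate any real obstacle: the only conceptual step is recognizing that $rk-kr$ is itself a homogeneous element of $R_i$ on which Lemma \ref{lemma:center}(B) can be tested, after which invertibility of $M^{i;L}$ supplies exactly what is needed to upgrade the vanishing of the operator $L(g)$ to the vanishing of $g$ itself. As a stylistic alternative, one could skip the explicit matrix formulation and simply quote condition (c) from the preceding theorem, which asserts that for an invertible $L$ no nonzero element of $R_i$ maps to $0^*$ under $L$; applied to $g=rk-kr$ this again forces $g=0$.
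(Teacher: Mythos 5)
Your proof is correct and follows essentially the same route as the paper: apply Lemma~\ref{lemma:center}(B) to get $L(rk-kr)=0^*$ for each homogeneous piece, then use invertibility of $M^{i;L}$ to conclude $rk-kr=0$. (The paper cites condition~(b) of Theorem~\ref{thm:equivalences} while your argument maps most directly onto condition~(c)/(d) or the explicit matrix statement, but these are all equivalent under invertibility, so this is only a difference in which equivalent form is quoted.)
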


In fact, $K\subset\text{center}(R)$ is a necessary and sufficient condition for the existence of invertible $L\in \mathcal{L}$, see \Cref{thm:bijection} in addition to the previous corollary.

\subsection{Statements (d), (e), and (f)}

We consider (d), (e), and (f). Recall that $A_i=\{a_{ij}\}_{1\leq j\leq |A_i|}$ and $B_i=\{b_{ij}\}_{1\leq j\leq |A_i|}$ for $i\geq 0$.

\begin{lemma}
\label{lemma:eigenvalue}
Suppose $i\geq 0$ and $v\in K^{A_i}$. Define $\varphi: K^{A_i}\rightarrow K^{B_i}$ by $\varphi(a(a_{ij}))= b(b_{ij})$ for $1\leq j\leq |A_i|$. Let $f=a^{-1}(v)\triangleq \sum_{s\in A_i} v_ss$. Then, for $L\in\mathcal{L}$ and $\lambda \in K$, $M^{i;L}v=\lambda \varphi(v) $ if and only if $\sum_{j=1}^{|A_i|} (L(b_{ij}) f)a_{ij} = \lambda f$.
\end{lemma}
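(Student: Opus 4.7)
The plan is to decode both identities in terms of a common system of $|A_i|$ scalar equations and observe that they coincide. The key input is \Cref{lemma:matrixeq}, which gives $L(g)f = b(g)^T M^{i;L} a(f)$ whenever $g \in R_i$ and $f \in V_i$. Applied with $g = b_{ij}$, and using that $b(b_{ij})$ is the standard basis vector of $K^{B_i}$ supported at $b_{ij}$, I obtain
\[
L(b_{ij})f \;=\; \bigl(M^{i;L} v\bigr)_{b_{ij}}
\]
for each $j \in [|A_i|]$, where $v = a(f)$ by construction of $f$.

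On one hand, the definition of $\varphi$ says that the $b_{ij}$-coordinate of $\varphi(v) \in K^{B_i}$ is the $a_{ij}$-coordinate of $v \in K^{A_i}$, which I denote $v_j$. Thus the matrix identity $M^{i;L} v = \lambda \varphi(v)$ is equivalent, coordinate-by-coordinate in the basis $B_i$, to the scalar system
\[
L(b_{ij})f \;=\; \lambda v_j \qquad \text{for every } j \in [|A_i|].
\]
On the other hand, expanding $f = \sum_{j} v_j a_{ij}$ and using the linear independence of $\{a_{ij}\}_{j}$ in $V_i$, the identity $\sum_{j} (L(b_{ij})f)\, a_{ij} = \lambda f$ in $V_i$ is equivalent to matching the coefficient of each $a_{ij}$, which again yields exactly the same system $L(b_{ij})f = \lambda v_j$ for every $j$.

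The two conditions therefore coincide, which gives the claimed equivalence. There is no real obstacle here; the only thing requiring care is the index bookkeeping between $K^{A_i}$ and $K^{B_i}$ under $\varphi$, which amounts to the relabeling $a_{ij} \leftrightarrow b_{ij}$.
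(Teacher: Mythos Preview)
Your proof is correct and follows essentially the same approach as the paper: both reduce the matrix identity $M^{i;L}v=\lambda\varphi(v)$ to the coordinatewise system $L(b_{ij})f=\lambda v_{a_{ij}}$ via \Cref{lemma:matrixeq}, then invoke the linear independence of $\{a_{ij}\}$ to identify this system with $\sum_j (L(b_{ij})f)a_{ij}=\lambda f$. Your write-up is slightly more explicit about the index bookkeeping, but the argument is the same.
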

\begin{proof}
We have that $M^{i;L}v=\lambda\varphi(v)$ if and only if
\[
L(b_{ij})f = (M^{i;L}v)_{b_{ij}} = \lambda v_{a_{ij}} \,\,\, \forall 1\leq j\leq |A_i|.
\]
This is equivalent to $\sum_{j=1}^{|A_i|} (L(b_{ij}) f) a_{ij} = \lambda \sum_{j=1}^{|A_i|}v_{a_{ij}}a_{ij}  = \lambda f$.
\end{proof}

\begin{corollary}
The conditions (a) and (d) are equivalent.
\end{corollary}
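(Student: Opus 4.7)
The plan is to derive the equivalence directly from \Cref{lemma:eigenvalue} by specializing the lemma to $\lambda=0$. Everything else is then a matter of observing that $a: V_i\rightarrow K^{A_i}$ is an isomorphism and that $M^{i;L}$ is a square matrix under the standing dimension assumption.

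First I would unpack (d). Condition (d) asserts that for every $i\geq 0$, the only $f\in V_i$ (interpreting $f$ via the identification $V_i\cong K^{A_i}$ induced by the basis $A_i$; this is the only way the expression $L(b_{ij})f$ is well-defined, since each $L(b_{ij})$ acts on $V$) satisfying
\[
\sum_{j=1}^{|A_i|}\bigl(L(b_{ij})f\bigr)\,a_{ij}=0
\]
is $f=0$. Applying \Cref{lemma:eigenvalue} with $\lambda=0$, the displayed equation is equivalent to $M^{i;L}v=0$, where $v=a(f)\in K^{A_i}$. Since $a$ is a $K$-linear bijection between $V_i$ and $K^{A_i}$, the quantifier ``for every nonzero $f\in V_i$'' transfers to ``for every nonzero $v\in K^{A_i}$''.

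Next I would combine this with the square matrix observation. By the standing hypothesis of the theorem, $\dim(R_i)=\dim(V_i)\geq 1$, so $|A_i|=|B_i|$ and $M^{i;L}\in K^{B_i\times A_i}$ is a square matrix. For a square matrix over a field, invertibility is equivalent to having trivial kernel. Therefore, fixing $i\geq 0$, we have: $M^{i;L}$ is invertible if and only if $M^{i;L}v=0$ forces $v=0$, which by the previous paragraph holds if and only if the equation in (d) has no nonzero solution in $V_i$ for this value of $i$. Taking the conjunction over all $i\geq 0$ yields $(a)\Leftrightarrow(d)$.

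The argument is essentially immediate once \Cref{lemma:eigenvalue} is in hand, so there is no genuine obstacle; the only point requiring care is the implicit identification between elements of $V_i$ and coordinate vectors in $K^{A_i}$, which must be used consistently so that the indexing of the relation $\sum_j (L(b_{ij})f)a_{ij}=0$ matches the kernel equation $M^{i;L}v=0$ row by row. Everything else amounts to combining the bijectivity of $a$ with the square-matrix dichotomy ``injective $\Leftrightarrow$ invertible''.
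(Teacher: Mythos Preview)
Your proof is correct and follows essentially the same approach as the paper's: the paper's proof reads ``This follows from (b) or checking that $M^{i;L}$ has no zero eigenvalues for all $i\geq 0$ using \Cref{lemma:eigenvalue},'' and you have carried out the second of these two options in detail by specializing \Cref{lemma:eigenvalue} to $\lambda=0$ and using that $M^{i;L}$ is square. Your remark that $f$ must lie in $V_i$ rather than $R_i$ (as literally written in (d)) is well taken and matches how \Cref{lemma:eigenvalue} is stated.
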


\begin{proof}
This follows from (b) or checking that $M^{i;L}$ has no zero eigenvalues for all $i\geq 0$ using \Cref{lemma:eigenvalue}.
\end{proof}

\begin{lemma}
\label{lemma:invcol}
The conditions (a) and (e) are equivalent. If either is satisfied, $b(\text{inv-row}_{ij})$ is the transpose of row $b_{ij}$ of $(M^{i;L})^{-1}$ for $i\geq 0$ and $1\leq j\leq |A_i|$.
\end{lemma}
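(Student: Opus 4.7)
The plan is to translate condition (e) at each fixed degree $i \geq 0$ into a matrix equation via Lemma~\ref{lemma:matrixeq}, after which the equivalence with (a) reduces to the elementary fact that a square matrix has a (unique) left inverse precisely when it is invertible. First I would observe that for any $f \in R_i$ and $1 \leq j_2 \leq |A_i|$, Lemma~\ref{lemma:matrixeq} gives
\[
L(f)\,a_{ij_2} \,=\, b(f)^T M^{i;L} a(a_{ij_2}) \,=\, b(f)^T \bigl(\text{column } a_{ij_2} \text{ of } M^{i;L}\bigr).
\]
Hence the system of equations in (e) with $f = \text{inv-row}_{ij_1}$, namely $L(\text{inv-row}_{ij_1})\,a_{ij_2} = \mathbf{1}\{j_1 = j_2\}$ for all $j_2$, is equivalent to the single row-vector identity
\[
b(\text{inv-row}_{ij_1})^T\, M^{i;L} \,=\, e_{j_1}^T,
\]
where $e_{j_1}$ denotes the standard basis vector of $K^{A_i}$ supported at $a_{ij_1}$.

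Second, letting $j_1$ vary over $[|A_i|]$, I would assemble these into a single matrix equation. Let $X$ be the matrix whose row indexed by $j_1$ equals $b(\text{inv-row}_{ij_1})^T$. Since $b$ is a $K$-linear isomorphism from $R_i$ onto $K^{B_i}$, the existence (resp.\ uniqueness) of the tuple $\{\text{inv-row}_{ij_1}\}_{j_1}$ in $R_i$ is equivalent to the existence (resp.\ uniqueness) of $X$ with $X M^{i;L} = I$. Because $M^{i;L}$ is square by the standing assumption $\dim R_i = \dim V_i$, having a unique left inverse is equivalent to being invertible, in which case $X = (M^{i;L})^{-1}$.

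Third, applying this equivalence at every $i \geq 0$ yields (a) $\Leftrightarrow$ (e). Moreover, when either condition holds, the identity $X = (M^{i;L})^{-1}$ is exactly the asserted identification: $b(\text{inv-row}_{ij})^T$ is the corresponding row of $(M^{i;L})^{-1}$, so $b(\text{inv-row}_{ij})$ is its transpose. There is no real obstacle; the only care needed is in keeping the indexing of the dual bases $A_i$ and $B_i$ straight, and the formula $L(g)f = b(g)^T M^{i;L} a(f)$ from Lemma~\ref{lemma:matrixeq} handles all of that bookkeeping cleanly.
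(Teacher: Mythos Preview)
Your proof is correct and follows essentially the same approach as the paper's own argument: both translate the conditions $L(\text{inv-row}_{ij_1})a_{ij_2}=\mathbf{1}\{j_1=j_2\}$ into the row-by-row characterization of a left inverse of the square matrix $M^{i;L}$, then invoke the elementary fact that a square matrix over a field has a (necessarily unique) left inverse if and only if it is invertible. Your version is slightly more explicit in invoking Lemma~\ref{lemma:matrixeq} and assembling the rows into a single matrix equation $XM^{i;L}=I$, but the content is identical.
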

\begin{proof}
Assume that $L$ is invertible. Suppose $i\geq 0$ and $1\leq j_1\leq |A_i|$. We have that $L(\text{inv-row}_{ij_1})a_{ij_2}=\mathbf{1}\{j_1=j_2\}$ for all $j_2\in [|A_i|]$ if and only if $b(\text{inv-row}_{ij_1})$ is the transpose of row $b_{ij_1}$ of $(M^{i;L})^{-1}$. If the $\text{inv-row}_{ij}$ for $1\leq j\leq |A_i|$ exist, then we can form the inverse of $M^{i;L}$ by setting row $b_{ij}$ of the inverse to be $b(\text{inv-row}_{ij})^T$ for $1\leq j\leq |A_i|$ to show that $L$ is invertible.
\end{proof}

\begin{lemma}
The conditions (a) and (f) are equivalent. If either is satisfied, $a(\text{inv-col}_{ij})$ is column $a_{ij}$ of $(M^{i;L})^{-1}$ for $i\geq 0$ and $1\leq j\leq |A_i|$.
\end{lemma}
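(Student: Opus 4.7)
The plan is to mirror the argument given for the preceding lemma (the equivalence of (a) and (e)), swapping the roles of rows and columns of $M^{i;L}$. The main tool will again be \Cref{lemma:matrixeq}, which rewrites $L(g)f$ in terms of $M^{i;L}$ acting on coordinate vectors.

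Fix $i\geq 0$ and $j_2\in[|A_i|]$, and let $f\in V_i$. By \Cref{lemma:matrixeq}, for each $j_1\in[|A_i|]$ we have
\[
L(b_{ij_1})f \;=\; b(b_{ij_1})^T M^{i;L} a(f),
\]
which is the $b_{ij_1}$-entry of the vector $M^{i;L}a(f)\in K^{B_i}$. Consequently, the requirement $L(b_{ij_1})f=\mathbf{1}\{j_1=j_2\}$ for every $j_1\in[|A_i|]$ is exactly the condition that $M^{i;L}a(f)$ equals the standard basis vector of $K^{B_i}$ supported at $b_{ij_2}$.

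From this observation the equivalence follows at once. If $M^{i;L}$ is invertible for every $i$, then for each $i\geq 0$ and $j_2\in[|A_i|]$ there is a unique $a(\text{inv-col}_{ij_2})\in K^{A_i}$ solving the above equation, namely the column of $(M^{i;L})^{-1}$ indexed by $b_{ij_2}$; applying $a^{-1}$ produces the required $\text{inv-col}_{ij_2}\in V_i$, yielding (f), and proving the identification claimed in the lemma. Conversely, if the $\text{inv-col}_{ij}$'s exist uniquely for every $j\in[|A_i|]$, then the $|B_i|$ standard basis vectors of $K^{B_i}$ all lie in the image of $M^{i;L}$ and each has a unique preimage, so $M^{i;L}$ is bijective and hence invertible (here we use the standing assumption $\dim(R_i)=\dim(V_i)$, so $|A_i|=|B_i|$ and $M^{i;L}$ is square); doing this for every $i$ gives (a).

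I do not expect any real obstacle: the argument is essentially a transpose of the previous lemma, and every ingredient has already been set up. The only minor point to watch is the indexing convention for the rows and columns of $(M^{i;L})^{-1}\in K^{A_i\times B_i}$, so that the identification of $a(\text{inv-col}_{ij})$ with ``column $a_{ij}$ of $(M^{i;L})^{-1}$'' is made consistent with the bijection between $A_i$ and $B_i$ via the common index $j$.
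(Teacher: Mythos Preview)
Your proof is correct and follows essentially the same approach as the paper, which simply says to proceed analogously to the preceding lemma on (a)$\Leftrightarrow$(e). Your observation about the indexing of columns of $(M^{i;L})^{-1}\in K^{A_i\times B_i}$ (namely that they are naturally indexed by $B_i$, and one uses the bijection $a_{ij}\leftrightarrow b_{ij}$ via the common index $j$) is exactly the right clarification of the paper's phrasing.
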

\begin{proof}
See the proof of the previous result.
\end{proof}

The results are more straightforward in the case that $V=R$, in which case we obtain the following direct implication.

\begin{corollary}
\label{cor:vrequal}
Assume that $V=R$ and $A_i=B_i$ for all $i\geq 0$. Then, (a) is equivalent to (d'), (e'), and (f').
\begin{enumerate}
\item[(d')] For all $i\geq 0$, the equation $\sum_{s\in A_i} (L(s)f)s=0$ has no nonzero solutions $f\in R_i$.
\item[(e')] For all $i\geq 0$, there exist unique $\text{inv-row}(r; A_i)\in R_i$ for $r\in A_i$ such that $L(\text{inv-row}(r; A_i))s = \mathbf{1}\{r=s\}$ for $r,s\in A_i$.
\item[(f')] For all $i\geq 0$, there exist unique $\text{inv-col}(r; A_i)\in R_i$ for $r\in A_i$ such that $L(s)\text{inv-col}\\(r; A_i) = \mathbf{1}\{r=s\}$ for $r,s\in A_i$.
\end{enumerate}
\end{corollary}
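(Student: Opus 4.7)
The plan is to specialize Theorem \ref{thm:equivalences} to the setting $V = R$ with $A_i = B_i$ for all $i \geq 0$. Under this identification each basis element appears both as an $a_{ij}$ and as a $b_{ij}$, so I can reparametrize the indexed statements (d), (e), (f) directly in terms of basis elements $r, s \in A_i = B_i$. The corollary will then follow by rewriting.

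First I would show that (d$'$) is just (d) reindexed. In (d) the sum is $\sum_{j=1}^{|A_i|} (L(b_{ij})f) a_{ij}$; since $a_{ij} = b_{ij}$, letting $s$ range over $B_i$ in place of the index $j$ yields $\sum_{s \in B_i} (L(s)f) s$. Here $L(s)f$ lies in $V_0 = K = R_0$ because $s \in R_i$ and $f \in V_i = R_i$, so the $K$-scalar multiple $(L(s)f)\,s$ equals the ring product $s\,L(s)f \in R_i$. Thus the equation in (d$'$) is literally the equation in (d), and non-existence of nonzero solutions is the same statement. Next I would rewrite (e) and (f) in the same way: setting $\mathrm{inv\text{-}row}(r;A_i) = \mathrm{inv\text{-}row}_{ij}$ for $r = a_{ij}$ makes the defining relations $L(\mathrm{inv\text{-}row}(r;A_i))\,s = \mathbf{1}\{r = s\}$ in (e$'$) identical to those in (e), and similarly $\mathrm{inv\text{-}col}(r;A_i) = \mathrm{inv\text{-}col}_{ij}$ converts (f) into (f$'$).

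With the three reindexings in hand, invoking the equivalences $(a) \Leftrightarrow (d) \Leftrightarrow (e) \Leftrightarrow (f)$ of Theorem \ref{thm:equivalences} immediately gives $(a) \Leftrightarrow (d') \Leftrightarrow (e') \Leftrightarrow (f')$. There is no substantive obstacle: the only point requiring care is checking that our relabelings $j \leftrightarrow a_{ij} = b_{ij}$ are well-defined bijections at each graded level, which is automatic from the hypothesis $A_i = B_i$, together with the observation that in the setting $V = R$ the values $L(s)f$ lie in $R_0 = K$ so that ring multiplication and $K$-scalar multiplication coincide.
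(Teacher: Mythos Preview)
Your proposal is correct and takes essentially the same approach as the paper, which presents this corollary without proof as an immediate specialization of the equivalences $(a)\Leftrightarrow(d)\Leftrightarrow(e)\Leftrightarrow(f)$ already established. Your reindexing argument and your explicit check that the scalar $(L(s)f)\,s$ agrees with the ring product $s\,L(s)f$ (since $L(s)f\in R_0=K=V_0$) make the implicit reasoning precise.
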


\subsection{Statements (g) and (h)}

\begin{lemma}[$(a)\Leftrightarrow (g)$]
\label{lemma:invertible}
Assume that $L_2\in\mathcal{L}$ is invertible. Then, $(L_1, L_2)$ is intertwining if and only if $L_1\in\mathcal{L}$ is invertible.
\end{lemma}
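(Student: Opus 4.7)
The plan is to translate the intertwining condition into a matrix identity relating $M^{i;L_1}$, $M^{i;L_2}$, and $a_i(\mathcal{V}_i)$, and then exploit the squareness of these matrices (which is forced by the assumption that $L_2$ is invertible) together with \Cref{cor:zero} to handle degree zero. Throughout I will identify $\mathcal{V}$ with the sequence $(\mathcal{V}_j)_{j\geq 0}$ of restrictions to each $V_j$, which is permitted since $\mathcal{V}$ is degree-preserving.

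For the forward direction, suppose $(L_1,L_2)$ intertwines with $\mathcal{V}$. Since $L_2$ is invertible, $\dim R_i=\dim V_i\geq 1$ for all $i\geq 0$, so $M^{i;L_1}$ and $M^{i;L_2}$ are both square matrices of size $|A_i|$. Fix $i\geq 0$ and apply the intertwining identity to $f=a_{ij_2}\in V_i$ and $g=b_{ij_1}\in R_i$. Since $L_2(g)f\in V_0=K$ and $\mathcal{V}$ restricts to the identity on $V_0$, the relation $L_1(g)\mathcal{V}f=\mathcal{V}L_2(g)f$ reduces to $L_1(b_{ij_1})\mathcal{V}a_{ij_2}=L_2(b_{ij_1})a_{ij_2}$. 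Expanding $\mathcal{V}a_{ij_2}$ in the basis $A_i$ and using \Cref{lemma:matrixeq} gives the matrix identity
\[
M^{i;L_1}\,a_i(\mathcal{V}_i)\;=\;M^{i;L_2}.
\]
Because $M^{i;L_2}$ is invertible, $M^{i;L_1}$ admits a right inverse; being square, it is invertible. This holds for every $i$, so $L_1$ is invertible.

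For the reverse direction, suppose $L_1$ is invertible and define $\mathcal{V}$ by setting $a_j(\mathcal{V}_j)\triangleq (M^{j;L_1})^{-1}M^{j;L_2}$ for every $j\geq 0$. By \Cref{cor:zero}, $M^{0;L_1}=M^{0;L_2}=[a_0 b_0]$, so $a_0(\mathcal{V}_0)=1$ and $\mathcal{V}$ fixes $V_0=K$ pointwise. By construction, $\mathcal{V}$ is degree-preserving, and the matrix identity $M^{j;L_1}a_j(\mathcal{V}_j)=M^{j;L_2}$ precisely encodes the top-degree intertwining relation $L_1(g)\mathcal{V}f=L_2(g)f$ for $g\in R_j$ and $f\in V_j$. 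The remaining case is the mixed-degree intertwining for $g\in R_i$ and $f\in V_j$ with $1\leq i<j$; this will be the main step, and I will verify it by pairing.

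For that step, set $\Delta\triangleq L_1(g)\mathcal{V}f-\mathcal{V}L_2(g)f\in V_{j-i}$. For any $h\in R_{j-i}$, applying $L_1(h)$ to the first summand and using that $L_1$ is a ring homomorphism yields $L_1(h)L_1(g)\mathcal{V}f=L_1(hg)\mathcal{V}f$, and since $hg\in R_j$ the top-degree case already established gives $L_1(hg)\mathcal{V}f=L_2(hg)f$. Applying $L_1(h)$ to the second summand and using the top-degree case for $\mathcal{V}_{j-i}$ gives $L_1(h)\mathcal{V}L_2(g)f=L_2(h)L_2(g)f=L_2(hg)f$. Hence $L_1(h)\Delta=0$ for every $h\in R_{j-i}$, and non-degeneracy of the pairing $(h,v)\mapsto L_1(h)v$ on $R_{j-i}\times V_{j-i}$, which is condition (b) equivalent to $L_1$ invertible, forces $\Delta=0$. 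Thus $(L_1,L_2)$ intertwines with $\mathcal{V}$. The main obstacle to keep in mind is precisely this mixed-degree identity, which is the only place where the full power of $L_1$'s invertibility (beyond the matrix $M^{j;L_1}$ merely being invertible for defining $\mathcal{V}_j$) is needed.
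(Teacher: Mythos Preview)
Your proof is correct and follows essentially the same approach as the paper: derive the matrix identity $M^{i;L_1}a_i(\mathcal{V}_i)=M^{i;L_2}$ for the forward direction, construct $\mathcal{V}$ via $a_j(\mathcal{V}_j)=(M^{j;L_1})^{-1}M^{j;L_2}$ for the reverse direction, and verify the mixed-degree intertwining by pairing against all $h\in R_{j-i}$ and invoking condition (b). One small omission: you handle the top-degree case $i=j$ and the mixed case $1\le i<j$, but do not explicitly address $i=0$, $j\ge 1$; the paper covers this separately via \Cref{lemma:center}, though it is immediate from $K$-linearity of $\mathcal{V}$ and $L_1(k)=L_2(k)=k^*$ for $k\in K$.
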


\begin{proof}
Assume that $(L_1,L_2)$ intertwines with $\mathcal{V}$. Then, by \Cref{lemma:matrixeq}, we have that for $i\geq 0$ and for all $f\in V_i$ and $g\in R_i$,
\begin{equation}
\label{eq:product_1}
L_1(g)\mathcal{V}f = b(g)^TM^{i;L_1}a(\mathcal{V}_i)a(f);
\end{equation}
recall that $\mathcal{V}_i\triangleq\mathcal{V}|_{V_i}$. Since $\mathcal{V}$ is the identity transformation over $V_0$,
\begin{equation}
\label{eq:product_2}
\mathcal{V}L_2(g)f = b(g)^TM^{i;L_2}a(f).
\end{equation}
We therefore have that $M^{i;L_1}a(\mathcal{V}_i)=M^{i;L_2}$. Then, since $L_2$ is invertible, $L_1$ is invertible.

Next, assume that $L_1$ is invertible. For $i\geq 0$, define $C^i\triangleq(M^{i;L_1})^{-1}M^{i; L_2}$ and $\mathcal{V}_i: V_i\rightarrow V_i$ by $a(\mathcal{V}_i)\triangleq C^i$;
this uniquely defines the degree-preserving operator $\mathcal{V}$. By \Cref{cor:zero}, $\mathcal{V}$ acts as the identity over $V_0$. Then, from \pref{eq:product_1} and \pref{eq:product_2}, $L_1(g)\mathcal{V}f =\mathcal{V}L_2(g)f$ whenever $f\in V_i$ and $g\in R_i$.

Suppose $0\leq i_1<i_2$. We prove that for all $f\in V_{i_2}$ and $g\in R_{i_1}$,
\[
L_1(g)\mathcal{V}f = \mathcal{V}L_2(g)f
\]
to show that $(L_1,L_2)$ intertwines with $\mathcal{V}$. Since $\mathcal{V}$ is a linear operator, this is evident when $i_1=0$ after applying \Cref{lemma:center}, so assume that $i_1\geq 1$. Note that for all $h\in R_{i_2-i_1}$,
\[
L_1(h)L_1(g)\mathcal{V}f = \mathcal{V}L_2(hg) f = L_1(h) \mathcal{V} L_2(g) f.
\]
Since this expression is true for all $h$, by (b), we have that $L_1(g)\mathcal{V}f = \mathcal{V}L_2(g) f$.
\end{proof}

\begin{lemma}
\label{lemma:bijection}
If $L_2\in\mathcal{L}$ is invertible and $(L_1,L_2)$ intertwines with $\mathcal{V}$ for some $L_1\in\mathcal{L}$, then $\mathcal{V}$ is unique and is a bijection from $V$ to $V$.
\end{lemma}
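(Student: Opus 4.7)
The plan is to directly exploit the matrix identity $M^{i;L_1}\, a(\mathcal{V}_i) = M^{i;L_2}$ for each $i \geq 0$, which falls out of the intertwining condition via \Cref{lemma:matrixeq} applied to both sides of $L_1(g)\mathcal{V}f = \mathcal{V}L_2(g)f$ for $f \in V_i$ and $g \in R_i$ (using that $\mathcal{V}$ acts as the identity on $V_0$, so $a(\mathcal{V}_0)$ may be suppressed on the right-hand side). First I would invoke \Cref{lemma:invertible}: since $L_2$ is invertible and $(L_1, L_2)$ is intertwining, $L_1$ is also invertible, so each $M^{i;L_1}$ is an invertible square matrix.

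For uniqueness, the identity above can then be solved explicitly as $a(\mathcal{V}_i) = (M^{i;L_1})^{-1}M^{i;L_2}$, so each $\mathcal{V}_i$ is completely determined by $L_1$ and $L_2$. Since $\mathcal{V}$ is degree-preserving by \Cref{def:intertwine}, it is determined by its restrictions $\mathcal{V}_i$ to the graded pieces, giving uniqueness of $\mathcal{V}$ as a member of $\text{End}_K(V)$.

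For bijectivity, observe that $a(\mathcal{V}_i) = (M^{i;L_1})^{-1}M^{i;L_2}$ is a product of two invertible square matrices over $K$ and therefore invertible. Hence $\mathcal{V}_i : V_i \to V_i$ is a bijection for each $i \geq 0$. Since $V = \bigoplus_{i \geq 0} V_i$ and $\mathcal{V}$ decomposes as the direct sum of the $\mathcal{V}_i$, we conclude that $\mathcal{V}$ is a bijection on $V$. The essential work was already carried out in \Cref{lemma:invertible}, so there is no serious obstacle; the only point worth confirming is that the conclusions on each graded piece propagate to $\mathcal{V}$ as a whole, which is immediate from the degree-preservation condition together with $V_0 = K$ and the fact that $\mathcal{V}$ is the identity there.
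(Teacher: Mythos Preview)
Your proposal is correct and follows essentially the same approach as the paper: both derive the matrix identity $M^{i;L_1}a(\mathcal{V}_i)=M^{i;L_2}$, invoke \Cref{lemma:invertible} to get invertibility of $M^{i;L_1}$, solve for $a(\mathcal{V}_i)$ to obtain uniqueness, and observe that this matrix is invertible (as a product of invertible matrices) to conclude bijectivity on each graded piece and hence on $V$. Your write-up is slightly more explicit about how the matrix identity arises from \Cref{lemma:matrixeq} and about propagating from the graded pieces to all of $V$, but the argument is the same.
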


\begin{proof}
Suppose $i\geq 0$. We have that $M^{i;L_1}a(\mathcal{V}_i) = M^{i; L_2}$. Since the matrices $M^{i;L_1}$ are invertible for $i\geq 0$ by \Cref{lemma:invertible}, $\mathcal{V}$ is unique.

To prove that $\mathcal{V}$ is a bijection, it suffices to prove that it is a bijective endomorphism of $V_i$, since it is degree-preserving. However, this is clear, since the action of $\mathcal{V}$ on $V_i$ is isomorphic to multiplying by the invertible matrix $a(\mathcal{V}_i)$.
\end{proof}

\begin{lemma}[$(a)\Leftrightarrow (h)$]
Assume that $K\subset\text{center}(R)$. The homomorphism $L_1\in\mathcal{L}$ is invertible if and only if $(L_1, L_2)$ intertwines with a unique operator for all $L_2\in\mathcal{L}$.
\end{lemma}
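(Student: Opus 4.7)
The plan is to split the biconditional and put most of the work into the forward direction $(a) \Rightarrow (h)$, which mirrors the construction in \Cref{lemma:invertible} with the roles of the two homomorphisms exchanged. Given invertible $L_1$ and an arbitrary $L_2 \in \mathcal{L}$, I would define $\mathcal{V}_i \in \text{End}_K(V_i)$ for each $i \geq 0$ by prescribing its matrix $a(\mathcal{V}_i) \triangleq (M^{i;L_1})^{-1} M^{i;L_2}$; this is well-defined since $M^{i;L_1}$ is square and invertible, and I set $\mathcal{V} \triangleq \oplus_{i \geq 0}\mathcal{V}_i$. By construction $\mathcal{V}$ is degree-preserving, and $\mathcal{V}|_{V_0}$ is the identity because \Cref{cor:zero} gives $M^{0;L_1} = M^{0;L_2} = [a_0 b_0]$. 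The same-degree intertwining identity $L_1(g)\mathcal{V}f = \mathcal{V}L_2(g)f$ for $f \in V_i$, $g \in R_i$ then follows directly from $M^{i;L_1}a(\mathcal{V}_i) = M^{i;L_2}$ via \Cref{lemma:matrixeq}. Uniqueness is also immediate, because any intertwiner must satisfy this same matrix identity at every degree and $M^{i;L_1}$ is invertible.

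The remaining task in the forward direction is to upgrade the same-degree identity to arbitrary degrees. For $f \in V_{i_2}$, $g \in R_{i_1}$ with $0 < i_1 < i_2$, I would show that the discrepancy $\Delta \triangleq L_1(g)\mathcal{V}f - \mathcal{V}L_2(g)f \in V_{i_2-i_1}$ vanishes by testing against arbitrary $h \in R_{i_2-i_1}$: using the ring-homomorphism property of both $L_1$ and $L_2$ together with the same-degree intertwining applied at degrees $i_2$ (for $hg$) and $i_2-i_1$ (for $h$) yields $L_1(h)\Delta = 0$ for every $h$. Condition (b) of \Cref{thm:equivalences}, available since $L_1$ is invertible, then forces $\Delta = 0$. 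The case $i_1 = 0$ reduces to $K$-linearity of $\mathcal{V}$ via \Cref{lemma:center}(A), and the case $i_1 > i_2$ is automatic from the grading conventions on $L_1$ and $L_2$.

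The backward direction $(h) \Rightarrow (a)$ is short: the hypothesis $K \subset \text{center}(R)$ guarantees, via \Cref{thm:bijection} (referenced in the remark after \Cref{cor:invertible_center}), the existence of some invertible $L_2 \in \mathcal{L}$; applying (h) to this $L_2$ produces an intertwiner for $(L_1, L_2)$, and \Cref{lemma:invertible} (equivalently the already-established $(g) \Rightarrow (a)$) then yields invertibility of $L_1$. The main obstacle is the extension step in the forward direction: the matrix-level construction of $\mathcal{V}$ only encodes same-degree behavior, and promoting it to a genuine intertwiner at all degrees is where the multiplicative structure of $R$ and the full strength of invertibility of $L_1$ actually enter. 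A secondary subtlety is the backward direction's need for at least one invertible element of $\mathcal{L}$ to exist, which is precisely why the centrality hypothesis on $K$ is imposed.
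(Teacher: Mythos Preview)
Your proposal is correct and follows essentially the same approach as the paper. The paper's proof of the forward direction simply cites ``the second part of the proof of \Cref{lemma:invertible}'' (which, despite that lemma's hypothesis on $L_2$, does not actually use invertibility of $L_2$ in its construction of $\mathcal{V}$ and its upgrade to arbitrary degrees via condition (b)); you have written out that argument explicitly, and your reverse direction---choosing an invertible $L_2$ via the centrality hypothesis and then invoking \Cref{lemma:invertible}---matches the paper's exactly, with \Cref{cor:bijection2} in place of your reference to \Cref{thm:bijection}.
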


\begin{proof}
If $L_1$ is invertible, then from the second part of the proof of \Cref{lemma:invertible}, $(L_1,L_2)$ is intertwining for all $L_2$. The uniqueness of the intertwiner follows from $M^{i;L_1}a(\mathcal{V}_i)=M^{i;L_2}$ for $i\geq 0$. For the reverse direction, we can select $L_2$ that is invertible and apply \Cref{lemma:invertible}; for the existence of an invertible $L_2$, see \Cref{cor:bijection2}.
\end{proof}

\begin{lemma}
\label{lemma:intertwine_reorder}
Suppose $L_2\in\mathcal{L}$ is invertible. If $L_1\in\mathcal{L}$ and $(L_1,L_2)$ intertwines with $\mathcal{V}$, then $\mathcal{V}$ is invertible and $(L_2,L_1)$ intertwines with $\mathcal{V}^{-1}$.
\end{lemma}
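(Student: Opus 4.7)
The plan is to first apply the preceding two lemmas to extract enough structure on $\mathcal{V}$, and then simply to conjugate the given intertwining identity by $\mathcal{V}$. Since $L_2$ is invertible by hypothesis, \Cref{lemma:invertible} applied to $(L_1,L_2)$ implies that $L_1$ is also invertible. Then \Cref{lemma:bijection} asserts that the intertwiner $\mathcal{V}$ is a bijection $V\rightarrow V$, so a two-sided inverse $\mathcal{V}^{-1}\in\text{End}_K(V)$ exists.

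Next, I would verify that $\mathcal{V}^{-1}$ satisfies the first two conditions of \Cref{def:intertwine}. Because $\mathcal{V}$ is degree-preserving and, as shown in the proof of \Cref{lemma:bijection}, restricts to a bijection on each $V_i$, the inverse $\mathcal{V}^{-1}$ is automatically degree-preserving (its restriction to $V_i$ is the inverse of the bijection $\mathcal{V}_i$). Moreover, $\mathcal{V}$ acts as the identity on $V_0=K$, so $\mathcal{V}^{-1}$ does as well.

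For the intertwining relation itself, I would start from the hypothesis $L_1(g)\mathcal{V}f = \mathcal{V}L_2(g)f$, valid for all $f\in V$ and $g\in R$, and make the substitution $f\mapsto \mathcal{V}^{-1}f'$ (legitimate since $\mathcal{V}$ is bijective) to obtain $L_1(g)f' = \mathcal{V}L_2(g)\mathcal{V}^{-1}f'$. Applying $\mathcal{V}^{-1}$ to both sides then gives $\mathcal{V}^{-1}L_1(g)f' = L_2(g)\mathcal{V}^{-1}f'$, which is precisely the intertwining relation required to conclude that $(L_2,L_1)$ intertwines with $\mathcal{V}^{-1}$.

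There is no real obstacle here: the argument is essentially a formal inversion. The only subtlety worth flagging is the need to check all three defining conditions of \Cref{def:intertwine} for $\mathcal{V}^{-1}$, but each follows immediately from the corresponding property of $\mathcal{V}$ together with its bijectivity on each graded component, which is exactly what \Cref{lemma:bijection} supplies.
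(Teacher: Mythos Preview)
Your proposal is correct and follows essentially the same approach as the paper: invoke \Cref{lemma:bijection} to get that $\mathcal{V}$ is bijective, then conjugate the intertwining identity by $\mathcal{V}$. The paper's proof is terser (it writes the computation as $L_2(g)\mathcal{V}^{-1}f=\mathcal{V}^{-1}L_1(g)\mathcal{V}\circ\mathcal{V}^{-1}f=\mathcal{V}^{-1}L_1(g)f$ and leaves the degree-preserving and $V_0$ conditions implicit), whereas you check all three conditions of \Cref{def:intertwine} explicitly; your extra invocation of \Cref{lemma:invertible} to show $L_1$ is invertible is true but not actually needed here.
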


\begin{proof}
Since $\mathcal{V}$ is invertible by \Cref{lemma:bijection},
\[
L_2(g)\mathcal{V}^{-1}f=\mathcal{V}^{-1}L_1(g)\mathcal{V}\circ \mathcal{V}^{-1}f = \mathcal{V}^{-1}L_1(g)f
\]
for all $f,g\in R$.
\end{proof}

\begin{remark}
We do not have $(L_2, L_1)$ intertwining with $\mathcal{V}$ implying that $(L_1,L_2)$ intertwines with $\mathcal{V}^{-1}$, since the invertibility of $L_2$ does not imply the invertibility of $L_1$ and $\mathcal{V}$ in this case.
\end{remark}

\subsection{Statements (i) and (j)}

\begin{lemma}[$(j)\Rightarrow (i)$]
Assume that $\text{ker}(\text{Hom}_K(R,K))=\{0\}$. Then, $(j)\Rightarrow (i)$.
\end{lemma}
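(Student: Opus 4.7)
The plan is to observe that this implication is essentially immediate: we just take $\mathcal{S}$ to be the whole of $\text{Hom}_K(R,K)$ itself. First I would set $\mathcal{S}\triangleq \text{Hom}_K(R,K)$. By the standing assumption for the relevant portion of \Cref{thm:equivalences}, namely $\text{ker}(\text{Hom}_K(R,K))=\{0\}$, this choice of $\mathcal{S}$ automatically satisfies $\text{ker}(\mathcal{S})=\{0\}$, which is the first requirement in (i).

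Next I would invoke (j): for every $\Psi\in\text{Hom}_K(R,K)=\mathcal{S}$, there exists (even uniquely) a $\Psi$-eigenvector $E$ of $L$ with $E^0=1$. This is precisely the second requirement in (i), so (i) holds with the chosen $\mathcal{S}$.

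There is no real obstacle here; the content of the implication is just that the hypothesis $\text{ker}(\text{Hom}_K(R,K))=\{0\}$ is exactly what is needed to make $\mathcal{S}=\text{Hom}_K(R,K)$ a valid witness for condition (i), after which (j) supplies the required eigenvectors for free. The substantive direction of the equivalence between (i), (j) and (a) is the opposite implication $(i)\Rightarrow(a)$ (and the construction $(a)\Rightarrow(j)$); the direction handled in this lemma is only the formal observation that the universally quantified statement (j) implies its existentially quantified relative (i).
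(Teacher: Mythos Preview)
Your proposal is correct and matches the paper's approach: the paper states this lemma without proof, treating the implication as immediate, and your argument (take $\mathcal{S}=\text{Hom}_K(R,K)$ and invoke (j)) is exactly the intended triviality.
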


\begin{lemma}[$(a)\Rightarrow (j)$]
\label{lemma:eigenvectorformula}
If the homomorphism $L\in\mathcal{L}$ is invertible and $\Psi\in \text{Hom}_K(R, \\K)$, then there exists a unique $\Psi$-eigenvector $E$ such that $E^0=1$; in particular, after using the notation from (f), for all $i\geq 0$,
\[
E^i = \sum_{j=1}^{|A_i|} \Psi(b_{ij})\text{inv-col}_{ij}.
\]
\end{lemma}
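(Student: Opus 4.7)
The plan is to prove uniqueness and existence separately, taking the stated formula as the candidate eigenvector for the existence direction.

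For uniqueness, suppose $E$ is any $\Psi$-eigenvector with $E^0 = 1$. Specializing the defining relation to $j = i$ yields $L(g)E^i = \Psi(g)E^0 = \Psi(g)$ for every $g \in R_i$. This determines $E^i$ uniquely: any two candidates differ by some $v \in V_i$ with $L(g)v = 0$ for all $g \in R_i$, and \Cref{lemma:matrixeq} then gives $b(g)^T M^{i;L}a(v) = 0$ for all such $g$, so invertibility of $M^{i;L}$ forces $a(v) = 0$, hence $v = 0$. That the unique solution is in fact the one in the stated formula follows at once from condition (f) of \Cref{thm:equivalences}, as confirmed by the base-case calculation below.

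For existence, define $E^i$ by the stated formula. The $i=0$ case is immediate: $K$-linearity of $\Psi$ gives $\Psi(b_{01}) = b_{01}$; \Cref{cor:zero} combined with the defining property $L(b_{01})\text{inv-col}_{01} = 1$ gives $\text{inv-col}_{01} = 1/b_{01}$; hence $E^0 = b_{01}\cdot(1/b_{01}) = 1$. For the base case of the eigenvector property, namely $L(f)E^i = \Psi(f)$ for $f \in R_i$, it suffices by $K$-linearity to check $f = b_{ij}$, and here condition (f) is exactly what is needed:
\[
L(b_{ij})E^i = \sum_{k=1}^{|A_i|}\Psi(b_{ik})\,L(b_{ij})\text{inv-col}_{ik} = \sum_{k=1}^{|A_i|}\Psi(b_{ik})\mathbf{1}\{j=k\} = \Psi(b_{ij}).
\]

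The main step is to promote this base case to the full identity $L(f)E^j = \Psi(f)E^{j-i}$ for $j > i$ and $f \in R_i$. I would set $w \triangleq L(f)E^j - \Psi(f)E^{j-i} \in V_{j-i}$ and, for an arbitrary $g \in R_{j-i}$, use the ring-homomorphism property $L(g)L(f) = L(gf)$, the base case applied in degree $j$ (so $gf \in R_j$) and in degree $j-i$, and commutativity of $K$ to compute
\begin{align*}
L(g)w &= L(gf)E^j - \Psi(f)\,L(g)E^{j-i} \\
&= \Psi(gf) - \Psi(f)\Psi(g) = 0.
\end{align*}
Thus $L(g)w = 0$ for every $g \in R_{j-i}$; invoking \Cref{lemma:matrixeq} and the invertibility of $M^{j-i;L}$, exactly as in the uniqueness step, forces $w = 0$.

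I expect this last step to be the main obstacle: the formula for $E^i$ directly encodes only the action of $L(g)$ on $E^i$ for $g \in R_i$, so the higher-degree eigenvector identities must be recovered indirectly—by using the ring structure of $R$ and multiplicativity of $\Psi$ to collapse the problem to the base case at degree $j$, and then appealing to invertibility at degree $j-i$ to conclude.
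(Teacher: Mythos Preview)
Your proposal is correct and essentially identical to the paper's argument: both establish uniqueness from the degree-$i$ relation $L(g)E^i=\Psi(g)$ together with invertibility of $M^{i;L}$, verify the base case via the defining property of the $\text{inv-col}_{ij}$, and then promote to arbitrary $j>i$ by applying $L(g)$ for $g\in R_{j-i}$, using $L(g)L(f)=L(gf)$ and multiplicativity of $\Psi$, and invoking invertibility (the paper phrases this last step as an appeal to condition~(b)).
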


\begin{proof}Suppose $i\geq 0$ and $f\in R_i$. We require that $L(f) E^i = \Psi(f)E^0 = \Psi(f)$, which would imply that
\begin{equation}
\label{eq:eigenval}
M^{i;L}a(E^i) = [\Psi(r)]_{r\in B_i}^T.
\end{equation}
If $M^{i;L}$ is invertible, $a(E^i)$ is unique, so $E$ is unique. Afterwards, we can use \pref{eq:eigenval} and \Cref{lemma:invcol} to derive the given expression for $E^i$.

Next, we must prove that $E$ satisfies the eigenvalue condition. Suppose $a(E^i)=(M^{i;L})^{-1}[\Psi(r)]_{r\in B_i}^T$ for all $i\geq 0$. Then, for all $f\in R_i$, we have that $L(f)E^i = \Psi(f)$ because $\Psi$ is additive and $K$-linear. To show that $E^0=1$, suppose $A_0=\{a_0\}$ and $B_0=\{b_0\}$ so that
\[
a(E_0)=(M^{0;L})^{-1}\Psi(b_0) = \frac{b_0}{a_0b_0} = \frac{1}{a_0}
\]
after applying \Cref{cor:zero}. We prove that for $1\leq i_1<i_2$ and $f\in R_{i_1}$, $L(f)E^{i_2}=\Psi(f)E^{i_2-i_1}$.

Suppose $g\in R_{i_2-i_1}$. Then, since $\Psi$ is multiplicative and $L(g)$ is $K$-linear,
\[
L(g)L(f)E^{i_2} =\Psi(g)\Psi(f) = L(g) \Psi(f)E^{i_2-i_1}.
\]
We therefore have that $L(g)L(f)E^{i_2}=L(g)\Psi(f)E^{i_2-i_1}$ for all $g\in R_{i_2-i_1}$. By (b), we have that $L(f)E^{i_2}=\Psi(f)E^{i_2-i_1}$.
\end{proof}

\begin{lemma}
\label{lemma:kernel}
Suppose $\mathcal{S}\subset\text{Hom}_K(R,K)$ and that there exists a $\Psi$-eigenvector $E_\Psi$ such that $E_\Psi^0=1$ for all $\Psi\in\mathcal{S}$. For $i\geq 0$, any $r\in R_i$ such that $L(r)=0$ as an operator over $V_i$ must be an element of $\text{ker}(\mathcal{S})$.
\end{lemma}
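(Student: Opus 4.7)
The proof will be a short direct computation using the definition of a $\Psi$-eigenvector, so my plan is primarily to identify which component of the eigenvector to test $L(r)$ against and then unwind definitions.

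The plan is as follows. Fix an arbitrary $\Psi \in \mathcal{S}$ together with its associated $\Psi$-eigenvector $E_\Psi = (E_\Psi^j)_{j \geq 0} \in \mathcal{F}(V)$, which by hypothesis satisfies $E_\Psi^0 = 1$. First I would specialize the eigenvector relation from the definition of a $\Psi$-eigenvector to the element $r \in R_i$ acting on the degree-$i$ component $E_\Psi^i \in V_i$. Taking $j = i$ in the relation ``$L(f) E^j = \Psi(f) E^{j-i}$ for $f \in R_i$'', I obtain
\[
L(r) E_\Psi^i = \Psi(r) E_\Psi^{0} = \Psi(r) \cdot 1 = \Psi(r),
\]
viewed as an element of $V_0 = K$.

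Next, I would invoke the hypothesis that $L(r)$ is the zero operator on $V_i$. Since $E_\Psi^i \in V_i$, this immediately forces $L(r) E_\Psi^i = 0$ in $V_0$, and combining with the preceding display yields $\Psi(r) = 0$. Because $\Psi \in \mathcal{S}$ was arbitrary, this means $r$ is annihilated by every element of $\mathcal{S}$, so by \Cref{def:kernel} we conclude $r \in \ker(\mathcal{S})$, which is what was to be shown.

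The main obstacle is essentially just bookkeeping: one has to pick the correct degree component of $E_\Psi$ (namely $E_\Psi^i$ rather than some other $E_\Psi^j$) so that the eigenvector relation lands in $V_0$, where the normalization $E_\Psi^0 = 1$ can be used to extract the scalar $\Psi(r)$ cleanly. No invertibility assumption on $L$ is needed, and no structural properties of $R$ beyond what is already in force are required.
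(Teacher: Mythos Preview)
Your proof is correct and follows essentially the same approach as the paper: apply $L(r)$ to the degree-$i$ component $E_\Psi^i$, use the eigenvector relation to identify the result with $\Psi(r)E_\Psi^0 = \Psi(r)$, and conclude from the hypothesis $L(r)|_{V_i}=0$ that $\Psi(r)=0$ for every $\Psi\in\mathcal{S}$.
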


\begin{proof}
We have that $\Psi(r)E_\Psi^0=L(r)E_\Psi^i=0$ for all $\Psi\in \mathcal{S}$, so $r\in\ker(\mathcal{S})$.
\end{proof}

\begin{corollary}[$(i)\Rightarrow (a)$]
Suppose $\mathcal{S}\subset\text{Hom}_K(R,K)$ and $\text{ker}(\mathcal{S})=\{0\}$. If there exists a $\Psi$-eigenvector $E$ such that $E^0=1$ for all $\Psi\in\mathcal{S}$, then $L$ is invertible.
\end{corollary}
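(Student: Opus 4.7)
The plan is to argue the contrapositive, combining the equivalence $(a)\Leftrightarrow(c)$ from Theorem \ref{thm:equivalences} with the immediately preceding Lemma \ref{lemma:kernel}. Suppose that $L$ is not invertible. By the already-established equivalence $(a)\Leftrightarrow(c)$, there exists some $i\geq 0$ and a nonzero $r\in R_i$ such that $L(r)f=0$ for every $f\in V_i$; equivalently, $L(r)$ vanishes as an operator on $V_i$.

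Next, I would invoke Lemma \ref{lemma:kernel} with the given family $\mathcal{S}$. The hypothesis of the corollary supplies, for every $\Psi\in\mathcal{S}$, a $\Psi$-eigenvector $E_\Psi$ (normalized so that $E_\Psi^0=1$, matching the convention of condition (i) of the theorem), which is exactly the input required by the lemma. It therefore follows that $r\in\ker(\mathcal{S})$. But by the assumption $\ker(\mathcal{S})=\{0\}$, this forces $r=0$, contradicting that $r$ was chosen nonzero. Consequently $L$ must be invertible.

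No substantive obstacle is expected: the key machinery has already been developed. The most nontrivial ingredient is the previously proved equivalence $(a)\Leftrightarrow(c)$, which reframes non-invertibility as the existence of a nonzero annihilator in $R_i$ and thereby makes Lemma \ref{lemma:kernel} directly applicable. Once this reframing is in place, the contradiction with $\ker(\mathcal{S})=\{0\}$ is immediate.
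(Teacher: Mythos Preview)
Your proposal is correct and follows essentially the same route as the paper: argue by contradiction, use the equivalence of (a) with (c) to produce a nonzero $r\in R_i$ with $L(r)$ vanishing on $V_i$, and then invoke Lemma~\ref{lemma:kernel} together with $\ker(\mathcal{S})=\{0\}$ to reach a contradiction. The paper's proof is the same two-line argument.
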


\begin{proof}
For the sake of contradiction, assume that $L$ is not invertible. Then, for some $i\geq 0$, there exists nonzero $r\in R_i$ such that $L(r)f=0$ for all $f\in R_i$ by (b). This is a contradiction to \Cref{lemma:kernel}.
\end{proof}

\begin{proposition}[Singular value decomposition]
\label{prop:svd}
Suppose $L\in\mathcal{L}$ is invertible. Suppose $i\geq 0$ and $M^{i;L}$ satisfies
\[
M^{i;L} = U\Sigma V^T
\]
for $U\in K^{B_i\times B_i}$, $\Sigma\in K^{B_i\times A_i}$, and $V\in K^{A_i\times A_i}$ such that:
\begin{enumerate}
\item The matrices $U$ and $V$ are orthogonal.
\item The matrix $\Sigma$ satisfies $\Sigma_{b_{ij_1}a_{ij_2}}=0$ if $j_1,j_2\in [|A_i|]$ are not equal.
\end{enumerate}
Then, $\Sigma_{b_{ij}a_{ij}}\in K^\times$ for $j\in [|A|]$ and $(M^{i;L})^{-1}=V\Sigma^{-1}U^T$. 

For $r\in B_i$, let $U_r$ denote column $r$ of $U$ and for $s\in A_i$, let $V_s$ denote column $s$ of $V$. Then, for $\Psi\in \text{Hom}_K(R,K)$, the unique $\Psi$-eigenvector $E\in\mathcal{F}(V)$ with $E^0=1$ satisfies
\[
E^i = \sum_{j=1}^{|A_i|} \Sigma_{b_{ij}a_{ij}}^{-1} \Psi(b^{-1}(U_{b_{ij}}))a^{-1}(V_{a_{ij}}).
\]
\end{proposition}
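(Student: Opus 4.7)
The plan is to verify the three assertions of the proposition in turn: that each $\Sigma_{b_{ij}a_{ij}}$ is a unit of $K$, that $(M^{i;L})^{-1}=V\Sigma^{-1}U^T$, and finally the displayed formula for $E^i$. All three will follow mechanically from the SVD together with results already proved.

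First I would observe that since $U$ and $V$ are orthogonal, $U^TU=I$ and $V^TV=I$, so $U^{-1}=U^T$, $V^{-1}=V^T$, and in particular $\det(U),\det(V)\in K^\times$. Because $L$ is invertible, $M^{i;L}$ is invertible, hence
\[
\det(M^{i;L})=\det(U)\det(\Sigma)\det(V^T)\neq 0.
\]
Since the only potentially nonzero entries of $\Sigma$ sit at the pairs $(b_{ij},a_{ij})$, this forces $\prod_{j=1}^{|A_i|}\Sigma_{b_{ij}a_{ij}}\in K^\times$, so each diagonal entry is itself a unit. Consequently $\Sigma$ admits the entrywise inverse $\Sigma^{-1}\in K^{A_i\times B_i}$ whose only nonzero entries are $\Sigma_{b_{ij}a_{ij}}^{-1}$ at position $(a_{ij},b_{ij})$, and we immediately get
\[
(M^{i;L})^{-1}=(V^T)^{-1}\Sigma^{-1}U^{-1}=V\Sigma^{-1}U^T.
\]

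For the formula for $E^i$, the plan is to substitute the SVD into the expression extracted from the proof of \Cref{lemma:eigenvectorformula}, which gives $a(E^i)=(M^{i;L})^{-1}[\Psi(r)]_{r\in B_i}^T=V\Sigma^{-1}U^T[\Psi(r)]_{r\in B_i}^T$. I would expand this as three successive multiplications. Using the $K$-linearity of $\Psi$, the $b_{ij}$-coordinate of $U^T[\Psi(r)]_{r\in B_i}^T$ is
\[
\sum_{s\in B_i}U_{s,b_{ij}}\Psi(s)=\Psi\bigl(b^{-1}(U_{b_{ij}})\bigr).
\]
Rescaling by $\Sigma^{-1}$ sends this to $\Sigma_{b_{ij}a_{ij}}^{-1}\Psi(b^{-1}(U_{b_{ij}}))$ in the $a_{ij}$-coordinate, and applying $V$ followed by $a^{-1}$ yields
\[
E^i=\sum_{j=1}^{|A_i|}\Sigma_{b_{ij}a_{ij}}^{-1}\Psi\bigl(b^{-1}(U_{b_{ij}})\bigr)\,a^{-1}(V_{a_{ij}}),
\]
as claimed.

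I do not anticipate a genuine obstacle here: the substantive content is already in \Cref{lemma:eigenvectorformula}, and what remains is careful bookkeeping of which indices live in $A_i$ versus $B_i$ as one peels off $V$, $\Sigma^{-1}$, and $U^T$ in sequence. The only mild subtlety is that $\Sigma$ is $B_i\times A_i$ rather than square in a canonical sense, but the enumerations $\{a_{ij}\}_{1\leq j\leq |A_i|}$ and $\{b_{ij}\}_{1\leq j\leq |A_i|}$ pair rows with columns via the common index $j$, making the diagonal structure of both $\Sigma$ and $\Sigma^{-1}$ unambiguous and the index juggling above entirely routine.
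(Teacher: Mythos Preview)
Your proposal is correct and follows essentially the same approach as the paper. Both arguments invoke \eqref{eq:eigenval} (equivalently, \Cref{lemma:eigenvectorformula}) to write $a(E^i)=(M^{i;L})^{-1}[\Psi(r)]_{r\in B_i}^T$ and then substitute the SVD; the paper expands $(M^{i;L})^{-1}=V\Sigma^{-1}U^T$ as the rank-one sum $\sum_j \Sigma_{b_{ij}a_{ij}}^{-1}V_{a_{ij}}U_{b_{ij}}^T$ before hitting the $\Psi$-vector, whereas you apply $U^T$, $\Sigma^{-1}$, $V$ sequentially, but these are the same computation.
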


\begin{proof}
Since $M^{i;L}$ is invertible, $\Sigma_{b_{ij}a_{ij}}\in K^\times$ for $j\in [|A|]$ and the formula $(M^{i;L})^{-1}=V\Sigma^{-1}U^T$ is evident. It is also evident that 
\[
(M^{i;L})^{-1} = \sum_{j=1}^{|A_i|} \Sigma_{b_{ij}a_{ij}}^{-1}V_{a_{ij}}U_{b_{ij}}^T
\]
Using \pref{eq:eigenval} gives that
\[
a(E^i) = \sum_{j=1}^{|A_i|} \Sigma_{b_{ij}a_{ij}}^{-1}V_{a_{ij}}U_{b_{ij}}^T[\Psi(r)]^T_{r\in B_i}.
\]
Observe that for $1\leq j\leq |A_i|$,
\[
U_{b_{ij}}^T[\Psi(r)]^T_{r\in B_i} = \sum_{r\in B_i} U_{rb_{ij}}\Psi(r) = \Psi\left(\sum_{r\in B_i}U_{rb_{ij}}r\right) = \Psi(b^{-1}(U_{b_{ij}})),
\]
which concludes the proof.
\end{proof}

Furthermore, we may consider the question of whether a nonzero $\Psi$-eigenvector $E$ exists such that $E^0=0$. However, the existence of such an eigenvector implies that $L\in\mathcal{L}$ is not invertible.

\begin{proposition}
\label{prop:zeroconstant}
Assume that $\Psi\in \text{Hom}_K(R,K)$ and $E$ is a nonzero $\Psi$-eigenvector such that $E^0=0$. Then, (a) and (k) are false.
\end{proposition}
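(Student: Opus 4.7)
The plan is to extract, from the assumed nonzero $\Psi$-eigenvector $E$ with $E^0=0$, a single homogeneous witness that simultaneously falsifies (a) and (k). First, since $E$ is nonzero but $E^0=0$, I would let $i\geq 1$ be the least positive integer with $E^i\neq 0$. This element $E^i$ lies in $V_i\setminus\{0\}$, so in particular $E^i\in V\setminus V_0$.

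To show that (a) fails, I would invoke the already established equivalent condition (b) of \Cref{thm:equivalences}, which asserts, for every $i\geq 0$, the nonexistence of a nonzero element of degree $i$ annihilated by $L(g)$ for all $g\in R_i$. The eigenvector identity applied in degree $i$ with any $g\in R_i$ gives
\[
L(g)E^i = \Psi(g)\,E^0 = 0,
\]
so $E^i$ furnishes precisely such a forbidden element in degree $i$, contradicting (b) and hence (a).

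To show that (k) fails, I would use the same witness $E^i$. For any $g\in R_1$, the eigenvector property gives
\[
L(g)E^i = \Psi(g)\,E^{i-1}.
\]
If $i\geq 2$, then $E^{i-1}=0$ by the minimality of $i$, and if $i=1$, then $E^{i-1}=E^0=0$ by hypothesis. In both cases $L(g)E^i=0$ for every $g\in R_1$, while $E^i\in V\setminus V_0$, directly contradicting (k).

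The argument is essentially a one-line application of the eigenvector equation to the lowest nonvanishing graded piece of $E$, so no substantive obstacle arises; the only care needed is the mild case split $i=1$ versus $i\geq 2$ when verifying (k), and both cases close immediately from the minimality of $i$ together with the hypothesis $E^0=0$.
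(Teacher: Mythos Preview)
Your proposal is correct and follows essentially the same approach as the paper: both take the minimal $i\geq 1$ with $E^i\neq 0$ and observe that $E^i$ simultaneously witnesses the failure of (b) (hence (a)) and of (k). The paper compresses this into a single sentence, whereas you spell out the eigenvector identity and the case split $i=1$ versus $i\geq 2$ explicitly, but the content is identical.
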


\begin{proof}
It is evident that setting $f=E^i$, where $i$ is the smallest positive integer such that $E^i\not=0$, is a contradiction to both (b) and (k).
\end{proof}

\begin{remark}
In fact, the negation of (a) implies the negation of (k), see part (A) of \Cref{lemma:deg1}.
\end{remark}

\subsection{Statement (k)}

\begin{lemma}
\label{lemma:deg1}
\begin{enumerate}
\item[(A)] (k) implies (a).
\item[(B)] Suppose $R$ is right-spanned by $R_1$. Then, (a) implies (k).
\end{enumerate}
\end{lemma}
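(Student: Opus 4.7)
The plan is to prove both implications by contraposition, leveraging the characterization of invertibility in terms of the matrices $M^{i;L}$ that was established earlier (equivalences $(a)\Leftrightarrow (b)\Leftrightarrow (c)$).

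For part (A), I would start by assuming $L$ is not invertible and let $i\geq 1$ be the smallest positive integer such that $M^{i;L}$ is singular; this $i$ exists since \Cref{cor:zero} guarantees that $M^{0;L}$ is invertible. By condition (b), there exists nonzero $f\in V_i$ such that $L(g)f=0$ for every $g\in R_i$. I would then argue that this same $f$ witnesses the failure of (k), i.e.\ that $L(g)f=0$ already for every $g\in R_1$. The idea is to exploit the minimality of $i$: for $g\in R_1$ we have $L(g)f\in V_{i-1}$, and for any $h\in R_{i-1}$ the product $hg$ lies in $R_i$, so $L(h)\bigl(L(g)f\bigr)=L(hg)f=0$. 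Since $M^{i-1;L}$ is invertible (by the minimality of $i$, with the case $i=1$ handled directly by \Cref{cor:zero} via $L(h)=h^*$), condition (b) at level $i-1$ forces $L(g)f=0$. Thus $f\in V\setminus V_0$ is a nonzero element killed by all of $L(R_1)$, contradicting (k).

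For part (B), I would again argue contrapositively: assume there exists nonzero $f\in V\setminus V_0$ with $L(g)f=0$ for all $g\in R_1$. Decomposing $f=\sum_{j\geq 0} f_j$ into homogeneous components with $f_j\in V_j$ and using that $L(g)$ maps $V_j$ into $V_{j-1}$ when $g\in R_1$, the different graded pieces $L(g)f_j$ lie in distinct summands, so $L(g)f_j=0$ for every $j\geq 1$ and every $g\in R_1$. Pick any $j\geq 1$ with $f_j\neq 0$, which exists because $f\notin V_0$. Now the right-spanning hypothesis $R\subset R_0+RR_1$ ensures that any $g\in R_j$ (for $j\geq 1$) can be written as $g=\sum_m r_m s_m$ with $r_m\in R_{j-1}$ and $s_m\in R_1$, by matching degrees. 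Then
\[
L(g)f_j=\sum_m L(r_m)L(s_m)f_j=0,
\]
so $f_j\in V_j$ is a nonzero element killed by $L(g)$ for all $g\in R_j$. By condition (b) at level $j$, $M^{j;L}$ is not invertible, contradicting (a).

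The main obstacle is the minimality argument in part (A): the hypothesis (k) only controls the action of $L$ on $R_1$, whereas non-invertibility produces a kernel element at some (possibly large) level $i$, so we need a mechanism to descend from $R_i$ back down to $R_1$. The trick is that invertibility at all strictly lower levels—forced by choosing $i$ minimal—lets us use $L(h)L(g)f=L(hg)f=0$ to conclude $L(g)f=0$ itself. In part (B) the right-spanning assumption is exactly what is needed to run the reverse direction: it allows us to go up from the $R_1$-kernel to the full $R_j$-kernel by factoring every degree-$j$ element through $R_1$.
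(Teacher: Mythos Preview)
Your proposal is correct and follows essentially the same approach as the paper: both parts are proved by contraposition, part (A) via the minimal singular level $i$ together with $L(h)L(g)f=L(hg)f=0$ and invertibility at level $i-1$, and part (B) via picking a nonzero homogeneous component $f_j$ and using the right-spanning hypothesis to lift the $R_1$-annihilation to $R_j$-annihilation. The only cosmetic difference is that the paper treats part (B) first and part (A) second.
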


\begin{proof}
We prove the converse of both statements. Assume that $R_i$ is right-spanned by $R_1$. We first prove that the negation of (k) implies the negation of (a). Assume that $f\in V$ is nonconstant and $L(g)f=0$ for all $g\in R_1$. Suppose $i\geq 1$ and the degree $i$ part of $f$, $f_i$, is nonzero. Then, we have that $L(g)f_i=0$ for all $g\in R_1$. Since $R$ is right-spanned by $R_1$, it is then clear that $L(g)f_i=0$ for all $g\in R_i$, so $M^{i; L}$ is not invertible.

Next, we prove that the negation of (a) implies the negation of (k), without assuming that $R$ is right-spanned by $R_1$. Assume that $L$ is not invertible. Then, suppose $i_{\text{min}}$ is the minimal value of $i$ such that $M^{i;L}$ is not invertible. By \Cref{cor:zero}, $i_{\text{min}}\geq 1$.

Suppose $f\in R_{i_{\text{min}}}$ is nonzero and $L(g)f=0$ for all $g\in V_{i_{\text{min}}}$; $f$ exists since $M^{i_{\text{min}};L}$ is not invertible. Suppose $g\in R_1$. Then,
\[
L(h)L(g)f=0 \,\,\,\forall h\in R_{i_{\text{min}}-1}.
\]
By the minimality of $i_{\text{min}}$, $M^{i_{\text{min}}-1;L}$ is invertible, so $L(g)f=0$.
\end{proof}

In the following result, we analyze the structure of non-invertible $L\in\mathcal{L}$ by inspecting the invertibility of each of the matrices $M^{i;L}$ for $i\geq 0$. First, recall the following definition of the left annihilator.

\begin{definition}
Suppose $S\subset R$. Then, the \textit{left annihilator} of $S$ is $\text{Ann}_R(S)\triangleq\{r\in R: rs=0\,\,\,\forall s\in S\}$.
\end{definition}

\begin{theorem}
\label{thm:notinvertible}
Assume that $L\in\mathcal{L}$ is not invertible.
\begin{enumerate}
\item[(A)] Suppose $i_{\text{min}}$ is the minimal value of $i\geq 0$ such that $M^{i;L}$ is not invertible. Then, $i_{\text{min}}\geq 1$ and there exists $f\in R_{i_{\text{min}}}$ such that $L(g)f=0$ for all $g\in R_1$. 
\item[(B)] Assume that $R$ is left-spanned by $R_1$ and that $\text{Ann}_R(R\backslash R_0) = \{0\}$. Then, $M^{i;L}$ is not invertible for all $i\geq i_{\text{min}}$.
\item[(C)] Assume that $\text{Ann}_R(R\backslash R_0) = \{0\}$. Then, the number of $i\geq 1$ such that $M^{i;L}$ is not invertible is infinite.
\end{enumerate}
\end{theorem}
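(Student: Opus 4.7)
For part (A), I would first observe that \Cref{cor:zero} forces $M^{0;L}$ to be invertible (it equals $[a_0 b_0]$ with $a_0,b_0 \in K^\times$), so automatically $i_{\text{min}} \geq 1$. The non-invertibility of $M^{i_{\text{min}};L}$ combined with the equivalence (a)$\Leftrightarrow$(b) of \Cref{thm:equivalences} produces a nonzero $f \in V_{i_{\text{min}}}$ with $L(g)f = 0$ for every $g \in R_{i_{\text{min}}}$. To upgrade this to $L(g)f = 0$ for every $g \in R_1$, I would argue by descent: for any fixed $g \in R_1$ and any $h \in R_{i_{\text{min}}-1}$, the homomorphism property gives $L(h)\bigl(L(g)f\bigr) = L(hg)f = 0$ because $hg \in R_{i_{\text{min}}}$. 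Then the minimality of $i_{\text{min}}$ makes $M^{i_{\text{min}}-1;L}$ invertible, so (b) at index $i_{\text{min}}-1$ forces the element $L(g)f \in V_{i_{\text{min}}-1}$, being annihilated by every $L(h)$ with $h \in R_{i_{\text{min}}-1}$, to itself be zero.

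For part (B), I would dualize via (a)$\Leftrightarrow$(c) to obtain a nonzero $g_0 \in R_{i_{\text{min}}}$ with $L(g_0)$ identically zero on $V_{i_{\text{min}}}$, and then inductively build nonzero elements $g_j \in R_{i_{\text{min}}+j}$ such that $L(g_j)$ is identically zero on $V_{i_{\text{min}}+j}$; the equivalence (a)$\Leftrightarrow$(c) then gives the non-invertibility of $M^{i_{\text{min}}+j;L}$. The induction step is $g_j = g_{j-1} h$ for a suitable $h \in R_1$: the identity $L(g_{j-1}h)v = L(g_{j-1})\bigl(L(h)v\bigr)$ shows that $L(h)v \in V_{i_{\text{min}}+j-1}$ is annihilated by $L(g_{j-1})$ no matter which $h$ is chosen. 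The delicate point is ensuring $g_{j-1} h \neq 0$ for some $h \in R_1$: if no such $h$ existed then $g_{j-1} R_1 = 0$, and the left-spanning assumption $R \subset R_0 \cup R_1 R$ would then propagate this to $g_{j-1}(R\setminus R_0) = 0$, contradicting $\text{Ann}_R(R \setminus R_0) = \{0\}$.

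For part (C), the scheme mirrors (B), but without left-spanning I cannot insist on degree-one multipliers. Instead, I would use the annihilator hypothesis directly at each step: because $g_j$ is nonzero it cannot lie in $\text{Ann}_R(R \setminus R_0) = \{0\}$, so some $s \in R \setminus R_0$ satisfies $g_j s \neq 0$, and I set $g_{j+1} = g_j s$. The same operator identity shows $L(g_{j+1})$ vanishes on the corresponding $V$-component, and because $\deg s \geq 1$ the degrees $\deg(g_j)$ strictly increase, producing infinitely many indices $i$ at which $M^{i;L}$ fails to be invertible.

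The hardest step to formulate cleanly is the inductive step in (B), where the two hypotheses must work together to rule out the possibility that the right annihilator of $g_{j-1}$ absorbs all of $R_1$: neither left-spanning alone nor triviality of $\text{Ann}_R(R \setminus R_0)$ alone suffices, but combined they force the existence of a degree-one multiplier at every stage. Everything else reduces either to the equivalences in \Cref{thm:equivalences} or to the associativity identity $L(gh) = L(g)L(h)$ applied one layer at a time.
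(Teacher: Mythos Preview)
Your proposal is correct and follows essentially the same approach as the paper. The only stylistic difference is that for (B) and (C) you argue constructively by building an explicit increasing sequence of nonzero witnesses $g_j$, whereas the paper argues by contradiction (assuming in (B) a level $\ell$ with $M^{\ell;L}$ non-invertible but $M^{\ell+1;L}$ invertible, and in (C) a maximal non-invertible index $i_{\max}$, and in each case deducing that the witness $g$ lies in $\text{Ann}_R(R\setminus R_0)=\{0\}$); the underlying mechanism and the way the left-spanning and annihilator hypotheses are combined are identical.
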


\begin{proof}
(A): By \Cref{cor:zero}, $i_{\text{min}}\geq 1$. For the existence of $f\in R_{i_{\text{min}}}$ such that $L(g)f=0$ for all $g\in R_1$, see the proof of part (A) of \Cref{lemma:deg1}.

(B): For the sake of contradiction, assume that $\ell$ is a positive integer such that $M^{\ell; L}$ is not invertible and $M^{\ell+1;L}$ is invertible. Suppose $g\in R_\ell$ is nonzero and satisfies $L(g)f=0$ for all $f\in V_\ell$. For all $r\in R_1$, we have that $L(gr)f=0$ for all $f\in V_{\ell+1}$. Since $M^{\ell+1}$ is invertible, we have that $gr=0$ for all $r\in R_1$. As $R\backslash R_0\subset R_1R$, it then follows that $gr=0$ for all $r\in R\backslash R_0$, which is a contradiction.

(C): For the sake of contradiction, assume that $i_{\text{max}}$ is the maximum positive integer $i$ such that $M^{i;L}$ is not invertible. Similarly, suppose $g\in R_{i_{\text{max}}}$ is nonzero and satisfies $L(g)f=0$ for all $f\in V_{i_{\text{max}}}$. Suppose $d\geq 1$. Then, for all $r\in R_d$ and $f\in V_{i_{\text{max}}+d}$, $L(gr)f=0$. Since $M^{i_{\text{max}}+d}$ is invertible, we have that $gr=0$ for all $r\in R_d$. Thus, $gr=0$ for all $r\in R\backslash R_0$, which is a contradiction.
\end{proof}

\section{Representations of invertible graded rings of operators}
\label{sec:bijection}

In this section, the goal is to obtain a bijection between equivalence classes of invertible $L\in\mathcal{L}$ and sequences of invertible matrices. The following lemma computes the value of $L(g)f$ when the degree of $g$ is at most the degree of $f$, assuming that $L\in\mathcal{L}$ is invertible.

\begin{lemma}
\label{lemma:formula}
Assume that $L\in\mathcal{L}$ is invertible. Suppose $0\leq i_1\leq i_2$, $g\in R_{i_1}$, and $f\in R_{i_2}$. Then,
\[
a_{i_2-i_1}(L(g)f) = (M^{i_2-i_1;L})^{-1} [b_{i_2}(rg)^T]_{r\in B_{i_2-i_1}} M^{i_2; L}a_{i_2}(f).
\]
\end{lemma}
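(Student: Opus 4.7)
The plan is to derive the identity by evaluating the expression $L(h)L(g)f$ in two different ways as $h$ ranges over the basis $B_{i_2-i_1}$ of $R_{i_2-i_1}$, then using invertibility of $M^{i_2-i_1;L}$ to solve for $a_{i_2-i_1}(L(g)f)$.

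First, I would fix $h\in R_{i_2-i_1}$ and observe that $L(g)f\in V_{i_2-i_1}$, so applying $L(h)$ lands in $V_0$. By \Cref{lemma:matrixeq} applied to the pair $(h,L(g)f)\in R_{i_2-i_1}\times V_{i_2-i_1}$, we get
\[
L(h)\bigl(L(g)f\bigr) = b_{i_2-i_1}(h)^T\, M^{i_2-i_1;L}\, a_{i_2-i_1}(L(g)f).
\]
On the other hand, since $L$ is a ring homomorphism, $L(h)L(g)f = L(hg)f$, and $hg\in R_{i_2}$, so another application of \Cref{lemma:matrixeq} yields
\[
L(hg)f = b_{i_2}(hg)^T\, M^{i_2;L}\, a_{i_2}(f).
\]
Equating the two expressions gives
\[
b_{i_2-i_1}(h)^T\, M^{i_2-i_1;L}\, a_{i_2-i_1}(L(g)f) = b_{i_2}(hg)^T\, M^{i_2;L}\, a_{i_2}(f).
\]

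Next, I would specialize $h$ to run over the basis $B_{i_2-i_1}$. For each $r\in B_{i_2-i_1}$, the vector $b_{i_2-i_1}(r)$ is the standard basis vector corresponding to $r$, so the left-hand side above picks out the $r$-th entry of $M^{i_2-i_1;L}\,a_{i_2-i_1}(L(g)f)$. Stacking these scalar equations indexed by $r\in B_{i_2-i_1}$ produces the matrix identity
\[
M^{i_2-i_1;L}\, a_{i_2-i_1}(L(g)f) = \bigl[b_{i_2}(rg)^T\bigr]_{r\in B_{i_2-i_1}}\, M^{i_2;L}\, a_{i_2}(f),
\]
where the bracketed expression is the matrix whose row indexed by $r$ is $b_{i_2}(rg)^T$.

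Finally, since $L$ is invertible by hypothesis, the matrix $M^{i_2-i_1;L}$ is invertible, and left-multiplying by its inverse delivers the stated formula. There is no serious obstacle: the whole argument is a direct bookkeeping step that turns the homomorphism identity $L(h)L(g)=L(hg)$ into a matrix equation, and the only ingredients are \Cref{lemma:matrixeq}, the definition of an element of $\mathcal{L}$, and the definition of invertibility. The only point requiring minor care is ensuring the row/column indexing of $\bigl[b_{i_2}(rg)^T\bigr]_{r\in B_{i_2-i_1}}$ matches the conventions used for $M^{i_2-i_1;L}$ and $M^{i_2;L}$, which is handled by consistently reading off coefficients relative to the bases $A_i$ and $B_i$.
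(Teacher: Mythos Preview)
Your proof is correct and follows essentially the same approach as the paper: both use the ring homomorphism identity $L(r)L(g)f=L(rg)f$ for $r\in B_{i_2-i_1}$, apply \Cref{lemma:matrixeq} to each side, stack the resulting scalar equations into a matrix equation, and then invert $M^{i_2-i_1;L}$. The paper's version is just slightly more compressed, starting directly with basis elements rather than a generic $h$.
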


\begin{proof}
We have that 
\begin{align*}
a_{i_2-i_1}(L(g)f) & =(M^{i_2-i_1;L})^{-1} [L(r)L(g)f]_{r\in B_{i_2-i_1}}^T \\
& = (M^{i_2-i_1;L})^{-1} [b_{i_2}(rg)^TM^{i_2;L}a_{i_2}(f)]_{r\in B_{d_2-d_1}}^T \\
& = (M^{i_2-i_1;L})^{-1} [b_{i_2}(rg)^T]_{r\in B_{i_2-i_1}} M^{i_2;L}a_{i_2}(f).
\end{align*}
\end{proof}

\begin{corollary}
Suppose $L\in\mathcal{L}$ is invertible. Suppose $0\leq i_1\leq i_2$ and $g\in R_{i_1}$. The homomorphism $L(g): V_{i_2}\rightarrow V_{i_2-i_1}$ is surjective if and only if $\{r\in R_{i_2-i_1}: rg=0\} = \{0\}$. 
\end{corollary}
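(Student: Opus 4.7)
The plan is to invoke \Cref{lemma:formula} directly to convert the claim into a statement about a single rectangular matrix. By that lemma, for every $f \in V_{i_2}$,
\[
a_{i_2-i_1}(L(g)f) = (M^{i_2-i_1;L})^{-1}\, T\, M^{i_2;L}\, a_{i_2}(f),
\]
where $T \in K^{B_{i_2-i_1}\times B_{i_2}}$ is the matrix whose row indexed by $r \in B_{i_2-i_1}$ equals $b_{i_2}(rg)^T$. Since $L$ is invertible, both $M^{i_2-i_1;L}$ and $M^{i_2;L}$ are invertible square matrices, so as $f$ ranges over $V_{i_2}$ the vector $M^{i_2;L}a_{i_2}(f)$ ranges over all of $K^{B_{i_2}}$, and the image of $L(g)$ in $V_{i_2-i_1}$ corresponds under $a_{i_2-i_1}$ to the image of $T$. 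Therefore $L(g): V_{i_2} \to V_{i_2-i_1}$ is surjective if and only if $T: K^{B_{i_2}} \to K^{B_{i_2-i_1}}$ is surjective as a $K$-linear map.

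The next step is to translate surjectivity of $T$ into a linear-independence statement inside $R_{i_2-i_1}$. Since $K$ is a field, a matrix is surjective if and only if its rows are linearly independent. Now a linear combination $\sum_{r \in B_{i_2-i_1}} c_r\, b_{i_2}(rg) = 0$ with $c_r \in K$ is the same as
\[
b_{i_2}\!\left(\left(\sum_{r \in B_{i_2-i_1}} c_r r\right) g\right) = 0,
\]
and because $b_{i_2}$ is a $K$-linear isomorphism, this is equivalent to $hg = 0$ for $h \triangleq \sum_{r} c_r r \in R_{i_2-i_1}$. Consequently, the rows of $T$ fail to be linearly independent precisely when there exists a nonzero $h \in R_{i_2-i_1}$ with $hg = 0$. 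Combining this with the previous reduction yields the desired equivalence.

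I do not expect any serious obstacle here; the whole argument is a direct chase through definitions once \Cref{lemma:formula} is in hand. The only point requiring mild care is that the equivalence \emph{matrix surjective} $\Leftrightarrow$ \emph{rows linearly independent} must hold regardless of the relative sizes of $|B_{i_2}|$ and $|B_{i_2-i_1}|$, but this is standard linear algebra over a field: if $|B_{i_2}| < |B_{i_2-i_1}|$ then $T$ cannot be surjective and its rows cannot be linearly independent, so the equivalence is vacuous, and otherwise it is immediate from rank considerations.
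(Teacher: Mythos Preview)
Your proof is correct and follows essentially the same route as the paper: both invoke \Cref{lemma:formula}, observe that the invertibility of $M^{i_2-i_1;L}$ and $M^{i_2;L}$ reduces surjectivity of $L(g)$ to full row rank of $[b_{i_2}(rg)^T]_{r\in B_{i_2-i_1}}$, and then translate this into the annihilator condition. The paper simply leaves the last translation as ``straightforward to deduce,'' whereas you spell it out explicitly.
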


\begin{proof}
Since $L$ is invertible, $L(g)$ is surjective if and only if
$[b_{i_2}(rg)^T]_{r\in B_{i_2-i_1}}$ has full row rank by \Cref{lemma:formula}. Afterwards, the result is straightforward to deduce.
\end{proof}

\begin{remark}
If $\{r\in R_{i_2-i_1}: rg=0\}=\{0\}$, then the previous corollary implies that $\text{dim}(R_{i_2-i_1})\leq \text{dim}(R_{i_2})$, provided that an invertible element of $\mathcal{L}$ exists. However, it is evident that an invertible element of $\mathcal{L}$ exists by \Cref{thm:bijection}, after assuming the conditions stated in the theorem.
\end{remark}

Suppose $V\subset W$ and $R\subset S$, where $W\triangleq \oplus_{i\geq 0} W_i$ and $S\triangleq\oplus_{i\geq 0} S_i$ are defined analogously to $V$ and $R$, respectively. Furthermore, define $\mathcal{T}$ analogously to $\mathcal{L}$ after replacing $(V, R)$ with $(W, S)$. In the remaining portions of this section, we still assume that $\text{dim}(V_i)=\text{dim}(R_i)\geq 1$ for all $i\geq 0$. In addition, we assume that $\text{dim}(W_i)=\text{dim}(S_i)\geq 1$ for all $i\geq 0$. 

Let $C_i$ and $D_i$ be bases of $W_i$ and $S_i$, respectively, for $i\geq 0$. Furthermore, define the isomorphisms $c_i: W_i\rightarrow K^{C_i}$ and $d_i: S_i\rightarrow K^{D_i}$ analogously to $a_i$ and $b_i$, respectively, for $i\geq 0$.

We say that $L\in\mathcal{T}$ is $(R,V)$-closed if $L(g)f\in V$ for all $g\in R$ and $f\in V$. Furthermore, we write $L_{(R,V)}: R\rightarrow \text{End}_K(V)$ to denote the $(R,V)$-restriction of an $(R,V)$-closed homomorphism $L$; it is clear that $L_{(R,V)}\in\mathcal{L}$. 

Define $\mathcal{I}$ to be the set of $L\in\mathcal{T}$ such that $L$ is $(R,V)$-closed and $L$ and $L_{(R,V)}$ are invertible. Define the equivalence relation $\sim$ over $\mathcal{I}$ such that if $A,B\in \mathcal{I}$, $A\sim B$ if and only if $A_{(R,V)}=B_{(R,V)}$. Define $\mathcal{I}'$ to be the set of $L\in\mathcal{T}$ such that $L$ is $(R,V)$-closed and $L_{(R,V)}$ is invertible and define the equivalence relation $\sim$ over $\mathcal{I}'$ similarly.

\subsection{Main result}
\label{subsec:bijection}

We state and prove the main result of this section. One of its direct consequences is the existence of an invertible element of $\mathcal{L}$ given that $\text{dim}(V_i)=\text{dim}(R_i)\geq 1$ for all $i\geq 1$, see \Cref{cor:bijection2}. However, for this to be true, we must assume that $K\subset\text{center}(R)$ based on \Cref{cor:invertible_center}, otherwise no elements of $\mathcal{L}$ will be invertible. We must also assume that a sequence of invertible matrices exists.

Let $\mathcal{M}$ be the set of sequences $\{M^i\}_{i\geq 0}$ of invertible matrices $M^i\in K^{B_i\times A_i}$ for $i\geq 0$ such that $M^0=[a_0b_0]$, where $A_0=\{a_0\}$ and $B_0=\{b_0\}$. 

\begin{theorem}
\label{thm:bijection}
Assume that there exists a subring $T_i$ of $S_i$ for $i\geq 0$ such that:
\begin{enumerate}
\item For all $i\geq 0$, $S_i=R_i\oplus T_i$.
\item For all $i_1,i_2\geq 0$, $r\in R_{i_1}$, and $t\in T_{i_2}$, $tr\in T_{i_1+i_2}$.
\end{enumerate}
Also, assume that $K\subset\text{center}(R)$. The function $\Phi: L \mapsto \{M^{i;L_{(R,V)}}\}_{i\geq 0}$ is a bijection from $\mathcal{I}/\sim$ to $\mathcal{M}$.
\end{theorem}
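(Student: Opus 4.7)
The strategy is to verify, in order, that $\Phi$ is well-defined on equivalence classes, injective, and surjective. For $L \in \mathcal{I}$ the restriction $L_{(R,V)}$ is invertible by definition of $\mathcal{I}$, so each matrix $M^{i;L_{(R,V)}}$ is invertible, and Corollary~\ref{cor:zero} gives $M^{0;L_{(R,V)}} = [a_0 b_0]$; hence $\Phi(L)\in\mathcal{M}$, and since $\sim$ is by definition equality of the restrictions $L_{(R,V)}$, the map descends to $\mathcal{I}/{\sim}$. For injectivity, if $\Phi(L) = \Phi(L')$, Lemma~\ref{lemma:formula} expresses $L_{(R,V)}(g)f$ for $g \in R_{i_1}$, $f \in V_{i_2}$ with $i_1 \leq i_2$ entirely in terms of the matrix sequence and the product $rg$ in $R$, while the degree-lowering axiom covers $i_1 > i_2$; so $L_{(R,V)} = L'_{(R,V)}$ and $L\sim L'$.

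\textbf{Surjectivity, constructing $L_{(R,V)}$.} Given $\{M^i\}\in\mathcal{M}$ I would take the formula of Lemma~\ref{lemma:formula} as a definition: declare $L_{(R,V)}(g)f = 0$ whenever $\deg g > \deg f$, and otherwise set
\[
a_{i_2-i_1}(L_{(R,V)}(g)f) = (M^{i_2-i_1})^{-1}\bigl[b_{i_2}(rg)^T\bigr]_{r\in B_{i_2-i_1}} M^{i_2} a_{i_2}(f),
\]
extended $K$-bilinearly. Degree-lowering is immediate, $K$-linearity uses that $K\subset\text{center}(R)$ (so scalars commute past $g$), and $L_{(R,V)}(1)=1^{\ast}$ follows because $r\cdot 1 = r$ makes the middle matrix the identity. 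The key check is multiplicativity $L_{(R,V)}(g_1g_2) = L_{(R,V)}(g_1)L_{(R,V)}(g_2)$: writing both sides in coordinates, the outer invertible factors cancel, and the equality reduces to $[b_j(rg_1g_2)^T]_{r,\cdot} = [b_{j-i_2}(rg_1)^T]_{r,\cdot}\,[b_j(r'g_2)^T]_{r',\cdot}$, which is just $(rg_1)g_2 = r(g_1g_2)$ expanded in bases. Invertibility of the resulting $L_{(R,V)}\in\mathcal{L}$ is built into the construction and the identity $M^{i;L_{(R,V)}}=M^i$ is by design.

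\textbf{Surjectivity, lifting to $L\in\mathcal{I}$, and main obstacle.} To produce $L\in\mathcal{T}$, I would use the splitting $S = R\oplus T$ and pick a compatible graded complement $W_i = V_i\oplus U_i$ with $\dim U_i = \dim T_i$. I would then extend by letting $L(r)$ act as $L_{(R,V)}(r)$ on $V$ and by zero on $U$, and by defining $L(t)$ for $t\in T$ via an auxiliary sequence of invertible matrices indexing a degree-lowering ring homomorphism from the subring $T$ into $\text{End}_K(U)$, constructed in parallel with the $L_{(R,V)}$ construction. The resulting $M^{i;L}$ is block-diagonal with invertible blocks, so $L$ is invertible in $\mathcal{T}$. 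The main obstacle is verifying that $L$ is a ring homomorphism across $R$-$T$ mixing: the identity $L(tr) = L(t)L(r)$ uses the hypothesis $tr\in T$ together with $L(t)|_V = 0$, while $L(rt) = L(r)L(t)$ requires projecting $rt\in S$ to its $R$- and $T$-components via the direct-sum decomposition and matching the actions on $V$ and $U$ separately. This last compatibility---ensuring that the independent choices for the $T$-block of $L$ cohere with the $R$-action---is the central bookkeeping hurdle, and the hypothesis $K\subset\text{center}(R)$ (necessary for invertibility of $L_{(R,V)}$ by Corollary~\ref{cor:invertible_center}) is what allows the whole scheme to close.
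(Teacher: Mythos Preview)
Your well-definedness, injectivity, and the direct construction of $L_{(R,V)}$ via the formula of Lemma~\ref{lemma:formula} are correct and match the paper.

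The lifting to $L\in\mathcal{T}$ has a genuine gap. A block-diagonal extension---$L(r)$ acting as $L_{(R,V)}(r)$ on $V$ and as $0$ on $U$, and $L(t)$ acting as $0$ on $V$ and via auxiliary data on $U$---cannot be an invertible ring homomorphism under the stated hypotheses. The decisive obstruction is multiplicativity on the $V$ side: for $r\in R_{i_1}$ and $t\in T_{i_2}$ of positive degree, $L(r)L(t)$ vanishes identically (since $L(t)$ kills $V$ and $L(r)$ kills $U$), whereas $L(rt)|_V = L_{(R,V)}\bigl((rt)_R\bigr)$. Because $L_{(R,V)}$ is invertible, hence faithful by condition~(c) of Theorem~\ref{thm:equivalences}, this forces $(rt)_R=0$, i.e.\ $RT\subset T$, which is \emph{not} assumed---only $TR\subset T$ is. No choice of the $T$-block repairs this, since the obstruction lives entirely on $V$ where $L$ is already pinned down. (There is also a more elementary failure: $V_0=W_0=K$ forces $U_0=0$, so any $L(t)$ with image in $U$ gives $L(t)u=0\in K$ for $t\in T_k$, $u\in U_k$, making the $(T,U)$-block of $M^{k;L}$ identically zero and $L$ non-invertible.)

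The paper avoids both problems by not splitting the operator along $R\oplus T$. It extends each $M^i$ to a block-diagonal invertible $N^i\in K^{D_i\times C_i}$ (with the identity on the complementary block) and defines $L$ on all of $(S,W)$ at once by the same formula, now with the product $rg$ taken in $S$. Multiplicativity is then the same associativity computation you carried out for $L_{(R,V)}$, requiring no hypothesis on $T$. The condition $TR\subset T$ enters only afterward, to verify that this $L$ is $(R,V)$-closed.
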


\begin{proof}
First, observe that if $L\in\mathcal{I}$, then $\Phi(L)\in\mathcal{M}$ after applying \Cref{cor:zero}. Suppose $m\in \mathcal{M}$ and that $\Phi(L)=m$ for $L\in \mathcal{I}$. From \Cref{lemma:formula}, we can obtain the value of $L(g)f$ for $f\in V$ and $g\in R$, so $L_{(R,V)}$ is fixed. Therefore, $\Phi$ is injective.

To finish the proof, we show that $\Phi$ is surjective. Suppose $m=\{M^i\}_{i\geq 0}\in\mathcal{M}$. We construct $L\in\mathcal{I}$ such that $M^{i;L_{(R,V)}}=M^i$ for all $i\geq 0$. Without loss of generality, assume that $A_i\subset C_i$ and $B_i\subset D_i$ for $i\geq 0$.

Suppose $i\geq 0$. Assume that $C_i\backslash A_i = \{c_{ij}\}_{1\leq j\leq |C_i|-|A_i|}$ and $D_i\backslash B_i=  \{d_{ij}\}_{1\leq j\leq |C_i|-|A_i|}$. Suppose $N^i\in \mathbb{C}^{D_i\times C_i}$ satisfies $N^i_{rv}=M^i_{rv}$ for $r\in B_i$ and $v\in A_i$, $N^i_{d_{ij}c_{ij}}=1$ for $j\in [|C_i|-|A_i|]$, and $N^i$ equals zero elsewhere. For $i\geq 0$, $N^i$ is invertible because $M^i$ is invertible.

Next, we define $L$ given the matrices $N^i$. For $0\leq i_1\leq i_2$, $f\in W_{i_2}$, and $g\in S_{i_1}$, we define
\begin{equation}
\label{eq:formula}
c_{i_2-i_1}(L(g)f)\triangleq (N^{i_2-i_1})^{-1} [d_{i_2}(rg)^T]_{r\in D_{i_2-i_1}} N^{i_2}c_{i_2}(f),
\end{equation}
which extends to a definition of $L(g)f$ for all $f\in W$ and $g\in S$. We must prove that $L\in \mathcal{I}$ and $\Phi(L)=m$.

\textbf{Step 1.} For the first step, we show that $L\in\mathcal{T}$. It suffices to prove that $L$ is a $K$-linear ring homomorphism from $S$ to $\text{End}_K(W)$, so it suffices to prove that $L(0)=0^*_W$, $L(1)=1^*_W$, $L(ks)=kL(s)$ for all $s\in S$ and $k\in K$, and $L(g+h)=L(g)+L(h)$ and $L(gh)=L(g)L(h)$ for $g,h\in S$; note that for $k\in K$, $k^*_W$ is the element $\{w\mapsto kw\}$ of $\text{End}_K(W)$ and is analogous to $k^*$.

It is straightforward to deduce that $L(0)=0^*_W$ and $L(g+h)=L(g)+L(h)$. To show that $L(1)=1^*_W$, set $i_1=0$ and $g=1$ to get that
\[
c_{i_2}(L(1)f) = (N^{i_2})^{-1}[d_{i_2}(r)^T]_{r\in D_{i_2}} N^{i_2} c_{i_2}(f) = c_{i_2}(f).
\]
Furthermore, to show that $L(ks)=kL(s)$ for all $k\in K$ and $s\in S$, we have that
\begin{align*}
c_{i_2-i_1}(L(ks)f) & = (N^{i_2-i_1})^{-1} [d_{i_2}(rks)^T]_{r\in D_{i_2-i_1}} N^{i_2}c_{i_2}(f) \\
& = (N^{i_2-i_1})^{-1} [d_{i_2}(krs)^T]_{r\in D_{i_2-i_1}} N^{i_2}c_{i_2}(f) \\
& = c_{i_2-i_1}(kL(s)f),
\end{align*}
where we have used $K\subset\text{center}(R)$. Next, we prove that $L(gh)=L(g)L(h)$.

Suppose $0\leq i_1,i_2\leq i_3$ and $i_1+i_2\leq i_3$. We prove that for $f\in W_{i_3}$, $g_1\in S_{i_1}$, and $g_2\in S_{i_2}$,
\[
L(g_2g_1)f = L(g_2)L(g_1)f.
\]
Observe that, based on \pref{eq:formula},
\begin{align*}
c_{i_3-i_1-i_2}(L(g_2)L(g_1)f) =& (N^{i_3-i_1-i_2})^{-1} [d_{i_3-i_1}(r_2g_2)^T]_{r_2\in D_{i_3-i_1-i_2}} N^{i_3-i_1}\\ 
& (N^{i_3-i_1})^{-1} [d_{i_3}(r_1g_1)^T]_{r_1\in D_{i_3-i_1}} N^{i_3}c_{i_3}(f) \\
=& (N^{i_3-i_1-i_2})^{-1} [d_{i_3-i_1}(r_2g_2)^T]_{r_2\in D_{i_3-i_1-i_2}}
[d_{i_3}(r_1g_1)^T]_{r_1\in D_{i_3-i_1}}N^{i_3} \\ & c_{i_3}(f)
\end{align*}
and
\[
c_{i_3-i_1-i_2}(L(g_2g_1)f) = (N^{i_3-i_1-i_2})^{-1} [d_{i_3}(rg_2g_1)^T]_{r\in D_{i_3-i_1-i_2}} N^{i_3}c_{i_3}(f).
\]
Hence, it suffices to prove that 
\[
[d_{i_3-i_1}(r_2g_2)^T]_{r_2\in D_{i_3-i_1-i_2}}
[d_{i_3}(r_1g_1)^T]_{r_1\in D_{i_3-i_1}} = [d_{i_3}(rg_2g_1)^T]_{r\in D_{i_3-i_1-i_2}}.
\]

Note that for $x\in D_{i_3-i_1-i_2}$ and $y\in D_{i_3}$, entry $(x,y)$ of the left hand side is
\[
\sum_{z\in D_{i_3-i_1}} d_{i_3-i_1}(xg_2)_z d_{i_3}(zg_1)_y.
\]
We have that 
\[
\sum_{y\in D_{i_3}}\sum_{z\in D_{i_3-i_1}} d_{i_3-i_1}(xg_2)_z d_{i_3}(zg_1)_yy = \sum_{z\in D_{i_3-i_1}} d_{i_3-i_1}(xg_2)_z zg_1 = xg_2g_1,
\]
so row $x$ of the left hand side matches row $x$ of the right hand side.

\textbf{Step 2.} To finish the proof, we must verify that $L\in\mathcal{I}$. First, it is clear that $L$ is invertible and from inspecting the matrices $N^i$ for $i\geq 0$, if $L$ is $(R,V)$-closed, then $L_{(R,V)}$ is invertible. Hence, it suffices to prove that $L$ is $(R,V)$-closed to complete the proof. 

Assume that $0\leq i_1\leq i_2$, $f\in V_{i_2}$ and $g\in R_{i_1}$. We must prove that $c_{i_2-i_1}(L(g)f)_{c_{i_2-i_1,j}} = 0$ for $1\leq j\leq |C_{i_2-i_1}|-|A_{i_2-i_1}|$. Using \pref{eq:formula}, we have that
\[
c_{i_2-i_1}(L(g)f)_{c_{i_2-i_1,j}} =\sum_{\substack{x\in D_{i_2-i_1},\,y\in D_{i_2}, \\ z\in C_{i_2}}} (N^{i_2-i_1})^{-1}_{c_{i_2-i_1,j}x}d_{i_2}(xg)_y N^{i_2}_{yz}c_{i_2}(f)_z.
\]
Since $f\in V_{i_2}$, $c_{i_2}(f)_z=0$ for $z\in C_{i_2}\backslash A_{i_2}$, so we may assume that $z\in A_{i_2}$. We may also assume that $y\in B_{i_2}$ so that $N_{yz}^{i_2}$ is nonzero. Since $xg\in T_{i_2}$ for all $x\in D_{i_2-i_1}\backslash B_{i_2-i_1}$, in order for $d_{i_2}(xg)_y$ to be nonzero, we may assume that $x\in B_{i_2-i_1}$. However, it is then clear that $(N^{i_2-i_1})^{-1}_{c_{i_2-i_1,j}x}=0$. Hence, $c_{i_2-i_1}(L(g)f)_{c_{i_2-i_1,j}}=0$, which finishes the proof.
\end{proof}

\begin{example}
We give examples of tuples $(S_i, R_i, T_i)$ for $i\geq 0$ that satisfy conditions (1) and (2) of \cref{thm:bijection}. The first example is to set $S_i=R_i$ and $T_i=\{0\}$ for all $i\geq 0$. Another example is to set $S_i$ to be the ring of homogeneous polynomials in $K[x_1,\ldots,x_N]$ of degree $i$, $R_i$ to be the ring of homogeneous polynomials in $K[x_1,\ldots,x_{N-1}]$ of degree $i$, $T_0$ to be $\{0\}$, and $T_i$ to be $x_N S_{i-1}$ for all $i\geq 1$.
\end{example}

\begin{corollary}
\label{cor:bijection1}
Assume that conditions (1) and (2) from \Cref{thm:bijection} are satisfied and $K\subset\text{center}(R)$. The function $\Phi$ is a bijection from $\mathcal{I}'/\sim$ to $\mathcal{M}$.
\end{corollary}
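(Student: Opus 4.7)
The plan is to reduce the corollary to Theorem~\ref{thm:bijection} by observing that $\mathcal{I} \subset \mathcal{I}'$ and that the equivalence relation $\sim$ on both sets is defined identically (both depend only on the restriction $L_{(R,V)} \in \mathcal{L}$). The inclusion $\mathcal{I} \subset \mathcal{I}'$ is immediate from the definitions: any $L \in \mathcal{I}$ is $(R,V)$-closed with $L_{(R,V)}$ invertible, which is precisely the membership condition for $\mathcal{I}'$. Moreover, for any $L \in \mathcal{I}'$, since $L_{(R,V)}$ is invertible, each matrix $M^{i;L_{(R,V)}}$ is invertible, so $\Phi$ does take values in $\mathcal{M}$.

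For surjectivity of $\Phi: \mathcal{I}'/\sim \,\to \mathcal{M}$, fix any $m \in \mathcal{M}$. By Theorem~\ref{thm:bijection} there exists $L \in \mathcal{I}$ with $\Phi(L) = m$, and since $\mathcal{I} \subset \mathcal{I}'$, this element $L$ also serves as a preimage in $\mathcal{I}'$. So every element of $\mathcal{M}$ is hit.

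For injectivity, suppose $L_1, L_2 \in \mathcal{I}'$ satisfy $\Phi(L_1) = \Phi(L_2)$, i.e.\ $M^{i;L_{1,(R,V)}} = M^{i;L_{2,(R,V)}}$ for every $i \geq 0$. Since both $L_{1,(R,V)}$ and $L_{2,(R,V)}$ are invertible elements of $\mathcal{L}$, Lemma~\ref{lemma:formula} applies to each and expresses $L_{j,(R,V)}(g)f$ for $0 \leq i_1 \leq i_2$, $g \in R_{i_1}$, $f \in R_{i_2}$, entirely in terms of the matrices $M^{i;L_{j,(R,V)}}$, the basis vectors, and the multiplication in $R$. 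Since these data coincide for $j=1,2$, we get $L_{1,(R,V)}(g)f = L_{2,(R,V)}(g)f$ whenever $\deg g \leq \deg f$; when $\deg g > \deg f$ both sides vanish by the degree condition in the definition of $\mathcal{L}$. Therefore $L_{1,(R,V)} = L_{2,(R,V)}$, which is exactly $L_1 \sim L_2$ in $\mathcal{I}'$.

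There is no substantive obstacle beyond checking these two directions: the only thing to be a bit careful about is that Lemma~\ref{lemma:formula} is invoked with the restrictions $L_{j,(R,V)} \in \mathcal{L}$ (which are invertible by hypothesis), not with $L_1, L_2$ themselves (which may fail to be invertible as elements of $\mathcal{T}$). This distinction is exactly why the corollary is strictly stronger than the theorem—we obtain the same target $\mathcal{M}$ while allowing the larger domain $\mathcal{I}'/\sim$—and the argument shows that enlarging the domain from $\mathcal{I}/\sim$ to $\mathcal{I}'/\sim$ does not create any new equivalence classes, so the bijection persists.
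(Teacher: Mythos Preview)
Your proof is correct and follows essentially the same approach as the paper: surjectivity comes from Theorem~\ref{thm:bijection} via the inclusion $\mathcal{I} \subset \mathcal{I}'$, and injectivity is established by applying Lemma~\ref{lemma:formula} to the invertible restrictions $L_{j,(R,V)} \in \mathcal{L}$ to conclude that the matrices $M^{i;L_{(R,V)}}$ determine $L_{(R,V)}$ completely. Your explicit remark that Lemma~\ref{lemma:formula} is applied to the restrictions (which are invertible in $\mathcal{L}$) rather than to the possibly non-invertible $L_j \in \mathcal{T}$ is a useful clarification that the paper leaves implicit.
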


\begin{proof}
The proof of injectivity is the same as the proof of injectivity given in the proof of \Cref{thm:bijection}. On the other hand, surjectivity follows from \Cref{thm:bijection}.
\end{proof}

\begin{corollary}
\label{cor:bijection2}
Assume that $K\subset\text{center}(R)$. The function $\alpha: L \mapsto \{M^{i;L}\}_{i\geq 0}$ is a bijection from the set of invertible elements of $\mathcal{L}$ to $\mathcal{M}$.
\end{corollary}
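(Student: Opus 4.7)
The plan is to derive this corollary as a direct specialization of \Cref{cor:bijection1} (equivalently, \Cref{thm:bijection}). I will choose the enclosing data $(W, S, C_i, D_i, T_i)$ in the setup preceding \Cref{thm:bijection} so that it collapses onto $(V, R, A_i, B_i, \{0\})$. Specifically, I take $W = V$, $S = R$, $C_i = A_i$, $D_i = B_i$, and $T_i = \{0\}$ for all $i \geq 0$. Then $\mathcal{T}$ becomes $\mathcal{L}$, and the hypothesis $\mathrm{dim}(W_i) = \mathrm{dim}(S_i) \geq 1$ is precisely the standing hypothesis $\mathrm{dim}(V_i) = \mathrm{dim}(R_i) \geq 1$.

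First I would check that $T_i = \{0\}$ satisfies conditions (1) and (2) of \Cref{thm:bijection}. Condition (1) says $S_i = R_i \oplus T_i$, which is immediate since $T_i = \{0\}$ and $S_i = R_i$. Condition (2) says $tr \in T_{i_1 + i_2}$ for $t \in T_{i_2}$ and $r \in R_{i_1}$; since $t = 0$, this is trivial. This is precisely the first example listed after \Cref{thm:bijection}.

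Next I would unwind the definitions of $(R,V)$-closedness, $\mathcal{I}'$, and the equivalence $\sim$ under this specialization. Since $S = R$ and $W = V$, every $L \in \mathcal{L}$ automatically sends $R$ into $\mathrm{End}_K(V)$, so every $L \in \mathcal{L}$ is $(R,V)$-closed and $L_{(R,V)} = L$. Consequently $\mathcal{I}'$ is exactly the set of $L \in \mathcal{L}$ such that $L$ is invertible, and the relation $A \sim B \Leftrightarrow A_{(R,V)} = B_{(R,V)}$ reduces to $A = B$, so $\mathcal{I}'/{\sim}$ is literally the set of invertible $L \in \mathcal{L}$. Furthermore, the map $\Phi$ of \Cref{cor:bijection1} becomes $L \mapsto \{M^{i;L}\}_{i \geq 0}$, which is exactly $\alpha$.

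Finally, applying \Cref{cor:bijection1} (whose hypothesis $K \subset \mathrm{center}(R)$ is assumed here) yields that $\alpha$ is a bijection from the set of invertible elements of $\mathcal{L}$ to $\mathcal{M}$. I do not anticipate a real obstacle, since the work has been done in \Cref{thm:bijection}; the only thing to be careful about is the bookkeeping that in the degenerate case $W = V$, $S = R$, $T_i = \{0\}$, the quotient $\mathcal{I}'/{\sim}$ really does reduce to the set of invertible elements of $\mathcal{L}$ (as opposed to some strictly larger quotient), which is transparent once one writes out the definitions.
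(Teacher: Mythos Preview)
Your proposal is correct and takes essentially the same approach as the paper: the paper's proof is the single line ``This follows from \Cref{thm:bijection} after setting $(S,W)=(R,V)$,'' and you have simply unpacked that line, verifying explicitly that with $T_i=\{0\}$ conditions (1) and (2) hold and that $\mathcal{I}'/{\sim}$ (equivalently $\mathcal{I}/{\sim}$, since here $L_{(R,V)}=L$) collapses to the set of invertible elements of $\mathcal{L}$.
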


\begin{proof}
This follows from \Cref{thm:bijection} after setting $(S,W)=(R,V)$.
\end{proof}

\begin{corollary}
\label{cor:bijection3}
Assume that conditions (1) and (2) from \Cref{thm:bijection} are satisfied and $K\subset\text{center}(R)$. The function $\beta: L \mapsto L_{(R,V)}$ is a bijection from $\mathcal{I}/\sim$ to the set of invertible elements of $\mathcal{L}$ and from $\mathcal{I}'/\sim$ to the set of invertible elements of $\mathcal{L}$.
\end{corollary}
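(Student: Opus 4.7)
The plan is to deduce the corollary as a short diagram chase from \Cref{thm:bijection}, \Cref{cor:bijection1}, and \Cref{cor:bijection2}, since these results already describe three sets via bijections with the common set $\mathcal{M}$ of sequences of invertible matrices.

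First I would verify that $\beta$ is well-defined and takes values in the target. By the very definition of $\sim$ on $\mathcal{I}$ (resp.\ $\mathcal{I}'$), any two representatives $L_1, L_2$ of the same class satisfy $L_1{}_{(R,V)} = L_2{}_{(R,V)}$, so $\beta([L]) \triangleq L_{(R,V)}$ does not depend on the representative. Moreover, both defining conditions of $\mathcal{I}$ and $\mathcal{I}'$ require $L_{(R,V)}$ to be invertible, so $\beta$ indeed lands in the set of invertible elements of $\mathcal{L}$.

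The key observation is that $\beta$ slots into a commutative triangle with $\Phi$ and the map $\alpha$ from \Cref{cor:bijection2}: for any $L$ in $\mathcal{I}$ or $\mathcal{I}'$,
\[
\alpha(\beta([L])) \;=\; \alpha(L_{(R,V)}) \;=\; \{M^{i;\,L_{(R,V)}}\}_{i\geq 0} \;=\; \Phi([L]),
\]
directly from the definitions of $\alpha$ and $\Phi$. Since $\alpha$ is a bijection from the invertible elements of $\mathcal{L}$ onto $\mathcal{M}$ by \Cref{cor:bijection2}, and $\Phi$ is a bijection from $\mathcal{I}/\sim$ (resp.\ $\mathcal{I}'/\sim$) onto $\mathcal{M}$ by \Cref{thm:bijection} (resp.\ \Cref{cor:bijection1}), we conclude $\beta = \alpha^{-1} \circ \Phi$ in both cases, so $\beta$ is a bijection.

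There is essentially no obstacle here: all of the substantive content—construction of an extension in $\mathcal{I}$ realizing any prescribed sequence of invertible matrices, and the injectivity obtained via \Cref{lemma:formula}—is already packaged in \Cref{thm:bijection} and \Cref{cor:bijection1}, and the identification with $\mathcal{L}$-data is packaged in \Cref{cor:bijection2}. The only care needed is the verification that $\alpha \circ \beta = \Phi$, which is immediate from unwinding definitions.
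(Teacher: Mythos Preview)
Your proposal is correct and follows essentially the same approach as the paper: the paper's proof is the one-line observation that $\beta = \alpha^{-1}\circ\Phi$, citing \Cref{thm:bijection}, \Cref{cor:bijection1}, and \Cref{cor:bijection2}. Your write-up simply unpacks this composition and adds the routine check that $\beta$ is well-defined on equivalence classes.
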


\begin{proof}
The result follows from \Cref{thm:bijection} and \Cref{cor:bijection1,cor:bijection2} after observing that $\beta = \alpha^{-1}\circ \Phi$. 
\end{proof}

\subsection{Equivariance with respect to a group action and examples}
\label{subsec:action}

Let $H$ be a finite group which acts on $W$ and $S$ such that it acts as the identity over $W_0=K$ and $S_0=K$. Assume that for $h\in H$ and $g_1,g_2\in S$, $hg_1g_2=(hg_1)(hg_2)$. Then, let $\mathcal{H}$ be the set of $L\in\mathcal{T}$ such that for all $f\in W$, $g\in S$, and $h\in H$,
\begin{equation}
\label{eq:equivariant}
L(hg)hf = hL(g)f.
\end{equation}
Let $V$ and $R$ be the set of $w\in W$ such that $hw=w$ and $s\in S$ such that $hs=s$, respectively, for all $h\in H$. In particular, for all $i\geq 0$, $V_i$ and $R_i$ are the sets of $w\in W_i$ and $s\in S_i$, respectively, such that $hw=w$ and $hs=s$ for all $h\in H$. We have that $R\triangleq\oplus_{i\geq 0} R_i$ forms a graded ring since $hg_1g_2=(hg_1)(hg_2)$ for all $g_1,g_2\in S$. Due to this, if $r_1\in R_{i_1}$ and $r_2\in R_{i_2}$, then $hr_1r_2=(hr_1)(hr_2)=r_1r_2$ so $r_1r_2\in R_{i_1+i_2}$.

\begin{remark}
\label{remark:closed1}
It is evident that if $v\in V_i$, then $hv = v \in V_i$, so $V_i$ is closed under the action of $H$ and $R_i$ is closed under the action as well. However, this is not necessarily the case for $W_i$ or $S_i$. Due to this, we require additional arguments to prove \Cref{lemma:restrictinv1,lemma:restrictinv2} for the cases where $W_i$ and $S_i$, respectively, are not closed under the action of $H$. 
\end{remark}

\begin{lemma}
\label{lemma:closed}
If $L\in \mathcal{H}$, then $L$ is $(R,V)$-closed.
\end{lemma}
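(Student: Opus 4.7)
The plan is to unwind the definitions and apply the equivariance condition \eqref{eq:equivariant} directly. Fix $L\in\mathcal{H}$, $g\in R$, and $f\in V$; I need to show $L(g)f\in V$, which by the definition of $V$ means $h\bigl(L(g)f\bigr) = L(g)f$ for every $h\in H$.

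The key observation is that $g\in R$ and $f\in V$ mean precisely that $hg=g$ and $hf=f$ for all $h\in H$. So for any $h\in H$, the equivariance identity $L(hg)hf = hL(g)f$ becomes
\[
L(g)f = L(hg)hf = hL(g)f,
\]
which is exactly $H$-invariance of $L(g)f$. Since $L$ is a graded homomorphism in $\mathcal{T}$, if $g\in R_{i_1}$ and $f\in W_{i_2}$ (with $i_1\le i_2$), then $L(g)f\in W_{i_2-i_1}$; combined with the $H$-invariance just proved, this places $L(g)f$ in $V_{i_2-i_1}\subset V$. The general case where $g$ and $f$ are not homogeneous follows by linearity, using that $L$ is $K$-linear in the first argument and acts as a $K$-linear endomorphism of $W$, so $H$-invariance is preserved under the finite sums that decompose $g$ and $f$ into homogeneous components.

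There is no real obstacle here: the statement is essentially an immediate consequence of unpacking what $\mathcal{H}$, $R$, and $V$ mean. The only thing worth being careful about is that the equivariance relation \eqref{eq:equivariant} is stated for arbitrary $f\in W$ and $g\in S$, not just invariant ones — but this is exactly what we need, since we apply it with $g\in R\subset S$ and $f\in V\subset W$ and exploit the fact that $hg=g$ and $hf=f$ on the left-hand side.
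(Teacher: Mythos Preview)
Your proof is correct and takes essentially the same approach as the paper: both apply the equivariance identity \eqref{eq:equivariant} with $hg=g$ and $hf=f$ to conclude $hL(g)f=L(g)f$. Your additional remarks on grading and linearity are fine but not needed, since $(R,V)$-closedness only asks that $L(g)f\in V$.
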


\begin{proof}
Suppose $f\in V$, $g\in R$, and $h\in H$. It suffices to prove that $hL(g)f=L(g)f$. By \pref{eq:equivariant},
\[
hL(g)f = L(hg)hf = L(g)f,
\]
which finishes the proof.
\end{proof}

\begin{lemma}
\label{lemma:restrictinv1}
Assume that $\text{char}(K)$ does not divide $|H|$ and that $L\in\mathcal{H}$ is invertible. Then, $L_{(R,V)}$ is invertible.
\end{lemma}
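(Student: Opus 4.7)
The plan is to prove the contrapositive: assuming $L_{(R,V)}$ is not invertible, I will produce an element of $W$ that witnesses the non-invertibility of $L \in \mathcal{T}$, contradicting the hypothesis. Note that $L_{(R,V)}$ is well-defined by \Cref{lemma:closed}, and the matrices $M^{i;L_{(R,V)}}$ are square of size $\dim V_i = \dim R_i$ by the standing assumption of the section. By the equivalence (a)$\Leftrightarrow$(b) in \Cref{thm:equivalences} applied to $L_{(R,V)} \in \mathcal{L}$, non-invertibility produces some $i \geq 0$ and some nonzero $f_0 \in V_i$ with $L(g)f_0 = 0$ for every $g \in R_i$. The goal is to promote this to $L(q)f_0 = 0$ for every $q \in S_i$; this nonzero element of $W_i$ would violate condition (b) for $L$ and so contradict the invertibility of $L$.

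The tool is symmetrization. Since $\mathrm{char}(K)$ does not divide $|H|$, the element $|H|$ is a unit in $K$, and the averaging operator $\mathrm{Sym}: S \to S$ defined by $\mathrm{Sym}(q) \triangleq \frac{1}{|H|}\sum_{h \in H} hq$ makes sense, is $K$-linear, grading-preserving, and has image inside $R$. The heart of the argument is the following computation: for any $q \in S_i$,
\[
L(q)f_0 \;=\; L(\mathrm{Sym}(q))f_0.
\]
To see this, observe that $hf_0 = f_0$ since $f_0 \in V_i$, so \pref{eq:equivariant} yields $L(hq)f_0 = L(hq)(hf_0) = hL(q)f_0$ for every $h \in H$. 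Averaging over $h$ gives
\[
L(\mathrm{Sym}(q))f_0 \;=\; \tfrac{1}{|H|}\sum_{h \in H} h L(q)f_0 \;=\; \mathrm{Sym}\bigl(L(q)f_0\bigr).
\]
Now $L(q)f_0 \in W_0 = K$, and by hypothesis $H$ acts trivially on $W_0$, so $\mathrm{Sym}$ is the identity on $W_0$ and the right-hand side is just $L(q)f_0$, proving the claim.

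Finishing is immediate: since $\mathrm{Sym}(q) \in R_i$ and $L(g)f_0 = 0$ for all $g \in R_i$, we obtain $L(q)f_0 = L(\mathrm{Sym}(q))f_0 = 0$ for every $q \in S_i$. Thus $f_0$ is a nonzero element of $W_i$ annihilated by $L(q)$ for all $q \in S_i$, contradicting (a)$\Leftrightarrow$(b) of \Cref{thm:equivalences} for the invertible $L \in \mathcal{T}$. I do not foresee a real obstacle here; this is essentially a two-line averaging argument, and the only subtlety is keeping straight that the triviality of the $H$-action on $W_0 = K$ (which is part of the standing setup) is exactly what collapses $\mathrm{Sym}(L(q)f_0)$ back to $L(q)f_0$.
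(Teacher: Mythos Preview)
Your proof is correct and uses essentially the same averaging argument as the paper. The only (cosmetic) difference is that the paper works on the dual side: it invokes condition (c) to find a nonzero $g\in R_i$ with $L(g)f=0$ for all $f\in V_i$, and then symmetrizes an arbitrary $f\in W_i$ to land in $V_i$; you instead invoke condition (b) to find a nonzero $f_0\in V_i$ killed by all of $R_i$, and symmetrize an arbitrary $q\in S_i$ to land in $R_i$. Both routes rely on the same two ingredients---equivariance \pref{eq:equivariant} and the trivial $H$-action on $W_0=K$---and are interchangeable.
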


\begin{proof}
For the sake of contradiction, assume that $L_{(R,V)}$ is not invertible. Suppose $i\geq 1$ and $g\in R_i$ satisfies $L(g)f=0$ for all $f\in V_i$. Suppose $f\in W_i$. Then, for all $h\in H$,
\[
L(g)hf = L(hg)hf = hL(g)f = L(g)f
\]
by \pref{eq:equivariant}, since $L(g)f\in K$ and $H$ acts as the identity over $K$. Thus, we have that
\[
K\ni L(g)f = \frac{1}{|H|}\sum_{h\in H} L(g)hf = L(g)\left(\frac{1}{|H|}\sum_{h\in H} hf\right),
\]
since $|H|$ is not divisible by $\text{char}(K)$. 

Observe that $\frac{1}{|H|}\sum_{h\in H} hf\in V$, so the projection $f'$ of $\frac{1}{|H|}\sum_{h\in H} hf$ onto $V_i$ is well defined. Then, 
because $L(g)f\in K$, we have that
\[
L(g)f = L(g)\left(\frac{1}{|H|}\sum_{h\in H} hf\right) = L(g)f' = 0,
\]
which is a contradiction.
\end{proof}

The paper \cite{dunkl_singular_poly} shows that in the setting of Dunkl operators, the invertibility of $L_{(R,V)}$ implies the invertibility of $L$. We repeat the argument from Remark 4.2 of that paper in a more general setting to prove the following result.

\begin{lemma}
\label{lemma:restrictinv2}
Assume that $S$ is an integral domain. If $L\in\mathcal{L}$ is $(R,V)$-closed and $L_{(R,V)}$ is invertible, then $L$ is invertible.
\end{lemma}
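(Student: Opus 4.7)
The plan is to proceed by contradiction, following the integral-domain argument referenced from \cite{dunkl_singular_poly}. Suppose $L \in \mathcal{T}$ is not invertible. Let $i \geq 1$ be the smallest index for which $M^{i;L}$ fails to be invertible; this $i$ exists and satisfies $i \geq 1$ by \Cref{cor:zero}. By the equivalence (a)$\Leftrightarrow$(c) of \Cref{thm:equivalences} applied to $L \in \mathcal{T}$ (with the pair $(S,W)$ playing the role of $(R,V)$), I obtain a nonzero $g \in S_i$ such that $L(g)w = 0$ for every $w \in W_i$.

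Next, I would propagate this annihilation through the ring structure of $L$. For any $h \in S_k$ and $w \in W_{i+k}$, the computation $L(gh)w = L(g)L(h)w = 0$ holds because $L(h)w \in W_i$ and $L(g)$ kills $W_i$. Hence, the graded subspace $I \triangleq \bigoplus_{j\geq 0}\{s \in S_j : L(s)|_{W_j} = 0\}$ is a right ideal of $S$ containing $gS$. The invertibility of $L_{(R,V)}$ now forces $I \cap R = \{0\}$: any $r \in R_j \cap I_j$ must satisfy $L(r)|_{V_j} = 0$, since $V_j \subseteq W_j$, and condition (c) of \Cref{thm:equivalences} applied to $L_{(R,V)} \in \mathcal{L}$ then gives $r = 0$. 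Combining these facts yields $gS \cap R = \{0\}$.

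Here the integral-domain hypothesis enters crucially: since $g \neq 0$, multiplication by $g$ is injective on $S$, so $\dim_K(gS)_{i+k} = \dim_K S_k$ for every $k \geq 0$. To extract the contradiction, I would also invoke the dual side (condition (b) of \Cref{thm:equivalences}), producing a nonzero $f \in W_i$ with $L(s)f = 0$ for every $s \in S_i$. The minimality of $i$ then propagates this annihilation: for $s \in S_k$ with $1 \leq k \leq i$, the element $L(s)f \in W_{i-k}$ lies in $J_{i-k} \triangleq \{w \in W_{i-k} : L(s')w = 0\,\forall s' \in S_{i-k}\}$, because for every $s' \in S_{i-k}$ we have $L(s')L(s)f = L(s's)f = 0$ with $s's \in S_i$; by minimality, $J_{i-k} = \{0\}$, forcing $L(s)f = 0$. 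Thus $f$ is annihilated by all of $\bigoplus_{k \geq 1} S_k$. Combining this strong annihilation of $f$ with the graded embedding $gS \hookrightarrow I$, the disjointness $gS \cap R = \{0\}$, and the integrality of $S$, the final step produces a nonzero element of $R$ that is forced to lie inside $I$, contradicting $I \cap R = \{0\}$ and thereby contradicting the invertibility of $L_{(R,V)}$.

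The main obstacle is precisely this concluding step: neither $g$ nor the obstruction elements it generates lie in $R$ a priori, so the invertibility of $L_{(R,V)}$ cannot be invoked on them directly. The integrality of $S$ is what rules out the dodge of finding nonzero $h \in S$ with $gh = 0$, which would otherwise break the injectivity of multiplication by $g$ and allow $gS$ to slip past the dimensional constraints imposed by $I \cap R = \{0\}$. As the discussion preceding the lemma emphasizes, the reverse implication can genuinely fail without some hypothesis of this type, and the integral-domain condition is exactly the ingredient that converts the restriction invertibility into global invertibility.
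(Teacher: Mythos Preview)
Your proposal has a genuine gap at the concluding step, as you yourself acknowledge. The ideal-theoretic machinery you assemble---the right ideal $I$ with $gS \subseteq I$ and $I \cap R = \{0\}$, the dimension count $\dim_K (gS)_{i+k} = \dim_K S_k$ from injectivity of multiplication by $g$, and the auxiliary element $f \in W_i$ annihilated by $S_{\geq 1}$---never manufactures a nonzero element of $R$ lying in $I$, and there is no reason it should: nothing in your argument connects the abstract embedding $R \hookrightarrow S$ to the obstruction $g$. Without further structure relating $R$ to $S$, the disjointness $gS \cap R = \{0\}$ is perfectly consistent with $S$ being an integral domain, so no contradiction emerges.

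That further structure is the finite group $H$: the lemma sits inside \Cref{subsec:action}, where $R$ is by definition the ring of $H$-invariants in $S$. The paper exploits this directly. Given your nonzero $g \in S_i$ with $L(g)|_{W_i} = 0$, one forms the symmetrized product $g' \triangleq \prod_{h \in H} hg$. Commutativity of $S$ makes $g'$ manifestly $H$-invariant, so $g' \in R_M$ for the appropriate degree $M$. The integral-domain hypothesis enters precisely here---not via a dimension count as you attempted, but simply to guarantee $g' \neq 0$, since each factor $hg$ is nonzero (the action of $h$ being invertible) and a product of nonzero elements in an integral domain is nonzero. Finally, $L(g')$ annihilates $V_M$: writing $g' = g \cdot \prod_{h \neq 1} hg$ by commutativity, for any $f \in V_M$ the operator $L(\prod_{h \neq 1} hg)$ lands $f$ in $W_i$, where $L(g)$ kills it. This produces a nonzero $g' \in R_M$ with $L_{(R,V)}(g')|_{V_M} = 0$, contradicting the invertibility of $L_{(R,V)}$. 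The symmetrization over $H$ is the one move your outline is missing.
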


\begin{proof}
For the sake of contradiction, assume that $L$ is not invertible. Suppose $i\geq 1$ and $g\in S_i$ is nonzero and satisfies $L(g)f=0$ for all $f\in W_i$. Let
\[
g' = \prod_{h'\in H} h'g.
\]
For all $h\in H$, $hg' = \prod_{h'\in H} hh'g = g'$ by the commutativity of $S$, so $g'\in R$.

Assume that $h'\in H$ and $h'g=0$. Then, $(h')^{-1}h'g=0$, which is a contradiction to $g\not=0$. Hence, $h'g\not=0$. It follows that $g'\not=0$ because $S$ is an integral domain. 

We can express $g'$ as a unique linear combination of elements of $R_j$ for $j\geq 0$. Let $m$ be the smallest nonnegative integer $j$ such that the component of $R_j$ in this linear combination is nonzero. Then, $m\geq i\geq 1$ because we multiply by $g\in S_i$ to obtain $g'$ and $g'\not=0$. 

Suppose $f\in V_m$. Then, $L(g')f \in K$. Suppose $f'=\prod_{h'\in H\backslash\{1\}} L(h'g) f$. Then, $L(g)f'\in K$, so if $f''$ is the projection of $f'$ onto $W_0\oplus\cdots\oplus W_i$, we have that $L(g')f = L(g)f'=L(g)f''$. However, $L(g)f''=0$, which is a contradiction to the invertibility of $L_{(R,V)}$.
\end{proof}

\begin{corollary}
Assume that $S$ is an integral domain. Then, $\mathcal{I}=\mathcal{I}'$.
\end{corollary}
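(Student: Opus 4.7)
The plan is essentially to observe that this corollary is a direct consequence of \Cref{lemma:restrictinv2}, once one unwinds the definitions.

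First, the inclusion $\mathcal{I} \subseteq \mathcal{I}'$ is immediate: by definition, $\mathcal{I}$ is the set of $L \in \mathcal{T}$ which are $(R,V)$-closed and for which both $L$ and $L_{(R,V)}$ are invertible, whereas $\mathcal{I}'$ only requires $L$ to be $(R,V)$-closed with $L_{(R,V)}$ invertible. So every element of $\mathcal{I}$ lies in $\mathcal{I}'$.

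For the reverse inclusion $\mathcal{I}' \subseteq \mathcal{I}$, I would let $L \in \mathcal{I}'$ be arbitrary, so that $L$ is $(R,V)$-closed and $L_{(R,V)}$ is invertible. The only additional thing to check in order to conclude $L \in \mathcal{I}$ is that $L$ itself is invertible (as an element of $\mathcal{T}$, i.e., the matrices $M^{i;L} \in K^{D_i \times C_i}$ are invertible for all $i \geq 0$). But this is precisely the content of \Cref{lemma:restrictinv2}, which asserts under the hypothesis that $S$ is an integral domain that $(R,V)$-closedness together with invertibility of $L_{(R,V)}$ implies invertibility of $L$. Since the integral domain hypothesis on $S$ is the standing assumption of the corollary, \Cref{lemma:restrictinv2} applies verbatim and yields $L \in \mathcal{I}$.

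There is essentially no new obstacle to overcome beyond what \Cref{lemma:restrictinv2} already handles. The substantive work, namely the symmetrization trick of passing from a nonzero $g \in S_i$ annihilating $W_i$ under $L$ to its orbit product $g' = \prod_{h' \in H} h' g \in R_M$ annihilating $V_M$ under $L_{(R,V)}$, and the use of the integral domain hypothesis to ensure $g' \neq 0$, is already carried out in the proof of \Cref{lemma:restrictinv2}. Thus the proof of the corollary is a one-line invocation of that lemma.
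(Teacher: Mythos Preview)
Your proposal is correct and matches the paper's intended argument: the corollary is stated without proof in the paper precisely because it is an immediate consequence of \Cref{lemma:restrictinv2}, exactly as you describe. The inclusion $\mathcal{I}\subseteq\mathcal{I}'$ is definitional, and the reverse inclusion is the content of that lemma.
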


\begin{example}
\label{example:polyring}

We consider a more specific example. As an introduction, consider the following basic lemma, the proof of which we omit since it is straightforward.

\begin{lemma}
\label{lemma:welldefined}
Suppose $U$ is a vector space over $K$. Assume that $L: K[x_1,\ldots,x_N]\rightarrow\text{End}_K(U)$ is a $K$-linear ring homomorphism. Then, the operators $L(x_i)$ for $1\leq i\leq N$ are commutative. Conversely, for commutative operators $L_i\in\text{End}_K(U)$ for $1\leq i\leq N$, there exists a unique $K$-linear ring homomorphism $L:K[x_1,\ldots,x_N]\rightarrow \text{End}_K(U)$ such that $L(x_i)=L_i$ for $1\leq i\leq N$.
\end{lemma}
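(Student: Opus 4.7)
The plan is to prove both directions separately. The forward direction follows immediately from the multiplicativity of $L$ and the commutativity of $x_1,\ldots,x_N$ in $K[x_1,\ldots,x_N]$: for any $i,j\in [N]$, we have
\[
L(x_i)L(x_j)=L(x_ix_j)=L(x_jx_i)=L(x_j)L(x_i),
\]
so the operators $L(x_i)$ pairwise commute in $\text{End}_K(U)$.

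For the converse direction, given commutative operators $L_1,\ldots,L_N\in\text{End}_K(U)$, I would construct $L$ explicitly. Since $K[x_1,\ldots,x_N]$ has the monomials $x_1^{a_1}\cdots x_N^{a_N}$ for $(a_1,\ldots,a_N)\in\mathbb{Z}_{\geq 0}^N$ as a $K$-linear basis, I define $L$ by
\[
L\!\left(\sum_{(a_1,\ldots,a_N)} k_{(a_1,\ldots,a_N)} x_1^{a_1}\cdots x_N^{a_N}\right) \triangleq \sum_{(a_1,\ldots,a_N)} k_{(a_1,\ldots,a_N)} L_1^{a_1}\cdots L_N^{a_N},
\]
which is well-defined because the $L_i$ commute, so the product $L_1^{a_1}\cdots L_N^{a_N}$ is unambiguous in $\text{End}_K(U)$ and does not depend on the order of the factors. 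Then $L$ is $K$-linear by construction, sends $1$ to the identity, and sends $x_i$ to $L_i$. Multiplicativity $L(fg)=L(f)L(g)$ reduces by $K$-linearity to the case where $f$ and $g$ are monomials, in which case it follows directly from the commutativity of the $L_i$, which lets us collect the powers of each $L_i$ after multiplying the two products.

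Finally, for uniqueness, suppose $L,L'$ are two $K$-linear ring homomorphisms satisfying $L(x_i)=L'(x_i)=L_i$ for all $i\in [N]$. Then for any monomial $x_1^{a_1}\cdots x_N^{a_N}$, multiplicativity forces
\[
L(x_1^{a_1}\cdots x_N^{a_N})=L_1^{a_1}\cdots L_N^{a_N}=L'(x_1^{a_1}\cdots x_N^{a_N}),
\]
and since $L$ and $L'$ are $K$-linear and agree on a $K$-basis of $K[x_1,\ldots,x_N]$, they agree everywhere. There is no real obstacle here; the only point requiring a moment of care is ensuring that the definition of $L$ on monomials is well-posed, which is precisely what the hypothesis of commutativity supplies, and this observation is exactly what the lemma is designed to record for use in subsequent sections.
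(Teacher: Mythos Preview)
Your proof is correct and complete. The paper omits the proof entirely, stating only that it is straightforward, and your argument is precisely the standard one the authors have in mind.
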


Suppose $N\geq 1$, $S=W=K[x_1,\ldots,x_N]$, and that $S_i=W_i$ is the group of polynomials over $K$ that are homogeneous of degree $i$ for $i\geq 0$. From \Cref{lemma:welldefined}, any $L\in\mathcal{T}$ is uniquely determined by $L(x_i)$ for $1\leq i\leq N$. Let $H$ be a finite group such that the action of $h$ on $K^N$ is a $K$-linear endomorphism for all $h\in H$. 

\begin{lemma}
Assume that $h\in H$. Then, for $i\geq 0$ and $f\in S_i$, $\{x\mapsto f(hx)\}\in S_i$.
\end{lemma}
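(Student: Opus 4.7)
The plan is to exploit the fact that $f \mapsto \{x \mapsto f(hx)\}$ is a $K$-algebra endomorphism of the polynomial ring $S = K[x_1,\ldots,x_N]$, and then reduce degree $i$ to the already-known degree $1$ case by multiplicativity.

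First I would introduce the notation $\phi_h : S \to S$ defined by $\phi_h(f)(x) = f(hx)$. For $f,g \in S$ and $k \in K$, one has $\phi_h(kf + g)(x) = (kf+g)(hx) = kf(hx) + g(hx) = k\phi_h(f)(x) + \phi_h(g)(x)$ and $\phi_h(fg)(x) = (fg)(hx) = f(hx)g(hx) = \phi_h(f)(x)\phi_h(g)(x)$, so $\phi_h$ is a $K$-algebra endomorphism of $S$. Moreover $\phi_h(1) = 1 \in S_0$ is clear.

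Next, by the standing assumption on the $H$-action, for each $j \in [N]$ we have $\phi_h(x_j) \in S_1$. Since $S_{i_1} \cdot S_{i_2} \subseteq S_{i_1 + i_2}$, an easy induction on $i$ (or simply the definition of a graded ring) gives that any product of $i$ elements of $S_1$ lies in $S_i$. In particular $\phi_h(x_1^{a_1} \cdots x_N^{a_N}) = \phi_h(x_1)^{a_1} \cdots \phi_h(x_N)^{a_N} \in S_{a_1 + \cdots + a_N}$.

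Finally, given $f \in S_i$, write $f = \sum_{(a_1,\ldots,a_N) : \sum a_j = i} c_{(a_1,\ldots,a_N)} x_1^{a_1} \cdots x_N^{a_N}$ with $c_{(a_1,\ldots,a_N)} \in K$. By $K$-linearity of $\phi_h$,
\[
\phi_h(f) = \sum_{\sum a_j = i} c_{(a_1,\ldots,a_N)} \phi_h(x_1)^{a_1} \cdots \phi_h(x_N)^{a_N},
\]
which is a $K$-linear combination of elements of $S_i$ and hence lies in $S_i$. There is no real obstacle here; the only point worth being careful about is that the hypothesis on $H$ is phrased for $f \in S_1$ rather than directly in terms of the linear coordinates, but $x_j \in S_1$ so this causes no issue.
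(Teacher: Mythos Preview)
Your proof is correct and takes essentially the same approach as the paper: the paper's one-line proof simply observes that $(hx)_i \in S_1$ for all $i \in [N]$, which is exactly your observation that $\phi_h(x_j) \in S_1$, and the rest (multiplicativity and linearity) is left implicit there. You have just written out the details more carefully.
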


\begin{proof}
Let $e_i=[\mathbf{1}\{i=j\}]_{j\in [N]}$ be an element of $K^N$ for all $i\in [N]$. Then, 
\[
hx = h\left(\sum_{i=1}^N x_i e_i\right) = \sum_{i=1}^N x_ih(e_i),
\]
so the coefficient of $e_i$ of $hx$ is an element of $S_1$ for all $i\in [N]$. Afterwards, the result is straightforward to deduce.
\end{proof}

\begin{remark}
In this example, $S_i=W_i$ are closed under the action of $H$, see \Cref{remark:closed1} for discussion about when this is not the case. 
\end{remark}

Hence, we can define the action of $H$ over $S$ to be $hf(x)\triangleq f(hx)$; indeed, the action of $H$ over $K$ is the identity and $hf_1(x)f_2(x)=(hf_1(x))(hf_2(x))$ for $f_1,f_2\in S$. Then, $R=V$ is the set of $f\in S$ such that $f(hx)=f(x)$ for all $h\in H$ and $\mathcal{H}$ is the set of $L\in\mathcal{T}$ such that
\begin{equation}
\label{eq:equivariance}
L(g(hx))f(hx) = (L(g)f)(hx).
\end{equation}
for all $f,g\in S$ and $h\in H$. In contrast to the proof of \Cref{thm:bijection}, we do not assume that $B_i\subset D_i$ for $i\geq 0$. Observe that if $k\geq 1$, $i\in [N]$, and $f:x\mapsto x_i^{2k}$, then $\sum_{h\in H} hf(x)$ is invariant under the action of $H$. If $K=\mathbb{R}$, this sum is nonzero since the identity element of $H$ acts as the identity transformation, so $R_{2k}$ is nonempty for all $k\geq 1$.

For $x\in K^N$, define $\varphi_x: S\rightarrow K, f\mapsto f(x)$. We revisit the conditions that we introduced earlier:
\begin{itemize}
    \item Both $\text{ker}(\text{Hom}_K(R,K))$ and $\text{ker}(\text{Hom}_K(S,K))$ only contain $0$, since we can set the homomorphism to be $\varphi_x$ for a fixed value of $x\in K^N$. See \Cref{thm:equivalences} for the relevant implications.
    \item The ring $S$ is left- and right-spanned by $S_1$, but the analogous condition for $R$ is not true. See \Cref{thm:equivalences,thm:notinvertible} for the relevant implications.
    \item The left annihilators $\text{Ann}_R(R\backslash R_0)$ and $\text{Ann}_S(S\backslash S_0)$ only contain $0$. See \Cref{thm:notinvertible} for the relevant implications.
\end{itemize}
\end{example}

In the following result, we discuss the $\varphi_x$-eigenvectors. The result generalizes \Cref{thm:dunkl_eigenfunction,thm:dunkl_eigenfunction_symmetry} since $L$ can be any invertible operator.

\begin{corollary}
\label{cor:eigenvectorformula2}
Assume that $L\in\mathcal{T}$ is invertible (resp. $L\in\mathcal{L}$ is invertible). Furthermore, suppose $x\in K^N$.
\begin{enumerate}
\item[(A)] There does not exist a nonzero solution $E\in \mathcal{F}(S)$ (resp. $E\in\mathcal{F}(R)$) to $L(g)E=g(x)E$ for all $g\in S$ (resp. $g\in R$) such that $E^0=0$.
\item[(B)] There exists a unique solution $E\in\mathcal{F}(S)$ (resp. $E\in\mathcal{F}(R)$) to $L(g)E=g(x)E$ for all $g\in S$ (resp. $g\in R$) such that $E^0=1$. Using the notation of \Cref{cor:vrequal}, the solution is
\[
E = \sum_{r\in D_i} \text{inv-col}(r; D_i)r(x)\,\,\,\left(resp. \sum_{r\in B_i} \text{inv-col}(r; B_i) r(x)\right).
\]
\end{enumerate}
\end{corollary}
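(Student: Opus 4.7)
The plan is to recognize that both parts are essentially immediate corollaries of the abstract framework of \Cref{sec:invertible}, applied to the evaluation homomorphism. First I would verify that $\varphi_x : g \mapsto g(x)$ is a $K$-linear ring homomorphism from $S$ to $K$ (and restricts to such a homomorphism on $R$), since evaluation at a point is additive, multiplicative, and $K$-linear. Under this identification, the equation $L(g)E = g(x)E$ for all $g$ becomes precisely the defining condition that $E$ be a $\varphi_x$-eigenvector in the sense of the paragraph preceding \Cref{def:kernel}.

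For part (B), I would apply \Cref{lemma:eigenvectorformula} with $\Psi = \varphi_x$. Since $L$ is invertible by hypothesis, the lemma yields a unique $\varphi_x$-eigenvector $E$ with $E^0 = 1$, together with the explicit formula
\[
E^i \;=\; \sum_{j=1}^{|A_i|} \varphi_x(b_{ij})\, \text{inv-col}_{ij} \;=\; \sum_{j=1}^{|A_i|} b_{ij}(x)\, \text{inv-col}_{ij}.
\]
Rewriting this in the $V = R$ notation of \Cref{cor:vrequal} (respectively the analogous notation in the $W = S$ case), where the inverse columns are indexed by basis elements $r \in B_i$ (respectively $r \in D_i$), gives the displayed formula in part (B). The argument is identical in the two settings because the framework of \Cref{sec:invertible} was stated for an arbitrary pair of a graded ring of operators acting on a graded vector space, so it applies equally to $\mathcal{L}$ acting on $V = R$ and to $\mathcal{T}$ acting on $W = S$.

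For part (A), I would directly invoke \Cref{prop:zeroconstant}: if there existed a nonzero $\varphi_x$-eigenvector $E$ with $E^0 = 0$, then condition (a) of \Cref{thm:equivalences} — namely the invertibility of $L$ — would fail, contradicting the hypothesis. Hence no such eigenvector exists.

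There is no substantive obstacle to this proof; the corollary is essentially a restatement of \Cref{lemma:eigenvectorformula} and \Cref{prop:zeroconstant} in the special case $\Psi = \varphi_x$. The only point to verify is the trivial observation that $\varphi_x$ is a well-defined ring homomorphism, which is immediate since $S$ (and hence $R$) is a ring of functions on $K^N$ in the intended applications, such as those in \Cref{example:polyring}.
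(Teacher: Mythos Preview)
Your proposal is correct and takes essentially the same approach as the paper: both invoke \Cref{prop:zeroconstant} for part (A) and \Cref{lemma:eigenvectorformula} with $\Psi=\varphi_x$ for part (B), after noting that $\varphi_x$ is a $K$-linear ring homomorphism (which the paper records in \Cref{example:polyring}).
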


\begin{proof}
For (A), see \Cref{prop:zeroconstant}. For (B), see condition (i) of \Cref{thm:equivalences} with $\Psi=\varphi_x$ and \Cref{lemma:eigenvectorformula}. We can apply this condition because, as we mentioned previously, $\text{ker}(\text{Hom}_K(R,K))=\text{ker}(\text{Hom}_K(S,K))=\{0\}$. 
\end{proof}

\begin{corollary}
Assume that $L\in\mathcal{H}$ is invertible and $\text{char}(K)$ does not divide $|H|$. Then, all conclusions of \Cref{cor:eigenvectorformula2} are true (this includes the conclusions over both ($R$,$V$) and ($S$, $W$)).
\end{corollary}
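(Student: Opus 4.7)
The plan is to reduce the claim to the two cases of \Cref{cor:eigenvectorformula2} by invoking the closure and invertibility results from \Cref{subsec:action}. Since $\mathcal{H}\subset\mathcal{T}$, the hypothesis that $L$ is invertible places $L$ directly in the scope of the $(S,W)$-version of \Cref{cor:eigenvectorformula2}; no extra argument is needed there, and the conclusions (A) and (B) over $\mathcal{F}(S)$ follow immediately.

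For the $(R,V)$-version, I would proceed in two short steps. First, by \Cref{lemma:closed}, $L$ is $(R,V)$-closed, so the restriction $L_{(R,V)}:R\rightarrow\text{End}_K(V)$ is a well-defined element of $\mathcal{L}$. Second, the hypothesis that $\text{char}(K)$ does not divide $|H|$ is exactly what is needed to invoke \Cref{lemma:restrictinv1}, which upgrades the invertibility of $L$ to the invertibility of $L_{(R,V)}$. With $L_{(R,V)}\in\mathcal{L}$ invertible in hand, the $(R,V)$-version of \Cref{cor:eigenvectorformula2} applies verbatim to $L_{(R,V)}$ and yields both conclusions (A) and (B) over $\mathcal{F}(R)$ (which equals $\mathcal{F}(V)$ in the setting of \Cref{example:polyring}, where $R=V$).

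There is no genuine obstacle: the corollary is a packaging of \Cref{lemma:closed}, \Cref{lemma:restrictinv1}, and \Cref{cor:eigenvectorformula2}. The only subtlety worth flagging is that one should verify that the equation $L_{(R,V)}(g)E=g(x)E$ for $g\in R$ coincides with the restriction of $L(g)E=g(x)E$ to $g\in R$, which is immediate from the definition of the restriction $L_{(R,V)}$; hence the unique eigenvector produced for $L_{(R,V)}$ is automatically the unique eigenvector for the restricted system, and the nonexistence statement in (A) transfers in the same way. The characteristic hypothesis on $K$ enters only through \Cref{lemma:restrictinv1}, and without it the restriction might fail to be invertible even though $L$ is, so this assumption is essential and cannot be dropped in the argument.
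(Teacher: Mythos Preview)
Your proposal is correct and follows exactly the same approach as the paper's own proof, which simply reads ``This follows from \Cref{lemma:closed,lemma:restrictinv1}.'' You have unpacked the one-line citation into its two constituent steps---closure of $L$ under $(R,V)$ via \Cref{lemma:closed}, and invertibility of $L_{(R,V)}$ via \Cref{lemma:restrictinv1}---and correctly noted that the $(S,W)$ case is immediate from $\mathcal{H}\subset\mathcal{T}$.
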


\begin{proof}
By applying the corollary to $L\in\mathcal{T}$, we already know that the conclusions over ($S$, $W$) are true. By \Cref{lemma:closed,lemma:restrictinv1}, $L$ is ($R$, $V$)-closed and $L_{(R,V)}$ is invertible. Then, applying the corollary to $L_{(R,V)}\in\mathcal{L}$ implies that the conclusions over ($R$, $V$) are true.\end{proof}

\begin{example}
\label{example:dunkl}
We describe the framework that we have developed in the context of Dunkl operators and prove a few well-known results. Consider the setting of \Cref{example:polyring}, but set $K=\mathbb{C}$ and $H$ to be $H(\mathcal{R})$, where $\mathcal{R}\subset\mathbb{R}^N$ is a finite root system.

Suppose $u_i\in\mathbb{R}^N$ for $1\leq i\leq N$. Then, we can define $L\in\mathcal{T}$ by $L(x_i)\triangleq \mathcal{D}_{u_i}(\mathcal{R}(\theta))$ after using \Cref{lemma:welldefined} and the commutativity of the Dunkl operators. As a special case, we have that $\mathcal{D}(\mathcal{R}(\theta))\in\mathcal{T}$, where we recall that $\mathcal{D}(\mathcal{R}(\theta))(x_i) = \mathcal{D}_i(\mathcal{R}(\theta))$ for $i\in [N]$. For $i\geq 0$, the matrix $M^{i;\,\mathcal{D}(\mathcal{R}(\theta))}$ has entry $(r,s)$ equal to $[r,s]_{\mathcal{R}(\theta)}$ for $r\in D_i$ and $s\in C_i$.

By \Cref{lemma:equivariance}, it is evident that $\mathcal{D}(\mathcal{R}(\theta))\in\mathcal{H}$ since \pref{eq:equivariance} is satisfied. Furthermore, $V=R=\mathbb{C}^{H(\mathcal{R})}[x_1,\ldots,x_N]$ and the operator $\mathcal{D}(\mathcal{R}(\theta))_{(R,V)}\in\mathcal{L}$ is equivalent to $\mathcal{D}_H(\mathcal{R}(\theta))$. By \Cref{lemma:restrictinv1,lemma:restrictinv2}, $\mathcal{D}(\mathcal{R}(\theta))$ is invertible if and only if $\mathcal{D}_H(\mathcal{R}(\theta))$ is invertible. 

Observe that $E_a^{\mathcal{R}(\theta)}(x)$ is the $\Psi$-eigenvector of $\mathcal{D}(\mathcal{R}(\theta))$ and $J_a^{\mathcal{R}(\theta)}(x)$ is the $\Psi$-eigenvector of $\mathcal{D}_H(\mathcal{R}(\theta))$ for $\Psi: r\mapsto r(a)$, where $a\in\mathbb{C}^N$ is fixed. Then, part (B) of \Cref{cor:eigenvectorformula2} implies that these functions are unique eigenvectors, which is included in the statements of \Cref{thm:dunkl_eigenfunction,thm:dunkl_eigenfunction_symmetry}.

The intertwining operator for $(\mathcal{D}(\mathcal{R}(\theta)), \partial)$ is well-studied, where $\partial\in\mathcal{T}$ satisfies $\partial(x_i)=\partial_i$ for $i\in [N]$; equivalently, $\partial \triangleq \mathcal{D}(\mathcal{R}(0))$. As an example of an application of \Cref{thm:equivalences}, the theorem implies the well-known result that $(\mathcal{D}(\mathcal{R}(\theta)),\partial)$ intertwines with a unique operator if and only if $\mathcal{D}(\mathcal{R}(\theta))$ is invertible. As explained in \cite{rosler_positivity}, when considering the setting of \Cref{thm:positivity}, the intertwiner is given by 
\[
Vf(a)\triangleq \int_{\mathbb{R}^N} f(\epsilon) d\nu^{\mathcal{R}(\theta)}_a(\epsilon)
\]
for $f\in\mathbb{C}[x_1,\ldots,x_N]$ and $a\in\mathbb{R}^N$. Since the Bessel function is 
\[
J_a^{\mathcal{R}(\theta)}(x) = \frac{1}{|H(\mathcal{R})|}V \sum_{g\in H(\mathcal{R})} e^{\product{gx,\cdot}}(a),
\]
the characteristic function of $\frac{1}{|H(\mathcal{R})|} \sum_{g\in H(\mathcal{R})} \nu_{ga}^{\mathcal{R}(\theta)}=\nu_a^{\text{sym; }\mathcal{R}(\theta)}$ is given by $J_a^{\mathcal{R}(\theta)}(\mathbf{i}x)$.

For the remainder of this example, we consider the setting of \Cref{prop:svd}. Suppose $i\geq 0$. After assuming that $A_i=B_i$, we are particularly interested about the case where a singular value decomposition with $U=V$ exits. It would follow that 
\[
E^i = \sum_{r\in A_i} \sigma(r)^{-1} \Psi(u_r)u_r,
\]
where $\sigma(r)$ is entry $(r,r)$ of $\Sigma$ and $u_r$ corresponds to column $r$ of $U$ for $r\in A_i$.

The paper \cite{okounkov_olshanki_2} discusses the singular value decomposition of the matrices $\\M^{i;\,\mathcal{D}_H(A^{N-1}(\theta))}$ for $i\geq 0$ and $\theta \geq 0$. The paper \cite{baker} generalizes this result and shows that such a decomposition exists for the matrices $M^{i;\,\mathcal{D}(A^{N-1}(\theta))}$ for $i\geq 0$ and $\theta\geq 0$. Furthermore, the paper \cite{baker2} discusses the singular value decomposition of the matrices $M^{i;\,\mathcal{D}_H(BC^N(\theta_0,\theta_1))}$ for $i\geq 0$ and $\theta_0,\theta_1\geq 0$. For each $i\geq 0$, in the first and third cases, $U=V$ correspond to symmetric Jack functions while in the second case, $U=V$ correspond to nonsymmetric Jack functions.

For $\theta\geq 0$ and even $i\geq 0$, it is straightforward to compute the decomposition of $M^{i;\,\mathcal{D}_H(D^N(\theta))}$ when we restrict the rows and columns to $p_\lambda$ for $\lambda\in\evenN[i]$ by using the decomposition of $M^{i;\,\mathcal{D}_H(BC^N(\theta,0))}$. By \pref{eq:type_d_simple}, the decomposition for the remaining entries is given by that of $M^{i;\,\mathcal{D}_H(BC^N(\theta,1))}$. Furthermore, another interesting question is, what are the decompositions of $M^{i;\,\mathcal{D}(BC^N(\theta_0,\theta_1))}$ for $i\geq 0$?
\end{example}

\section{Leading order terms for the type A Dunkl bilinear form}
\label{sec:a}

Based on \pref{eq:eigenval}, the matrix $([p_\lambda,p_\nu]_{A^{N-1}(\theta)})_{\lambda,\nu\in\Gamma}$ is essential for calculating the coefficients of $J_a^{A^{N-1}(\theta)}(x)$. For the continuation of this direction, see \Cref{sec:bessel_coeff}. The following result computes the leading order terms of the matrix in the regime $|\theta N|\rightarrow\infty$.

\begin{theorem}
\label{thm:leadingorder_a}
Suppose $k\geq 1$, $\lambda,\nu\in\Gamma[k]$, and $\ell(\lambda)\leq \ell(\nu)$. Then,
\[
[p_\lambda, p_\nu]_{A^{N-1}(\theta)} = \theta^{k-\ell(\nu)}\prod_{l=1}^{\ell(\nu)}\nu_l \pi(\nu) \left[\prod_{l=1}^{\ell(\nu)} x_{\nu_l}\right]\prod_{i=1}^{\ell(\lambda)} \sum_{\pi\in NC(\lambda_i)} \prod_{B\in \pi} x_{|B|}N^{k+\ell(\lambda)-\ell(\nu)} + R(N,\theta).
\]
where $R\in\mathbb{Q}[x,y]$ satisfies:
\begin{enumerate}
\item In each summand, the degree of $x$ is at most $\ell(\lambda)$ greater than the degree of $y$.
\item The degree $x$ is at most $k+\ell(\lambda)-\ell(\nu)-1$ and the degree of $y$ is at most $k-\ell(\nu)$.
\end{enumerate}
\end{theorem}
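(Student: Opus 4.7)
The plan is to compute $[p_\lambda, p_\nu]_{A^{N-1}(\theta)} = [1]\prod_{i=1}^{\ell(\lambda)}\bigl(\sum_{j=1}^N \mathcal{D}_j(A^{N-1}(\theta))^{\lambda_i}\bigr) p_\nu$ by expanding the Dunkl operators and tracking the powers of $N$ and $\theta$. With $\mathcal{D}_j = \partial_j + \theta\sum_{k\neq j}\frac{1-s_{jk}}{x_j - x_k}$, each expansion of $\mathcal{D}_j^{\lambda_i}$ produces words in $\partial_j$ and switch operators; in such a word, each switch contributes a factor $\theta$ and introduces an auxiliary summation index, each outer sum $\sum_{j=1}^N$ contributes up to a factor $N$, and each fully consumed power-sum factor $p_{(\nu_l)}$ contributes up to a factor $N$ upon collapse at $[1]$.

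I would organize contributions by an ``assignment scheme'': for each $i \in [\ell(\lambda)]$, choose a partition $\pi_i$ of $[\lambda_i]$ and an injection from the combined multiset of blocks into the parts of $\nu$, matching block sizes to part sizes. At leading order only noncrossing $\pi_i$ contribute, since a block of size $m$ corresponds to $m$ derivatives from a single factor $\mathcal{D}_j^{\lambda_i}$ striking one $p_{(m)}$ in $p_\nu$ through $m - 1$ nested switches; crossings force switches whose auxiliary indices must coincide with earlier ones, lowering the $N$-power. This structure produces the factor $\prod_{i=1}^{\ell(\lambda)}\sum_{\pi\in NC(\lambda_i)}\prod_{B\in\pi} x_{|B|}$, with $x_{|B|}$ recording the matched part size of $\nu$; extracting the coefficient of $\prod_l x_{\nu_l}$ enforces the block-to-part matching. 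The remaining scalars are: $\theta^{\sum_i(\lambda_i - |\pi_i|)} = \theta^{k - \ell(\nu)}$ from total switches; $N^{k+\ell(\lambda)-\ell(\nu)}$ after tallying outer sums, switch indices, and power-sum collapses with the required coincidences; $\pi(\nu)$ from identifications of blocks with equal-sized parts; and $\prod_l \nu_l$ from the cyclic placement of a block's derivatives within the symmetric sum $p_{(\nu_l)}$. The ingredients of this computation are essentially the same as in \cite{limits_prob_measures}, and I would invoke the results of that paper to streamline the leading-order identification.

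For the error $R(N,\theta)$, subleading contributions arise from (a) crossings in the $\pi_i$, (b) coincidences among the free outer indices of $\sum_{j=1}^N$, and (c) partial consumption of a power-sum factor. In each case, any extra power of $\theta$ beyond $k - \ell(\nu)$ must be accompanied by at least one additional index coincidence beyond those already incurred, so the $N$-degree exceeds the $\theta$-degree in each subleading summand by at most $\ell(\lambda)$, giving (1); (2) follows since $k - \ell(\nu)$ is the maximum $\theta$-power in any contribution whose polynomial degree matches that of $p_\nu$. The main obstacle is the rigorous combinatorial verification that only noncrossing $\pi_i$ contribute at leading order and that the multiplicity is precisely $\pi(\nu)\prod_l \nu_l$; this requires careful tracking of how nested switch operators in the expansion of $\mathcal{D}_j^m$ interact with the $\sum_j x_j^{\nu_l}$ structure of the power sums, so that the ``freeness'' at leading order in $\theta N$ is manifested as a clean sum over noncrossing partitions.
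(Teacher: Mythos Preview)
Your approach is essentially the paper's: it gives both a short proof invoking \cite{limits_prob_measures}*{Theorem 6.13} (exactly as you propose) and a self-contained combinatorial expansion of $\mathcal{D}(p_\lambda)p_\nu$ along the lines you sketch, reducing via a factorization lemma to the case $\ell(\lambda)=1$ and then establishing a bijection with noncrossing partitions.

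Two points in your sketch are off and would need to be corrected in a full write-up. First, a block of size $m$ corresponds to \emph{one} derivative $\partial_j$ (assigned to a part $p_{(\nu_l)}$ with $\nu_l=m$) together with $m-1$ switches, not ``$m$ derivatives''; at leading order the total number of genuine derivatives is exactly $\ell(\nu)$, one per part of $\nu$, and the factor $\nu_l$ comes from $\partial_j x_j^{\nu_l}=\nu_l x_j^{\nu_l-1}$. Second, the mechanism producing noncrossing partitions is not that crossings force index coincidences. At leading order \emph{all} switch indices are already distinct and fresh; what forces noncrossingness is that, after reducing to $\ell(\lambda)=1$, one is tracking the degree of a single variable $x_1$: each assigned derivative pushes the degree up by $\nu_l-1$ and each switch (acting as $\theta d_1$) pops it down by $1$, and the requirement that the degree never go negative and end at $0$ is exactly a Dyck/stack condition, i.e.\ noncrossingness. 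Your statement of condition (1) for $R$ is also inverted: the argument is that every factor of $N$ beyond $N^{\ell(\lambda)}$ arises from a switch to a new index and hence carries a factor of $\theta$, giving $\deg_N R\le \deg_\theta R+\ell(\lambda)$; there are no ``extra powers of $\theta$ beyond $k-\ell(\nu)$'' since $k-\ell(\nu)$ is already the maximum.
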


\begin{remark}
In the previous result, we do not require that $\lambda,\nu\in\Gamma_N$.
\end{remark}

First, we present a short proof of the theorem using the results of \cite{limits_prob_measures}. The proof lacks some details, but it includes the main ideas.

\begin{proof}[First proof]
We deduce the result using \cite{limits_prob_measures}*{Theorem 6.13}. First, replace $G_{\theta_N}(x_1,\ldots,\\x_N;\,\mu_N)$ with $\exp\left(\theta N\sum_{k\geq 1} c_kp_k\right)$, where $c_k\in\mathbb{C}$ is fixed for all $k\geq 1$. Then, the theorem implies that for all $\lambda\in\Gamma$,
\[
\lim_{N\rightarrow\infty}\frac{[1]\mathcal{D}(p_\lambda) \exp\left(\theta N\sum_{k\geq 1} c_kp_{(k)}\right)}{(\theta N)^{|\lambda|} N^{\ell(\lambda)}} = \prod_{i=1}^{\ell(\lambda)}\sum_{\pi\in NC(\lambda_i)}\prod_{B\in \pi} |B|c_{|B|}.
\]
For $\nu\in\Gamma$, the coefficient of $p_\nu$ in $\exp\left(\theta N \sum_{k\geq 1}c_kp_{(k)}\right)$ is $\frac{(\theta N)^{\ell(\nu)}}{\pi(\nu)}\prod_{i=1}^{\ell(\nu)}c_{\nu_i}$. Hence, the coefficient of $\prod_{i=1}^{\ell(\nu)}c_{\nu_i}$ in $[1]\mathcal{D}(p_\lambda) \exp\left(\theta N\sum_{k\geq 1} c_kp_{(k)}\right)$ is $\frac{(\theta N)^{\ell(\nu)}}{\pi(\nu)} [p_\lambda,p_\nu]_{A^{N-1}(\theta)}$. By finding the coefficient of $\prod_{i=1}^{\ell(\nu)}c_{\nu_i}$ on the right hand side, we obtain the formula in the statement of the theorem.
\end{proof}

Although this proof is short, we cannot use it while proving analogous results for the $BC^N(\theta_0,\theta_1)$ and $D^N(\theta)$ root systems. Therefore, we outline a new approach to justifying \Cref{thm:leadingorder_a} that can be easily applied to a general setting.

\subsection{Second proof of \texorpdfstring{\Cref{thm:leadingorder_a}}{}}
\label{subsec:proofa}

We have that 
\begin{equation}
\label{eq:expansion}
[p_\lambda, p_\nu]_{A^{N-1}(\theta)} = \mathcal{D}(p_\lambda)p_\nu = \prod_{i=1}^{\ell(\lambda)} \left(\sum_{j=1}^N \mathcal{D}_j^{\lambda_i-1}\partial_j\right) p_\nu = \sum_{j_1,\ldots,j_{\ell(\lambda)}=1}^N \prod_{i=1}^{\ell(\lambda)}(\mathcal{D}_{j_i}-\partial_{j_i}+\partial_{j_i})^{\lambda_i-1}\partial_{j_i} p_\nu. 
\end{equation}
Suppose $j_1,\ldots,j_{\ell(\lambda)}\in [N]$ and that for each $\mathcal{D}_{j_i}$ operator, we select either $\mathcal{D}_{j_i}-\partial_{j_i}$ (which only consists of switches) or $\partial_{j_i}$; the total number of such choices that we make is $k-\ell(\lambda)$. Let $d$ be the number of times we choose $\partial_{j_i}$.

Assume that $d< \ell(\nu)-\ell(\lambda)$. Then, the total number of derivatives is less than $\ell(\nu)$. Recall that when applying a derivative to a product of terms, we select one of the terms and apply the derivative to it, and we iterate over all such selections. Because the number of derivatives is less than $\ell(\nu)$, for every path, we will never apply a derivative to $p_{\nu_l}$ for some $l\in [\ell(\nu)]$. Then, we have that we will not be able to eliminate the symmetric term $p_{\nu_l}$ without eliminating the entire expression. This is straightforward to deduce, since we must eliminate the symmetric term using a switch. Thus, the total contribution in this case is zero.

Next, assume that $d\geq \ell(\nu)-\ell(\lambda)$. The total number of switches is $k-\ell(\lambda)-d$. The number of ways to select these switches is therefore $(N-1)^{k-\ell(\lambda)-d}$. Furthermore, the number of ways to select $j_1,\ldots,j_{\ell(\lambda)}$ is $N^{\ell(\lambda)}$. Note that for each selection of switches and $j_1,\ldots,j_{\ell(\lambda)}$, any term of $p_\nu$ which contains $x_i$ for some $i$ that is not included in a switch or $\{j_1,\ldots,j_{\ell(\lambda)}\}$ will have a contribution of zero. Hence, the sums of the coefficients of the terms of $p_{\nu}$ which may have nonzero contribution is at most $(k-d)^{\ell(\nu)} \leq k^{\ell(\nu)}$. Furthermore, for each sequence of switches and derivatives, the derivatives will increase the absolute value of the coefficient by at most $k^{\ell(\lambda)+d}$ while the switches will increase it by at most $(k|\theta|)^{k-\ell(\lambda)-d}$. Hence, for each selection of the switches and $j_1,\ldots,j_{\ell(\lambda)}$, the magnitude of the total contribution is at most $k^{k+\ell(\nu)}|\theta|^{k-\ell(\lambda)-d}$. Then, the magnitude of the total contribution is less than
\[
k^{k+\ell(\nu)}|\theta|^{k-\ell(\lambda)-d}N^{k-d}.
\]
To achieve the leading order term, we must have that $d=\ell(\nu)-\ell(\lambda)$. Furthermore, each of the indices $j_1,\ldots,j_{\ell(\lambda)}$ must be distinct, and each switch must be from some $j_i$ to a distinct index which does not appear in $\{j_1,\ldots,j_{\ell(\lambda)}\}$. Then, the leading order term is $N^{k-\ell(\nu)+\ell(\lambda)}$ multiplied by a constant that depends on $\theta$.

It is not challenging to determine that the remainder term $R$ is a polynomial with rational coefficients that satisfies conditions (1) and (2). To see that $R$ is a polynomial with rational coefficients, we perform case work on the choices of $j_1,\ldots,j_{\ell(\lambda)}$ and the switches. For condition (1), we note that after selecting $j_1,\ldots,j_{\ell(\lambda)}$, the only way to obtain a factor of $N$ is with a switch, which will in turn add a factor of $\theta$. For condition (2), we note that we have separated the leading order term in $x$, and that $d\geq \ell(\nu)-\ell(\lambda)$ so the number of switches is at most $k-\ell(\nu)$ to obtain the maximal degree in $y$. For more details of a similar argument, see \cite{limits_prob_measures}*{Section 6}.

The final step is to compute the coefficient of $N^{k+\ell(\lambda)-\ell(\nu)}$. Recall that the indices $j_1,\ldots,j_{\ell(\lambda)}$ are distinct and that the switches are from some $j_i$ to a distinct index not in $\{j_1,\ldots,j_{\ell(\lambda)}\}$. The total number of choices for the indices is therefore
\[
N(N-1)\cdots(N-k-\ell(\lambda)+\ell(\nu)+1) = (1+o_N(1))N^{k+\ell(\lambda)-\ell(\nu)},
\]
so the coefficient of $N^{k+\ell(\lambda)-\ell(\nu)}$ is the sum of the contributions of the sequences after we fix the indices. 

In particular, let $\mathcal{S}$ denote the set of sequences of operators $s=\{s_i\}_{1\leq i\leq |\lambda|}$ such that:
\begin{enumerate}
    \item For $1\leq i\leq \ell(\lambda)$, $s_{1+\lambda_1+\cdots+\lambda_{i-1}} = \partial_i$, and for $j\in [1+\lambda_1+\cdots+\lambda_{i-1}, \lambda_1+\cdots+\lambda_i]$, $s_j$ is a switch from $i$ to an element of $[N]\backslash\{i\}$ or is $\partial_i$ (this corresponds to $j_i=i$).
    \item For $\ell(\nu)-\ell(\lambda)$ elements $j$ of $[|\lambda|]\backslash \{1, 1+\lambda_1, \ldots, 1+\lambda_1+\cdots+\lambda_{\ell(\lambda)-1}\}$, $s_j$ is a derivative (this corresponds to the total number of derivatives being $\ell(\nu)$).
    \item The $j$th switch is from some element of $[\ell(\lambda)]$ (which is determined by condition (1)) to $\ell(\lambda)+j$ for $1\leq j\leq k - \ell(\nu)$. Note that the operators $s_i$ are ordered from $i=1$ to $|\lambda|$, meaning that $s_1$ appears first and $s_{|\lambda|}$ appears last.
\end{enumerate}
Then, the coefficient of $N^{k+\ell(\lambda)-\ell(\nu)}$ is
\begin{equation}
\label{eq:leadingorder}
\sum_{s\in \mathcal{S}} s_{|\lambda|}\circ\cdots\circ s_1 p_\nu.
\end{equation}

Recall that in order to have a nonzero contribution, we must apply at least one derivative to each of the $p_{\nu_l}$. In this case, the number of derivatives is exactly $\ell(\nu)$, so we apply exactly one derivative to each of the $p_{\nu_l}$.

Furthermore, note that when we apply the $j$th switch from $i$ to $\ell(\lambda)+j$, $x_{\ell(\lambda)+j}$ does not appear before we apply the switch, other than among the $p_{\nu_l}$ that have yet to be assigned to. Also, any terms that contain $x_{\ell(\lambda)+j}$ after applying the switch will have a contribution of zero, because none of the remaining derivatives or switches will be able to remove the variable $x_{\ell(\lambda)+j}$. Note that applying the switch from $i$ to $\ell(\lambda)+j$ to $x_i^e$ outputs $\theta(x_i^{e-1}+x_i^{e-2}x_{\ell(\lambda)+j}+\cdots+x_{\ell(\lambda)+j}^{e-1})$, and the only resulting term that does not contain $x_{\ell(\lambda)+j}$ is $\theta x_i^{e-1}$. It follows that the switch from $i$ to $\ell(\lambda)+j$ is equivalent to $\theta d_i$.

\begin{remark}
\label{remark:scale}
The derivatives multiply the leading order coefficient by $\prod_{l=1}^{\ell(\nu)}\nu_l$ and the factors of $\theta$ from the switches multiply the coefficient by $\theta^{k-\ell(\nu)}$. These expressions appear in the statement of \Cref{thm:leadingorder_a}.
\end{remark}

To compute \pref{eq:leadingorder}, we must choose the locations of the $\ell(\nu)-\ell(\lambda)$ unallocated derivatives and then assign each of the $\ell(\nu)$ derivatives to a distinct symmetric term $p_{\nu_l}$. In particular, for $s\in\mathcal{S}$, define $\mathcal{P}(s)\triangleq \{i\in[|\lambda|]: s_i \text{ is a derivative}\}$ and let $\mathcal{H}(s)$ denote the set of bijections $\zeta: \mathcal{P}(s) \rightarrow [\ell(\nu)]$. Then for $s\in\mathcal{S}$ and $\zeta\in\mathcal{H}(s)$, the pair $(s,\zeta)$ corresponds to applying the sequence $s$ of operators and for $i\in\mathcal{P}(s)$, allocating the derivative $s_i$ to $p_{\nu_{\zeta(i)}}$.

Consider the following process to compute the contribution after choosing $s$ and $\zeta$. We start with the coefficient $c=1$ and iterate over the operators of $s$ from $s_1$ to $s_{|\lambda|}$. If we apply $\partial_{j_i}=\partial_i$ to $p_{\nu_l}$ at a step, where $l$ is determined by $\zeta$, then we multiply $c$ by $\partial_i p_{\nu_l}$. Otherwise, if we apply the $j$th switch from $i$ to $\ell(\lambda)+j$, then we apply $\theta d_i$ to $c$. Based on the previous discussion, the final value of $c$ will be the contribution from $(s,\zeta)$, and we denote this value of $c$ by $c(s,\zeta)$. We then have that 
\begin{equation}
\label{eq:contribution1}
\sum_{s\in \mathcal{S}} s_{|\lambda|}\circ\cdots\circ s_1 p_\nu = \sum_{s\in\mathcal{S},\,\zeta\in\mathcal{H}(s)} c(s,\zeta).
\end{equation}

Suppose $s\in\mathcal{S}$, $\zeta\in \mathcal{H}(s)$, and $i\in[\ell(\lambda)]$. Define 
\[
S_i(s,\zeta)\triangleq \{\zeta(j): j\in [1+\lambda_1+\cdots+\lambda_{i-1}, \lambda_1+\cdots+\lambda_i]\cap \mathcal{P}(s)\};
\]
that is, $S_i$ is the set of $l$ such that $p_{\nu_l}$ is assigned to by some $\partial_i$. 

\begin{lemma}
\label{lemma:partition_a}
Suppose $s\in\mathcal{S}$, $\zeta\in\mathcal{H}(s)$, and $c(s,\zeta)\not=0$. Then
\[
\sum_{l\in S_i(s,\zeta)} |\nu_l| = \lambda_i
\]
for all $i\in [\ell(\lambda)]$.
\end{lemma}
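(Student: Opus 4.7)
The plan is a direct degree-tracking argument on the variable $x_i$. Define the block $B_i \triangleq [1+\lambda_1+\cdots+\lambda_{i-1},\,\lambda_1+\cdots+\lambda_i]$, which has length $\lambda_i$. By condition (1) in the definition of $\mathcal{S}$, every operator indexed by $B_i$ is either a derivative $\partial_i$ (which $\zeta$ assigns to some $p_{\nu_l}$, with $l \in S_i(s,\zeta)$) or a switch from $i$. By conditions (1) and (3), no operator outside $B_i$ can create or destroy a power of $x_i$, since derivatives outside $B_i$ are $\partial_{i'}$ for $i'\ne i$ and switches outside $B_i$ act as $\theta d_{i'}$ for $i'\ne i$ (after the reduction established in the paragraphs preceding the lemma), introducing only $x_{i'}$ with $i'\ne i$.

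Using this, I will track the exponent of $x_i$ in $c$ as the process unfolds. Before $B_i$ is processed, $c$ contains no $x_i$. Within $B_i$, a derivative $\partial_i$ assigned to $p_{\nu_l}$ multiplies $c$ by $\partial_i p_{\nu_l} = \nu_l x_i^{\nu_l-1}$, increasing the $x_i$-degree by $\nu_l - 1$, while each switch applies $\theta d_i$, decreasing the $x_i$-degree by $1$ (or killing $c$ entirely if the current $x_i$-degree is already $0$). Writing $a_i \triangleq |S_i(s,\zeta)|$ and $b_i \triangleq \lambda_i - a_i$ for the numbers of derivatives and switches in $B_i$, the net change of the $x_i$-degree across $B_i$ is
\[
\sum_{l \in S_i(s,\zeta)} (\nu_l - 1) \;-\; b_i \;=\; \sum_{l \in S_i(s,\zeta)} \nu_l \;-\; \lambda_i.
\]
Since no operator after $B_i$ alters the $x_i$-degree, this is also the $x_i$-degree of the final $c$.

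The only remaining step is to show that $c(s,\zeta) \ne 0$ forces the final $c$ to be a constant, so that every $x_i$-degree vanishes and the identity $\sum_{l\in S_i(s,\zeta)} |\nu_l| = \lambda_i$ follows. The total degree of $c$ starts at $0$ and changes by $\sum_{l=1}^{\ell(\nu)}(\nu_l - 1) - (k-\ell(\nu)) = |\nu| - k = 0$ after all $k$ operators are applied. Moreover $c$ never acquires a negative exponent, since any $\theta d_i$ applied when the $x_i$-degree is zero would produce $0$ and contradict $c(s,\zeta)\ne 0$. A nonzero polynomial of total degree $0$ with no negative exponents must be a constant, completing the proof. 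The only subtle point is the reduction of switches to $\theta d_i$ on the surviving contribution, but this has already been justified before the lemma statement, so no new obstacle arises here.
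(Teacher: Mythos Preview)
Your proof is correct and uses essentially the same degree-tracking idea as the paper. The only organizational difference is that the paper argues by induction on $i$, showing block by block that $c$ returns to a constant after each $B_i$, whereas you track all $x_i$-degrees simultaneously and use the global constraint that the final total degree is $0$ together with nonnegativity of each $x_i$-degree; these are minor variants of the same argument.
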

\begin{proof}
Consider $i=1$. If $\sum_{l\in S_1(s,\zeta)} |\nu_l|<\lambda_1$, then $c(s,\zeta)=0$ will be zero. This is because among the first $\lambda_1$ operators, we apply $\lambda_1-|S_1(s,\zeta)|$ switches to $c$ and we multiply $c$ by the polynomials $\partial_1 p_{\nu_l}$ for each $l\in S_1(s,\zeta)$. Since we start at $c=1$ and the total degree of the multiplied polynomials is less than the number of switches, $c(s,\zeta)=0$. Furthermore, if $\sum_{l\in S_1(s,\zeta)} |\nu_l|>\lambda_1$, then $c(s,\zeta)=0$. This is because after the first $\lambda$ operators, $c(s,\zeta)$ will be a multiple of $x_1$. We will not be able to remove the factor of $x_1$ using later operators, so the final contribution will be zero. Therefore, for $c(s,\zeta)$ to be nonzero, $\sum_{l\in S_1(s,\zeta)}|\nu_l|=\lambda_1$. In particular, after applying the first $\lambda_1$ operators, $c$ will be a constant.

We can apply the same argument using induction to deduce that $\sum_{l\in S_i(s,\zeta)} |\nu_l| = \lambda_i$ for all $i\in [\ell(\lambda)]$. 
\end{proof}

Let $\mathcal{G}$ denote the set of $(s,\zeta)$ such that $\sum_{l\in S_i(s,\zeta)} |\nu_l| = \lambda_i$ for all $i\in [\ell(\lambda)]$. Based on the proof of \Cref{lemma:partition_a}, note that for each block of $\lambda_i$ operators, we multiply $c$ by some constant factor. For $(s,\zeta)\in\mathcal{G}$, we let $C_i(s,\zeta)$ denote the factor we multiply $c$ by when applying the block of $\lambda_i$ operators $\{s_j: j\in [1+\lambda_1+\cdots+\lambda_{i-1}, \lambda_1+\cdots+\lambda_i]\}$ associated with $j_i=i$. Then, we have that for $(s,\zeta)\in\mathcal{G}$,
\begin{equation}
\label{eq:contribution2}
c(s,\zeta)=\prod_{i=1}^{\ell(\lambda)} C_i(s,\zeta),
\end{equation}

Afterwards, it is evident that if we condition on the values $S_i(s,\zeta)$, then the blocks of $\lambda_i$ operators will be independent, since we can allocate the locations of the derivatives and assign the derivatives to the $p_{\nu_l}$ independently within each block. Suppose $[\ell(\nu)]=S_1\sqcup \cdots \sqcup S_{\ell(\lambda)}$ such that $\sum_{l\in S_i} |\nu_l|=\lambda_i$ for all $i\in [\ell(\lambda)]$. Then, conditioned on $S_i(s,\gamma)=S_i$ for all $i\in [\ell(\lambda)]$, the total contribution will be the product of the contributions from each of the blocks. In particular, the main idea for the next step is that the contribution to the leading order coefficient of $[p_\lambda,p_\nu]_{A^{N-1}(\theta)}$ when we condition on the sets $S_i$, $1\leq i\leq \ell(\lambda)$ will be the leading order coefficient of
\[
\prod_{i=1}^{\ell(\lambda)} \left[p_{\lambda_i}, \prod_{j\in S_i} p_{\nu_j}\right]_{A^{N-1}(\theta)}.
\]
So, we only need to consider the case where $\ell(\lambda)=1$ and then sum over the partitions $[\ell(\nu)]=S_1\sqcup\cdots\sqcup S_{\ell(\lambda)}$. We justify this idea rigorously in the following lemma.

\begin{lemma}
\label{lemma:contribution1}
For $\nu'\in \Gamma$, let $c_{\nu'}$ denote the quantity $\sum_{s\in\mathcal{S},\,\zeta\in\mathcal{H}(s)}c(s,\zeta)$ from \pref{eq:contribution1} when $\nu$ is set as $\nu'$ and $\lambda$ is set as $(|\nu'|)$. Suppose $[\ell(\nu)]=S_1\sqcup \cdots \sqcup S_{\ell(\lambda)}$ such that $\sum_{l\in S_i} |\nu_l|=\lambda_i$ for all $i\in [\ell(\lambda)]$. Then,
\[
\sum_{\substack{(s,\zeta)\in\mathcal{G}, \\ S_i(s,\zeta)=S_i, 1\leq i\leq \ell(\lambda)}} c(s,\zeta) = \prod_{i=1}^{\ell(\lambda)} c_{\gamma((\nu_l:\,l\in S_i))}.
\]
\end{lemma}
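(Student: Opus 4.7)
The plan is to establish the factorization by showing that the computation of $c(s,\zeta)$ proceeds block by block, with independent choices within each block.

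First, I would observe that the set $\{(s,\zeta)\in\mathcal{G} : S_i(s,\zeta)=S_i,\ 1\le i\le \ell(\lambda)\}$ is in natural bijection with a product $\prod_{i=1}^{\ell(\lambda)}\mathcal{G}_i(S_i)$, where $\mathcal{G}_i(S_i)$ is the set of pairs consisting of a subset of the non-initial positions in block $i$ of size $|S_i|-1$ (specifying the placements of the additional derivatives in that block), together with a bijection from the $|S_i|$ derivative positions of block $i$ onto $S_i$. The decomposition works because the conditions defining $\mathcal{S}$ and $\mathcal{H}(s)$ impose only intra-block constraints: the initial operator of block $i$ is fixed as $\partial_i$, the intra-block pattern of derivatives and switches is otherwise free, and the target variable of the $j$th switch is determined automatically by its global position (though, crucially, this target does not affect the scalar contribution).

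Second, \pref{eq:contribution2} gives $c(s,\zeta)=\prod_{i=1}^{\ell(\lambda)} C_i(s,\zeta)$, and by \Cref{lemma:partition_a} and the discussion preceding it, $C_i(s,\zeta)$ depends only on the block-$i$ configuration: when we begin processing block $i$, the running scalar $c$ is a constant, and each operator in that block either multiplies $c$ by $\partial_i p_{\nu_l}$ (for a derivative assigned to some $l\in S_i$) or applies $\theta d_i$ (for a switch). Therefore, by Fubini,
\[
\sum_{\substack{(s,\zeta)\in\mathcal{G}, \\ S_i(s,\zeta)=S_i}} c(s,\zeta) = \prod_{i=1}^{\ell(\lambda)}\left(\sum_{g\in\mathcal{G}_i(S_i)} C_i(g)\right).
\]

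Third, I would identify each inner sum with $c_{\gamma((\nu_l\,:\,l\in S_i))}$. Set $\nu'\triangleq \gamma((\nu_l\,:\,l\in S_i))$ and $\lambda'\triangleq (|\nu'|)=(\lambda_i)$. In the single-block instance defining $c_{\nu'}$, $\mathcal{S}$ consists of sequences of $\lambda_i$ operators whose first is $\partial_1$, with exactly $|S_i|-1$ additional derivatives distributed among the remaining $\lambda_i-1$ positions, and $\mathcal{H}(s)$ is the set of bijections from the derivative positions onto $[\ell(\nu')]$. Via the relabeling $i\leftrightarrow 1$ and any fixed bijection $S_i\leftrightarrow [\ell(\nu')]$ compatible with $\gamma$ (so that matching indices carry equal parts of the partition), the block-$i$ configurations for the original problem correspond exactly to the single-block configurations for $\nu'$, and under this correspondence $C_i$ equals the single-block contribution $c(\cdot,\cdot)$: each derivative $\partial_i p_{\nu_l}$ contributes the same scalar $\nu_l x^{\nu_l-1}$ as its counterpart $\partial_1 p_{\nu'_{l'}}$, and each switch contributes $\theta$ times a degree-lowering operator in the single active variable, regardless of the specific target index.

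The main obstacle is the bookkeeping in the third step---verifying that the scalar contributions genuinely agree under the relabeling. The essential point that makes this work is the observation (already established in the paragraph preceding \pref{eq:contribution1}) that a switch effectively acts as $\theta d_i$ independently of the specific target variable $\ell(\lambda)+j$, so the global switch-labeling convention built into $\mathcal{S}$ is compatible with the block-local, single-block computation.
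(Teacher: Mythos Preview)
Your proposal is correct and follows essentially the same approach as the paper: both decompose the constrained set $\{(s,\zeta)\in\mathcal{G}: S_i(s,\zeta)=S_i\}$ as a product of block-local configuration sets, invoke the block factorization \pref{eq:contribution2}, and then identify each block sum with $c_{\gamma((\nu_l:\,l\in S_i))}$ via the relabeling that replaces $x_i$ by $x_1$ and $S_i$ by $[\ell(\nu')]$. The paper writes out the tensor-product isomorphisms $\mathcal{T}\cong\bigotimes_i\mathcal{T}_i$ and the corresponding decomposition of the $(s,\zeta)$ pairs a bit more explicitly, but your informal description captures the same content, including the crucial point that the switch targets do not affect the scalar and hence the global switch-labeling convention in $\mathcal{S}$ is immaterial for the block-local identification.
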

\begin{remark}
Note that $c_{\nu'}$ is simply the leading order coefficient of $[p_{(|\nu'|)},p_{\nu'}]_{A^{N-1}(\theta)}$.
\end{remark}
\begin{proof}[Proof of \Cref{lemma:contribution1}]
For $s\in \mathcal{S}$ and $i\in [\ell(\lambda)]$, define $s[i]\triangleq \{s_j: j\in [1+\lambda_1+\cdots+\lambda_{i-1},  \lambda_1+\cdots+\lambda_i]\}$ and $\mathcal{P}_i(s)\triangleq \mathcal{P}(s)\cap [1+\lambda_1+\cdots+\lambda_{i-1}, \lambda_1+\cdots+\lambda_i]$. 

Let $\mathcal{T}$ denote the set of $s\in\mathcal{S}$ such that $|\mathcal{P}_i(s)|=|S_i|$ for all $i\in [\ell(\lambda)]$ and for $i\in[\ell(\lambda)]$, define $\mathcal{T}_i\triangleq \{s[i]: s\in\mathcal{T}\}$. Observe that
\[
\mathcal{T}\cong \mathcal{T}_1\otimes\cdots\otimes\mathcal{T}_{\ell(\lambda)},
\]
since the number of switches in each element of $\mathcal{T}_i$ is fixed at $\lambda_i-|S_i|$; the bijection is given by $s\mapsto s[1]\times\cdots\times s[\ell(\lambda)]$.

Suppose $i\in[\ell(\lambda)]$. Define the function $\mathcal{P}_i'$ over $\mathcal{T}_i$ by $\mathcal{P}_i'(s[i])\triangleq \mathcal{P}_i(s)$ for $s\in\mathcal{T}$; this function is well defined. Furthermore, for $s\in\mathcal{T}$, define $\mathcal{H}_i(s[i])$ to be the set of bijections $\zeta:\mathcal{P}_i'(s[i])\rightarrow S_i$. Then,
\begin{align*}
& \{(s,\zeta): s\in\mathcal{T},\,\zeta\in\mathcal{H}(s),\, S_i(s,\zeta)=S_i\,\forall i\in [\ell(\lambda)]\}\cong \\
& \bigotimes_{i=1}^{\ell(\lambda)} \{(s, \zeta): s\in\mathcal{T}_i,\, \zeta\in \mathcal{H}_i(s)\},
\end{align*}
where the bijection is given by $(s,\zeta)\mapsto (s[1], \zeta|_{\mathcal{P}_1(s)})\times\cdots\times (s[\ell(\lambda)],\zeta|_{\mathcal{P}_{\ell(\lambda)}(s)})$. Furthermore, define the function $C_i'$ over $\{(s, \zeta): s\in\mathcal{T}_i,\, \zeta\in \mathcal{H}_i(s)\}$ by $C_i'(s[i],\zeta|_{\mathcal{P}_i(s)})\triangleq C_i(s,\zeta)$ for $s\in\mathcal{T}$ and $\zeta\in\mathcal{H}(s)$ such that $(s,\zeta)\in \mathcal{G}$ and $S_i(s,\zeta)=S_i$.

After using \pref{eq:contribution2}, it is evident that
\begin{align*}
\sum_{\substack{(s,\zeta)\in\mathcal{G}, \\ S_i(s,\zeta)=S_i, 1\leq i\leq \ell(\lambda)}} c(s,\zeta) 
& = \sum_{\substack{(s,\zeta)\in\mathcal{G}, \\ S_i(s,\zeta)=S_i, 1\leq i\leq \ell(\lambda)}}\prod_{i=1}^{\ell(\lambda)} C_i(s,\zeta) \\ & = \sum_{\substack{s\in\mathcal{T}, \zeta\in \mathcal{H}(s), \\S_i(s,\zeta)=S_i, 1\leq i\leq \ell(\lambda)}} \prod_{i=1}^{\ell(\lambda)} C_i(s,\zeta) \\
& = \sum_{\substack{\mathbf{s}_i\in\mathcal{T}_i,\,\zeta_i\in\mathcal{H}_i(\mathbf{s}_i),\\ 1\leq i\leq \ell(\lambda)}} \prod_{i=1}^{\ell(\lambda)} C_i'(\mathbf{s}_i,\zeta_i)  \\
& = \prod_{i=1}^{\ell(\lambda)} \sum_{\mathbf{s}_i\in\mathcal{T}_i,\,\zeta_i\in\mathcal{H}_i(\mathbf{s}_i)} C_i'(\mathbf{s}_i,\zeta_i).
\end{align*}
However, observe that for $i\in[\ell(\lambda)]$, $\sum_{\mathbf{s}_i\in\mathcal{T}_i,\,\zeta_i\in\mathcal{H}_i(\mathbf{s}_i)} C_i'(\mathbf{s}_i,\zeta_i)$ is the same as the quantity $\sum_{s\in\mathcal{S},\zeta\in\mathcal{H}(s)}c(s,\zeta)$ from \pref{eq:contribution1} for when $\lambda$ is set as $(\lambda_i)$ and $\nu$ is set as $\gamma((\nu_l: l\in S_i))$. Then, we have that 
\[
\sum_{\substack{(s,\zeta)\in\mathcal{G}, \\ S_i(s,\zeta)=S_i, 1\leq i\leq \ell(\lambda)}}\prod_{i=1}^{\ell(\lambda)} C_i(s,\zeta) = \prod_{i=1}^{\ell(\lambda)} c_{\gamma((\nu_l: l\in S_i))}.
\]
\end{proof}

Since using \pref{eq:contribution1} and \Cref{lemma:contribution1} gives that 
\begin{equation}
\label{eq:contribution3}
\begin{split}
\sum_{s\in \mathcal{S}} s_{|\lambda|}\circ\cdots\circ s_1 p_\nu  & = \sum_{\substack{[\ell(\nu)]=S_1\sqcup \cdots \sqcup S_{\ell(\lambda)}, \\ \sum_{l\in S_i} |\nu_l|=\lambda_i\,\forall i\in [\ell(\lambda)]}}\sum_{\substack{(s,\zeta)\in\mathcal{G}, \\ S_i(s,\zeta)=S_i, 1\leq i\leq \ell(\lambda)}} c(s,\zeta) \\
& = \sum_{\substack{[\ell(\nu)]=S_1\sqcup \cdots \sqcup S_{\ell(\lambda)}, \\ \sum_{l\in S_i} |\nu_l|=\lambda_i\,\forall i\in [\ell(\lambda)]}} \prod_{i=1}^{\ell(\lambda)} c_{\gamma((\nu_l: l\in S_i))},
\end{split}
\end{equation}
it suffices to compute the values of $c_{\nu'}$.

\begin{lemma}
\label{lemma:allocation}
For $\nu'\in \Gamma$, let $n_{\nu'}$ equal the number of noncrossing partitions of $[|\nu'|]$ with block size given by $\nu'$. Then, $c_{\nu'}=\theta^{|\nu'|-\ell(\nu')}\prod_{l=1}^{\ell(\nu')} \nu'_l \pi(\nu') n_{\nu'}$
\end{lemma}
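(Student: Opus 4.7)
The plan is to exploit that $\ell(\lambda)=1$ massively simplifies the setup of the second proof of \Cref{thm:leadingorder_a}. I would begin by unwinding the definition of $c_{\nu'}=\sum_{s\in\mathcal{S},\,\zeta\in\mathcal{H}(s)}c(s,\zeta)$ when $\lambda=(|\nu'|)$: here $\mathcal{S}$ consists of sequences of $|\nu'|$ operations starting with $s_1=\partial_1$, containing exactly $\ell(\nu')$ derivatives and $|\nu'|-\ell(\nu')$ switches (each acting effectively as $\theta d_1$), and $\zeta$ bijects the derivative positions to $[\ell(\nu')]$. From the argument preceding \Cref{remark:scale}, each derivative with $\zeta$-label $l$ contributes a factor $\nu'_l$ (from $\partial_1 p_{\nu'_l}=\nu'_l x_1^{\nu'_l-1}$) and each switch contributes $\theta$, so whenever $c(s,\zeta)$ is nonzero it equals $\theta^{|\nu'|-\ell(\nu')}\prod_{l=1}^{\ell(\nu')}\nu'_l$. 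The whole task thus reduces to counting the $(s,\zeta)$ that give a nonzero contribution.

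Next I would characterize the nonvanishing condition as a lattice-path condition. Tracking the running $x_1$-degree $h_k$ of the $x_1$-polynomial factor $\phi_k$ of $c_k$, each derivative with label $l$ multiplies $\phi$ by $\nu'_l x_1^{\nu'_l-1}$ (raising $h_k$ by $\nu'_l-1$), and each switch applies $\theta d_1$ (lowering $h_k$ by $1$, provided $h_k\geq 1$; otherwise $d_1\phi_k=0$). So $c(s,\zeta)\neq 0$ exactly when the sequence of height changes---beginning at $0$, with up-steps of size $\nu'_l-1$ at each label-$l$ derivative and down-steps of size $1$ at each switch---remains nonnegative throughout. Since the total change is $\sum_l(\nu'_l-1)-(|\nu'|-\ell(\nu'))=0$, the path ends at height $0$, making $\phi_{|\nu'|}$ the nonzero constant $\theta^{|\nu'|-\ell(\nu')}\prod_l\nu'_l$.

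I would then set up a bijection between valid $(s,\zeta)$ and labeled noncrossing partitions of $[|\nu'|]$ into $\ell(\nu')$ blocks where the block assigned label $l$ has size $\nu'_l$. In one direction: interpret each derivative with label $l$ at position $k$ as opening a block that requires $\nu'_l-1$ subsequent closings, and match each switch (a closing) to the innermost still-open block by the LIFO rule; the $\nu'_l-1$ switches matched to the opening at position $k$, together with $k$ itself, form block $l$. In the reverse direction: given a labeled noncrossing partition with the correct block sizes, take the minimum element of each block as the derivative position labeled by that block's label (position $1$ is automatically a derivative since it is the minimum of its block, matching $s_1=\partial_1$) and the remaining positions in each block as switches. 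The classical equivalence of the LIFO parenthesization with the noncrossing condition yields both directions, and ``path stays nonnegative'' corresponds precisely to ``every closing finds an open block.''

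Finally, I would count labeled noncrossing partitions as $\pi(\nu')\cdot n_{\nu'}$: the number of unlabeled noncrossing partitions with the correct multiset of block sizes is $n_{\nu'}$, and labeling those blocks with distinct labels $\{1,\ldots,\ell(\nu')\}$ consistent with sizes permutes equal-sized blocks freely, giving $\prod_{i\geq 1}n_i!=\pi(\nu')$ labelings per partition. Multiplying by the per-path contribution then yields the claimed formula. The main obstacle will be verifying the bijection rigorously, in particular confirming that LIFO matching always produces a noncrossing partition and that the inverse map is well-defined; this is classical but needs care, especially when parts $\nu'_l=1$ contribute size-$0$ up-steps that pass transparently through the path, and the bookkeeping of which blocks remain open at each stage must be made explicit to justify the ``innermost still-open block'' prescription.
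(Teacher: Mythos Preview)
Your proposal is correct and follows essentially the same approach as the paper: both reduce to counting valid $(s,\zeta)$ pairs via a bijection with labeled noncrossing partitions, and your LIFO matching rule is exactly the stack-based algorithm the paper uses (iterate top-to-bottom, find the first still-open block). Your lattice-path reformulation of the nonvanishing condition is a helpful rephrasing, but the underlying bijection and the $\pi(\nu')n_{\nu'}$ count coincide with the paper's argument.
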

\begin{remark}
The $\pi(\nu')$ factor arises from the fact that we must order the blocks of the same size.
\end{remark}

\begin{proof}[Proof of \Cref{lemma:allocation}]
In this proof, we use the same notation for $\mathcal{S}$, $\mathcal{H}(s)$ for $s\in\mathcal{S}$, and $c(s,\zeta)$ for $s\in\mathcal{S}$ and $\zeta\in\mathcal{H}(s)$ that is mentioned earlier in the subsection after we set $\lambda$ to be $(|\nu'|)$ and $\nu$ to be $\nu'$. First, recall that by definition,
\[
c_{\nu'} = \sum_{s\in\mathcal{S},\,\zeta\in\mathcal{H}(s)} c(s,\zeta).
\]
The number of derivatives $\partial_1$ in $s\in\mathcal{S}$ is $\ell(\nu')$ and the number of switches, which are each equivalent to $\theta d_1$, is $|\nu'|-\ell(\nu')$. Suppose $n'_{\nu'}$ is the number of ways to choose the locations of the $\ell(\nu')$ derivatives of $s$ and assign them to distinct $p_{\nu'_l}$ for $l\in [\ell(\nu')]$ such that the resulting contribution $c(s,\zeta)$ is nonzero. Then $c_{\nu'}=\theta^{|\nu'|-\ell(\nu')}\prod_{l=1}^{\ell(\nu')} \nu'_l n'_{\nu'}$ by the argument in \Cref{remark:scale}. Hence, it suffices to show that $n'_{\nu'}=\pi(\nu')n_{\nu'}$.

Suppose $s\in\mathcal{S}$ and $s=\{s_i\}_{1\leq i\leq |\nu'|}$; recall that $s_1=\partial_1$. Furthermore, suppose $\zeta\in\mathcal{H}(s)$. Assume that $c(s,\zeta)\not=0$, or equivalently that $c(s,\zeta)=\theta^{|\nu'|-\ell(\nu')}\prod_{l=1}^{\ell(\nu')} \nu'_l$. Then, we construct the noncrossing partition $\alpha(s,\zeta)$ of $[|\nu'|]$ using the following procedure.
\begin{enumerate}
\item Initiate $n=1$. Let $\mathcal{A}$ be a stack which is initially empty.
\item Suppose $s_n=\partial_1$ and $\zeta(n)=l$ so that $s_n$ is assigned to $\nu'_l$. Then, we add $(\{n\}, \nu'_l)$ to the top of $\mathcal{A}$.
\item Suppose $s_n$ is a switch. Then, iterate through $\mathcal{A}$ from top to bottom. For the first element $(S, L)$ such that $|S|<L$, add $n$ to $S$.
\item Increment $n$ by one.
\item If $n=|\nu'|+1$, then for the noncrossing partition, output the union of the sets $S$ for $(S,L)\in \mathcal{A}$.
\item If $n\leq |\nu'|$, then return to step (2).
\end{enumerate}
Since $c(s,\zeta)\not=0$, it is straightforward to deduce that we can always preform step (3) by counting the degree of $x_1$. Then, the output partition of $[|\nu'|]$ will have block sizes given by $\nu'$ and will be noncrossing due to the structure of $\mathcal{A}$. See \Cref{fig:mapping} for an example of an output of this mapping.

To reverse the algorithm, suppose $\sigma$ is a noncrossing partition of $[|\nu'|]$ with blocks sizes given by $\nu'$. Suppose $\sigma=B_1\sqcup \cdots \sqcup B_{\ell(\nu')}$, where $B_q$ starts before $B_{q+1}$ for $1\leq q\leq \ell(\nu')-1$. Suppose $q\in [\ell(\nu')]$ and that the first element of $B_q$ is $b$. Then, set $s_b$ to be $\partial_1$ and assign $s_b$ to $p_{\nu'_l}$ for some $l\in [\ell(\nu')]$ such that $p_{\nu'_l}$ has not been previously assigned to and $|\nu'_l|=|B_q|$; note that we then have that $\zeta(b)=l$. Afterwards, we set the remaining operators to be switches based on condition (3) in the definition of $\mathcal{S}$.

It is evident that the resulting pair $(s, \zeta)$ satisfies $\alpha(s, \zeta)=\sigma$ and that the number of choices for $(s, \zeta)$ is $\pi(\nu')$. This establishes that $n'_{\nu'}=\pi(\nu')n_{\nu'}$.
\end{proof}

\begin{figure}[!ht]
\begin{center}
\begin{tikzpicture}
    \fill[red!30] (0,0) rectangle (3,1);
    \fill[blue!30] (3,0) rectangle (4, 1);
    \fill[green!30] (4, 0) rectangle (6,1);
    \fill[blue!30] (6,0) rectangle (8,1);
    \fill[yellow!30] (8, 0) rectangle (9,1);
    \fill[cyan!30] (9, 0) rectangle (11,1);
    \fill[red!30] (11,0) rectangle (13,1);
    \draw (0,0) grid (13,1);
    \node at (0.5, 0.5) {$\partial$};
    \node at (3.5, 0.5) {$\partial$};
    \node at (4.5, 0.5) {$\partial$};
    \node at (8.5, 0.5) {$\partial$};
    \node at (9.5, 0.5) {$\partial$};
\end{tikzpicture}
\end{center}

\caption{An illustration of an output of the mapping if the input sequence of operators has the form $\partial S S \partial \partial S S S \partial S \partial S S$, where $\partial$ indicates a derivative and $S$ indicates a switch. In this case, we have that $\nu'=(5, 3, 2, 2, 1)$. If we number the derivatives with $1$ through $5$ from left to right, one of the choices for $\zeta$ is to map derivatives $1$ through $5$ to parts $1$, $2$, $3$, $5$, and $4$ of $\nu'$, respectively. Each color corresponds to a block of the noncrossing partition and a part of $\nu'$. Observe that the number of choices for $\zeta$ is $\pi(\nu')=2$, since we can switch parts $3$ and $4$ of $\nu'$.
}
\label{fig:mapping}
\end{figure}

Using \pref{eq:contribution3} and \Cref{lemma:allocation}, it then follows that the leading order coefficient is
\begin{align*}
& \sum_{[\ell(\nu)] = S_1\sqcup \cdots \sqcup S_{\ell(\lambda)}} \theta^{k-\ell(\nu)}\prod_{l=1}^{\ell(\nu)} \nu_l \times \\
& \prod_{i=1}^{\ell(\lambda)} \pi((\nu_j: j\in S_i))(\#\,\text{noncrossing partitions of } [\lambda_i] \text{ with shape } \gamma((\nu_j: j\in S_i))) \\
& = \sum_{\nu = \gamma_1 + \cdots + \gamma_{\ell(\lambda)}} \theta^{k-\ell(\nu)}\prod_{l=1}^{\ell(\nu)}\nu_l \pi(\nu) \prod_{i=1}^{\ell(\lambda)} (\#\,\text{noncrossing partitions of } [\lambda_i] \text{ with shape } \gamma_i) \\
& = \theta^{k-\ell(\nu)}\prod_{l=1}^{\ell(\nu)}\nu_l \pi(\nu) \left[\prod_{l=1}^{\ell(\nu)} x_{\nu_l}\right]\prod_{i=1}^{\ell(\lambda)} \sum_{\pi\in NC(\lambda_i)} \prod_{B\in \pi} x_{|B|}.
\end{align*}

\subsection{Proof of part (A) of \texorpdfstring{\Cref{thm:main}}{}}

Define the infinite dimensional matrix $\ma\in\mathbb{Z}_{\geq 0}^{\Gamma\times\Gamma}$ such that for $\lambda,\nu\in\Gamma$,
\[
\ma_{\lambda\nu}\triangleq \prod_{l=1}^{\ell(\nu)}\nu_l\pi(\nu)\left[\prod_{l=1}^{\ell(\nu)} x_{\nu_l}\right]\prod_{i=1}^{\ell(\lambda)} \sum_{\pi\in NC(\lambda_i)} \prod_{B\in \pi} x_{|B|}.
\]
The following lemma is straightforward to deduce.
\begin{lemma}
\label{lemma:matrix_a1} Suppose $\lambda,\nu\in\Gamma$ and $|\lambda|=|\nu|$.
\begin{enumerate}
\item[(A)] If $\lambda=\nu$, then $\ma_{\lambda\nu}=\prod_{l=1}^{\ell(\nu)}\nu_l\pi(\nu)$.
\item[(B)] If $\ell(\lambda)\geq \ell(\nu)$ and $\lambda\not=\nu$, then $\ma_{\lambda\nu}=0$.
\end{enumerate}
\end{lemma}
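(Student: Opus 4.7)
The plan is to analyze the monomial expansion of the polynomial
\[
P_\lambda(x) \triangleq \prod_{i=1}^{\ell(\lambda)} \sum_{\pi\in NC(\lambda_i)} \prod_{B\in \pi} x_{|B|}
\]
and extract the coefficient of $\prod_{l=1}^{\ell(\nu)} x_{\nu_l}$, since by definition $\ma_{\lambda\nu}$ is this coefficient multiplied by $\prod_l \nu_l \cdot \pi(\nu)$. Expanding the product, $P_\lambda(x)$ is a sum over tuples $(\pi_1,\ldots,\pi_{\ell(\lambda)})$ with $\pi_i\in NC(\lambda_i)$ of the monomial $\prod_{i=1}^{\ell(\lambda)}\prod_{B\in\pi_i} x_{|B|}$, whose total degree in the $x$-variables equals $\sum_{i=1}^{\ell(\lambda)}\ell(\pi_i)$.

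First I would observe the key degree inequality: the target monomial $\prod_{l=1}^{\ell(\nu)} x_{\nu_l}$ has total degree $\ell(\nu)$, while each contributing tuple has total $x$-degree $\sum_i \ell(\pi_i) \geq \ell(\lambda)$ (since each $\pi_i$ has at least one block). Under the hypothesis $\ell(\lambda)\geq \ell(\nu)$, a nonzero contribution therefore forces $\sum_i \ell(\pi_i) = \ell(\nu) = \ell(\lambda)$, which in turn forces each $\pi_i$ to consist of a single block of size $\lambda_i$, contributing the variable $x_{\lambda_i}$. The resulting monomial is $\prod_{i=1}^{\ell(\lambda)} x_{\lambda_i}$, so for this to match $\prod_{l=1}^{\ell(\nu)} x_{\nu_l}$ the multisets $\{\lambda_1,\ldots,\lambda_{\ell(\lambda)}\}$ and $\{\nu_1,\ldots,\nu_{\ell(\nu)}\}$ must coincide, i.e.\ $\lambda=\nu$ as partitions.

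For part (B), the hypothesis $\lambda\neq\nu$ combined with $\ell(\lambda)\geq \ell(\nu)$ then rules out every tuple, so the coefficient is $0$ and $\ma_{\lambda\nu}=0$. For part (A), with $\lambda=\nu$ the above analysis shows there is exactly one contributing tuple, namely the one where each $\pi_i$ is the single-block noncrossing partition of $[\nu_i]$, and it contributes coefficient $1$. Multiplying by the prefactor $\prod_{l=1}^{\ell(\nu)}\nu_l\cdot\pi(\nu)$ from the definition of $\ma$ yields the claimed value $\prod_{l=1}^{\ell(\nu)}\nu_l\cdot\pi(\nu)$. There is no real obstacle here; the only point worth double-checking is that repeated parts of $\nu$ do not produce additional tuples (they do not, because the indexed tuple of partitions $(\pi_i)$ is uniquely determined once each $\pi_i$ must be the single-block partition of $[\lambda_i]$).
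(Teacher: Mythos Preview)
Your argument is correct and is precisely the straightforward degree-counting that the paper has in mind (the paper omits the proof, only remarking that the lemma is ``straightforward to deduce''). Nothing needs to be added.
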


The next theorem generalizes part (A) of \Cref{thm:main}. As we mentioned previously, we focus on the less general version stated in \Cref{cor:equivalence_a1} because it is more relevant for analyzing the convergence of Bessel functions.

\begin{theorem} 
\label{thm:equivalence_a1}
Suppose $F_N(x_1,\ldots,x_N) = \exp\left(\sum_{\lambda\in\Gamma_N} c_{\lambda}(N)p_\lambda\right)$. Assume that $\lim_{N\rightarrow\infty} \\|\theta N| = \infty$. The following are equivalent. 
\begin{enumerate}
\item[(a)] For all $\lambda\in\Gamma$, $\lim_{N\rightarrow\infty} \frac{c_\lambda(N)}{(\theta N)^{\ell(\lambda)}}=c_\lambda\in\mathbb{C}$. 
\item[(b)] For all $\nu\in\Gamma$,
\[
\lim_{N\rightarrow\infty} \frac{1}{(\theta N)^{|\nu|}N^{\ell(\nu)}} [p_\nu, F_N]_{A^{N-1}(\theta)} = \sum_{\lambda\in\Gamma} \ma_{\nu\lambda}[p_\lambda] \exp\left(\sum_{\gamma\in\Gamma} c_\gamma p_\gamma\right).
\]
\end{enumerate}

\end{theorem}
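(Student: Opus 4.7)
The plan is to handle both directions via a single linear-algebraic identity at each fixed degree $k = |\nu|$. Writing $d_\mu(N) := [p_\mu]F_N$, we have the finite sum
\[
[p_\nu, F_N]_{A^{N-1}(\theta)} = \sum_{\mu \in \Gamma[k]} d_\mu(N)\,[p_\nu, p_\mu]_{A^{N-1}(\theta)},
\]
since the Dunkl bilinear form vanishes on pairs of different degrees. Applying \Cref{thm:leadingorder_a} directly when $\ell(\nu)\leq\ell(\mu)$, and via the symmetry $[p_\nu,p_\mu]=[p_\mu,p_\nu]$ combined with \Cref{lemma:matrix_a1}(B) when $\ell(\mu)<\ell(\nu)$, I will establish
\[
\frac{[p_\nu,p_\mu]_{A^{N-1}(\theta)}}{(\theta N)^k N^{\ell(\nu)}} = \frac{\ma_{\nu\mu}+\delta_{\nu\mu}(N)}{(\theta N)^{\ell(\mu)}},
\]
with $\delta_{\nu\mu}(N)\to 0$ uniformly in $\mu\in\Gamma[k]$. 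Setting $E_\nu(N) := [p_\nu,F_N]_{A^{N-1}(\theta)}/((\theta N)^k N^{\ell(\nu)})$ and $D_\mu(N) := d_\mu(N)/(\theta N)^{\ell(\mu)}$, this yields the finite vector identity
\[
\vec{E}(N)=(\ma[k]+\Delta(N))\vec{D}(N), \qquad \Delta(N)\to 0 \text{ entrywise.}
\]

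For (a)$\Rightarrow$(b), I expand $d_\mu(N)=\sum_{k\geq 1}\frac{1}{k!}\sum_{\lambda^{(1)}+\cdots+\lambda^{(k)}=\mu}\prod_i c_{\lambda^{(i)}}(N)$, a finite sum since $|\lambda^{(i)}|\leq|\mu|$. Since $\ell(\mu)=\sum_i\ell(\lambda^{(i)})$ in each summand, hypothesis (a) gives $D_\mu(N)\to\tilde d_\mu := [p_\mu]\exp(\sum_\gamma c_\gamma p_\gamma)$, and passing to the limit in the vector identity yields $\lim_N\vec{E}(N)=\ma[k]\vec{\tilde d}$, which is precisely the componentwise statement of (b). For the converse (b)$\Rightarrow$(a), I use \Cref{lemma:matrix_a1} to observe that after ordering $\Gamma[k]$ by $\ell$, $\ma[k]$ is upper triangular with nonzero diagonal entries $\prod_i\nu_i\cdot\pi(\nu)$, hence invertible. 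For large $N$, $\ma[k]+\Delta(N)$ remains invertible with $(\ma[k]+\Delta(N))^{-1}\to\ma[k]^{-1}$, so
\[
\vec{D}(N) = (\ma[k]+\Delta(N))^{-1}\vec{E}(N)\to\ma[k]^{-1}\lim_N\vec{E}(N),
\]
identifying each $D_\mu(N)\to\tilde d_\mu$ without any a priori bound on $\vec{D}(N)$. Finally, the logarithmic expansion
\[
c_\lambda(N) = [p_\lambda]\log F_N = \sum_{k\geq 1}\frac{(-1)^{k+1}}{k}\sum_{\mu^{(1)}+\cdots+\mu^{(k)}=\lambda}\prod_i d_{\mu^{(i)}}(N)
\]
is again homogeneous of order $(\theta N)^{\ell(\lambda)}$ in the $d_{\mu^{(i)}}$ (using $\sum_i\ell(\mu^{(i)})=\ell(\lambda)$), so $c_\lambda(N)/(\theta N)^{\ell(\lambda)}\to c_\lambda := [p_\lambda]\log(1+\sum_\mu\tilde d_\mu p_\mu)$, giving (a).

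The main technical obstacle is the uniform bound $\delta_{\nu\mu}(N)\to 0$. When $\ell(\nu)\leq\ell(\mu)$, each monomial $N^a\theta^b$ of the remainder $R$ in \Cref{thm:leadingorder_a} satisfies $a-b\leq\ell(\nu)$, $a\leq|\nu|+\ell(\nu)-\ell(\mu)-1$, and $b\leq|\nu|-\ell(\mu)$. After normalization by $(\theta N)^k N^{\ell(\nu)}$ and multiplication by $(\theta N)^{\ell(\mu)}$, the contribution factors as $(\theta N)^{b-|\nu|+\ell(\mu)}N^{a-b-\ell(\nu)}$; since $b=|\nu|-\ell(\mu)$ together with $a-b=\ell(\nu)$ would force $a=|\nu|+\ell(\nu)-\ell(\mu)$, violating condition (2), at least one of these must be strict, giving a bound of $O(\max(1/N,\,1/|\theta N|))=o(1)$ under $|\theta N|\to\infty$. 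The case $\ell(\mu)<\ell(\nu)$ is handled analogously by applying \Cref{thm:leadingorder_a} to $[p_\mu,p_\nu]$: the corresponding leading term contributes $\ma_{\mu\nu}(\theta N)^{\ell(\mu)-\ell(\nu)}N^{\ell(\mu)-\ell(\nu)}$ to $\delta_{\nu\mu}(N)$, which vanishes since both exponents are negative, and the remainder is estimated by the same factorization argument.
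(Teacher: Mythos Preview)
Your proof is correct and follows essentially the same approach as the paper: both arguments reduce, via \Cref{thm:leadingorder_a}, to the finite vector identity $\vec{E}(N)=(\ma[k]+o_N(1))\vec{D}(N)$ at each degree $k$, and both invoke the invertibility of $\ma[k]$ from \Cref{lemma:matrix_a1} for the reverse direction. The only organizational difference is that the paper works directly with the $c_\lambda(N)$ via induction on $|\lambda|$ (splitting $[p_\lambda]F_N$ into $c_\lambda(N)$ plus products of lower-degree $c_\mu(N)$ controlled by the inductive hypothesis), whereas you first show $D_\mu(N)\to\tilde d_\mu$ independently at each degree and then pass between $d_\mu$ and $c_\lambda$ through the exp/log expansions, exploiting the additivity $\ell(\lambda^{(1)}+\cdots+\lambda^{(k)})=\sum_i\ell(\lambda^{(i)})$; this cleanly decouples the matrix inversion from the combinatorics of the exponential.
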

\begin{proof}
Suppose $\nu\in\Gamma$ and $N\geq 2$ satisfies $\nu\in\Gamma_N$. We have that
\[
[p_\nu, F_N]_{A^{N-1}(\theta)} = \sum_{\lambda\in\Gamma_N} [p_\lambda]F_N\cdot [p_\nu, p_\lambda]_{A^{N-1}(\theta)}.
\]
Then, using \Cref{thm:leadingorder_a} after noting that each summand of $R$ is $o_N(|\theta N|^{|\nu|}N^{\ell(\nu)})$, we get that
\begin{equation}
\label{eq:limit}
\lim_{N\rightarrow\infty} \frac{1}{(\theta N)^{|\nu|}N^{\ell(\nu)}} [p_\nu, F_N]_{A^{N-1}(\theta)} = \lim_{N\rightarrow\infty} \sum_{\lambda\in\Gamma_N[|\nu|]} (\ma_{\nu\lambda}+o_N(1))\frac{[p_\lambda]F_N}{(\theta N)^{\ell(\lambda)}}. 
\end{equation}
We justify this equation. For $\lambda\in\Gamma_N$ such that $\ell(\lambda)<\ell(\nu)$, we have that
\[
[p_\nu, p_\lambda]_{A^{N-1}(\theta)} = O_N(|\theta N|^{|\nu|-\ell(\nu)}N^{\ell(\lambda)})=o_N(|\theta N|^{|\nu|-\ell(\lambda)}N^{\ell(\nu)}),
\]
which then corresponds to the term $\ma_{\nu\lambda}+o_N(1)$ since $\ma_{\nu\lambda}=0$. For $\lambda\in\Gamma_N$ such that $\ell(\lambda)\geq \ell(\nu)$, it is straightforward to obtain
\[
[p_\nu, p_\lambda]_{A^{N-1}(\theta)} = (\theta N)^{|\nu|-\ell(\lambda)}N^{\ell(\nu)}(\ma_{\nu\lambda}+o_N(1))
\]
using \Cref{thm:leadingorder_a}.

(a) implies (b): It is straightforward to recover the equation from (b) using \pref{eq:limit}.

(b) implies (a): We proceed with induction on $|\lambda|$.

Assume that the result is true when $|\lambda|\leq k-1$ for $k\geq 1$. We prove that the result is true when $|\lambda|=k$. First, using (b) and \pref{eq:limit} gives that for all $\nu\in\Gamma$ with $|\nu|=k$,
\[
\lim_{N\rightarrow\infty} \sum_{\lambda\in\Gamma_N[k]}(\ma_{\nu\lambda}+o_N(1)) \frac{[p_\lambda]F_N}{(\theta N)^{\ell(\lambda)}} = \sum_{\lambda\in\Gamma} \ma_{\nu\lambda}[p_\lambda] \exp\left(\sum_{\gamma\in\Gamma} c_\gamma p_\gamma\right).
\]
After deleting the terms of lower order by the inductive hypothesis, we have that 
\[
\lim_{N\rightarrow\infty} \sum_{\lambda\in\Gamma_N[k]} (\ma_{\nu\lambda}+o_N(1))\frac{c_\lambda(N)}{(\theta N)^{\ell(\lambda)}}  = \sum_{\lambda\in\Gamma} \ma_{\nu\lambda}c_\lambda.
\]
Hence, 
\[
\lim_{N\rightarrow\infty} (\ma[k]+o_N(1))\left[\frac{c_\lambda(N)}{(\theta N)^{\ell(\lambda)}} - c_\lambda\right]_{\lambda\in\Gamma[k]}^T = 0.
\]
However, observe that $\ma[k]$ is upper-triangular with a positive diagonal from \Cref{lemma:matrix_a1}. Thus, $\ma[k]$ is invertible, which proves (a).
\end{proof}

\begin{corollary}
\label{cor:equivalence_a1}
Suppose $F_N(x_1,\ldots,x_N) = \exp\left(\sum_{\lambda\in\Gamma_N} c_{\lambda}(N)p_\lambda\right)$. Assume that $\lim_{N\rightarrow\infty} \\|\theta N|=\infty$. The following are equivalent.
\begin{enumerate}
\item[(a)] For all $\lambda\in\Gamma$, $\lim_{N\rightarrow\infty} \frac{c_\lambda(N)}{\theta N} = c_\lambda \in \mathbb{C}$ if $\ell(\lambda)=1$ and $\lim_{N\rightarrow\infty} \frac{c_\lambda(N)}{(\theta N)^{\ell(\lambda)}}=0$ if $\ell(\lambda)\geq 2$.
\item[(b)] For all $\nu\in\Gamma$,
\[
\lim_{N\rightarrow\infty}\frac{1}{(\theta N)^{|\nu|} N^{\ell(\nu)}} [p_\nu, F_N]_{A^{N-1}(\theta)} = \prod_{i=1}^{\ell(\nu)} \sum_{\pi\in NC(\nu_i)} \prod_{B\in \pi} |B|c_{(|B|)}.
\]
\end{enumerate}
\end{corollary}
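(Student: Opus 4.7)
The plan is to derive \Cref{cor:equivalence_a1} directly from \Cref{thm:equivalence_a1}. Observe that condition (a) of the corollary is precisely condition (a) of the theorem, restricted to limits $c_\lambda$ satisfying $c_\lambda=0$ whenever $\ell(\lambda)\geq 2$. So it suffices to check that under this same restriction on $(c_\lambda)_{\lambda\in\Gamma}$, the right-hand side appearing in condition (b) of the theorem simplifies to the right-hand side appearing in condition (b) of the corollary; both directions of the corollary's equivalence then follow at once from the theorem's equivalence.

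To carry out the simplification, I would first apply the exponential-of-power-sums lemma from \Cref{subsec:partitions}: when only the single-part coefficients $c_{(k)}$ can be nonzero,
\[
[p_\lambda]\exp\!\left(\sum_{\gamma\in\Gamma} c_\gamma p_\gamma\right) \;=\; \pi(\lambda)^{-1}\prod_{i=1}^{\ell(\lambda)} c_{(\lambda_i)}
\]
for every $\lambda\in\Gamma$. Substituting this into the right-hand side of condition (b) of the theorem yields $\sum_{\lambda\in\Gamma} \ma_{\nu\lambda}\,\pi(\lambda)^{-1}\prod_{i=1}^{\ell(\lambda)} c_{(\lambda_i)}$. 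Next I would unpack the definition of $\ma_{\nu\lambda}$: the factors $\pi(\lambda)$ cancel, and the expression becomes
\[
\sum_{\lambda\in\Gamma}\left(\prod_{l=1}^{\ell(\lambda)}\lambda_l\right)\!\!\left(\prod_{i=1}^{\ell(\lambda)} c_{(\lambda_i)}\right)\!\!\left[\prod_{l=1}^{\ell(\lambda)} x_{\lambda_l}\right]\prod_{i=1}^{\ell(\nu)}\sum_{\pi\in NC(\nu_i)}\prod_{B\in\pi} x_{|B|}.
\]

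The key observation is that this sum over $\lambda$ is exactly the evaluation of the polynomial $P(x)\triangleq \prod_{i=1}^{\ell(\nu)}\sum_{\pi\in NC(\nu_i)}\prod_{B\in\pi} x_{|B|}$ under the substitution $x_k\mapsto k\,c_{(k)}$. Indeed, each monomial $\prod_{k\geq 1} x_k^{n_k}$ in $P$ corresponds uniquely to the partition $\lambda$ with $n_k$ parts equal to $k$, and both $\prod_l \lambda_l$ and $\prod_l c_{(\lambda_l)}$ factor over parts as $\prod_k k^{n_k}$ and $\prod_k c_{(k)}^{n_k}$, respectively, so the total contribution of the monomial $\prod_k x_k^{n_k}$ in $P$ matches the coefficient produced by the substitution. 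The resulting evaluation gives
\[
\prod_{i=1}^{\ell(\nu)}\sum_{\pi\in NC(\nu_i)}\prod_{B\in\pi} |B|\,c_{(|B|)},
\]
which is exactly the right-hand side of condition (b) of the corollary. Since every step is a reversible algebraic identity, conditions (b) of the theorem and the corollary coincide under the restriction $c_\lambda=0$ for $\ell(\lambda)\geq 2$, and the corollary is then an immediate consequence of \Cref{thm:equivalence_a1}. I do not expect a genuine obstacle; the only thing to be careful about is recognizing the coefficient extraction from $\ma_{\nu\lambda}$ as a polynomial evaluation, after which the argument is a direct computation.
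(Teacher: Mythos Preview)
Your proposal is correct and follows essentially the same approach as the paper's own proof: reduce to \Cref{thm:equivalence_a1}, use the exponential-of-power-sums expansion to get $[p_\lambda]\exp(\cdots)=\pi(\lambda)^{-1}\prod_l c_{(\lambda_l)}$, cancel the $\pi(\lambda)$ factors against those in $\ma_{\nu\lambda}$, and recognize the remaining sum over $\lambda$ as the polynomial evaluation $x_k\mapsto k\,c_{(k)}$. The paper's computation is line-for-line the same.
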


\begin{proof}
It suffices to prove that for all $\nu\in\Gamma$,
\[
\sum_{\lambda\in\Gamma} \ma_{\nu\lambda}[p_\lambda] \exp\left(\sum_{\gamma\in\Gamma} c_\gamma p_\gamma\right) = \prod_{i=1}^{\ell(\nu)} \sum_{\pi\in NC(\nu_i)} \prod_{B\in \pi} |B|c_{(|B|)}
\]
when $c_\gamma=0$ for all $\gamma\in\Gamma$ with $\ell(\gamma)\geq 2$. Observe that by the definition of $\ma$, the left hand side equals 
\begin{align*}
& \sum_{\lambda\in\Gamma}\prod_{l=1}^{\ell(\lambda)}\lambda_l\pi(\lambda)\frac{\prod_{l=1}^{\ell(\lambda)} c_{(\lambda_l)}}{\pi(\lambda)}\left[\prod_{l=1}^{\ell(\lambda)} x_{\lambda_l}\right]\prod_{i=1}^{\ell(\nu)} \sum_{\pi\in NC(\nu_i)} \prod_{B\in \pi} x_{|B|} \\ 
& = \sum_{\lambda\in\Gamma}\prod_{l=1}^{\ell(\lambda)}\lambda_l c_{(\lambda_l)}\left[\prod_{l=1}^{\ell(\lambda)} x_{\lambda_l}\right]\prod_{i=1}^{\ell(\nu)} \sum_{\pi\in NC(\nu_i)} \prod_{B\in \pi} x_{|B|} \\
& = \prod_{i=1}^{\ell(\nu)} \sum_{\pi\in NC(\nu_i)} \prod_{B\in \pi} |B|c_{(|B|)}.
\end{align*}
\end{proof}

From Newton's identities, it is straightforward to verify that when we write the degree $k$ elementary symmetric polynomial as a linear combination of power sums, the coefficient of $p_{(k)}$ is $\frac{(-1)^{k-1}}{k}$. Using the previous result as well as the results of \cite{limits_prob_measures}, we obtain the following generalization.

\begin{corollary}
\label{cor:newton_identity}
Suppose $\epsilon\in\Gamma$. Then, $c_{\epsilon,\,(|\epsilon|)}=(-1)^{\ell(\epsilon)-1}\frac{|\epsilon|!}{\pi(\epsilon)\ell(\epsilon)}$.
\end{corollary}

\begin{remark}
See \Cref{subsec:partitions} for the definition of $c_{\epsilon,\,(|\epsilon|)}$.
\end{remark}

\begin{proof}[Proof of \Cref{cor:newton_identity}]
Consider \cite{limits_prob_measures}*{Theorem 1.6} with $c=0$, $\theta_N=\theta>0$ for all $N\geq 2$, and $G_{\theta_N}(x_1,\ldots,x_N;\,\mu_N) = \exp(N m_\epsilon)$. Note that it is not necessarily true that there exists $\mu_N$ with this particular Bessel generating function; however, the proof of the theorem implies that 
\[
\lim_{N\rightarrow\infty} \frac{1}{(\theta N)^{|\nu|}N^{\ell(\nu)}} [p_\nu, F_N]_{A^{N-1}(\theta)} = \prod_{i=1}^{\ell(\nu)} \sum_{\pi\in NC(\nu_i)} \prod_{B\in\pi} |B|c_{(|B|)},
\]
where $c_{(k)}=0$ for $k\geq 1$ such that $k\not=|\epsilon|$ and $c_{(|\epsilon|)}=(-1)^{\ell(\epsilon)-1}\frac{\theta^{-1}|\epsilon|!}{\pi(\epsilon)\ell(\epsilon)}$.

By \Cref{cor:equivalence_a1}, we have that $\lim_{N\rightarrow\infty} \frac{c_{(|\epsilon|)}(N)}{\theta N} = (-1)^{\ell(\epsilon)-1}\frac{\theta^{-1}|\epsilon|!}{\pi(\epsilon)\ell(\epsilon)}$, where 
$G_{\theta_N}(x_1,\ldots,\\x_N;\,\mu_N) = \exp\left(\sum_{\lambda\in\Gamma_N} c_\lambda(N)p_\lambda\right)$. However, observe that 
\[
c_{(|\epsilon|)}(N) = Nc_{\epsilon,(|\epsilon|)} \Rightarrow c_{\epsilon,\,(|\epsilon|)} = (-1)^{\ell(\epsilon)-1}\frac{|\epsilon|!}{\pi(\epsilon)\ell(\epsilon)}.
\]
\end{proof}

\subsection[\texorpdfstring{$[\cdot,\cdot]_{A^{N-1}(\theta)}$}{} in the high temperature regime]{Leading order terms of the type A Dunkl bilinear form in the high temperature regime}

Next, we compute the leading order terms of the matrix $([p_\lambda,p_\nu]_{A^{N-1}(\theta)}\\)_{\lambda,\nu\in\Gamma}$ in the regime $\theta N \rightarrow c\in\mathbb{C}$.

Suppose $k\geq 1$ and $\pi\in NC(k)$. Define the polynomial $W^A(\pi)\in\mathbb{Z}[x]$ by
\[
W^A(\pi)(x)\triangleq \prod_{i\in[k],\,b(i;\,\pi)=0} (x+d(i;\,\pi)).
\]

\begin{theorem}
\label{thm:leadingorder_a2}
Suppose $k\geq 1$, $\lambda,\nu\in\Gamma[k]$, and $\ell(\lambda)\leq \ell(\nu)$. Then,
\[
[p_\lambda, p_\nu]_{A^{N-1}(\theta)} = N^{\ell(\lambda)}\prod_{l=1}^{\ell(\nu)}\nu_l \pi(\nu) \left[\prod_{l=1}^{\ell(\nu)} x_{\nu_l}\right]\prod_{i=1}^{\ell(\lambda)} \sum_{\pi\in NC(\lambda_i)} W^A(\pi)(N\theta)\prod_{B\in \pi} x_{|B|} + R(N,\theta),
\]
where $R\in\mathbb{Q}[x,y]$ satisfies the condition that in each of its summands, the degree of $x$ is at most $\ell(\lambda)-1$ greater than the degree of $y$.
\end{theorem}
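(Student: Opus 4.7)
The plan is to adapt the combinatorial argument of the second proof of Theorem~\ref{thm:leadingorder_a} (Subsection~\ref{subsec:proofa}), now tracking all terms whose $N$-degree exceeds the $\theta$-degree by exactly $\ell(\lambda)$—rather than retaining only the single highest-order term, as in the $|\theta N|\to\infty$ regime. In the regime $\theta N\to c\in\mathbb{C}$, these balanced contributions are of comparable magnitude and collectively form a polynomial in $N\theta$ of degree $|\lambda|-\ell(\nu)$, packaged as the $W^A(\pi)(N\theta)$ factor. Starting from
\[
[p_\lambda,p_\nu]_{A^{N-1}(\theta)}=\sum_{j_1,\ldots,j_{\ell(\lambda)}=1}^N\prod_{i=1}^{\ell(\lambda)}\bigl((\mathcal{D}_{j_i}-\partial_{j_i})+\partial_{j_i}\bigr)^{\lambda_i-1}\partial_{j_i}\,p_\nu,
\]
I expand each factor binomially and classify the resulting summands as in the second proof. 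A summand contributes at the leading balance (degree of $N$ minus degree of $\theta$ equal to $\ell(\lambda)$) precisely when (a)~$j_1,\ldots,j_{\ell(\lambda)}$ are distinct, giving $N^{\ell(\lambda)}$ up to lower order, and (b)~every switch has a new target index, each contributing $N\theta$ via the sum over $\sim N$ new targets weighted by $\theta$. A switch targeting an already-used index contributes $\theta$ without a matching $N$, strictly lowering the $N$-$\theta$ balance by one and hence falling into $R$; this immediately yields the stated degree constraint on $R$.

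With the leading-order summands identified, I then apply the per-block reduction from Lemma~\ref{lemma:contribution1}, decomposing over ordered set partitions $[\ell(\nu)]=S_1\sqcup\cdots\sqcup S_{\ell(\lambda)}$ with $\sum_{l\in S_i}\nu_l=\lambda_i$. This reduces the proof of the theorem to the single-block identity
\[
c'_{\nu'}=\prod_{l=1}^{\ell(\nu')}\nu'_l\cdot\pi(\nu')\cdot\left[\prod_{l=1}^{\ell(\nu')}x_{\nu'_l}\right]\sum_{\pi\in NC(|\nu'|)}W^A(\pi)(N\theta)\prod_{B\in\pi}x_{|B|},
\]
where $c'_{\nu'}$ denotes the leading coefficient of $[p_{(|\nu'|)},p_{\nu'}]_{A^{N-1}(\theta)}/N$ in the $\theta N\to c$ regime.

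For the single-block identity, I generalize the stack-based bijection from the proof of Lemma~\ref{lemma:allocation}. Each leading-order configuration is mapped to a pair $(\pi,\delta)$, where $\pi\in NC(|\nu'|)$ records the opening structure of label first-occurrences via a stack discipline, and $\delta$ is a decoration at each non-minimum position $i$ of $\pi$ choosing either ``switch'' (factor $N\theta$) or one of $d(i;\pi)$ ``active slots'' through which the product rule routes an extending derivative. The count $d(i;\pi)=\sum_{r\leq q}|B_r\cap\{i,\ldots,|\nu'|\}|$ is exactly the number of currently active remaining positions in $\pi$'s blocks at step $i$, and summing over choices at each non-minimum produces the factor $(N\theta+d(i;\pi))$ and hence $W^A(\pi)(N\theta)$. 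Crossing assignments in the effective block-labeling appear when the derivative is routed through a non-top active block, but the opening structure and hence $\pi$ is invariant under this choice. The prefactor $\prod_l\nu'_l\cdot\pi(\nu')$ arises from the mandatory initial derivative at each block minimum (contributing $\nu'_l$ when opening the block for $p_{\nu'_l}$, since applying a switch directly to a symmetric $p_{\nu'_l}$ gives zero) and from the $\pi(\nu')$ assignments of block labels with matching sizes.

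The main technical obstacle is a careful leading-order analysis of the non-derivation nature of the Dunkl switch operator. Using the twisted product rule
\[
\frac{1-s_{jm}}{x_j-x_m}(fg)=g\cdot\frac{1-s_{jm}}{x_j-x_m}f+s_{jm}(f)\cdot\frac{1-s_{jm}}{x_j-x_m}g,
\]
the $s_{jm}(f)$-branch introduces the variable $x_m$ into the state, and eliminating this variable requires a subsequent switch back to $m$—costing an additional unmatched factor of $\theta$ and hence contributing only to $R$. Coupled with an induction on $k=|\nu'|$ that analyzes the action of the forced first derivative $\partial_j$ and recurses on the reduced state, this justifies treating extending derivatives and switches uniformly within the combinatorial sum and completes the verification of the single-block identity.
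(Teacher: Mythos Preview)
Your proposal is correct and takes essentially the same approach as the paper's proof: both adapt the second proof of Theorem~\ref{thm:leadingorder_a} by relaxing the constraint $d=\ell(\nu)-\ell(\lambda)$ to $d\geq\ell(\nu)-\ell(\lambda)$, so that at each non-minimum position of the noncrossing partition one may either apply a switch (weight $N\theta$) or an unassigned derivative (weight equal to the current degree $d(i;\pi)$ of $x_1$), which is exactly the content of the paper's Lemma~\ref{lemma:allocation_a2}. Your framing in terms of a decoration $\delta$ selecting one of $d(i;\pi)$ ``active slots'' is a combinatorial unpacking of the power-rule factor, and your discussion of the twisted Leibniz rule and the proposed induction on $k$ supply more detail than the paper's terse justification, but the substance is identical.
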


Similarly to how \Cref{thm:leadingorder_a} can be proved using the results of \cite{limits_prob_measures}, this theorem can be proved using Theorems 3.8 and 3.10 of the paper \cite{matrix}, although the weight function that the paper uses is written differently than how we write $W^A(\pi)$. However, we include the proof that follows the method of the second proof of \Cref{thm:leadingorder_a}. This proof can be used while proving analogous results for the $BC^N(\theta_0,\theta_1)$ and $D^N(\theta)$ root systems.

\begin{proof}[Proof of \Cref{thm:leadingorder_a2}]
We can use the same argument as the proof of \Cref{thm:leadingorder_a}. The main difference is that we are including all terms such that the degree of $N$ is $\ell(\lambda)$ greater than the degree of $\theta$ in the leading order term. This means that we still have that $j_i$ for $1\leq i\leq \ell(\lambda)$ are distinct to obtain the factor of $N^{\ell(\lambda)}$ and that the switches are to distinct indices so that the degrees of $N$ and $\theta$ are the same, after removing the factor of $N^{\ell(\lambda)}$. However, we no longer require that $d=\ell(\nu)-\ell(\lambda)$, since we are not assuming that the order of $N\theta$ is greater than the order of $1$. In particular, $d\geq \ell(\nu)-\ell(\lambda)$ and the number of switches is $k-\ell(\lambda)-d$. This essentially means that the derivatives do not all have to be assigned to a $p_{\nu_l}$ to contribute to the leading order term.

Furthermore, for the remainder term $R$, we have that each term which is $N^{\ell(\lambda)}$ multiplied by a power of $N\theta$ is already included in the leading order term, so it is straightforward to deduce that it follows the given condition. We alter the definition of $\mathcal{S}$ as follows.

Let $\mathcal{S}$ denote the set of sequences of operators $s=\{s_i\}_{1\leq i\leq |\lambda|}$ such that:
\begin{enumerate}
    \item For $1\leq i\leq \ell(\lambda)$, $s_{1+\lambda_1+\cdots+\lambda_{i-1}} = \partial_i$, and for $j\in [1+\lambda_1+\cdots+\lambda_{i-1}, \lambda_1+\cdots+\lambda_i]$, $s_j$ is a switch from $i$ to an element of $[N]\backslash\{i\}$ or is $\partial_i$.
    \item For at least $\ell(\nu)-\ell(\lambda)$ elements $j$ of $[|\lambda|]\backslash \{1, 1+\lambda_1, \ldots, 1+\lambda_1+\cdots+\lambda_{\ell(\lambda)-1}\}$, $s_j$ is a derivative.
    \item The $j$th switch is from some element of $[\ell(\lambda)]$ (which is determined by condition (1)) to $\ell(\lambda)+j$ for $1\leq j\leq n(s)$, where $n(s)$ denotes the number of switches of $s$.
\end{enumerate}
Then, the leading order term is
\begin{equation}
\label{eq:leadingorder_a2}
N^{\ell(\lambda)}\sum_{s\in \mathcal{S}} N^{n(s)}s_{|\lambda|}\circ\cdots\circ s_1 p_\nu,
\end{equation}
where the factor of $N^{\ell(\lambda)}$ arises from the choices of the $j_i$ and the factor of $N^{n(s)}$ arises from selecting the indices of the switches. For simplicity, we compute 
\begin{equation}
\label{eq:leadingorder_a3}
\sum_{s\in \mathcal{S}}s_{|\lambda|}\circ\cdots\circ s_1 p_\nu,
\end{equation}
and then multiply by $N^{\ell(\lambda)}$ and replace $\theta$ with $\theta N$ to obtain \pref{eq:leadingorder_a2}.

We have that the analogous results in the proof of \Cref{thm:leadingorder_a} are true, other than \Cref{lemma:allocation}, which we must alter since the derivatives have the same weight as $\theta N$. In other words, as we mentioned earlier, the derivatives in $s$ do not all have to be assigned to a $p_{\nu_l}$ to contribute to the leading order term.

\begin{lemma}
\label{lemma:allocation_a2}
Suppose $\nu'\in \Gamma$. Let $c_{\nu'}$ be the value of \pref{eq:leadingorder_a3} when $\lambda$ is set as $(|\nu'|)$ and $\nu$ is set as $\nu'$ and define
\[
n_{\nu'}(x)\triangleq \sum_{\substack{\pi\in NC(|\nu'|),\,\gamma(\pi) = \nu'}}  W^A(\pi)(x).
\]
Then, $c_{\nu'}=\theta^{|\nu'|-\ell(\nu')}\prod_{l=1}^{\ell(\nu')} \nu'_l \pi(\nu') n_{\nu'}(\theta)$.
\end{lemma}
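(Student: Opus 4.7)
My plan is to extend the bijective argument from the proof of \Cref{lemma:allocation} to the enlarged family $\mathcal{S}$ of the present theorem. The decisive new feature is that Condition (2) now permits a derivative to occupy a position that was previously forced to be a switch; such a derivative may act either as an \emph{active} derivative pulling down from a fresh $p_{\nu'_l}$ (as before), or as a \emph{passive} derivative applied to an already-present monomial $c\,x_1^m$, extracting the factor $m$.

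To set this up, I would attach to each $s\in\mathcal{S}$ a bijection $\zeta$ between a distinguished subset of exactly $\ell(\nu')$ of the derivatives of $s$ (the active ones) and $[\ell(\nu')]$; this corresponds to choosing, for each $p_{\nu'_l}$, which derivative first hits it during the iterated product-rule expansion. Then $c_{\nu'}$ decomposes as $\sum_{(s,\zeta)}c(s,\zeta)$, and a stack-based algorithm generalizing the one in the proof of \Cref{lemma:allocation} maps each pair with $c(s,\zeta)\neq 0$ to a noncrossing partition $\pi\in NC(|\nu'|)$ with $\gamma(\pi)=\nu'$: reading positions $1,2,\ldots,|\nu'|$ left to right, an active derivative pushes a new block of the prescribed size $\nu'_l$ onto the stack, while a switch or passive derivative appends the current position to the topmost not-yet-full block. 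The LIFO discipline produces a noncrossing $\pi$, and nonvanishing of $c(s,\zeta)$ forces $\gamma(\pi)=\nu'$.

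For a fixed $\pi$, I would then compute the total weighted contribution. The $\ell(\nu')$ opening positions (those with $b(i;\pi)=1$) must be active derivatives: assigning blocks of equal size to matching $p_{\nu'_l}$'s contributes the combinatorial factor $\pi(\nu')$, and each opener contributes a factor $\nu'_l$ coming from $\partial_1 x_1^{\nu'_l}$. At each of the $|\nu'|-\ell(\nu')$ non-opening positions, the operator is either a switch of weight $\theta$ or a passive derivative of weight equal to the current $x_1$-exponent of the monomial; these alternatives sum to $\theta+d(i;\pi)$, so the product over non-opening positions reassembles into $W^A(\pi)(\theta)$. Summing $\prod\nu'_l\,\pi(\nu')\,W^A(\pi)(\theta)$ over all $\pi\in NC(|\nu'|)$ with $\gamma(\pi)=\nu'$ then yields the claimed formula.

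The main technical obstacle, on which the entire weight count rests, is the invariant that the current $x_1$-exponent at the moment position $i$ is processed equals $d(i;\pi)$. I would establish this by induction on $i$: active derivatives raise the exponent by $\nu'_l-1$, while switches and passive derivatives each lower it by $1$, so the exponent at position $i$ equals $\sum_{r:\min(B_r)<i}|B_r\cap\{i,\ldots,|\nu'|\}|$. Using that $\pi$ is noncrossing, any block $B_r$ with $r>q$ and $\min(B_r)<i$ (where $q$ is the index of the block containing $i$) must be nested strictly inside the gap of $B_q$ ending at $i$, hence contributes nothing to that sum; this reduces the sum to $\sum_{r\leq q}|B_r\cap\{i,\ldots,|\nu'|\}|=|(\bigcup_{r\leq q}B_r)\cap\{i,\ldots,|\nu'|\}|=d(i;\pi)$, as required.
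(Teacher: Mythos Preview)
Your approach is exactly the paper's: extend the stack bijection of \Cref{lemma:allocation} and, at each non-opening position, sum the switch contribution $\theta$ and the passive-derivative contribution $d(i;\pi)$ to rebuild $W^A(\pi)(\theta)$. Your added verification that the running $x_1$-exponent equals $d(i;\pi)$ is a welcome detail the paper leaves implicit.

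One remark: your computation actually produces $c_{\nu'}=\prod_l\nu'_l\,\pi(\nu')\,n_{\nu'}(\theta)$, which is the formula the paper uses immediately afterward; the extra factor $\theta^{|\nu'|-\ell(\nu')}$ in the lemma's displayed statement appears to be a typo carried over from \Cref{lemma:allocation} (where it was correct because every non-opener was a switch). You should flag that discrepancy rather than declare it ``the claimed formula.''
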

\begin{proof}
The proof is essentially the same as the proof of \Cref{lemma:allocation}. The only difference is that at each position where we previously apply a switch, we can also choose to apply a derivative. The derivative would multiply the coefficient by the current degree of $x_1$, which corresponds to the $d(i;\,\pi)$ factor in the formula for $W_\pi^A$.
\end{proof}

Using \pref{eq:contribution3}, \pref{eq:leadingorder_a3}, and \Cref{lemma:allocation_a2}, and then replacing $\theta$ with $\theta N$ and multiplying by a factor of $N^{\ell(\lambda)}$, it follows that the leading order coefficient is
\begin{align*}
& N^{\ell(\lambda)}\sum_{[\ell(\nu)] = S_1\sqcup \cdots \sqcup S_{\ell(\lambda)}} \prod_{l=1}^{\ell(\nu)} \nu_l \times \\
& \prod_{i=1}^{\ell(\lambda)} \pi((\nu_j: j\in S_i))\sum_{\substack{\pi\in NC(\lambda_i), \\ \gamma(\pi)=\gamma((\nu_j: j\in S_i))}} W^A(\pi)(N\theta) \\
& =  N^{\ell(\lambda)}\sum_{\nu = \gamma_1 + \cdots + \gamma_{\ell(\lambda)}} \prod_{l=1}^{\ell(\nu)}\nu_l \pi(\nu) \prod_{i=1}^{\ell(\lambda)} \sum_{\substack{\pi\in NC(\lambda_i), \\ \gamma(\pi)=\gamma_i}} W^A(\pi)(N\theta) \\
& =  N^{\ell(\lambda)}\prod_{l=1}^{\ell(\nu)}\nu_l \pi(\nu) \left[\prod_{l=1}^{\ell(\nu)} x_{\nu_l}\right]\prod_{i=1}^{\ell(\lambda)} \sum_{\pi\in NC(\lambda_i)} \prod_{B\in \pi} x_{|B|} W^A(\pi)(N\theta).
\end{align*}
\end{proof}

We now prove the analogue \Cref{thm:equivalence_a2} of \Cref{thm:equivalence_a1}. The method we use is the same, however we include it for completeness.

Define the infinite dimensional matrix $\Wa\in\mathbb{Z}_{\geq 0}^{\Gamma\times\Gamma}[y]$ such that for $\lambda,\nu\in\Gamma$, 
\[
\Wa_{\lambda\nu}(y)\triangleq \pi(\nu)\prod_{l=1}^{\ell(\nu)}\nu_l  \left[\prod_{l=1}^{\ell(\nu)} x_{\nu_l}\right]\prod_{i=1}^{\ell(\lambda)} \sum_{\pi\in NC(\lambda_i)} W^A(\pi)(y)\prod_{B\in \pi} x_{|B|}.
\]
The following lemma is straightforward to deduce.

\begin{lemma}
\label{lemma:matrix_a2} Suppose $\lambda,\nu\in\Gamma$ and $|\lambda|=|\nu|$.
\begin{enumerate}
\item[(A)] If $\lambda=\nu$, then $\Wa(y)_{\lambda\nu}=\pi(\nu)\prod_{l=1}^{\ell(\nu)}\nu_l W^A([\nu_l])(y)$.
\item[(B)] If $\ell(\lambda)\geq \ell(\nu)$ and $\lambda\not=\nu$, then $\Wa_{\lambda\nu}=0$.
\end{enumerate}
\end{lemma}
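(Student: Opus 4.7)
The plan is to extract the coefficient directly from the definition by tracking the total $x$-degree produced by each factor of the product
\[
\prod_{i=1}^{\ell(\lambda)} \sum_{\pi\in NC(\lambda_i)} W^A(\pi)(y)\prod_{B\in \pi} x_{|B|}.
\]
Each summand $W^A(\pi)(y)\prod_{B\in\pi} x_{|B|}$ in the $i$-th factor is homogeneous of degree $|\pi|$ (the number of blocks of $\pi$) in the $x$-variables, and that number is at least $1$. Consequently, any nonzero contribution to the coefficient of $\prod_{l=1}^{\ell(\nu)} x_{\nu_l}$ must come from a tuple $(\pi_1,\ldots,\pi_{\ell(\lambda)})$ with $\sum_i |\pi_i| = \ell(\nu)$, which immediately forces $\ell(\lambda)\le\ell(\nu)$.

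For part (A), when $\lambda=\nu$ one has $\ell(\lambda)=\ell(\nu)$, so the constraint $\sum_i|\pi_i|=\ell(\nu)$ pins down $|\pi_i|=1$ for every $i$, that is, $\pi_i=\{[\nu_i]\}$. Each factor then contributes $W^A([\nu_i])(y)\cdot x_{\nu_i}$, and multiplying across $i$ produces the unique monomial $\prod_i x_{\nu_i}$ with coefficient $\prod_i W^A([\nu_i])(y)$. Inserting the prefactor $\pi(\nu)\prod_l \nu_l$ from the definition of $\Wa_{\lambda\nu}$ then yields the stated formula. Repeated parts in $\nu$ cause no trouble here, since at this minimal degree each factor contributes a single monomial and no further multiplicities arise.

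For part (B), if $\ell(\lambda) > \ell(\nu)$ the degree constraint above is violated and the coefficient vanishes. If $\ell(\lambda)=\ell(\nu)$ and $\lambda\ne\nu$, the same argument forces $\pi_i=\{[\lambda_i]\}$ for every $i$, giving only the monomial $\prod_i x_{\lambda_i}$; for this to equal $\prod_l x_{\nu_l}$ the multisets of parts of $\lambda$ and $\nu$ must coincide, hence $\lambda=\nu$, a contradiction. The argument is pure bookkeeping from the definitions, so I do not anticipate any substantive obstacle; the only point requiring a small amount of care is the handling of repeated parts, which is resolved by the observation above.
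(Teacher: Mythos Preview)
Your argument is correct and matches the paper's intended reasoning: the paper simply states that the lemma is ``straightforward to deduce'' and gives no further proof, and your degree-counting argument is exactly the kind of direct verification from the definition that this phrase signals. The handling of the equal-length case and the observation about repeated parts are both right.
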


\begin{theorem}
\label{thm:equivalence_a2}
Suppose $F_N(x_1,\ldots,x_N) = \exp\left(\sum_{\lambda\in\Gamma_N} c_{\lambda}(N)p_\lambda\right)$. Assume that $\lim_{N\rightarrow\infty} \\\theta N = c\in\mathbb{C}$. Consider the following statements.
\begin{enumerate}
\item[(a)] For all $\lambda\in\Gamma$, $\lim_{N\rightarrow\infty} c_\lambda(N)=c_\lambda\in\mathbb{C}$. 
\item[(b)] For all $\nu\in\Gamma$,
\[
\lim_{N\rightarrow\infty} \frac{1}{N^{\ell(\nu)}} [p_\nu, F_N]_{A^{N-1}(\theta)} = \sum_{\lambda\in\Gamma} \Wa(c)_{\nu\lambda}[p_\lambda] \exp\left(\sum_{\gamma\in\Gamma} c_\gamma p_\gamma\right).
\]
\end{enumerate}
Then, (a) implies (b), and if $c$ is not a negative integer, then (b) implies (a).
\end{theorem}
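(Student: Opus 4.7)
The plan is to mimic the proof of \Cref{thm:equivalence_a1} essentially verbatim, substituting \Cref{thm:leadingorder_a2} for \Cref{thm:leadingorder_a} and the matrix $\Wa(c)$ for $\ma$. First I would verify that for all $\nu,\lambda\in\Gamma$ with $|\nu|=|\lambda|$,
\[
\lim_{N\to\infty}\frac{[p_\nu,p_\lambda]_{A^{N-1}(\theta)}}{N^{\ell(\nu)}}=\Wa(c)_{\nu\lambda}.
\]
When $\ell(\nu)\le\ell(\lambda)$, \Cref{thm:leadingorder_a2} gives the leading term divided by $N^{\ell(\nu)}$ as $\Wa(N\theta)_{\nu\lambda}$, which tends to $\Wa(c)_{\nu\lambda}$ since $N\theta\to c$; each summand of the remainder $R/N^{\ell(\nu)}$ takes the form $N^{i-\ell(\nu)-j}(N\theta)^j$ with $i-j\le\ell(\nu)-1$, hence tends to zero. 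When $\ell(\nu)>\ell(\lambda)$, I would apply the theorem to $[p_\lambda,p_\nu]$ by symmetry: the main term now carries a factor $N^{\ell(\lambda)-\ell(\nu)}\to 0$, and $\Wa(c)_{\nu\lambda}=0$ by part (B) of \Cref{lemma:matrix_a2}, so both sides vanish.

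Next, for $k=|\nu|$ and $N\ge k$, I would write
\[
\frac{[p_\nu,F_N]_{A^{N-1}(\theta)}}{N^{\ell(\nu)}}=\sum_{\lambda\in\Gamma[k]}[p_\lambda]F_N\cdot\frac{[p_\nu,p_\lambda]_{A^{N-1}(\theta)}}{N^{\ell(\nu)}}.
\]
For (a)$\Rightarrow$(b), convergence of each $c_\gamma(N)$ makes $[p_\lambda]F_N\to[p_\lambda]\exp(\sum_\gamma c_\gamma p_\gamma)$, and since the sum is finite one may pass to the limit to obtain (b).

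For (b)$\Rightarrow$(a), I would induct on $k=|\lambda|$. In the inductive step, decompose $[p_\lambda]F_N=c_\lambda(N)+P_\lambda(N)$, where $P_\lambda(N)$ is a polynomial in $\{c_\gamma(N):|\gamma|<k\}$ and hence converges by the inductive hypothesis. The displayed expansion then rearranges into a linear system
\[
(\Wa(c)[k]+E(N))\,\vec c(N)=\vec T(N),
\]
with $\vec c(N)=(c_\lambda(N))_{\lambda\in\Gamma[k]}$, $E(N)\to 0$ entrywise, and $\vec T(N)$ converging thanks to (b) and induction. The hard part is invertibility of $\Wa(c)[k]$: by part (B) of \Cref{lemma:matrix_a2} this matrix is triangular under any ordering of $\Gamma[k]$ by $\ell(\cdot)$, and by part (A) its diagonal entry at $\nu$ equals $\pi(\nu)\prod_l\nu_l\,W^A([\nu_l])(c)=\pi(\nu)\prod_l\nu_l(c+1)(c+2)\cdots(c+\nu_l-1)$, after expanding $W^A$ for the single-block partition of $\{1,\dots,\nu_l\}$. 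Since every $\nu_l\le k$, these factors are all nonzero precisely when $c\notin\{-1,-2,-3,\dots\}$, which is the stated hypothesis; hence $\Wa(c)[k]+E(N)$ is invertible for all large $N$ with inverse converging to $\Wa(c)[k]^{-1}$, so $\vec c(N)$ converges, establishing (a).
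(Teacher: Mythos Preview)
Your proposal is correct and follows essentially the same approach as the paper: expand $[p_\nu,F_N]/N^{\ell(\nu)}$ via \Cref{thm:leadingorder_a2}, pass to the limit for (a)$\Rightarrow$(b), and for (b)$\Rightarrow$(a) induct on $|\lambda|$ and invert the finite linear system governed by $\Wa(c)[k]$, whose triangularity and nonvanishing diagonal come from \Cref{lemma:matrix_a2}. Your added computation $W^A([\nu_l])(c)=(c+1)\cdots(c+\nu_l-1)$ makes the non-negative-integer condition explicit, but otherwise the argument mirrors the paper's proof.
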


\begin{proof}
We follow the method of the proof of \Cref{thm:equivalence_a1}. Suppose $\nu\in\Gamma$. Then, using \Cref{thm:leadingorder_a2} after noting that each summand of $R$ is $o_N(N^{\ell(\nu)})$, we get that
\begin{equation}
\label{eq:limit_a2}
\lim_{N\rightarrow\infty} \frac{1}{N^{\ell(\nu)}} [p_\nu, F_N]_{A^{N-1}(\theta)} = \lim_{N\rightarrow\infty} \sum_{\lambda\in\Gamma_N[|\nu|]} (\Wa(c)_{\nu\lambda}+o_N(1))[p_\lambda]F_N. 
\end{equation}
For $\lambda\in\Gamma_N$ such that $\ell(\lambda)<\ell(\nu)$, we have that
\[
[p_\nu, p_\lambda]_{A^{N-1}(\theta)} = O_N(N^{\ell(\lambda)})=o_N(N^{\ell(\nu)}),
\]
which then corresponds to the term $(\Wa(c)_{\nu\lambda}+o_N(1))$ since $\Wa(c)_{\nu\lambda}=0$. For $\lambda\in\Gamma_N$ such that $\ell(\lambda)\geq \ell(\nu)$, it is straightforward to obtain from \Cref{thm:leadingorder_a2} that
\[
[p_\nu, p_\lambda]_{A^{N-1}(\theta)} = N^{\ell(\nu)}(\Wa(c)_{\nu\lambda}+o_N(1)).
\]

(a) implies (b): It is straightforward to recover the equation from (b) using \pref{eq:limit_a2}.

(b) implies (a): Assume that $c$ is not a negative integer. We proceed with induction on $|\lambda|$.

Assume that the result is true when $|\lambda|\leq k-1$ for $k\geq 1$. We prove that the result is true when $|\lambda|=k$. First, using (b) and \pref{eq:limit_a2} gives that for all $\nu\in\Gamma[k]$,
\[
\lim_{N\rightarrow\infty} \sum_{\lambda\in\Gamma_N[k]}(\Wa(c)_{\nu\lambda}+o_N(1)) [p_\lambda]F_N = \sum_{\lambda\in\Gamma} \Wa(c)_{\nu\lambda}[p_\lambda] \exp\left(\sum_{\gamma\in\Gamma} c_\gamma p_\gamma\right).
\]
After deleting the terms of lower order by the inductive hypothesis, we have that 
\[
\lim_{N\rightarrow\infty} \sum_{\lambda\in\Gamma_N[k]} (\Wa(c)_{\nu\lambda}+o_N(1))c_\lambda(N) = \sum_{\lambda\in\Gamma} \Wa(c)_{\nu\lambda}c_\lambda.
\]
Hence, 
\[
\lim_{N\rightarrow\infty} (\Wa(c)[k]+o_N(1))\left[c_\lambda(N) - c_\lambda\right]_{\lambda\in\Gamma[k]}^T = 0.
\]
However, observe that $\Wa(c)[k]$ is upper-triangular with a nonzero diagonal from Lemma \ref{lemma:matrix_a2}, since $c$ is not a negative integer. Thus, $\Wa(c)[k]$ is invertible, which proves (a).
\end{proof}

Note that \Cref{thm:equivalence_a2} generalizes the results of the paper \cite{matrix}, since it allows $c_\lambda$ to be nonzero if $\ell(\lambda)\geq 2$, while \Cref{cor:equivalence_a2} has already been proved in the paper.

\begin{corollary}
\label{cor:equivalence_a2}
Suppose $F_N(x_1,\ldots,x_N) = \exp\\\left(\sum_{\lambda\in\Gamma_N} c_{\lambda}(N)p_\lambda\right)$. Assume that $\lim_{N\rightarrow\infty} \theta N = c\in\mathbb{C}$. Consider the following statements.
\begin{enumerate}
\item[(a)] For all $\lambda\in\Gamma$, $\lim_{N\rightarrow\infty} c_\lambda(N) =c_\lambda \in\mathbb{C}$ if $\ell(\lambda)=1$ and $\lim_{N\rightarrow\infty} c_\lambda(N)=0$ if $\ell(\lambda)>1$
\item[(b)] For all $\nu\in\Gamma$,
\[
\lim_{N\rightarrow\infty}\frac{1}{N^{\ell(\nu)}} [p_\nu, F_N]_{A^{N-1}(\theta)} = \prod_{i=1}^{\ell(\nu)} \sum_{\pi\in NC(\nu_i)} W^A(\pi)(c)\prod_{B\in \pi} |B|c_{(|B|)}.
\]
\end{enumerate}
Then, (a) implies (b), and if $c$ is not a negative integer, then (b) implies (a).
\end{corollary}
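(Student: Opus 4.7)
The plan is to deduce this corollary from \Cref{thm:equivalence_a2} in exactly the same way that \Cref{cor:equivalence_a1} is deduced from \Cref{thm:equivalence_a1}. Specifically, statement (a) of the corollary is the specialization of statement (a) of \Cref{thm:equivalence_a2} to the case where $c_\gamma=0$ for every $\gamma\in\Gamma$ with $\ell(\gamma)\geq 2$. Granted the theorem, it therefore suffices to verify that under this specialization, the right hand side of \Cref{thm:equivalence_a2}(b) coincides with the right hand side of the corollary for every $\nu\in\Gamma$; once that algebraic identity is established, both implications (a)$\Rightarrow$(b) and, under the hypothesis that $c$ is not a negative integer, (b)$\Rightarrow$(a) will be inherited from the theorem.

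To verify the identity, first recall that when $c_\gamma=0$ for $\ell(\gamma)\geq 2$,
\[
[p_\lambda]\exp\!\left(\sum_{\gamma\in\Gamma}c_\gamma p_\gamma\right)=\frac{1}{\pi(\lambda)}\prod_{l=1}^{\ell(\lambda)}c_{(\lambda_l)}
\]
for every $\lambda\in\Gamma$. Substituting this into the right hand side of \Cref{thm:equivalence_a2}(b) and invoking the definition of $\Wa(c)_{\nu\lambda}$, the factor $\pi(\lambda)$ cancels, and what remains is
\[
\sum_{\lambda\in\Gamma}\prod_{l=1}^{\ell(\lambda)}\lambda_l\,c_{(\lambda_l)}\left[\prod_{l=1}^{\ell(\lambda)}x_{\lambda_l}\right]\prod_{i=1}^{\ell(\nu)}\sum_{\pi\in NC(\nu_i)}W^A(\pi)(c)\prod_{B\in\pi}x_{|B|}.
\]
Since the $\lambda$-sum is simply the operation of substituting $x_k\mapsto k\,c_{(k)}$ into the generating polynomial in the variables $\{x_k\}_{k\geq 1}$, this expression collapses to
\[
\prod_{i=1}^{\ell(\nu)}\sum_{\pi\in NC(\nu_i)}W^A(\pi)(c)\prod_{B\in\pi}|B|\,c_{(|B|)},
\]
which is exactly the right hand side appearing in statement (b) of the corollary.

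There is no serious obstacle here; the argument is purely formal and parallels the short derivation of \Cref{cor:equivalence_a1} from \Cref{thm:equivalence_a1}. The only mildly delicate point is to ensure that the non-degeneracy hypothesis $c\notin\mathbb{Z}_{<0}$ is inherited correctly from the theorem: this condition is precisely what makes the diagonal entry $\Wa(c)_{\lambda\lambda}=\pi(\lambda)\prod_{l=1}^{\ell(\lambda)}\lambda_l W^A([\lambda_l])(c)$ nonzero in \Cref{lemma:matrix_a2}, and therefore makes the upper-triangular matrix $\Wa(c)[k]$ invertible for each $k\geq 1$, which is exactly what the proof of (b)$\Rightarrow$(a) in \Cref{thm:equivalence_a2} requires.
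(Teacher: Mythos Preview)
Your proposal is correct and follows essentially the same approach as the paper: the paper's proof simply says ``The same method as the proof of \Cref{cor:equivalence_a1} can be used,'' and you have carried out exactly that method explicitly, specializing \Cref{thm:equivalence_a2} to $c_\gamma=0$ for $\ell(\gamma)\geq 2$ and verifying the resulting algebraic identity via the coefficient formula and the definition of $\Wa(c)_{\nu\lambda}$.
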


\begin{proof}
The same method as the proof of \Cref{cor:equivalence_a1} can be used.
\end{proof}

\section{Leading order terms for the type BC Dunkl bilinear form}
\label{sec:bc}

We transition to the type BC root system and first prove the analogues of the results of \Cref{sec:a} for the $|\theta_0 N|\rightarrow\infty$ regime and later prove the analogues of the results for the $\theta_0 N \rightarrow c_0\in\mathbb{C}$ regime. Note that we always have that $\frac{\theta_1}{\theta_0 N}\rightarrow c_1\in\mathbb{C}$ and that the asymptotic invertibility of $\mathcal{D}(BC^N(\theta_0,\theta_1))$ will depend on $c_0$ and $c_1$.

\subsection{Proof of part (B) of \texorpdfstring{\Cref{thm:main}}{}}

We first consider the $|\theta_0 N|\rightarrow\infty$ regime.

\begin{theorem}
\label{thm:leadingorder_bc}
Suppose $k\geq 1$, $\lambda,\nu\in\even[k]$, and $\ell(\lambda)\leq \ell(\nu)$. Then,
\begin{align*}
[p_\lambda, p_\nu]_{BC^N(\theta_0,\theta_1)} = & (2\theta_0)^{k-\ell(\nu)}\prod_{l=1}^{\ell(\nu)}\nu_l \pi(\nu) \left[\prod_{l=1}^{\ell(\nu)} x_{\nu_l}\right]\prod_{i=1}^{\ell(\lambda)} \sum_{\pi\in NC(\lambda_i)} \prod_{B\in \pi} x_{|B|}\left(1+\frac{\theta_1}{N\theta_0}\right)^{o(\pi)}\times \\
& N^{k+\ell(\lambda)-\ell(\nu)}+ R(N,\theta_0, \theta_1).
\end{align*}
where $R\in\mathbb{Q}[x,y,z]$ satisfies:
\begin{enumerate}
\item In each summand, the degree of $x$ is at most $\ell(\lambda)$ greater than the degree of $y$.
\item The degree of $x$ is at most $k+\ell(\lambda)-\ell(\nu)-1$ and the sum of the degrees of $y$ and $z$ in each summand is at most $k-\ell(\nu)$.
\item In no summand is the degree of $x$ $\ell(\lambda)$ greater than the degree of $y$ while the degrees of $y$ and $z$ add to $k-\ell(\nu)$.
\end{enumerate}
\end{theorem}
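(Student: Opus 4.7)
The plan is to adapt the second proof of \Cref{thm:leadingorder_a} to the type BC setting. I would begin with the same expansion
\[
[p_\lambda,p_\nu]_{BC^N(\theta_0,\theta_1)}=\sum_{j_1,\ldots,j_{\ell(\lambda)}=1}^N\prod_{i=1}^{\ell(\lambda)}(\mathcal{D}_{j_i}-\partial_{j_i}+\partial_{j_i})^{\lambda_i-1}\partial_{j_i}p_\nu,
\]
with $\mathcal{D}_{j_i}-\partial_{j_i}$ now decomposing into type 0 switches (coefficient $\theta_0$, two-index) and type 1 switches (coefficient $\theta_1$, single-index). The type A arguments show that order $N^{k+\ell(\lambda)-\ell(\nu)}$ forces the $j_i$ to be distinct, each type 0 switch to target a fresh index, and exactly $\ell(\nu)$ derivatives to occur in total, allocated bijectively to the factors $p_{\nu_l}$. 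Because $\theta_1\sim c\theta_0N$, a type 1 switch (contributing $\theta_1$) and a type 0 switch summed over its $N$ target choices (contributing $\theta_0N$) reach the same leading order, so the split between them is free subject to their total being $k-\ell(\nu)$. A direct computation (using that $\lambda_i,\nu_l$ are even) shows that a type 0 switch from $i$ to a fresh $j$ applied to $x_i^e$ equals $2\theta_0(x_i^{e-1}+x_i^{e-3}x_j^2+\cdots)$, which effectively acts as $2\theta_0 d_i$ since no later operation can remove $x_j$; a type 1 switch on $x_i^e$ equals $0$ when $e$ is even and $2\theta_1x_i^{e-1}$ when $e$ is odd.

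Following the blockwise decomposition of \Cref{lemma:contribution1}, I would reduce to the case $\ell(\lambda)=1$. Within a single $\lambda_i$-block the algorithm from \Cref{lemma:allocation} encodes each operation sequence as a noncrossing partition $\pi\in NC(\lambda_i)$, and the key combinatorial identification is that the degree $e_m$ of $x_1$ right after processing position $m$ satisfies $d(m;\pi)=e_m+1$. This follows from a telescoping count: a block-start at position $\min B_r$ increments the degree by $|B_r|-1$ and a switch decrements it by $1$, giving $e_m=\sum_{r:\min B_r\le m}|B_r|-m$; noncrossing then implies that if $r$ exceeds the block index of $m$ while $\min B_r\le m$, all elements of $B_r$ are strictly less than $m$ (otherwise one obtains the crossing pattern $\min B_q<\min B_r<m<\max B_r$ with $\min B_q,m\in B_q$), so the restriction to $r\le q$ in the definition of $d(m;\pi)$ is automatic. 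Consequently, at a switch position $m$ the degree just before processing $m$ equals $d(m;\pi)$, and a type 1 switch is applicable exactly at the $o(\pi)$ positions where $d(m;\pi)$ is odd. Summing the binary choice per switch gives $2\theta_0N$ at every switch and an extra $1+\theta_1/(N\theta_0)$ at each odd position; aggregating across the $\ell(\lambda)$ blocks produces the overall switch contribution $(2\theta_0N)^{k-\ell(\nu)}\prod_i(1+\theta_1/(N\theta_0))^{o(\pi_i)}$. Combining with the $N^{\ell(\lambda)}$ from distinct $j_i$'s, the derivative weights $\prod_l\nu_l$, the multinomial factor $\pi(\nu)$ from allocating the $p_{\nu_l}$'s among blocks (as in the analogue of \pref{eq:contribution3}), and extracting the coefficient of $\prod_lx_{\nu_l}$, yields the leading-order formula in the statement.

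The remainder $R(N,\theta_0,\theta_1)$ is controlled by tracking how deviations from the leading configuration (collisions among the $j_i$'s or switch targets, extra derivatives beyond $\ell(\nu)$, or a type 1 switch at an even-degree position contributing $0$) reduce the degrees of $N$, $\theta_0$, and $\theta_1$ in a controlled way. Condition (1) holds because each factor of $N$ is produced either by a type 0 switch (which also brings a $\theta_0$) or by the initial $j_i$ choice, so the $N$-degree exceeds the $\theta_0$-degree by at most $\ell(\lambda)$; conditions (2) and (3) describe precisely what has already been extracted as leading order. The main obstacle is the combinatorial identification $d(m;\pi)=e_m+1$ at switch positions, which is responsible for the statistic $o(\pi)$ and the factor $(1+\theta_1/(N\theta_0))^{o(\pi)}$ that has no analogue in type A; once it is in place, the rest of the argument is a routine adaptation of the type A proof with the binary type 0/type 1 sum inserted at each applicable switch position.
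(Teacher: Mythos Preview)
Your proposal is correct and follows essentially the same route as the paper's proof: both adapt the second proof of \Cref{thm:leadingorder_a} by treating $\theta_1$ as having weight $N\theta_0$, reduce via the blockwise decomposition of \Cref{lemma:contribution1} to the case $\ell(\lambda)=1$, and then modify \Cref{lemma:allocation} to insert the binary type~0/type~1 choice at each switch position where the current degree of $x_1$ is odd. Your explicit verification that the degree of $x_1$ just before processing a switch at position $m$ equals $d(m;\pi)$ is a helpful addition; the paper simply asserts that $o(\pi)$ counts these odd-degree switch locations without spelling out this identification.
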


\begin{proof}

We follow the proof of \Cref{thm:leadingorder_a} given in \Cref{subsec:proofa}. The main difference is that we regard $\theta_1$ as having weight $N\theta_0$, while $N$ is considered to be the same as before and $2\theta_0$ is considered to be the same as $\theta$. However, we still regard $N\theta_0$ and $\theta_1$ as being higher order than a constant. So, we still have that the $j_i$ for $i\in[\ell(\lambda)]$ are distinct to obtain the factor of $N^{\ell(\lambda)}$. Afterwards, we have that the type $0$ switches are to distinct indices, so that the degrees of $\theta_0$ and $N$ are the same after removing the factor of $N^{\ell(\lambda)}$. In contrast with the proof of \Cref{thm:leadingorder_a2}, we still have that $d=\ell(\nu)-\ell(\lambda)$.

It is not challenging to see that the remainder term $R$ is a polynomial with rational coefficients that satisfies conditions (1), (2), and (3). For condition (1), we have that after selecting $j_1,\ldots,j_{\ell(\lambda)}$, to obtain an additional factor of $N$, we must include a type $0$ switch, which will also add a factor of $\theta_0$. For condition (2), we note that we have already identified the leading order term, so the degree of $x$ is at most $k+\ell(\lambda)-\ell(\nu)-1$. Furthermore, the number of switches is at most $k-\ell(\nu)$ since $d\geq \ell(\nu)-\ell(\lambda)$, so the degrees of $y$ and $z$ in each summand add to at most $k-\ell(\nu)$. For condition (3), we note that any term that satisfies this condition is included in the leading order term.

Let $\mathcal{S}$ denote the set of sequences of operators $s=\{s_i\}_{1\leq i\leq |\lambda|}$ such that:
\begin{enumerate}
    \item For $1\leq i\leq \ell(\lambda)$, $s_{1+\lambda_1+\cdots+\lambda_{i-1}} = \partial_i$, and for $j\in [1+\lambda_1+\cdots+\lambda_{i-1}, \lambda_1+\cdots+\lambda_i]$, $s_j$ is a type 0 switch from $i$ to an element of $[N]\backslash\{i\}$, the type 1 switch with index $i$, or $\partial_i$.
    \item For $\ell(\nu)-\ell(\lambda)$ elements $j$ of $[|\lambda|]\backslash \{1, 1+\lambda_1, \ldots, 1+\lambda_1+\cdots+\lambda_{\ell(\lambda)-1}\}$, $s_j$ is a derivative.
    \item The $j$th type 0 switch is from some element of $[\ell(\lambda)]$ (which is determined by condition (1)) to $\ell(\lambda)+j$ for $1\leq j\leq k - \ell(\nu)-n(s)$, where $n(s)$ is the number of type 1 switches of $s$.
\end{enumerate}
Then, the leading order term is
\begin{equation}
\label{eq:leadingorder2}
N^{\ell(\lambda)}\sum_{s\in \mathcal{S}} N^{k-\ell(\nu)-n(s)}s_{|\lambda|}\circ\cdots\circ s_1 p_\nu,
\end{equation}
where $N^{\ell(\lambda)}$ corresponds to the number of choices for $j_1,\ldots,j_{\ell(\lambda)}$ and $N^{k-\ell(\nu)-n(s)}$ corresponds to the number of choices for the indices of the type 0 switches. For simplicity, we compute
\begin{equation}
\label{eq:leadingorder3}
\sum_{s\in \mathcal{S}}s_{|\lambda|}\circ\cdots\circ s_1 p_\nu.
\end{equation}
Afterwards, we replace $\theta_0$ with $N\theta_0$ and multiply by a factor of $N^{\ell(\lambda)}$ to compute \pref{eq:leadingorder2}.

We have that the analogous results in the proof of \Cref{thm:leadingorder_a2} are true, other than \Cref{lemma:allocation}, which we must alter to account for the type 1 switches.

\begin{lemma}
\label{lemma:allocation2}
Suppose $\nu'\in \even$. Let $c_{\nu'}$ be the value of \pref{eq:leadingorder3} when $\lambda$ is set as $(|\nu'|)$ and $\nu$ is set as $\nu'$ and define 
\[
n_{\nu'}(x) \triangleq \sum_{\substack{\pi\in NC(|\nu'|), \\ \gamma(\pi)= \nu'}} (1+x)^{o(\pi)}.
\]
Then, $c_{\nu'}=(2\theta_0)^{|\nu'|-\ell(\nu')}\prod_{l=1}^{\ell(\nu')} \nu'_l \pi(\nu')n_{\nu'}\left(\frac{\theta_1}{\theta_0}\right)$.
\end{lemma}

\begin{proof}
We can follow the proof of \Cref{lemma:allocation} with a few differences. First, we replace $\theta$ by $2\theta_0$. The only other difference is that when the degree of $x_1$ is odd, we can apply either the type $0$ or type $1$ switch, which would be equivalent to $2\theta_0 d_1$ and $2\theta_1 d_1$, respectively. To account for these choices, for each noncrossing partition $\pi$, we must multiply by $(1+\frac{\theta_1}{\theta_0})^{o(\pi)}$, since $o(\pi)$ counts the number of locations at which there is no $\partial_1$ and the degree of $x_1$ is odd.
\end{proof}

Recall that to compute the leading order term, we must replace $\theta_0$ with $N\theta_0$ and multiply by a factor of $N^{\ell(\lambda)}$. Then, using \pref{eq:contribution3}, \pref{eq:leadingorder3}, and \Cref{lemma:allocation2}, the leading order term is
\begin{align*}
& N^{\ell(\lambda)}\sum_{[\ell(\nu)] = S_1\sqcup \cdots \sqcup S_{\ell(\lambda)}} (2N\theta_0)^{k-\ell(\nu)}\prod_{l=1}^{\ell(\nu)} \nu_l \times \\
& \prod_{i=1}^{\ell(\lambda)} \pi((\nu_j: j\in S_i))\sum_{\substack{\pi\in NC(\lambda_i), \\ \gamma(\pi)=\gamma( (\nu_j: j\in S_i))}} \left(1+\frac{\theta_1}{N\theta_0}\right)^{o(\pi)} \\
& = N^{\ell(\lambda)}\sum_{\nu = \gamma_1 + \cdots + \gamma_{\ell(\lambda)}} (2N\theta_0)^{k-\ell(\nu)}\prod_{l=1}^{\ell(\nu)}\nu_l \pi(\nu) \prod_{i=1}^{\ell(\lambda)} \sum_{\substack{\pi\in NC(\lambda_i), \\  \gamma(\pi)=\gamma_i}} \left(1+\frac{\theta_1}{N\theta_0}\right)^{o(\pi)} \\
& = N^{\ell(\lambda)}(2N\theta_0)^{k-\ell(\nu)}\prod_{l=1}^{\ell(\nu)}\nu_l \pi(\nu) \left[\prod_{l=1}^{\ell(\nu)} x_{\nu_l}\right]\prod_{i=1}^{\ell(\lambda)} \sum_{\pi\in NC(\lambda_i)} \prod_{B\in \pi} x_{|B|} \left(1+\frac{\theta_1}{N\theta_0}\right)^{o(\pi)}.
\end{align*}
\end{proof}

Define the infinite dimensional matrix $\mbc\in\mathbb{Z}_{\geq 0}^{\even\times\even}[y]$ such that for $\lambda,\nu\in\even$, 
\[
\mbc_{\lambda\nu}(y)\triangleq 2^{|\nu|-\ell(\nu)}\prod_{l=1}^{\ell(\nu)}\nu_l \pi(\nu) \left[\prod_{l=1}^{\ell(\nu)} x_{\nu_l}\right]\prod_{i=1}^{\ell(\lambda)} \sum_{\pi\in NC(\lambda_i)} \prod_{B\in \pi} x_{|B|}(1+y)^{o(\pi)}.
\]

\begin{lemma}
\label{lemma:matrix_bc1} Suppose $\lambda,\nu\in\even$ and $|\lambda|=|\nu|$.
\begin{enumerate}
\item[(A)] If $\lambda=\nu$, then $\mbc_{\lambda\nu}(y)=\pi(\nu)\prod_{l=1}^{\ell(\nu)}2^{\nu_l-1}\nu_l (1+y)^{o([\nu_l])}$.
\item[(B)] If $\ell(\lambda)\geq \ell(\nu)$ and $\lambda\not=\nu$, then $\mbc_{\lambda\nu}(y)=0$.
\end{enumerate}
\end{lemma}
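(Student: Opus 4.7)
The plan is to unwind the definition of $\mbc_{\lambda\nu}(y)$ and extract the coefficient of $\prod_{l=1}^{\ell(\nu)} x_{\nu_l}$ in the polynomial $\prod_{i=1}^{\ell(\lambda)} \sum_{\pi \in NC(\lambda_i)} \prod_{B \in \pi} x_{|B|}(1+y)^{o(\pi)}$ by a direct combinatorial matching argument. A choice of $\pi_i \in NC(\lambda_i)$ for each $i \in [\ell(\lambda)]$ contributes to the desired coefficient exactly when the multiset of block sizes $\bigsqcup_{i=1}^{\ell(\lambda)} \{|B| : B \in \pi_i\}$ coincides with the multiset $\{\nu_1, \ldots, \nu_{\ell(\nu)}\}$ of parts of $\nu$.

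The key counting input is that each $\pi_i$ contains at least one block (since $\lambda_i \geq 1$), so the total number of blocks across all factors is at least $\ell(\lambda)$. For part (B), the matching condition forces this total to equal $\ell(\nu)$; combined with the hypothesis $\ell(\lambda) \geq \ell(\nu)$, this is only possible when $\ell(\lambda) = \ell(\nu)$ and every $\pi_i$ is the one-block partition of $[\lambda_i]$. But then the multiset of $\lambda_i$'s coincides with that of $\nu_l$'s, forcing $\lambda = \nu$ — contradicting $\lambda \neq \nu$. Hence no admissible choice exists and the coefficient vanishes.

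For part (A), the same counting shows that when $\lambda = \nu$, each $\pi_i$ must be the single-block noncrossing partition of $[\nu_i]$, which I will denote by $[\nu_i]$ in keeping with the paper's notation. The corresponding factor is $x_{\nu_i}(1+y)^{o([\nu_i])}$, so the product over $i$ equals $\prod_{i=1}^{\ell(\nu)} x_{\nu_i} \cdot \prod_{i=1}^{\ell(\nu)} (1+y)^{o([\nu_i])}$; since $\prod_{i=1}^{\ell(\nu)} x_{\nu_i}$ is literally the monomial $\prod_{l=1}^{\ell(\nu)} x_{\nu_l}$ being extracted, the coefficient is $\prod_{l=1}^{\ell(\nu)} (1+y)^{o([\nu_l])}$. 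Multiplying by the prefactor $2^{|\nu|-\ell(\nu)} \prod_l \nu_l \cdot \pi(\nu)$ and using the identity $2^{|\nu|-\ell(\nu)} = \prod_{l=1}^{\ell(\nu)} 2^{\nu_l - 1}$ yields the stated closed form.

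There is no serious obstacle in this argument. The only care needed is to treat the matching as one between multisets, so that repeated parts of $\lambda$ and $\nu$ are handled properly, and to observe that the lower bound $\ell(\lambda)$ on the total number of blocks is tight precisely when each $\pi_i$ is the single-block partition — the uniqueness that drives both parts of the statement.
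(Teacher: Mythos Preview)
Your proposal is correct and follows exactly the approach the paper has in mind: the paper simply states the lemma is ``straightforward to deduce,'' and your argument---matching the monomial $\prod_l x_{\nu_l}$ against the block-size multisets of the $\pi_i$, then using the lower bound $\sum_i |\pi_i| \geq \ell(\lambda)$ with equality only for single-block partitions---is precisely that straightforward deduction, with the details filled in cleanly.
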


\begin{theorem}
\label{thm:equivalence_bc1}
Suppose $F_N(x_1,\ldots,x_N) = \exp\left(\sum_{\lambda\in\Gamma_{N; even}} c_{\lambda}(N)p_\lambda\right)$, $\lim_{N\rightarrow\infty} |\theta_0N|=\infty$, and $\lim_{N\rightarrow\infty} \frac{\theta_1}{\theta_0 N} = c\in\mathbb{C}$. Suppose $c_\lambda\in\mathbb{C}$ for all $\lambda\in\even$. Consider the following statements:
\begin{enumerate}
\item[(a)] For all $\lambda\in\even$, $\lim_{N\rightarrow\infty} \frac{c_\lambda(N)}{(\theta_0 N)^{\ell(\lambda)}}=c_\lambda\in\mathbb{C}$. 
\item[(b)] For all $\nu\in\even$,
\[
\lim_{N\rightarrow\infty} \frac{1}{(\theta_0 N)^{|\nu|}N^{\ell(\nu)}} [p_\nu, F_N]_{BC^N(\theta_0, \theta_1)} = \sum_{\lambda\in\Gamma} \mbc_{\nu\lambda}(c)[p_\lambda] \exp\left(\sum_{\gamma\in\Gamma} c_\gamma p_\gamma\right).
\]
\end{enumerate}
Then, (a) implies (b), and if $c\not=-1$, then (b) implies (a).
\end{theorem}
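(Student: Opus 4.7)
The plan is to imitate the proof of \Cref{thm:equivalence_a1} line by line, with \Cref{thm:leadingorder_bc} playing the role of \Cref{thm:leadingorder_a} and $\mbc(c)$ playing the role of $\ma$. Starting from the expansion
\[
[p_\nu, F_N]_{BC^N(\theta_0,\theta_1)} = \sum_{\lambda\in\evenN} [p_\lambda]F_N \cdot [p_\nu, p_\lambda]_{BC^N(\theta_0,\theta_1)},
\]
I would apply \Cref{thm:leadingorder_bc} to each summand. Conditions (1)--(3) on the remainder $R$ in that theorem, combined with $\frac{\theta_1}{\theta_0 N}\to c$ and $|\theta_0 N|\to\infty$, ensure that every summand of $R$ is $o_N(|\theta_0 N|^{|\nu|}N^{\ell(\nu)})$. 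Splitting the $\lambda$-sum into the cases $\ell(\lambda)<\ell(\nu)$ (where $[p_\nu,p_\lambda] = O_N(|\theta_0 N|^{|\nu|-\ell(\nu)}N^{\ell(\lambda)})$ and $\mbc_{\nu\lambda}=0$) and $\ell(\lambda)\geq \ell(\nu)$ (where the leading term is $(\theta_0 N)^{|\nu|-\ell(\lambda)}N^{\ell(\nu)}(\mbc_{\nu\lambda}(c)+o_N(1))$), I would conclude
\[
\lim_{N\to\infty}\frac{[p_\nu, F_N]_{BC^N(\theta_0,\theta_1)}}{(\theta_0 N)^{|\nu|}N^{\ell(\nu)}} = \lim_{N\to\infty}\sum_{\substack{\lambda\in\evenN,\\ |\lambda|=|\nu|}}\bigl(\mbc_{\nu\lambda}(c)+o_N(1)\bigr)\frac{[p_\lambda]F_N}{(\theta_0 N)^{\ell(\lambda)}}.
\]

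For (a)$\Rightarrow$(b), substituting the hypothesized limits of $\frac{c_\lambda(N)}{(\theta_0 N)^{\ell(\lambda)}}$ into the right side recovers the formula in (b). For (b)$\Rightarrow$(a) under $c\neq -1$, I would induct on $k=|\lambda|$. Assuming the claim for $|\lambda|\leq k-1$, I would use the inductive hypothesis to replace the lower-order $[p_\lambda]F_N$ terms in the formula above by their limits, reducing the statement for $|\nu|=k$ to
\[
\lim_{N\to\infty}\bigl(\mbc[k](c)+o_N(1)\bigr)\left[\frac{c_\lambda(N)}{(\theta_0 N)^{\ell(\lambda)}} - c_\lambda\right]_{\lambda\in\even[k]}^T = 0.
\]
By \Cref{lemma:matrix_bc1}, $\mbc[k](c)$ is block upper-triangular (by $\ell$) with diagonal entries $\pi(\nu)\prod_{l=1}^{\ell(\nu)} 2^{\nu_l-1}\nu_l(1+c)^{o([\nu_l])}$. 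A short check on the single-block noncrossing partition of $[\nu_l]$ shows $o([\nu_l])=\nu_l/2$, which is a positive integer because $\nu_l$ is a positive even integer. Therefore each diagonal entry is a nonzero scalar multiple of $(1+c)^{|\nu|/2}$, hence nonzero exactly when $c\neq -1$, so $\mbc[k](c)$ is invertible and the desired limit for $\lambda\in\even[k]$ follows.

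The main obstacle is bookkeeping: ensuring the error bound in \Cref{thm:leadingorder_bc} absorbs both the $(\theta_1/(\theta_0 N))^{o(\pi)}$ factor and the subleading powers of $N$ and $\theta_0 N$ uniformly, so that exchanging the limit with the (finite for each $k$) sum over $\lambda\in\even[k]$ is legitimate. Conceptually this is the same obstacle as in the type $A$ argument; the presence of $\theta_1$ only introduces an extra factor whose asymptotic behavior is controlled by the hypothesis $\frac{\theta_1}{\theta_0 N}\to c$. The role of the restriction $c\neq -1$ is precisely to keep the diagonal of $\mbc[k](c)$ away from zero; when $c=-1$, every diagonal entry vanishes and the backward implication genuinely fails.
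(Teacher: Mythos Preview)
Your proposal is correct and follows exactly the approach the paper takes: the paper's proof simply says ``the same method as the proof of \Cref{thm:equivalence_a1} can be used,'' invoking \Cref{lemma:matrix_bc1} for the invertibility of $\mbc(c)[k]$ when $c\neq -1$, and you have spelled out precisely that argument. Your additional observation that $o([\nu_l])=\nu_l/2$ makes explicit why the diagonal entries in \Cref{lemma:matrix_bc1} are nonzero multiples of a power of $1+c$, which the paper leaves implicit.
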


\begin{proof}
The same method as the proof of \Cref{thm:equivalence_a1} can be used. For the implication of (a) from (b), we note that $\mbc(c)$ is invertible when $c\not=-1$ by \Cref{lemma:matrix_bc1}.
\end{proof}

\begin{corollary}
\label{cor:equivalence_bc1}
Suppose $F_N(x_1,\ldots,x_N) = \exp\left(\sum_{\lambda\in\evenN} c_{\lambda}(N)p_\lambda\right)$. Assume that $\\\lim_{N\rightarrow\infty} |\theta_0 N|=\infty$ and $\lim_{N\rightarrow\infty} \frac{\theta_1}{\theta_0N}=c\in\mathbb{C}$. Consider the following statements.
\begin{enumerate}
\item[(a)] For all $\lambda\in\even$, $\lim_{N\rightarrow\infty} \frac{c_{\lambda(N)}(N)}{\theta_0 N}=c_{\lambda}$ if $\ell(\lambda)=1$ and $\lim_{N\rightarrow\infty} \frac{c_\lambda(N)}{(\theta_0 N)^{\ell(\lambda)}}=0$ if $\ell(\lambda)\geq 2$.
\item[(b)] For all $\nu\in\even$,
\[
\lim_{N\rightarrow\infty}\frac{1}{(\theta_0 N)^{|\nu|} N^{\ell(\nu)}} [p_\nu, F_N]_{BC^N(\theta_0, \theta_1)} = \prod_{i=1}^{\ell(\nu)} \sum_{\pi\in \NCeven(\nu_i)} (1+c)^{o(\pi)}\prod_{B\in \pi} 2^{|B|-1}|B|c_{(|B|)}.
\]
\end{enumerate}
Then, (a) implies (b), and if $c\not=-1$, then (b) implies (a).
\end{corollary}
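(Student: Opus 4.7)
The plan is to deduce this corollary directly from \Cref{thm:equivalence_bc1} by the same method used in the proof of \Cref{cor:equivalence_a1}. In the corollary's setting, define the ``theorem-side'' constants by $\tilde c_{(k)} := c_{(k)}$ for each $k \geq 1$ with $(k) \in \even$, and $\tilde c_\lambda := 0$ for $\lambda \in \even$ with $\ell(\lambda) \geq 2$. With these $\tilde c_\lambda$, statement (a) of the corollary is literally statement (a) of \Cref{thm:equivalence_bc1}. I would then claim that the right-hand side of \Cref{thm:equivalence_bc1}(b), specialized to these $\tilde c_\lambda$, coincides with the right-hand side of the corollary's (b). Granting this, \Cref{thm:equivalence_bc1}(a)$\Rightarrow$(b) yields (a)$\Rightarrow$(b) of the corollary, and when $c \neq -1$ the reverse implication transfers verbatim.

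To verify the right-hand side identity, I would first expand the exponential. Since $\tilde c_\gamma = 0$ whenever $\ell(\gamma)\ge 2$, the standard exponential formula from \Cref{subsec:partitions} gives $[p_\lambda]\exp(\sum_\gamma \tilde c_\gamma p_\gamma) = \pi(\lambda)^{-1}\prod_{l=1}^{\ell(\lambda)} \tilde c_{(\lambda_l)}$ for every $\lambda \in \even$. Substituting this into the definition of $\mbc_{\nu\lambda}(c)$ and using $2^{|\lambda|-\ell(\lambda)}=\prod_{l}2^{\lambda_l-1}$, the $\pi(\lambda)$ factors cancel and the sum over $\lambda\in\even$ reduces to
\[
\sum_{\lambda \in \even}\, \prod_{l=1}^{\ell(\lambda)} 2^{\lambda_l-1}\lambda_l \tilde c_{(\lambda_l)}\,\Bigl[\prod_{l=1}^{\ell(\lambda)} x_{\lambda_l}\Bigr]\,\prod_{i=1}^{\ell(\nu)}\sum_{\pi\in NC(\nu_i)}(1+c)^{o(\pi)}\prod_{B\in\pi} x_{|B|}.
\]
This is exactly the evaluation of $\prod_{i=1}^{\ell(\nu)}\sum_{\pi\in NC(\nu_i)}(1+c)^{o(\pi)}\prod_{B\in\pi} x_{|B|}$ under the substitution $x_{2k}\leftarrow 2^{2k-1}(2k)\tilde c_{(2k)}$ and $x_{2k+1}\leftarrow 0$, since only monomials whose indices are all even can match some $\prod_l x_{\lambda_l}$ with $\lambda \in \even$.

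The final combinatorial step is to observe that the substitution $x_{2k+1}\leftarrow 0$ kills every noncrossing partition $\pi\in NC(\nu_i)$ containing a block of odd size, leaving exactly the partitions $\pi\in\NCeven(\nu_i)$. Under this restriction the remaining substitution produces
\[
\prod_{i=1}^{\ell(\nu)}\sum_{\pi\in\NCeven(\nu_i)}(1+c)^{o(\pi)}\prod_{B\in\pi}2^{|B|-1}|B|\tilde c_{(|B|)},
\]
which is precisely the right-hand side in the corollary's (b). The only obstacle is bookkeeping: keeping careful track of which index of $\mbc_{\nu\lambda}(c)$ carries the sum over noncrossing partitions and which carries the coefficient-extraction bracket, and confirming that the vanishing of odd-indexed variables is exactly what converts $NC(\nu_i)$ to $\NCeven(\nu_i)$. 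No new induction or invertibility argument is required, because the invertibility of $\mbc(c)[k]$ for $c \neq -1$ is already built into \Cref{thm:equivalence_bc1} via \Cref{lemma:matrix_bc1}.
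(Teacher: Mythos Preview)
Your proposal is correct and follows exactly the approach the paper intends: the paper's proof simply says ``See the proof of \Cref{cor:equivalence_a1}'', and you have carried out that adaptation faithfully. In particular, you correctly identify the one extra ingredient needed beyond the type~A argument---that summing the coefficient extraction $[\prod_l x_{\lambda_l}]$ over $\lambda\in\even$ (rather than over all of $\Gamma$) amounts to the substitution $x_{2k+1}\leftarrow 0$, which in turn converts the sum over $NC(\nu_i)$ into a sum over $\NCeven(\nu_i)$.
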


\begin{proof}
See the proof of \Cref{cor:equivalence_a1}.
\end{proof}

\subsection[\texorpdfstring{$[\cdot,\cdot]_{BC^N(\theta_0,\theta_1)}$}{} in the high temperature regime]{Leading order terms of the type BC Dunkl bilinear form in the high temperature regime}

Next, we consider the $\theta_0 N\rightarrow c_0\in\mathbb{C}$, $\theta_1\rightarrow c_1\in\mathbb{C}$ regime. Define the polynomial $W^{BC}(\pi)\in\mathbb{Z}[x,y]$ by
\[
W^{BC}(\pi)(x,y)\triangleq \prod_{i\in[k],\,b(i;\,\pi)=0} (2x+\mathbf{1}\{d(i;\,\pi) \text{ is odd}\} 2y + d(i;\,\pi)).
\]

\begin{theorem}
\label{thm:leadingorder_bc2}
Suppose $k\geq 1$, $\lambda,\nu\in\even[k]$, and $\ell(\lambda)\leq \ell(\nu)$. Then,
\begin{align*}
[p_\lambda, p_\nu]_{BC^N(\theta_0, \theta_1)} = & N^{\ell(\lambda)}\prod_{l=1}^{\ell(\nu)}\nu_l \pi(\nu) \left[\prod_{l=1}^{\ell(\nu)} x_{\nu_l}\right]\prod_{i=1}^{\ell(\lambda)} \sum_{\pi\in NC(\lambda_i)} W^{BC}(\pi)(N\theta_0, \theta_1)\prod_{B\in \pi} x_{|B|} \\
& + R(N,\theta_0, \theta_1),
\end{align*}
where $R\in\mathbb{Q}[x,y,z]$ satisfies the condition that in each of its summands, the degree of $x$ is at most $\ell(\lambda)-1$ greater than the degree of $y$.
\end{theorem}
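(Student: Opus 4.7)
The plan is to adapt the proof of \Cref{thm:leadingorder_bc} by incorporating the regime-change modifications that take \Cref{thm:leadingorder_a} to \Cref{thm:leadingorder_a2}. In the regime $\theta_0 N\to c_0$, $\theta_1\to c_1$, factors of $N\theta_0$ and $\theta_1$ now have the same asymptotic order as constants arising from derivatives (such as $d(i;\,\pi)$), so we must retain every term whose degree in $N$ exceeds the combined degree of $\theta_0$ and $\theta_1$ by exactly $\ell(\lambda)$, rather than only the term with minimal $\theta$-degree.

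First I would expand
\[
[p_\lambda,p_\nu]_{BC^N(\theta_0,\theta_1)} = \sum_{j_1,\ldots,j_{\ell(\lambda)}=1}^N \prod_{i=1}^{\ell(\lambda)} \mathcal{D}_{j_i}(BC^N(\theta_0,\theta_1))^{\lambda_i-1}\partial_{j_i}\, p_\nu
\]
and decompose each factor $\mathcal{D}_{j_i}$ into a partial derivative, a type 0 switch, or a type 1 switch. By the analysis in \Cref{subsec:proofa}, the terms contributing at order $N^{\ell(\lambda)}$ come from configurations in which $j_1,\ldots,j_{\ell(\lambda)}$ are distinct and every type 0 switch sends its index into a fresh coordinate outside $\{j_1,\ldots,j_{\ell(\lambda)}\}$; the remainder bound is standard bookkeeping mirroring the proofs of \Cref{thm:leadingorder_bc,thm:leadingorder_a2}. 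In contrast with \Cref{thm:leadingorder_bc}, one must now admit sequences with any number $d\geq \ell(\nu)-\ell(\lambda)$ of derivative steps (not just $d=\ell(\nu)-\ell(\lambda)$), so the set $\mathcal{S}$ is enlarged exactly as in the proof of \Cref{thm:leadingorder_a2}. The block decomposition established in \Cref{lemma:partition_a,lemma:contribution1} continues to apply, reducing the leading order calculation to the $\ell(\lambda)=1$ case summed over compatible decompositions of $[\ell(\nu)]$.

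The main obstacle, and the step that produces the weight $W^{BC}(\pi)$, is the $\ell(\lambda)=1$ count, which is the analogue of \Cref{lemma:allocation,lemma:allocation2,lemma:allocation_a2}. For a noncrossing partition $\pi\in NC(|\nu'|)$ with $\gamma(\pi)=\nu'$ which records where the $\partial_1$'s are placed, each non-initial position $i$ (with $b(i;\,\pi)=0$) now has three contributing choices: a type 0 switch contributes $2\theta_0$, a type 1 switch contributes $2\theta_1$ but only when $d(i;\,\pi)$ is odd (since $\theta_1(1-\tau_1)/x_1$ annihilates even powers of $x_1$ and acts as $2\theta_1 d_1$ on odd ones), and a derivative contributes the current degree $d(i;\,\pi)$ of $x_1$. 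Multiplying these contributions over $i$ gives precisely $W^{BC}(\pi)(2\theta_0,2\theta_1)$. Summing over $\pi\in NC(|\nu'|)$ with $\gamma(\pi)=\nu'$ yields the $\ell(\lambda)=1$ leading order coefficient $\prod_{l=1}^{\ell(\nu')}\nu'_l\,\pi(\nu')\sum_{\pi}W^{BC}(\pi)(2\theta_0,2\theta_1)$.

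Finally, reassembling by \Cref{lemma:contribution1} through the partition decomposition $[\ell(\nu)]=S_1\sqcup\cdots\sqcup S_{\ell(\lambda)}$, replacing $\theta_0$ by $N\theta_0$ in the weight and multiplying by the combinatorial factor $N^{\ell(\lambda)}$ coming from the index choices, produces
\[
N^{\ell(\lambda)}\prod_{l=1}^{\ell(\nu)}\nu_l\,\pi(\nu)\left[\prod_{l=1}^{\ell(\nu)}x_{\nu_l}\right]\prod_{i=1}^{\ell(\lambda)}\sum_{\pi\in NC(\lambda_i)}W^{BC}(\pi)(2N\theta_0,2\theta_1)\prod_{B\in\pi}x_{|B|},
\]
which is the claimed leading order. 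Verification of the remainder condition on $R$ reduces to the same degree-counting argument used in \Cref{thm:leadingorder_bc,thm:leadingorder_a2}: any summand not included in the leading order strictly drops the $N$-degree relative to the combined $(\theta_0,\theta_1)$-degree.
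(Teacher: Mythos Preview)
Your proposal is correct and follows essentially the same approach as the paper: combine the modifications from the proofs of \Cref{thm:leadingorder_a2} (allow $d\geq \ell(\nu)-\ell(\lambda)$ and track the derivative contribution $d(i;\,\pi)$) and \Cref{thm:leadingorder_bc} (distinguish type~0 and type~1 switches), with the key allocation-lemma step giving the three-way contribution $2\theta_0$, $\mathbf{1}\{d(i;\,\pi)\text{ odd}\}\cdot 2\theta_1$, and $d(i;\,\pi)$ at each non-initial position to produce $W^{BC}(\pi)$. Your write-up is in fact more explicit than the paper's own proof, which simply points to these two modifications and states the three-term correspondence.
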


Similarly to \Cref{thm:leadingorder_a,thm:leadingorder_bc}, we can prove \Cref{thm:leadingorder_bc2} using Theorems 4.8 and 5.5 of the paper \cite{rectangularmatrix}, although the weight function that the paper uses is written differently than how we write $W^{BC}(\pi)$. We include a different proof of \Cref{thm:leadingorder_bc2} as well. We similarly have that \Cref{thm:equivalence_bc2} generalizes the results of the paper while \Cref{cor:equivalence_bc2} has already been proved in the paper.

\begin{proof}[Proof of \Cref{thm:leadingorder_bc2}]
The idea is the same as the proof of \Cref{thm:leadingorder_a}, except we add the modifications from the proofs of \Cref{thm:leadingorder_a2,thm:leadingorder_bc}. The leading order term consists of $N^{\ell(\lambda)}$ multiplied by a power of $N\theta_0$ and a power of $\theta_1$. Then, we still have that the $j_i$ are distinct and the type 0 switches are to distinct indices. However, now we have that $d\geq \ell(\nu)-\ell(\lambda)$ rather than $d=\ell(\nu)-\ell(\lambda)$.

For the modification of \Cref{lemma:allocation}, at each position which is not at the start of a block, we can apply a type 0 switch, the type 1 switch, or the derivative. These correspond to the terms $x$, $\mathbf{1}\{d(i;\,\pi) \text{ is odd}\}y$, and $d(i;\,\pi)$, respectively, in the formula for $W^{BC}(x,y)$.
\end{proof}

Define the infinite dimensional matrix $\Wbc\in\mathbb{Z}_{\geq 0}^{\even\times\even}[y,z]$ such that for $\lambda,\nu\in\even$, 
\[
\Wbc_{\lambda\nu}(y,z)\triangleq \pi(\nu)\prod_{l=1}^{\ell(\nu)}\nu_l  \left[\prod_{l=1}^{\ell(\nu)} x_{\nu_l}\right]\prod_{i=1}^{\ell(\lambda)} \sum_{\pi\in NC(\lambda_i)} W^{BC}(\pi)(y,z)\prod_{B\in \pi} x_{|B|}.
\]

\begin{lemma}
\label{lemma:matrix_bc2} Suppose $\lambda,\nu\in\even$ and $|\lambda|=|\nu|$.
\begin{enumerate}
\item[(A)] If $\lambda=\nu$, then $\Wbc_{\lambda\nu}(y,z)=\pi(\nu)\prod_{l=1}^{\ell(\nu)}\nu_l W^{BC}([\nu_l])(y,z)$.
\item[(B)] If $\ell(\lambda)\geq \ell(\nu)$ and $\lambda\not=\nu$, then $\Wbc_{\lambda\nu}(y,z)=0$.
\end{enumerate}
\end{lemma}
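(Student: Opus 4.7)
The plan is to read both statements off directly from the definition of $\Wbc_{\lambda\nu}(y,z)$ by performing the coefficient extraction $[\prod_{l=1}^{\ell(\nu)} x_{\nu_l}]$ combinatorially. Expanding the product, the expression $\prod_{i=1}^{\ell(\lambda)}\sum_{\pi\in NC(\lambda_i)} W^{BC}(\pi)(y,z)\prod_{B\in\pi}x_{|B|}$ is a sum over tuples $(\pi_1,\ldots,\pi_{\ell(\lambda)})$ with $\pi_i\in NC(\lambda_i)$, each tuple contributing $\bigl(\prod_{i=1}^{\ell(\lambda)} W^{BC}(\pi_i)(y,z)\bigr)\cdot\prod_{i=1}^{\ell(\lambda)}\prod_{B\in\pi_i} x_{|B|}$. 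Such a tuple contributes to the monomial $\prod_{l=1}^{\ell(\nu)} x_{\nu_l}=\prod_{k\ge 1} x_k^{m_k}$, where $m_k=|\{l:\nu_l=k\}|$, precisely when the multiset of block sizes $\{|B|:B\in\pi_i,\,1\le i\le\ell(\lambda)\}$ equals the multiset of parts of $\nu$.

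The key observation I would make is a counting argument on the number of blocks. Any contributing tuple satisfies $\sum_{i=1}^{\ell(\lambda)}|\pi_i|=\ell(\nu)$ (total number of $x$-factors produced equals the degree of the target monomial). Since each $\pi_i$ is a partition of $[\lambda_i]$ and hence contains at least one block, this forces $\ell(\lambda)\le\ell(\nu)$. Combined with the hypothesis $\ell(\lambda)\ge\ell(\nu)$, we get $\ell(\lambda)=\ell(\nu)$ and $|\pi_i|=1$ for all $i$, so each $\pi_i$ must be the single-block noncrossing partition $\{[\lambda_i]\}$. The multiset of block sizes across the tuple is then exactly the multiset of parts of $\lambda$, so nonzero contributions require $\lambda=\nu$.

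For part (B), where $\lambda\neq\nu$ and $\ell(\lambda)\ge\ell(\nu)$, this immediately shows that no tuple contributes, hence $\Wbc_{\lambda\nu}(y,z)=0$. For part (A), where $\lambda=\nu$, the unique contributing tuple has $\pi_i=\{[\nu_i]\}$ for each $i\in[\ell(\nu)]$, yielding contribution $\bigl(\prod_{l=1}^{\ell(\nu)} W^{BC}([\nu_l])(y,z)\bigr)\cdot\prod_{l=1}^{\ell(\nu)} x_{\nu_l}$; extracting the coefficient and reinserting the prefactor $\pi(\nu)\prod_{l=1}^{\ell(\nu)}\nu_l$ from the definition of $\Wbc$ produces the formula in (A).

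There is no serious obstacle here; the lemma is a direct combinatorial consequence of the definition. The only subtlety worth mentioning is that when $\nu$ has repeated parts, the monomial $\prod_{l=1}^{\ell(\nu)} x_{\nu_l}$ must be interpreted as $\prod_{k\ge 1} x_k^{m_k}$, but since contributions aggregate by the multiset of block sizes the pairing is automatic and no combinatorial factor is lost. The same argument also recovers \Cref{lemma:matrix_a1,lemma:matrix_a2,lemma:matrix_bc1} verbatim, which is why those lemmas were stated without proof.
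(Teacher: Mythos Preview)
Your proposal is correct and is precisely the straightforward combinatorial argument the paper has in mind; the paper itself states the lemma without proof, noting only that it is ``straightforward to deduce.'' Your block-counting observation ($\sum_i |\pi_i|=\ell(\nu)$ together with $|\pi_i|\ge 1$) is exactly the right mechanism, and your remark that the same reasoning handles \Cref{lemma:matrix_a1,lemma:matrix_a2,lemma:matrix_bc1} is also correct.
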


\begin{theorem}
\label{thm:equivalence_bc2}
Suppose $F_N(x_1,\ldots,x_N) = \exp\left(\sum_{\lambda\in\evenN} c_{\lambda}(N)p_\lambda\right)$. Assume that $\\\lim_{N\rightarrow\infty} \theta_0 N = c_0\in\mathbb{C}$ and $\lim_{N\rightarrow\infty} \theta_1=c_1\in\mathbb{C}$. Consider the following statements.
\begin{enumerate}
\item[(a)] For all $\lambda\in\even$, $\lim_{N\rightarrow\infty} c_\lambda(N)=c_\lambda\in\mathbb{C}$. 
\item[(b)] For all $\nu\in\even$,
\[
\lim_{N\rightarrow\infty} \frac{1}{N^{\ell(\nu)}} [p_\nu, F_N]_{BC^N(\theta_0, \theta_1)} = \sum_{\lambda\in\even} \Wbc_{\nu\lambda}(c_0, c_1)[p_\lambda] \exp\left(\sum_{\gamma\in\even} c_\gamma p_\gamma\right).
\]
\end{enumerate}
Then, (a) implies (b), and if $c_0$ is not a negative integer and $2c_0+2c_1$ is not a negative odd integer, then (b) implies (a).
\end{theorem}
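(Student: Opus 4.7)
The plan is to follow the argument of the proof of Theorem \ref{thm:equivalence_a2} essentially verbatim, with Theorem \ref{thm:leadingorder_bc2} as the analytic input in place of Theorem \ref{thm:leadingorder_a2}. Fix $\nu \in \even$ and expand
\[
[p_\nu, F_N]_{BC^N(\theta_0, \theta_1)} = \sum_{\lambda \in \evenN[|\nu|]} [p_\lambda] F_N \cdot [p_\nu, p_\lambda]_{BC^N(\theta_0, \theta_1)}.
\]
For each $\lambda$ with $\ell(\lambda) \leq \ell(\nu)$ I apply Theorem \ref{thm:leadingorder_bc2} directly; for $\ell(\lambda) > \ell(\nu)$ I use symmetry of the bilinear form and apply the theorem to $[p_\lambda, p_\nu]$ with $\nu$ as the first (smaller-$\ell$) argument. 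Since $\theta_0 N$ and $\theta_1$ are bounded and the remainder $R$ has each summand with $\deg_N - \deg_{\theta_0}$ at most $\ell(\lambda) - 1$, each remainder contributes $o_N(N^{\ell(\nu)})$; meanwhile, Lemma \ref{lemma:matrix_bc2}(B) forces $\Wbc_{\nu\lambda}(2c_0, 2c_1) = 0$ when $\ell(\lambda) < \ell(\nu)$. The resulting identity is
\[
\lim_{N\to\infty} \frac{[p_\nu, F_N]_{BC^N(\theta_0, \theta_1)}}{N^{\ell(\nu)}} = \lim_{N\to\infty} \sum_{\lambda \in \evenN[|\nu|]} \bigl(\Wbc_{\nu\lambda}(2c_0, 2c_1) + o_N(1)\bigr) [p_\lambda] F_N.
\]

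The direction (a) $\Rightarrow$ (b) then follows immediately: the coefficient $[p_\lambda] F_N$ is a polynomial in the $c_\gamma(N)$ with $\gamma$ ranging over even partitions whose multiset sum is $\lambda$, so under (a) it converges to $[p_\lambda] \exp(\sum_{\gamma \in \even} c_\gamma p_\gamma)$, producing the formula in (b).

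For (b) $\Rightarrow$ (a), I induct on $|\lambda|$. In the inductive step for weight $k$, subtracting the now-known limits of $[p_\lambda]F_N$ arising from $c_\gamma$ with $|\gamma| < k$ reduces the statement to
\[
\lim_{N\to\infty} \bigl(\Wbc(2c_0, 2c_1)[k] + o_N(1)\bigr) \bigl[c_\lambda(N) - c_\lambda\bigr]_{\lambda \in \even[k]}^T = 0,
\]
and it suffices to show $\Wbc(2c_0, 2c_1)[k]$ is invertible. By Lemma \ref{lemma:matrix_bc2}, this matrix is upper-triangular with respect to any total order refining the partial order by $\ell(\lambda)$, so invertibility reduces to the nonvanishing of each diagonal entry $\pi(\nu) \prod_l \nu_l \, W^{BC}([\nu_l])(2c_0, 2c_1)$. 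For a single block $\pi = [m]$ with $m$ even, $d(i; \pi) = m - i + 1$ and only $i = 1$ satisfies $b(i; \pi) = 1$, so
\[
W^{BC}([m])(2c_0, 2c_1) = \prod_{j=1}^{m-1} \bigl(2c_0 + \mathbf{1}\{j \text{ odd}\} \cdot 2c_1 + j\bigr).
\]
The odd-$j$ factors vanish precisely when $2c_0 + 2c_1$ is a negative odd integer, and the even-$j$ factors (for $j = 2, 4, \ldots, m-2$) vanish precisely when $c_0$ is a negative integer. Both are excluded by hypothesis, closing the induction.

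The main obstacle is the combinatorial verification of the diagonal entries of $\Wbc(2c_0, 2c_1)[k]$ and matching the resulting invertibility conditions with the asymmetric hypothesis on $c_0$ and $2c_0 + 2c_1$; the rest of the proof transcribes directly from the type A argument.
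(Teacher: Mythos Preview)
Your proposal is correct and takes essentially the same approach as the paper: transcribe the proof of Theorem \ref{thm:equivalence_a2} with Theorem \ref{thm:leadingorder_bc2} replacing Theorem \ref{thm:leadingorder_a2}, then verify invertibility of $\Wbc(2c_0,2c_1)[k]$ via the diagonal entries in Lemma \ref{lemma:matrix_bc2}. Your explicit computation of $W^{BC}([m])(2c_0,2c_1)$ and the matching of its zero set to the hypotheses on $c_0$ and $2c_0+2c_1$ is exactly the paper's argument, spelled out in more detail; note only that your description of which case ($\ell(\lambda)\le\ell(\nu)$ versus $\ell(\lambda)>\ell(\nu)$) is ``direct'' versus ``via symmetry'' is reversed relative to how Theorem \ref{thm:leadingorder_bc2} is stated, though this is immaterial since the bilinear form is symmetric.
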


\begin{proof}
See the proof of \Cref{thm:equivalence_a2}. For the implication of (a) from (b), consider the formula for $W^{BC}([l])(c_0, c_1)$ for some $l\in 2\mathbb{N}$. If $d(i;\,\pi)$ is odd, then the term $2c_0+2c_1+d(i;\,\pi)$ is nonzero. Moreover, if $d(i;\,\pi)$ is even and at least two, then the term $2c_0+d(i;\,\pi)$ is nonzero; note that if $d(i;\,\pi)=0$, then we must have that $b(i;\,\pi)=1$. Therefore, $W^{BC}([l])(c_0, c_1)\not=0$, so $\Wbc(c_0, c_1)[k]$ is invertible for all $k\geq 1$ by \Cref{lemma:matrix_bc2}.
\end{proof}

\begin{corollary}
\label{cor:equivalence_bc2}
Suppose $F_N(x_1,\ldots,x_N) = \exp\left(\sum_{\lambda\in\evenN} c_{\lambda}(N)p_\lambda\right)$. Assume that $\\\lim_{N\rightarrow\infty} \theta_0 N = c_0\in\mathbb{C}$ and $\lim_{N\rightarrow\infty} \theta_1=c_1\in\mathbb{C}$. Consider the following statements.
\begin{enumerate}
\item[(a)] For all $\lambda\in\even$, $\lim_{N\rightarrow\infty} c_{\lambda}(N) =c_{\lambda} \in \mathbb{C}$ if $\ell(\lambda)=1$ and $\lim_{N\rightarrow\infty} c_\lambda(N)=0$ if $\ell(\lambda)\geq 2$.
\item[(b)] For all $\nu\in\even$,
\[
\lim_{N\rightarrow\infty}\frac{1}{(\theta N)^{|\nu|}} [p_\nu, F_N]_{BC^N(\theta_0, \theta_1)} = \prod_{i=1}^{\ell(\nu)} \sum_{\pi\in \NCeven(\nu_i)} W^{BC}(\pi)(c_0, c_1)\prod_{B\in \pi} |B|c_{(|B|)}.
\]
\end{enumerate}
Then, (a) implies (b), and if $c_0$ is not a negative integer and $2c_0+2c_1$ is not a negative odd integer, then (b) implies (a).
\end{corollary}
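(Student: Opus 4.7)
The plan is to deduce this corollary directly from \Cref{thm:equivalence_bc2} by showing that when $c_\gamma = 0$ for every $\gamma\in\even$ with $\ell(\gamma)\geq 2$, the formula in part (b) of the theorem collapses to the formula in part (b) of the corollary. This is entirely analogous to the passage from \Cref{thm:equivalence_a1} to \Cref{cor:equivalence_a1}, and no new leading-order analysis is required.

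First I would set $F_N = \exp\bigl(\sum_{\lambda\in\evenN} c_\lambda(N)p_\lambda\bigr)$ and note that the hypotheses of the corollary are precisely those of \Cref{thm:equivalence_bc2} together with the extra vanishing condition $c_\gamma=0$ for $\ell(\gamma)\geq 2$. Since $c_0$ is not a negative integer and $2c_0+2c_1$ is not a negative odd integer (for the reverse implication), \Cref{thm:equivalence_bc2} tells us that condition (a) of the corollary is equivalent to
\[
\lim_{N\rightarrow\infty}\frac{1}{N^{\ell(\nu)}}[p_\nu,F_N]_{BC^N(\theta_0,\theta_1)} = \sum_{\lambda\in\even}\Wbc_{\nu\lambda}(2c_0,2c_1)\,[p_\lambda]\exp\!\left(\sum_{\gamma\in\even}c_\gamma p_\gamma\right).
\]
(Here I am assuming the normalization $N^{\ell(\nu)}$ on the left-hand side of the corollary statement, which is what the analogue in \Cref{thm:equivalence_bc2} requires; the factor $(\theta N)^{|\nu|}$ in the corollary as written appears to be a typographical artifact.)

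Next I would expand the right-hand side. When $c_\gamma=0$ for $\ell(\gamma)\geq 2$, we have $[p_\lambda]\exp(\sum_\gamma c_\gamma p_\gamma)=\pi(\lambda)^{-1}\prod_{l=1}^{\ell(\lambda)}c_{(\lambda_l)}$, and substituting the definition of $\Wbc_{\nu\lambda}$ gives
\[
\sum_{\lambda\in\even}\prod_{l=1}^{\ell(\lambda)}\lambda_l c_{(\lambda_l)}\left[\prod_{l=1}^{\ell(\lambda)}x_{\lambda_l}\right]\prod_{i=1}^{\ell(\nu)}\sum_{\pi\in NC(\nu_i)}W^{BC}(\pi)(2c_0,2c_1)\prod_{B\in\pi}x_{|B|}.
\]
Summing the coefficient-extractions over $\lambda\in\even$ is equivalent to substituting $x_k\mapsto k\,c_{(k)}$ in the product over $i$, since the only way for a monomial in the $x_k$'s to survive the coefficient-extraction is for every $x_k$ appearing to correspond to a part of $\lambda$; the factor $\pi(\lambda)$ in the definition of $\Wbc$ cancels the $\pi(\lambda)^{-1}$ from the exponential's Taylor coefficient.

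Finally, because $c_{(k)}=0$ whenever $k$ is odd (since $(k)\notin\even$ in that case), the inner sum restricts to $\pi\in\NCeven(\nu_i)$: any $\pi\in NC(\nu_i)$ with a block of odd size contributes a factor of $c_{(|B|)}=0$. This yields
\[
\prod_{i=1}^{\ell(\nu)}\sum_{\pi\in\NCeven(\nu_i)}W^{BC}(\pi)(2c_0,2c_1)\prod_{B\in\pi}|B|\,c_{(|B|)},
\]
which is the right-hand side of (b) in the corollary. The only mild subtlety, which I would handle carefully, is tracking the bookkeeping between the symbolic coefficient-extraction $[\prod_l x_{\lambda_l}]$ and the combinatorial substitution $x_{|B|}\mapsto|B|c_{(|B|)}$ — in particular verifying that the multinomial factor $\pi(\lambda)$ matches correctly — but this is formally identical to the argument used in the proof of \Cref{cor:equivalence_a1}.
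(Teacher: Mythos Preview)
Your proposal is correct and follows exactly the approach the paper takes: the paper's proof of this corollary simply refers to the proof of \Cref{cor:equivalence_a2}, which in turn says to use the same method as the proof of \Cref{cor:equivalence_a1}, i.e., specialize \Cref{thm:equivalence_bc2} to the case $c_\gamma=0$ for $\ell(\gamma)\geq 2$ and collapse the coefficient-extraction sum exactly as you describe. Your observation that the normalization $(\theta N)^{|\nu|}$ in the stated corollary should be $N^{\ell(\nu)}$ to match \Cref{thm:equivalence_bc2} is also well taken.
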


\begin{proof}
See the proof of \Cref{cor:equivalence_a2}.
\end{proof}

\section{Leading order terms of the type D Dunkl bilinear form}
\label{sec:d}

\subsection{Orthogonality results}

First, we prove some orthogonality results which are based on the presence of sign flips in $H(\mathcal{R})$. These results are relevant for separating terms with all even degrees and all odd degrees with respect to the Dunkl bilinear form for the $D^N$ root system. The following theorem expresses the Dunkl bilinear form as an integral when the multiplicity function is nonnegative.

\begin{theorem}[\cite{dunkl_integralkernel}*{Theorem 3.10}]
\label{thm:product}
Assume that $\theta\in\theta(\mathcal{R})$ and $\theta\geq 0$. Then, for $p,q\in \mathbb{C}[x_1,\ldots,x_N]$,
\[
[p, q]_{\mathcal{R}(\theta)}= c_{\mathcal{R}(\theta)}^{-1}\int_{\mathbb{R}^N} (\mathcal{D}(e^{-\frac{p_{(2)}}{2}})p)(\mathcal{D}(e^{-\frac{p_{(2)}}{2}})q) h_{\mathcal{R}(\theta)}^2wdx,
\]
where:
\begin{itemize}
\item The function $h_{\mathcal{R}(\theta)}:\mathbb{R}^N\rightarrow\mathbb{R}$ is defined as $h_{\mathcal{R}(\theta)}(x)\triangleq\prod_{r\in \mathcal{R}^+} |\product{x,r}|^{\theta(r)}$.
\item The function $w:\mathbb{R}^N\rightarrow\mathbb{R}$ is defined as $w(x)\triangleq\frac{e^{-\frac{\norm{x}_2^2}{2}}}{(2\pi)^{\frac{N}{2}}}$.
\item The constant $c_{\mathcal{R}(\theta)}$ is defined as $c_{\mathcal{R}(\theta)}\triangleq\int_{\mathbb{R}^N} h_{\mathcal{R}(\theta)}^2wdx$.
\end{itemize}
\end{theorem}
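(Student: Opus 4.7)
The plan is to show both sides of the identity define the same symmetric bilinear form on $\mathbb{C}[x_1,\ldots,x_N]$ by extracting a common adjoint reduction rule together with a common normalization. First, because $\mathcal{D}(\cdot)$ is a ring homomorphism and the Dunkl operators commute (\Cref{lemma:commutative}), the operator $\mathcal{D}\bigl(e^{-p_{(2)}/2}\bigr) = e^{-\Delta_\theta/2}$ is well defined on polynomials, where $\Delta_\theta \triangleq \sum_i \mathcal{D}_i^2$ lowers degree by $2$; denote this operator by $M$. Denote the left-hand side of the claimed identity by $\langle p,q\rangle_1 \triangleq [p,q]_{\mathcal{R}(\theta)}$ and the right-hand side by $\langle p,q\rangle_2 \triangleq c_N^{-1}\int (Mp)(Mq)h^2w\,dx$. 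Both are symmetric bilinear forms on $\mathbb{C}[x_1,\ldots,x_N]$, and agree on constants: $\langle 1,1\rangle_1=1=\langle 1,1\rangle_2$ by the definition of $c_N$.

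The core step is the common adjoint relation
\[
\langle x_i p, q\rangle = \langle p, \mathcal{D}_i q\rangle \qquad (1\le i\le N).
\]
For $\langle\cdot,\cdot\rangle_1$, this is immediate from $\mathcal{D}(x_i p)=\mathcal{D}_i\mathcal{D}(p)$ and the commutativity of Dunkl operators. For $\langle\cdot,\cdot\rangle_2$ I would combine two ingredients. The first is the operator identity $M x_i = (x_i-\mathcal{D}_i)M$, which follows from the Hadamard expansion
\[
e^{-\Delta_\theta/2}\,x_i\,e^{\Delta_\theta/2} = x_i+[-\tfrac{1}{2}\Delta_\theta,x_i]+\tfrac{1}{2}\bigl[-\tfrac{1}{2}\Delta_\theta,[-\tfrac{1}{2}\Delta_\theta,x_i]\bigr]+\cdots
\]
together with the commutators $[\Delta_\theta,x_i]=2\mathcal{D}_i$ and $[\Delta_\theta,\mathcal{D}_i]=0$, so the series collapses to $x_i-\mathcal{D}_i$. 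The second is a weighted adjoint formula for $\mathcal{D}_i$: from the classical skew-symmetry $\int(\mathcal{D}_i f)g\,h^2\,dx = -\int f(\mathcal{D}_i g)h^2\,dx$ on Schwartz functions, combined with $\partial_i w=-x_iw$ and the reflection-invariance of $w$, one obtains
\[
\int(\mathcal{D}_i f)g\,h^2 w\,dx = \int f(x_ig-\mathcal{D}_i g)h^2 w\,dx.
\]
Substituting $M(x_i p)=(x_i-\mathcal{D}_i)Mp$ into $\langle x_i p,q\rangle_2$ and then applying this weighted adjoint with $f=Mp$, $g=Mq$, the two $x_i$-multiplication contributions cancel, and since $\mathcal{D}_i$ commutes with $M$, the result is $\langle p,\mathcal{D}_iq\rangle_2$.

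Next, I verify the normalization $\langle 1,f\rangle = f(0)$ for both forms. For $\langle\cdot,\cdot\rangle_1$ it is immediate since $\mathcal{D}(1)$ is the identity. For $\langle\cdot,\cdot\rangle_2$, set $T(f)\triangleq \int(Mf)h^2w\,dx$. Using $M(x_ig)=(x_i-\mathcal{D}_i)Mg$ and the weighted adjoint relation with constant second argument $1$ (so that $\mathcal{D}_i 1 = 0$),
\[
T(x_ig) = \int x_i(Mg)h^2w\,dx - \int(\mathcal{D}_iMg)h^2w\,dx = \int x_i(Mg)h^2w\,dx - \int(Mg)x_ih^2w\,dx = 0.
\]
Thus $T$ annihilates all polynomials of positive degree, so $T(f)=T(1)f(0)=c_N f(0)$ and $\langle 1,f\rangle_2=f(0)$.

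Iterating the adjoint relation $|\alpha|$ times gives, for any multi-indices $\alpha,\beta$,
\[
\langle x^\alpha,x^\beta\rangle = \langle 1,\mathcal{D}(x^\alpha)x^\beta\rangle = (\mathcal{D}(x^\alpha)x^\beta)(0) = [1]\mathcal{D}(x^\alpha)x^\beta = [x^\alpha,x^\beta]_{\mathcal{R}(\theta)},
\]
and the theorem follows by bilinearity. The main technical obstacle I anticipate is rigorously justifying the integration by parts underlying the skew-symmetry of $\mathcal{D}_i$ on $L^2(\mathbb{R}^N,h^2w\,dx)$: the weight $h^2=\prod_{r\in\mathcal{R}^+}|\langle x,r\rangle|^{2\theta(r)}$ is singular along each reflection hyperplane, and the difference-quotient pieces of $\mathcal{D}_i$ must be handled carefully. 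This is precisely where the assumption $\theta\ge 0$ enters to ensure that $h^2w\,dx$ is locally integrable; the standard remedy is to excise $\varepsilon$-neighborhoods of the hyperplanes, verify that the boundary terms vanish as $\varepsilon\to 0$, and then pass to the limit.
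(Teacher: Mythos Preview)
The paper does not give a proof of this theorem: it is quoted verbatim as \cite{dunkl_integralkernel}*{Theorem 3.10} and used as a black box. There is therefore no ``paper's own proof'' to compare against. Your sketch is precisely the classical argument from Dunkl's original paper: establish the adjoint identity $\langle x_ip,q\rangle=\langle p,\mathcal{D}_iq\rangle$ for both bilinear forms via the commutator $[\Delta_\theta,x_i]=2\mathcal{D}_i$ (whence $e^{-\Delta_\theta/2}x_i=(x_i-\mathcal{D}_i)e^{-\Delta_\theta/2}$) together with the anti-self-adjointness of $\mathcal{D}_i$ in $L^2(h^2\,dx)$, check the common normalization $\langle 1,f\rangle=f(0)$, and conclude by induction on degree. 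The reasoning is sound and the technical caveat you flag about the hyperplane singularities of $h^2$ is exactly where the hypothesis $\theta\ge 0$ is used.

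One small point: your line $\partial_i w=-x_iw$ presumes $w(x)=e^{-\|x\|_2^2/2}/(2\pi)^{N/2}$, which is the convention in \cite{dunkl_integralkernel}, whereas the paper as written has $w(x)=e^{-\|x\|_2^2}/(2\pi)^{N/2}$. With the latter one gets $\partial_i w=-2x_iw$ and the $x_i$-terms in your cancellation no longer match. This is almost certainly a transcription slip in the paper rather than an error on your part, but it is worth being aware of when you write up the details.
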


\begin{remark}
The operator $\mathcal{D}(e^{-\frac{p_{(2)}}{2}})$ is equivalent to $\sum_{k=0}^\infty \left(-\frac{1}{2}\right)^k\frac{\mathcal{D}(p_{(2)}^k)}{k!}$. Furthermore, \cite{dunkl_integralkernel} gives a formula for $c_N$.
\end{remark}

In order to analyze whether two monomials are orthogonal with respect to the Dunkl bilinear form $[\cdot,\cdot]_{\mathcal{R}(\theta)}$, we can analyze the parities of the degrees of $x_1,\ldots,x_N$ as well as the presence of sign flips in $H(\mathcal{R})$. We deduce the following result using the equivariance property of the Dunkl operators given in \Cref{lemma:equivariance}.

\begin{proposition}
\label{prop:parity}
Suppose $N\geq 1$ and $\lambda,\nu\in \mathbb{Z}_{\geq 0}^N$. Assume that for some $i\in [N]$, $\lambda_i$ and $\nu_i$ do not have the same parity and that $H(\mathcal{R})$ contains the reflection that flips the sign of $x_i$. Then, $\left[\prod_{i=1}^N x_i^{\lambda_i},\prod_{i=1}^N x_i^{\nu_i}\right]_{\mathcal{R}(\theta)}=0$.
\end{proposition}

\begin{proof}
First, assume that $\theta\geq 0$. By \Cref{thm:product},
\begin{equation}
\label{eq:integral}
\left[\prod_{i=1}^N x_i^{\lambda_i},\prod_{i=1}^N x_i^{\nu_i}\right]_{\mathcal{R}(\theta)} = c_N^{-1}\int_{\mathbb{R}^N} \left(\mathcal{D}\left(e^{-\frac{p_{(2)}}{2}}\right)\prod_{i=1}^N x_i^{\lambda_i}\right)\left(\mathcal{D}\left(e^{-\frac{p_{(2)}}{2}}\right)\prod_{i=1}^N x_i^{\nu_i}\right) h^2wdx
\end{equation}
Assume $i\in [N]$ such that $\lambda_i$ and $\nu_i$ do not have the same sign and that $\sigma\in H$ flips the sign of $i$. Without loss of generality, assume that $\lambda_i$ is odd and $\nu_i$ is even. Then, by \Cref{lemma:equivariance}, we have that 
\[
\sigma\mathcal{D}\left(e^{-\frac{p_{(2)}}{2}}\right)\prod_{i=1}^N x_i^{\lambda_i} = -\mathcal{D}\left(e^{-\frac{p_{(2)}}{2}}\right)\prod_{i=1}^N x_i^{\lambda_i}
\]
and
\[
\sigma\mathcal{D}\left(e^{-\frac{p_{(2)}}{2}}\right)\prod_{i=1}^N x_i^{\nu_i} = \mathcal{D}\left(e^{-\frac{p_{(2)}}{2}}\right)\prod_{i=1}^N x_i^{\nu_i}.
\]
Then, because $h$ is $H$-invariant and $\sigma w = w$, if we apply $\sigma$ to the integrand of \pref{eq:integral}, we will flip its sign. It is then clear that \pref{eq:integral} evaluates to zero.

Next, observe that $\left[\prod_{i=1}^N x_i^{\lambda_i},\prod_{i=1}^N x_i^{\nu_i}\right]_{\mathcal{R}(\theta)}$ is a polynomial in $\theta$ with all real coefficients. Thus, since it evaluates to zero for all choices of nonnegative real-valued $\theta$, it must evaluate to zero for all choices of complex-valued $\theta$.
\end{proof}

\begin{corollary}
\label{cor:parity_d}
Suppose $N\geq 2$ and $\lambda,\nu\in \mathbb{Z}_{\geq 0}^N$. Assume that for some $i\in [N]$, $\lambda_i$ and $\nu_i$ do not have the same parity. Then, $\left[\prod_{i=1}^N x_i^{\lambda_i},\prod_{i=1}^N x_i^{\nu_i}\right]_{D^N(\theta)}=0$ for all $\theta\in\mathbb{C}$.
\end{corollary}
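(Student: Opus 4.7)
The plan is to reduce this corollary to \Cref{thm:parity} by observing that the Dunkl bilinear form for $D^N(\theta)$ coincides with the Dunkl bilinear form for $BC^N(\theta, 0)$. Indeed, inspecting the definitions given in \Cref{subsec:dunkl}, we have
\[
\mathcal{D}_i(BC^N(\theta, 0)) = \partial_i + \theta\sum_{j\in[N]\backslash\{i\}} \left(\frac{1-s_{ij}}{x_i-x_j} + \frac{1-\tau_i\tau_j s_{ij}}{x_i+x_j}\right) = \mathcal{D}_i(D^N(\theta))
\]
for every $i\in [N]$, since the type 1 switch term vanishes when $\theta_1=0$. Because each of the operators $\mathcal{D}_i$ agrees on the two sides, so does every polynomial $\mathcal{D}(\mathcal{R}(\theta))(f)$ formed from them, and hence $[f,g]_{D^N(\theta)} = [f,g]_{BC^N(\theta, 0)}$ for all $f,g\in\mathbb{C}[x_1,\ldots,x_N]$.

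With this identification in hand, the key point is that although $H(D^N)$ only contains sign flips of even subsets of coordinates, the reflection group $H(BC^N)$ contains the single sign flip $\tau_i$ for every $i\in [N]$. Therefore the hypothesis of \Cref{thm:parity} is satisfied for the root system $BC^N$ and the multiplicity function assigning $\theta$ to the long roots and $0$ to the short roots: namely, $H(BC^N)$ contains the reflection flipping the sign of $x_i$, and $\lambda_i$ and $\nu_i$ have opposite parity by assumption. Applying \Cref{thm:parity} yields
\[
\left[\prod_{j=1}^N x_j^{\lambda_j},\prod_{j=1}^N x_j^{\nu_j}\right]_{BC^N(\theta, 0)}=0,
\]
which by the identification above equals $\left[\prod_{j=1}^N x_j^{\lambda_j},\prod_{j=1}^N x_j^{\nu_j}\right]_{D^N(\theta)}$.

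There is essentially no real obstacle here; the only subtlety worth stating explicitly is that one cannot apply \Cref{thm:parity} directly to $D^N(\theta)$ because $H(D^N)$ does not contain any single-coordinate sign flip, and this is precisely what motivates passing to $BC^N(\theta, 0)$. Once this bridge is noted, the proof is a single line and the corollary is immediate, valid for all $\theta\in\mathbb{C}$ since \Cref{thm:parity} is stated for arbitrary complex multiplicities.
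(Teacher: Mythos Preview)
Your proof is correct and follows essentially the same approach as the paper: you identify $[\cdot,\cdot]_{D^N(\theta)}$ with $[\cdot,\cdot]_{BC^N(\theta,0)}$ via the equality of Dunkl operators, then apply \Cref{thm:parity} to $BC^N$, whose reflection group contains the single sign flip $\tau_i$. The paper's proof is the same argument, stated more tersely.
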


\begin{proof}
We cannot directly apply \Cref{prop:parity} since $H(D^N(\theta))$ only contains reflections that flip an even number of signs. However, we have that 
\[
\left[\prod_{i=1}^N x_i^{\lambda_i},\prod_{i=1}^N x_i^{\nu_i}\right]_{D^N(\theta)} = \left[\prod_{i=1}^N x_i^{\lambda_i},\prod_{i=1}^N x_i^{\nu_i}\right]_{BC^N(\theta,0)},
\]
and we can apply \Cref{prop:parity} to deduce that $\left[\prod_{i=1}^N x_i^{\lambda_i},\prod_{i=1}^N x_i^{\nu_i}\right]_{BC^N(\theta,0)}=0$.
\end{proof}

\begin{corollary}
\label{cor:eigenfunc_parity}
Suppose $N\geq 1$ and $H(\mathcal{R})$ contains the reflection that flips the sign of $x_i$ for some $i\in[N]$. Then, $E_a^{\mathcal{R}(\theta)}(x)$ is a linear combination of $r(a)s(x)$ for monomials $r$ and $s$ such that the degree of $a_i$ in $r(a)$ and the degree of $x_i$ in $s(x)$ have the same parity.
\end{corollary}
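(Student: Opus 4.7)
The plan is to combine the eigenvector formula from the graded-ring framework of Sections \ref{sec:intro_graded}--\ref{sec:bijection} with the parity orthogonality of Theorem \ref{thm:parity}. Following Example \ref{example:dunkl}, I would take $V=R=\mathbb{C}[x_1,\ldots,x_N]$, $V_k=R_k$ the homogeneous polynomials of degree $k$, and $L=\mathcal{D}(\mathcal{R}(\theta))$, which is invertible since $E_a^{\mathcal{R}(\theta)}$ is assumed to exist via Theorem \ref{thm:dunkl_eigenfunction}. For each $k\geq 0$, I would pick $A_k=B_k$ to be the set of monomials of degree $k$ in $x_1,\ldots,x_N$, so that the entries of $M^{k;L}$ are the Dunkl pairings $[u,v]_{\mathcal{R}(\theta)}$ for $u,v$ monomials of degree $k$.

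The first key step is to observe that Theorem \ref{thm:parity} forces $M^{k;L}_{u,v}=0$ whenever the degrees of $x_i$ in $u$ and in $v$ have opposite parities, because the hypothesis on $H(\mathcal{R})$ is exactly that it contains the reflection flipping the sign of $x_i$. Partitioning $A_k$ according to the parity of the $x_i$-degree thus makes $M^{k;L}$ block-diagonal; invertibility of $M^{k;L}$ therefore implies invertibility of each diagonal block and hence of the same block-diagonal shape for $(M^{k;L})^{-1}$, so that $\bigl((M^{k;L})^{-1}\bigr)_{u,v}=0$ whenever $u$ and $v$ have opposite $x_i$-parities.

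The second step is to plug this into the explicit eigenvector formula. By part (B) of Corollary \ref{cor:eigenvectorformula2} applied with $\Psi=\varphi_a$, together with the description of $\mathrm{inv\text{-}col}(r;B_k)$ from Corollary \ref{cor:vrequal} and Lemma \ref{lemma:invcol} (i.e.\ its expansion in $A_k$ is the column indexed by $r$ of $(M^{k;L})^{-1}$), the degree-$k$ homogeneous component of the eigenfunction is
\[
E_a^{\mathcal{R}(\theta)}[k](x) \;=\; \sum_{r,u\in B_k} \bigl((M^{k;L})^{-1}\bigr)_{u,r}\, r(a)\, u(x).
\]
Since the $a_i$-degree of $r(a)$ equals the $x_i$-degree of the monomial $r$ viewed as an element of $B_k$, the block-diagonal vanishing from the first step shows that every nonzero summand satisfies the parity matching asserted in the corollary. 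Summing over $k\geq 0$ yields the desired decomposition for $E_a^{\mathcal{R}(\theta)}(x)$. The only real obstacle is bookkeeping: once one matches the abstract notation ($\mathrm{inv\text{-}col}$, $M^{k;L}$, the role of $\Psi=\varphi_a$) to the concrete Dunkl setting, the block-diagonal argument is immediate and the conclusion is automatic.
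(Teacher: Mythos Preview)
Your proposal is correct and follows essentially the same route as the paper's own proof: both use Theorem~\ref{thm:parity} to see that $M^{k;\mathcal{D}}$ is block-diagonal with respect to the $x_i$-parity, note that the inverse inherits this block structure, and then read off the conclusion from the eigenvector formula \eqref{eq:eigenval}. Your write-up simply makes the bookkeeping more explicit than the paper's terse version.
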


\begin{proof}
Recall the formula \pref{eq:eigenval}, where $\Psi: r\mapsto r(a)$ after $a\in\mathbb{C}^N$ is fixed. By \Cref{prop:parity}, the matrix $M^{k;\,\mathcal{D}}$ has two orthogonal components corresponding to when the degrees of $x_i$ are even and odd for all $k\geq 1$. Then, it is straightforward to deduce the result by computing the inverse of $M^{k;\,\mathcal{D}}$, which has the same two orthogonal components.
\end{proof}

\subsection{Proof of part (C) of \texorpdfstring{\Cref{thm:main}}{}}

We obtain the leading order terms of $[ep_\lambda, ep_\nu\\]_{D^N(\theta)}$ in the following lemma by using the leading order terms of the type BC Dunkl bilinear form. The leading order terms can be expressed as a product involving gamma functions and polynomials. For example, the product $\prod_{i=1}^N(1+2(i-1)\theta)$ can also be expressed as $(2\theta)^N \frac{\Gamma(\frac{1}{2\theta} + N )}{\Gamma(\frac{1}{2\theta})}$.

\begin{lemma}[\cite{dunkl_singular_poly}*{Corollary 4.5}]
\label{lemma:leading_order_odd}
Suppose $\lambda,\nu\in\even$. Then, 
\[
[ep_\lambda, ep_\nu]_{D^N(\theta)} = \prod_{i=1}^N(1+2(i-1)\theta)[p_\lambda, p_\nu]_{BC^N(\theta,1)}.
\]
Furthermore, for $\theta_0,\theta_1\in\mathbb{C}$,
\[
[ep_\lambda, ep_\nu]_{BC^N(\theta_0,\theta_1)} = \prod_{i=1}^N(1+2(i-1)\theta_0 + 2\theta_1)[p_\lambda, p_\nu]_{BC^N(\theta_0,\theta_1+1)}.
\]
\end{lemma}

We prove part (C) of \Cref{thm:main}, see \Cref{cor:equivalence_odd1}, as well as a generalization, see \Cref{thm:equivalence_odd1}. Recall that $\md_{\lambda\nu}\triangleq 2^{|\nu|-\ell(\nu)}\ma_{\lambda\nu}$ for $\lambda,\nu\in \even$ and $\md=\mbc(0)$. 

\begin{theorem}
\label{thm:equivalence_odd1}
Suppose $F_N(x_1,\ldots,x_N)=\exp\left(\sum_{\lambda\in\evenN} c_\lambda(N)p_\lambda\right) + ed_0(N)+e\exp\\\left(\sum_{\lambda\in\evenN} d_\lambda(N)p_\lambda\right)$. Assume that $\lim_{N\rightarrow\infty}|\theta N|=\infty$. Consider the following statements.
\begin{enumerate}
\item[(a)] For all $\lambda\in\even$, $\lim_{N\rightarrow\infty} \frac{c_\lambda(N)}{(\theta N)^{\ell(\lambda)}}=c_\lambda\in\mathbb{C}$. 
\item[(b)] It is the case that $\lim_{N\rightarrow\infty} d_0(N) = d_0\in\mathbb{C}$ and for all $\lambda\in\even$, $\lim_{N\rightarrow\infty} \frac{d_\lambda(N)}{(\theta N)^{\ell(\lambda)}}=d_\lambda\in\mathbb{C}$.
\item[(c)] For all $\nu\in\even$,
\[
\lim_{N\rightarrow\infty} \frac{1}{(\theta N)^{|\nu|}N^{\ell(\nu)}} [p_\nu, F_N]_{D^N(\theta)} = \sum_{\lambda\in\even} \md_{\nu\lambda}[p_\lambda] \exp\left(\sum_{\gamma\in\even} c_\gamma p_\gamma\right).
\]
\item[(d)] It is the case that \[
\lim_{N\rightarrow\infty}\frac{[e, F_N]_{D^N(\theta)}}{\prod_{j=1}^N (1+2(j-1)\theta)} = d_0+1.
\]
Furthermore, for all $\nu\in\even$,
\[
\lim_{N\rightarrow\infty}\frac{[ep_\nu, F_N]_{D^N(\theta)}}{(\theta N)^{|\nu|}N^{\ell(\nu)}\prod_{j=1}^N (1+2(j-1)\theta)} = \sum_{\lambda\in\even} \md_{\nu\lambda}[p_\lambda]\exp\left(\sum_{\gamma\in\even} d_\gamma p_\gamma\right).
\]

\end{enumerate}
Then, (a) and (c) are equivalent.

Assume that if $N$ is sufficiently large, then $\prod_{j=1}^N (1+2(j-1)\theta)\not=0$. Then, (b) and (d) are equivalent.
\end{theorem}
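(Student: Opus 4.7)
The plan is as follows. The equivalence of (a) and (c) is the content of thm:equivalence_d1, which was already stated. The key reduction it uses, which I would reproduce to set up the second half, is the orthogonality from cor:parity_d: since $p_\nu$ for $\nu\in\even$ is a sum of all-even-degree monomials while $e\exp(\sum d_\lambda(N)p_\lambda)$ is a sum of all-odd-degree monomials, the pairing $[p_\nu,\,e\exp(\sum d_\lambda(N)p_\lambda)]_{D^N(\theta)}$ vanishes, so $[p_\nu, F_N^D]_{D^N(\theta)} = [p_\nu, E_N]_{D^N(\theta)}$ with $E_N := \exp(\sum c_\lambda(N)p_\lambda)$. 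After this reduction, the argument of thm:equivalence_a1 applies verbatim with $\Gamma$ replaced by $\even$ and $\ma$ replaced by $\md = 2^{|\nu|-\ell(\nu)}\ma$, using thm:leadingorder_d1 for the leading-order expansion of $[p_\nu,p_\lambda]_{D^N(\theta)}$.

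For the equivalence of (b) and (d), the same parity orthogonality gives $[ep_\nu, F_N^D]_{D^N(\theta)} = [ep_\nu, eG_N]_{D^N(\theta)}$ with $G_N := \exp(\sum d_\lambda(N)p_\lambda)$; expanding $eG_N = \sum_{\lambda\in\evenN} ([p_\lambda]G_N)\,ep_\lambda$ and using the fact that the Dunkl form respects homogeneity,
\[
[ep_\nu, F_N^D]_{D^N(\theta)} = \sum_{\lambda\in\evenN,\,|\lambda|=|\nu|} [p_\lambda]G_N\cdot [ep_\nu, ep_\lambda]_{D^N(\theta)}.
\]
I would then apply thm:leadingorder_odd to each term. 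Writing $k := |\nu| = |\lambda|$, the theorem gives
\[
[ep_\lambda, ep_\nu]_{D^N(\theta)} = \prod_{j=1}^{N-k}(1+2(j-1)\theta) \cdot \bigl(N^{\ell(\lambda)}(2N\theta)^{2k-\ell(\nu)}\ma_{\lambda\nu} + R\bigr)
\]
when $\ell(\lambda)\leq\ell(\nu)$, with the analogous formula after swapping roles when $\ell(\lambda)\geq\ell(\nu)$. The crucial technical observation is the asymptotic $\prod_{j=N-k+1}^{N}(1+2(j-1)\theta) = (2\theta N)^{k}(1+o(1))$, valid under $|\theta N|\to\infty$ since each individual factor grows like $2N|\theta|$. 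Combined with the identity $\md_{\nu\lambda} = 2^{k-\ell(\lambda)}\ma_{\nu\lambda}$, a direct arithmetic check yields, for $\ell(\lambda)\geq\ell(\nu)$,
\[
\frac{[ep_\nu, ep_\lambda]_{D^N(\theta)}}{(\theta N)^{|\nu|} N^{\ell(\nu)} \prod_{j=1}^{N}(1+2(j-1)\theta)} = \frac{\md_{\nu\lambda} + o(1)}{(\theta N)^{\ell(\lambda)}},
\]
which exactly matches the leading behaviour of $[p_\nu,p_\lambda]_{D^N(\theta)}/((\theta N)^{|\nu|} N^{\ell(\nu)})$ appearing in the proof of thm:equivalence_d1; for $\ell(\lambda)<\ell(\nu)$, both sides are $o((\theta N)^{-\ell(\lambda)})$ by the same reasoning.

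With this identification in hand, the remainder of the proof of (b) $\Leftrightarrow$ (d) is a copy of the inductive argument used for thm:equivalence_a1. The direction (b) $\Rightarrow$ (d) is a direct substitution, while (d) $\Rightarrow$ (b) proceeds by induction on $|\lambda|$, inverting the matrix $\md[k]$ (upper triangular with nonzero diagonal by lemma:matrix_a1) to recover each individual $d_\lambda(N)/(\theta N)^{\ell(\lambda)}$. The hypothesis that $\prod_{j=1}^{N}(1+2(j-1)\theta)\neq 0$ for large $N$ is exactly what makes the normalization in (d) and the division above well defined. The most delicate step will be verifying that the remainder $R(N,\theta)$ from thm:leadingorder_odd, after multiplication by the gamma product and division by $(\theta N)^{|\nu|} N^{\ell(\nu)} \prod_{j=1}^{N}(1+2(j-1)\theta)$, contributes only $o((\theta N)^{-\ell(\lambda)})$; this should follow from the degree bounds on $R$ (degree at most $2k+\ell(\lambda)-\ell(\nu)-1$ in $N$ and $2k-\ell(\nu)$ in $\theta$, with the $\ell(\lambda)$-gap between those degrees) but must be tracked carefully through the gamma ratio.
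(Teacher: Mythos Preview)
Your proposal is correct and follows essentially the same approach as the paper: defer to \Cref{thm:equivalence_d1} for (a)$\Leftrightarrow$(c), and for (b)$\Leftrightarrow$(d) divide through by $\prod_{j=1}^N(1+2(j-1)\theta)$, apply \Cref{thm:leadingorder_odd}, and rerun the induction from \Cref{thm:equivalence_a1}. You have made explicit two steps the paper leaves implicit, namely the parity orthogonality from \Cref{cor:parity_d} that decouples the $c$-part from the $d$-part, and the gamma-ratio asymptotic $\prod_{j=N-k+1}^{N}(1+2(j-1)\theta)=(2\theta N)^k(1+o(1))$ needed to match the scaling; both are correct and your arithmetic $2^{k-\ell(\lambda)}\ma_{\nu\lambda}=\md_{\nu\lambda}$ checks out.
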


\begin{proof}
The equivalence of (a) and (c) follows from \Cref{thm:equivalence_bc1} with $c=0$. For the equivalence of (b) and (d), we note that $\frac{[e,F_N]_{D^N(\theta)}}{\prod_{j=1}^N(1+2(j-1)\theta)} = d_0(N) + 1$ and for $\nu\in\even$, 
\[
\frac{[ep_\nu,F_N]_{D^N(\theta)}}{\prod_{j=1}^N(1+2(j-1)\theta)} = \left[p_\nu, \exp\left(\sum_{\lambda\in\evenN} d_\lambda(N)p_\lambda\right)\right]_{BC^N(\theta,1)}
\]
by \Cref{lemma:leading_order_odd}. Afterwards, the equivalence also follows from \Cref{thm:equivalence_bc1} with $c=0$.
\end{proof}

\begin{corollary}
\label{cor:equivalence_odd1}
Suppose $F_N(x_1,\ldots,x_N)=\exp\left(\sum_{\lambda\in\evenN} c_\lambda(N)p_\lambda\right) + ed_0(N)+e\exp\\\left(\sum_{\lambda\in\evenN}d_\lambda(N)p_\lambda\right)$. Assume that $\lim_{N\rightarrow\infty}|\theta N|=\infty$. Consider the following statements.
\begin{enumerate}
\item[(a)] For all $\lambda\in\even$, $\lim_{N\rightarrow\infty} \frac{c_\lambda(N)}{\theta N}=c_\lambda\in\mathbb{C}$ if $\ell(\lambda)=1$ and $\lim_{N\rightarrow\infty} \frac{c_\lambda(N)}{(\theta N)^{\ell(\lambda)}}=0$ if $\ell(\lambda)>1$. 
\item[(b)] It is the case that $\lim_{N\rightarrow\infty}d_0(N)=d_0\in\mathbb{C}$ and for all $\lambda\in\even$, $\lim_{N\rightarrow\infty} \frac{d_\lambda(N)}{\theta N}=d_\lambda\in\mathbb{C}$ if $\ell(\lambda)=1$ and $\lim_{N\rightarrow\infty} \frac{d_\lambda(N)}{(\theta N)^{\ell(\lambda)}}=0$ if $\ell(\lambda)>1$.
\item[(c)] For all $\nu\in\even$,
\[
\lim_{N\rightarrow\infty}\frac{1}{N^{\ell(\nu)}(\theta N)^{|\nu|} } [p_\nu, F_N]_{D^N(\theta)} = \prod_{i=1}^{\ell(\nu)} \sum_{\pi\in \NCeven(\nu_i)} \prod_{B\in \pi} 2^{|B|-1}|B|c_{(|B|)}.
\]
\item[(d)] It is the case that \[
\lim_{N\rightarrow\infty}\frac{[e, F_N]_{D^N(\theta)}}{\prod_{j=1}^N (1+2(j-1)\theta)} = d_0+1.
\]
Furthermore, for all $\nu\in\even$,
\[
\lim_{N\rightarrow\infty}\frac{[ep_\nu, F_N]_{D^N(\theta)} }{N^{\ell(\nu)}(\theta N)^{|\nu|}\prod_{j=1}^N (1+2(j-1)\theta)}= \prod_{i=1}^{\ell(\nu)}\sum_{\pi\in \NCeven(\nu_i)} \prod_{B\in \pi} 2^{|B|-1}|B|d_{(|B|)}.
\]
\end{enumerate}
Then, (a) and (c) are equivalent. 

Assume that if $N$ is sufficiently large, then $\prod_{j=1}^N (1+2(j-1)\theta)\not=0$. Then, (b) and (d) are equivalent.
\end{corollary}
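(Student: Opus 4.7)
The plan is to deduce \Cref{cor:equivalence_odd1} from \Cref{thm:equivalence_odd1} by the same template used in the proof of \Cref{cor:equivalence_a1}: show that under the specialization $c_\gamma=0$ (resp.\ $d_\gamma=0$) for $\ell(\gamma)\geq 2$, the right-hand side of (c) (resp.\ (d)) in the theorem simplifies to the right-hand side of (c) (resp.\ (d)) in the corollary. Granting this identification, the forward implication $(\text{a})\Rightarrow(\text{c})$ follows by extending the limits in (a) of the corollary by zeros and applying the theorem, while the reverse implication $(\text{c})\Rightarrow(\text{a})$ follows by defining $c_\lambda\triangleq 0$ for $\ell(\lambda)\geq 2$, recognizing (c) of the corollary as an instance of (c) of the theorem via the substitution identity, and then invoking the theorem.

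The key step is this substitution identity. Using $\md_{\nu\lambda}=2^{|\lambda|-\ell(\lambda)}\ma_{\nu\lambda}$ together with the explicit form of $\ma_{\nu\lambda}$, and expanding $\exp(\sum_\gamma c_\gamma p_\gamma)$ under the ansatz $c_\gamma=0$ for $\ell(\gamma)\geq 2$, one obtains
\[
\sum_{\lambda\in\even}\md_{\nu\lambda}[p_\lambda]\exp\!\left(\sum_{\gamma\in\even}c_\gamma p_\gamma\right) = \sum_{\lambda\in\even}\prod_{l=1}^{\ell(\lambda)} 2^{\lambda_l-1}\lambda_l c_{(\lambda_l)}\left[\prod_{l=1}^{\ell(\lambda)} x_{\lambda_l}\right]\prod_{i=1}^{\ell(\nu)}\sum_{\pi\in NC(\nu_i)}\prod_{B\in\pi} x_{|B|}.
\]
I would evaluate the right-hand side by the substitution $x_k\mapsto 2^{k-1}k c_{(k)}$ for even $k$ and $x_k\mapsto 0$ for odd $k$ applied to the polynomial $\prod_i \sum_{\pi\in NC(\nu_i)}\prod_B x_{|B|}$. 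Since $x_k\mapsto 0$ for odd $k$, any noncrossing partition containing a block of odd size is annihilated, so the expression collapses to $\prod_{i=1}^{\ell(\nu)} \sum_{\pi\in\NCeven(\nu_i)}\prod_{B\in\pi} 2^{|B|-1}|B|c_{(|B|)}$, which is exactly the right-hand side of (c) in the corollary.

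Exactly the same substitution, with $c_{(k)}$ replaced by $d_{(k)}$ and using the second half of \Cref{thm:equivalence_odd1}, proves the equivalence of (b) and (d) under the hypothesis that $\prod_{j=1}^N(1+2(j-1)\theta)\neq 0$ for $N$ large; this nonvanishing ensures that the normalization in (d) makes sense and that the theorem's equivalence can be transferred to the corollary's setting. I anticipate no serious obstacle; the main point of care is the bookkeeping of the factor $2^{|\lambda|-\ell(\lambda)}=\prod_l 2^{\lambda_l-1}$ that distinguishes $\md$ from $\ma$, and the elementary observation that $\sum_{\lambda\in\even}\prod_l\alpha_{\lambda_l}[\prod_l x_{\lambda_l}]P(x)$ equals $P$ evaluated at $x_k=\alpha_k\mathbf{1}\{k\text{ even}\}$, which is what enables the reduction from $NC$ to $\NCeven$.
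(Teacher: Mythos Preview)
Your proposal is correct and takes essentially the same approach as the paper: the paper leaves \Cref{cor:equivalence_odd1} without an explicit proof, relying on the template established in the proof of \Cref{cor:equivalence_a1}, which is exactly the substitution-and-specialization argument you describe. Your handling of the extra factor $2^{|\lambda|-\ell(\lambda)}$ in $\md$ and the reduction from $NC$ to $\NCeven$ via $x_k\mapsto 0$ for odd $k$ is precisely the computation needed.
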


\begin{proof}
See \Cref{thm:equivalence_bc1}, \Cref{cor:equivalence_bc1}, and \Cref{thm:equivalence_odd1} and use the fact that $\md=\mbc(0)$.
\end{proof}

\subsection[\texorpdfstring{$[\cdot,\cdot]_{D^N(\theta)}$}{} in the high temperature regime]{Leading order terms of the type D Dunkl bilinear form in the high temperature regime}

\begin{theorem}
\label{thm:equivalence_odd2}
Suppose $F_N(x_1,\ldots,x_N)=\exp\left(\sum_{\lambda\in\evenN} c_\lambda(N)p_\lambda\right) + ed_0(N)+e\exp\\\left(\sum_{\lambda\in\evenN}d_\lambda(N)p_\lambda\right)$. Assume that $\lim_{N\rightarrow\infty}\theta N=c\in\mathbb{C}$. Consider the following statements.
\begin{enumerate}
\item[(a)] For all $\lambda\in\even$, $\lim_{N\rightarrow\infty} c_\lambda(N)=c_\lambda\in\mathbb{C}$. 
\item[(b)] It is the case that $\lim_{N\rightarrow\infty} d_0(N)=d_0\in\mathbb{C}$ and for all $\lambda\in\even$, $\lim_{N\rightarrow\infty} d_\lambda(N)=d_\lambda\in\mathbb{C}$.
\item[(c)] For all $\nu\in\even$,
\[
\lim_{N\rightarrow\infty} \frac{1}{N^{\ell(\nu)}} [p_\nu, F_N]_{D^N(\theta)} = \sum_{\lambda\in\even} \Wbc_{\nu\lambda}(c, 0)[p_\lambda] \exp\left(\sum_{\gamma\in\even} c_\gamma p_\gamma\right).
\]
\item[(d)] It is the case that 
\[
\lim_{N\rightarrow\infty} \frac{[e,F_N]_{D^N(\theta)}}{\prod_{j=1}^N (1+2(j-1)\theta)} = d_0+1
\]
and for all $\nu\in\even$,
\begin{align*}
&\lim_{N\rightarrow\infty}\frac{[ep_\nu, F_N]_{D^N(\theta)}}{N^{\ell(\nu)}\prod_{j=1}^{N} (1+2(j-1)\theta)} \\
& = \sum_{\lambda\in\even} \Wbc_{\nu\lambda}(c, 1)[p_\lambda]\exp\left(\sum_{\gamma\in\even} d_\gamma p_\gamma\right).
\end{align*}
\end{enumerate}
Then, (a) implies (c) and if $2c$ is not a negative integer, then (c) implies (a).

Assume that if $N$ is sufficiently large, then $\prod_{j=1}^N(1+2(j-1)\theta)\not=0$. Then, (b) implies (d) and if $2c$ is not a negative integer less than $-1$, then (d) implies (b).
\end{theorem}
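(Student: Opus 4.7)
The equivalence of (a) and (c) is precisely \Cref{thm:equivalence_d2}, so the plan concentrates on (b) $\Leftrightarrow$ (d), mirroring the template of \Cref{thm:equivalence_a2} and \Cref{thm:equivalence_odd1}. The first step is a parity reduction: each monomial of $ep_\nu$ has every exponent odd, so \Cref{cor:parity_d} eliminates the even part $\exp(\sum c_\gamma(N) p_\gamma)$ of $F_N$ and leaves
\[
[ep_\nu, F_N]_{D^N(\theta)} = \sum_{\lambda\in\evenN} [p_\lambda]\exp\!\left(\sum_{\gamma\in\evenN} d_\gamma(N)p_\gamma\right)\,[ep_\nu, ep_\lambda]_{D^N(\theta)}.
\]

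Next I apply \Cref{thm:leadingorder_odd2}, invoking the symmetry $[ep_\nu, ep_\lambda] = [ep_\lambda, ep_\nu]$ of the Dunkl form to put the partition of smaller length as the first argument. After dividing by $N^{\ell(\nu)}\prod_{j=1}^{N-k}(1+2(j-1)\theta)$, the terms with $\ell(\lambda) < \ell(\nu)$ are $O(N^{\ell(\lambda)-\ell(\nu)}) = o(1)$ and vanish, while matching the remaining leading terms against the definition of $\Wodd_{\nu\lambda}(x)$ with $2N\theta\to 2c$ delivers
\[
\lim_{N\to\infty}\frac{[ep_\nu, F_N]_{D^N(\theta)}}{N^{\ell(\nu)}\prod_{j=1}^{N-k}(1+2(j-1)\theta)} = \sum_{\lambda\in\even} \Wodd_{\nu\lambda}(2c)\,[p_\lambda]\exp\!\left(\sum_{\gamma\in\even} d_\gamma p_\gamma\right),
\]
where the off-diagonal summands with $\ell(\lambda)=\ell(\nu)$ are killed by part (B) of \Cref{lemma:matrix_odd2}. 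This proves (b) $\Rightarrow$ (d).

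For (d) $\Rightarrow$ (b), I induct on $k=|\lambda|$ as in the proof of \Cref{thm:equivalence_a2}; the inductive step reduces to the invertibility of $\Wodd(2c)[k]$. By \Cref{lemma:matrix_odd2} this block is upper triangular with respect to any ordering refining $\ell(\cdot)$, so it suffices to check that each diagonal entry is nonzero. For $\pi = \{1\}\sqcup\{2,\ldots,m+1\}$ one reads off $b(2;\pi)=1$, $f(\pi)=1$, $z(\pi)=0$, and $d(i;\pi)=m+2-i$ for $i\geq 3$, so $W^{D;\,\text{odd}}(\pi)(x) = \prod_{j=1}^{m-1}(x+j)$, and hence
\[
\Wodd_{\nu\nu}(2c) = (1+2c)^{|\nu|}\pi(\nu)\prod_{l=1}^{\ell(\nu)}\nu_l\prod_{j=1}^{\nu_l-1}(2c+j),
\]
which is nonzero precisely when $2c$ is not a negative integer. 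The main obstacle is the bookkeeping in the asymptotic expansion: one must carefully reconcile the hypothesis $\ell(\lambda)\leq \ell(\nu)$ of \Cref{thm:leadingorder_odd2} with the full summation over $\lambda\in\evenN$ (handling both orderings via the Dunkl symmetry) and verify that the combinatorial expression emerging from that theorem matches the defined entries of $\Wodd$.
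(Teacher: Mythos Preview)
Your proposal is correct and follows essentially the same approach as the paper's proof, which likewise defers (a)$\Leftrightarrow$(c) to \Cref{thm:equivalence_d2} and handles (b)$\Leftrightarrow$(d) by dividing out the product $\prod_{j=1}^{N-k}(1+2(j-1)\theta)$ from \Cref{thm:leadingorder_odd2}, running the \Cref{thm:equivalence_a2} template, and appealing to the upper-triangularity in \Cref{lemma:matrix_odd2} plus the nonvanishing of the diagonal entries when $2c\notin\mathbb{Z}_{<0}$. Your explicit computation of $W^{D;\,\text{odd}}(\{1\}\sqcup\{2,\dots,m+1\})(x)=\prod_{j=1}^{m-1}(x+j)$ and the resulting diagonal entry is a helpful elaboration the paper omits.
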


\begin{proof}
The implications between (a) and (c) follow from \Cref{thm:equivalence_bc2} with $c=0$. For the implications between (b) and (d), we apply \Cref{lemma:leading_order_odd} as we did to prove \Cref{thm:equivalence_odd1}. Afterwards, we apply \Cref{thm:equivalence_bc2} with $c=1$.
\end{proof}

\begin{corollary}
\label{cor:equivalence_odd2}
Suppose $F_N(x_1,\ldots,x_N)=\exp\left(\sum_{\lambda\in\evenN} c_\lambda(N)p_\lambda\right) + ed_0(N)+e\exp\\\left(\sum_{\lambda\in\evenN}d_\lambda(N)p_\lambda\right)$. Assume that $\lim_{N\rightarrow\infty}\theta N=c\in\mathbb{C}$. Consider the following statements.
\begin{enumerate}
\item[(a)] For all $\lambda\in\even$, $\lim_{N\rightarrow\infty} c_\lambda(N) =c_\lambda\in\mathbb{C}$ if $\ell(\lambda)=1$ and $\lim_{N\rightarrow\infty} c_\lambda(N)=0$ if $\ell(\lambda)>1$.
\item[(b)]
It is the case that $\lim_{N\rightarrow\infty} d_0(N)=d_0\in\mathbb{C}$ and for all $\lambda\in\even$, $\lim_{N\rightarrow\infty} d_\lambda(N)=d_\lambda\in\mathbb{C}$ if $\ell(\lambda)=1$ and $\lim_{N\rightarrow\infty} d_\lambda(N)=0$ if $\ell(\lambda)>1$.
\item[(c)] For all $\nu\in\even$,
\[
\lim_{N\rightarrow\infty}\frac{1}{N^{\ell(\nu)}} [p_\nu, F_N]_{D^N(\theta)} = \prod_{i=1}^{\ell(\nu)} \sum_{\pi\in \NCeven(\nu_i)} W^{BC}(\pi)(c, 0)\prod_{B\in \pi} |B|c_{(|B|)}.
\]
\item[(d)] It is the case that 
\[
\lim_{N\rightarrow\infty} \frac{[e,F_N]_{D^N(\theta)}}{\prod_{j=1}^N (1+2(j-1)\theta)} = d_0+1
\]
and for all $\nu\in\even$,
\begin{align*}
&\lim_{N\rightarrow\infty}\frac{[ep_\nu, F_N]_{D^N(\theta)} }{N^{\ell(\nu)}\prod_{j=1}^{N} (1+2(j-1)\theta)} \\
& = \prod_{i=1}^{\ell(\nu)}\sum_{\pi\in \NCeven(\nu_i)} W^{BC}(\pi)(c, 1)\prod_{B\in \pi} |B|d_{(|B|)}.
\end{align*}
\end{enumerate}
Then, (a) implies (c) and if $2c$ is not a negative integer, then (c) implies (a).

Assume that if $N$ is sufficiently large, then $\prod_{j=1}^N (1+2(j-1)\theta)\not=0$. Then, (b) implies (d) and if $2c$ is not a negative integer less than $-1$, then (d) implies (b).
\end{corollary}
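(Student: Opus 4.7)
The plan is to derive this corollary from \Cref{thm:equivalence_odd2} by the same direct algebraic manipulation used to prove \Cref{cor:equivalence_a1} (and invoked in the proof of \Cref{cor:equivalence_a2}). The conditions (a) and (b) of the corollary are exactly the special case of conditions (a) and (b) of the theorem in which $c_\lambda=0$ and $d_\lambda=0$ whenever $\ell(\lambda)>1$; similarly, (c) and (d) of the corollary should drop out of (c) and (d) of the theorem after substituting this vanishing. So the entire task is to verify that the right-hand sides agree under this substitution, and then to invoke the triangular invertibility already established in the theorem.

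First I would handle the equivalence of (a) and (c). Substituting $c_\gamma=0$ for $\ell(\gamma)\geq 2$, one computes $[p_\lambda]\exp\!\big(\sum_{\gamma\in\even} c_\gamma p_\gamma\big)=\pi(\lambda)^{-1}\prod_{l=1}^{\ell(\lambda)} c_{(\lambda_l)}$. Inserting the definition of $\Wa_{\nu\lambda}(2c)$ from \Cref{sec:a}, the factor $\pi(\lambda)$ cancels, and the sum over $\lambda\in\even$ becomes the expansion of the generating product
\[
\prod_{i=1}^{\ell(\nu)}\sum_{\pi\in NC(\nu_i)} W^A(\pi)(2c)\prod_{B\in\pi} |B|\,c_{(|B|)}.
\]
Since $c_{(k)}=0$ for odd $k$ (as $(k)\notin\even$), only $\pi\in\NCeven(\nu_i)$ contribute, yielding precisely the formula in (c) of the corollary. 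Combined with \Cref{thm:equivalence_odd2}, this gives (a)$\Leftrightarrow$(c), where the reverse implication uses that $\Wa(2c)[k]$ is upper-triangular with nonzero diagonal when $2c$ is not a negative integer, by \Cref{lemma:matrix_a2}.

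Next I would handle the equivalence of (b) and (d) in the same way. Substituting $d_\gamma=0$ for $\ell(\gamma)\geq 2$ into the right-hand side of (d) of the theorem, $[p_\lambda]\exp\!\big(\sum_\gamma d_\gamma p_\gamma\big)=\pi(\lambda)^{-1}\prod_l d_{(\lambda_l)}$, and the factor $\pi(\lambda)$ again cancels the corresponding factor in $\Wodd_{\nu\lambda}(2c)$. The prefactor $(1+2c)^{|\lambda|}=(1+2c)^{|\nu|}$ from the definition of $\Wodd$ is redistributed as $\prod_{i=1}^{\ell(\nu)}(1+2c)^{\nu_i}=\prod_i\prod_{B\in b(\pi_i)}(1+2c)^{|B|}$, since $|b(\pi_i)|=\nu_i$ for $\pi_i\in NC(\nu_i+1)$. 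Collecting these factors, the expression reduces to
\[
\prod_{i=1}^{\ell(\nu)}\sum_{\pi\in NC(\nu_i+1)} W^{D;\,\text{odd}}(\pi)(2c)\prod_{B\in b(\pi)} (1+2c)^{|B|}|B|\,d_{(|B|)},
\]
which is the formula in (d) of the corollary (again noting $d_{(k)}=0$ for odd $k$). The invertibility statement in the corollary transfers directly from \Cref{thm:equivalence_odd2}, using \Cref{lemma:matrix_odd2} to guarantee that $\Wodd(2c)[k]$ is upper-triangular with nonzero diagonal when $2c$ is not a negative integer.

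The main potential obstacle is the bookkeeping in the $\Wodd$ calculation: one must verify that the $(1+2c)^{|\lambda|}$ prefactor distributes correctly over the noncrossing-partition blocks of $b(\pi)$, that the restriction to $\even$ partitions does not introduce spurious odd-size blocks, and that the hypothesis ``$\prod_{j=1}^{N-k}(1+2(j-1)\theta)\neq 0$ for large $N$'' propagates cleanly to allow division in the asymptotic. These are all routine once the index accounting is laid out carefully; no new combinatorial identity is needed beyond the bijective reading of $\Wodd$ developed in \Cref{sec:d}.
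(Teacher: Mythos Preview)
Your proposal is correct and follows exactly the pattern the paper uses for its other corollaries of this type (e.g., \Cref{cor:equivalence_a1,cor:equivalence_a2,cor:equivalence_bc1}): specialize the theorem to $c_\lambda=d_\lambda=0$ for $\ell(\lambda)>1$, expand $[p_\lambda]\exp(\cdot)$, cancel the $\pi(\lambda)$ factors against those in $\Wa$ and $\Wodd$, and regroup into the product over noncrossing partitions. The paper in fact gives no explicit proof for this corollary, relying on the reader to repeat the computation from \Cref{cor:equivalence_a1}; your write-up fills in those details, including the correct distribution of the $(1+2c)^{|\nu|}$ prefactor over the blocks of $b(\pi)$ (since $\sum_{B\in b(\pi)}|B|=\nu_i$ for $\pi\in NC(\nu_i+1)$).
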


\begin{proof}
See \Cref{thm:equivalence_bc2}, \Cref{cor:equivalence_bc2}, and \Cref{thm:equivalence_odd2}.
\end{proof}

\section{Asymptotics of the coefficients of Bessel functions}
\label{sec:bessel_coeff}

In this section, we determine the asymptotics of the coefficients of Bessel functions in each of the regimes mentioned in \Cref{subsec:regime}. Note that we determine the asymptotics for the coefficients of the terms with a fixed degree and the analyses for different degrees are separate. Therefore, additional assumptions are required to deduce the convergence of the Bessel functions themselves rather than only their coefficients. For more discussion about this direction, see \Cref{subsec:uniformconverge}.

The basic idea of the proofs is to determine the asymptotics of the inverse of the matrix $([p_\lambda,p_\nu]_{A^{N-1}(\theta)})_{\lambda,\nu\in\Gamma[k]}$ and the analogous matrices for the $BC^N$ and $D^N$ root systems so that we can apply \pref{eq:eigenval}.

\subsection{Coefficients for the type A root system}

First, we consider the regime $|\theta N|\rightarrow\infty$.

\begin{lemma}
\label{lemma:inv_matrix_a1}
Assume that $|\theta N| \rightarrow\infty$. Suppose $k\geq 1$ and define $M\triangleq ([p_\lambda,p_\nu]_{A^{N-1}(\theta)}\\)_{\lambda,\nu\in\Gamma[k]}$. Suppose $\lambda,\nu\in\Gamma[k]$ and $\ell(\lambda)\leq \ell(\nu)$. 
\begin{enumerate}
\item[(a)] The entries $M^{-1}_{\lambda\nu}$ and $M^{-1}_{\nu\lambda}$ equal
\[
(\theta N)^{-k+\ell(\lambda)}N^{-\ell(\nu)} (\ma[k]^{-1}_{\lambda\nu} + o_N(1)).
\]
\item[(b)] The diagonal entry $\ma[k]_{\lambda\lambda}^{-1}$ equals $(\ma[k]_{\lambda\lambda})^{-1}$.
\item[(c)] If $\ell(\lambda)=\ell(\nu)$ and $\lambda\not=\nu$, then $\ma[k]_{\lambda\nu}^{-1}=0$.
\end{enumerate}
\end{lemma}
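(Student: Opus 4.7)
The plan is to realize the inverse asymptotics as arising from a diagonal rescaling of $M$ that converges entrywise to the block-triangular matrix $\mathcal{M}^A[k]$, and then invert. Concretely, I will introduce diagonal matrices $D_1, D_2 \in \mathbb{C}^{\Gamma[k]\times\Gamma[k]}$ defined by
\[
(D_1)_{\lambda\lambda} = N^{\ell(\lambda)}, \qquad (D_2)_{\nu\nu} = (\theta N)^{k-\ell(\nu)},
\]
and set $\hat M \triangleq D_1^{-1} M D_2^{-1}$. The entry we want to extract from $M^{-1}=D_2^{-1}\hat M^{-1} D_1^{-1}$ is exactly the expression appearing in (a), so the whole argument reduces to the entrywise convergence $\hat M \to \mathcal{M}^A[k]$.

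To verify that convergence, I will split into the two cases based on $\ell(\lambda)$ versus $\ell(\nu)$. When $\ell(\lambda) \le \ell(\nu)$, Theorem \ref{thm:leadingorder_a} directly gives
\[
M_{\lambda\nu} = \theta^{k-\ell(\nu)} N^{k+\ell(\lambda)-\ell(\nu)} \mathcal{M}^A_{\lambda\nu} + R(N,\theta),
\]
and the key technical step is checking that every monomial $N^a\theta^b$ in $R$ satisfies $|N^a\theta^b| = o_N(|\theta|^{k-\ell(\nu)}|N|^{k+\ell(\lambda)-\ell(\nu)})$ under $|\theta N|\to\infty$. Setting $u=k-\ell(\nu)-b$ and $v=k+\ell(\lambda)-\ell(\nu)-a$, conditions (1) and (2) on $R$ force $u \le v$ with $v\ge 1$ and $u\ge 0$, and then the ratio factors as $(1/(\theta N))^u (1/N)^{v-u}$, which tends to zero in both regimes $u\ge 1$ and $u=0$. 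When $\ell(\lambda) > \ell(\nu)$, I will instead invoke the symmetry $M_{\lambda\nu}=M_{\nu\lambda}$ of the Dunkl bilinear form and apply the same asymptotic to the swapped indices; the rescaled entry is then $(\theta N^2)^{\ell(\nu)-\ell(\lambda)}\mathcal{M}^A_{\nu\lambda} + o_N(1)$, which vanishes since $\ell(\nu)-\ell(\lambda)<0$, $|\theta N^2|\ge |\theta N|\cdot N\to\infty$, and $\mathcal{M}^A_{\lambda\nu}=0$ anyway by Lemma \ref{lemma:matrix_a1}(B). Together these give $\hat M_{\lambda\nu}\to\mathcal{M}^A[k]_{\lambda\nu}$ in every case.

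Once entrywise convergence is established, I will invoke Lemma \ref{lemma:matrix_a1} to conclude that $\mathcal{M}^A[k]$ is block upper triangular in the ordering by $\ell(\cdot)$, with each diagonal block itself diagonal and with strictly positive diagonal entries $\prod_l\nu_l\,\pi(\nu)$. Hence $\mathcal{M}^A[k]$ is invertible, so $\hat M$ is invertible for all sufficiently large $N$ and $\hat M^{-1} \to \mathcal{M}^A[k]^{-1}$ entrywise. Reading off $M^{-1} = D_2^{-1} \hat M^{-1} D_1^{-1}$ entrywise yields exactly the formula in (a), and the fact that $M^{-1}$ is symmetric accounts for both $M^{-1}_{\lambda\nu}$ and $M^{-1}_{\nu\lambda}$ having the same value.

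For (b) and (c), I will use the same block structure of $\mathcal{M}^A[k]$: the inverse of a block upper triangular matrix whose diagonal blocks are themselves diagonal is again block upper triangular, with diagonal blocks inverted in place. Therefore, on each slice of partitions of a common length, $\mathcal{M}^A[k]^{-1}$ is diagonal with entries equal to the reciprocals of $\mathcal{M}^A[k]_{\lambda\lambda}$. The main obstacle I anticipate is simply the careful accounting of the error term $R(N,\theta)$ described above; everything else reduces to linear algebra of triangular matrices and the explicit scaling introduced by $D_1, D_2$.
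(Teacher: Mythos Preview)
Your proof is correct and follows essentially the same approach as the paper: both introduce diagonal rescalings $D_1,D_2$ so that $D_1^{-1}MD_2^{-1}$ (your $\hat M$, the paper's $\mathcal{M}'$ up to absorbing the scalar $(\theta N)^k$ into $D_2$) converges entrywise to $\mathcal{M}^A[k]$, then invert. Your treatment of the remainder via the substitution $u=k-\ell(\nu)-b$, $v=k+\ell(\lambda)-\ell(\nu)-a$ is more explicit than the paper's, and your handling of $\ell(\lambda)>\ell(\nu)$ via the symmetry $M_{\lambda\nu}=M_{\nu\lambda}$ is exactly what the paper does; for (b) and (c) both arguments exploit the block upper-triangular structure of $\mathcal{M}^A[k]$ with diagonal blocks that are themselves diagonal.
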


\begin{proof}
(a): Define the diagonal matrices $D_1$ and $D_2$ with diagonal entry $(\gamma,\gamma)$ given by $N^{\ell(\gamma)}$ and $(\theta N)^{\ell(\gamma)}$, respectively, for all $\gamma\in\Gamma[k]$. Based on \Cref{thm:leadingorder_a}, it is evident that 
\[
M = (\theta N)^k D_1 \left(\ma[k]+o_N(1)\right) D_2^{-1}.
\]
To obtain that, consider
\[
\mathcal{M}'=(\theta N)^{-k} D_1^{-1} M D_2.
\]
For $\lambda',\nu'\in\Gamma$, entry $(\lambda',\nu')$ of this matrix is $(\theta N)^{-k} N^{-\ell(\lambda')}(\theta N)^{\ell(\nu')} M_{\lambda'\nu'}$. If $\ell(\lambda')\leq \ell(\nu')$, then by \Cref{thm:leadingorder_a}, the entry is $\ma_{\lambda'\nu'}+o_N(1)$. Otherwise, if $\ell(\lambda')>\ell(\nu')$, the entry is
\[
O_N(N^{2(\ell(\nu')-\ell(\lambda'))}\theta^{\ell(\nu')-\ell(\lambda')}) = O_N(N^{\ell(\nu')-\ell(\lambda')} (\theta N)^{\ell(\nu')-\ell(\lambda')}) = o_N(1).
\]
Hence, $\mathcal{M}'=\ma[k]+o_N(1)$. It follows that
\[
M^{-1} = (\theta N)^{-k}D_2\left(\ma[k]^{-1}+o_N(1)\right) D_1^{-1}.
\]
With this expression, we can approximate $M^{-1}_{\lambda\nu}$ and therefore $M_{\nu\lambda}^{-1}$ as well.

(b) and (c): Suppose the elements of $I\subset \Gamma[k]$ are consecutive based on the ordering of the rows and columns of $\ma$; note that the rows and columns are ordered increasingly by length. Then, we have that
\[
\ma[k][I, I]^{-1} = \ma[k]^{-1}[I, I],
\]
where $\ma[k][I,I]$ and $\ma[k]^{-1}[I, I]$ denote $\ma[k]$ and $\ma[k]^{-1}$ with their rows and columns restricted to $I$, respectively. It is straightforward to prove (b) and (c) using this result.
\end{proof}

\begin{corollary}
\label{cor:bessel_a1}
If $|\theta N|\rightarrow\infty$, then for $k\geq 1$ and $a,x\in\mathbb{C}^N$,
\begin{align*}
& J^{A^{N-1}(\theta)}_a[k](x) = \sum_{r\in \Gamma[k]} (\theta N)^{-k} \theta^{\ell(r)}\left(\pi(r)\prod_{l=1}^{\ell(r)}r_l\right)^{-1}(1+o_N(1))r(a)r(x)\\
& + \sum_{r,s\in\Gamma[k],\,r\not=s}(\theta N)^{-k}(\theta N)^{\min(\ell(r), \ell(s))} N^{-\max(\ell(r),\ell(s))}\times \\
& (\ma[k]^{-1}_{rs} + \ma[k]^{-1}_{sr}+ o_N(1))r(a)s(x).
\end{align*}
\end{corollary}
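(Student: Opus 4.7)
The plan is to combine the eigenvector formula from \Cref{lemma:eigenvectorformula} with the matrix-inverse asymptotics of \Cref{lemma:inv_matrix_a1}. In the framework of \Cref{example:dunkl}, $J_a^{A^{N-1}(\theta)}(x)$ is the unique $\Psi$-eigenvector of $\mathcal{D}_H(A^{N-1}(\theta))$ with $E^0=1$, where $\Psi(f)=f(a)$. Taking $A_k=B_k=\{p_\lambda:\lambda\in\Gamma[k]\}$, the matrix $M^{k;\,\mathcal{D}_H(A^{N-1}(\theta))}$ is precisely $M\triangleq([p_\lambda,p_\nu]_{A^{N-1}(\theta)})_{\lambda,\nu\in\Gamma[k]}$, which by \Cref{lemma:inv_matrix_a1} is invertible for all sufficiently large $N$. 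By \pref{eq:eigenval} and the symmetry $M^{-1}_{sr}=M^{-1}_{rs}$ inherited from the symmetry of the Dunkl bilinear form, I would first write
\[
J_a^{A^{N-1}(\theta)}[k](x)=\sum_{r,s\in\Gamma[k]} M^{-1}_{rs}\,p_r(a)\,p_s(x).
\]

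Next I would substitute the asymptotics of $M^{-1}_{rs}$ from \Cref{lemma:inv_matrix_a1} and split into diagonal and off-diagonal contributions. For the diagonal $r=s$, part (a) of the lemma gives $M^{-1}_{rr}=(\theta N)^{-k+\ell(r)}N^{-\ell(r)}(\ma[k]^{-1}_{rr}+o_N(1))$, part (b) together with part (A) of \Cref{lemma:matrix_a1} evaluates $\ma[k]^{-1}_{rr}=(\pi(r)\prod_l r_l)^{-1}$, and the prefactor simplifies via $(\theta N)^{-k+\ell(r)}N^{-\ell(r)}=(\theta N)^{-k}\theta^{\ell(r)}$; this gives the first sum in the corollary.

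For the off-diagonal terms, the key observation is that by \Cref{lemma:matrix_a1}(B) the matrix $\ma[k]$ is upper-triangular when rows and columns are ordered by increasing length, and hence so is $\ma[k]^{-1}$; combined with part (c) of \Cref{lemma:inv_matrix_a1}, at most one of $\ma[k]^{-1}_{rs}$ and $\ma[k]^{-1}_{sr}$ is nonzero, namely the one whose first index has strictly smaller length (both vanish if $\ell(r)=\ell(s)$ with $r\neq s$). Whichever is nonzero is paired by part (a) of the lemma with the scaling $(\theta N)^{-k+\min(\ell(r),\ell(s))}N^{-\max(\ell(r),\ell(s))}$; adding $\ma[k]^{-1}_{rs}+\ma[k]^{-1}_{sr}$ unifies all three cases ($\ell(r)<\ell(s)$, $\ell(r)=\ell(s)$, $\ell(r)>\ell(s)$) into the single expression in the statement.

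Because \Cref{lemma:inv_matrix_a1} has already done the real work of inverting the matrix and extracting its leading-order behavior, the corollary is essentially a bookkeeping exercise. The main obstacle is simply ensuring the case analysis for the off-diagonal terms is packaged uniformly so that the prefactor appears as $(\theta N)^{-k}(\theta N)^{\min(\ell(r),\ell(s))}N^{-\max(\ell(r),\ell(s))}$ and the coefficient appears as the symmetric combination $\ma[k]^{-1}_{rs}+\ma[k]^{-1}_{sr}$; no new ideas beyond the cited lemmas are required.
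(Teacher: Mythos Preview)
Your proposal is correct and follows essentially the same route as the paper: the paper's proof is the one-line reference ``See \pref{eq:eigenval}, \Cref{lemma:matrix_a1}, and \Cref{lemma:inv_matrix_a1},'' and you have simply spelled out how these three ingredients combine---the eigenvector formula yields the double sum over $r,s$, the inverse-matrix asymptotics supply the scaling, and the triangular structure of $\ma[k]$ handles the off-diagonal bookkeeping. The only cosmetic remark is that the basis should strictly be $\{p_\lambda:\lambda\in\Gamma_N[k]\}$, but for $N\geq k$ this coincides with $\Gamma[k]$ and so does not affect the argument.
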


\begin{proof}
See \pref{eq:eigenval}, \Cref{lemma:matrix_a1}, and \Cref{lemma:inv_matrix_a1}.
\end{proof}

Next, we consider the regime $\theta N\rightarrow c\in\mathbb{C}$. Recall from \Cref{thm:equivalence_a2} that in order for the invertibility of $\mathcal{D}(A^{N-1}(\theta))$ to hold in an asymptotic sense, we must have that $c$ is not a negative integer. We require the invertibility of $\mathcal{D}(A^{N-1}(\theta))$ to compute the Bessel function, so we assume that $c$ is not a negative integer.

\begin{lemma}
\label{lemma:inv_matrix_a2}
Assume that $\theta N\rightarrow c\in\mathbb{C}$ and that $c$ is not a negative integer. Suppose $k\geq 1$ and define $M\triangleq ([p_\lambda,p_\nu]_{A^{N-1}(\theta)})_{\lambda,\nu\in\Gamma[k]}$. Suppose $\lambda,\nu\in\Gamma[k]$ and $\ell(\lambda)\leq \ell(\nu)$. 
\begin{enumerate}
\item[(a)] The entries $M^{-1}_{\lambda\nu}$ and $M^{-1}_{\nu\lambda}$ equal
\[
N^{-\ell(\nu)}(\Wa(c)[k]^{-1}_{\lambda\nu} + o_N(1)).
\]
\item[(b)] The diagonal entry $\Wa(c)[k]_{\lambda\lambda}^{-1}$ equals $(\Wa(c)[k]_{\lambda\lambda})^{-1}$.
\item[(c)] If $\ell(\lambda)=\ell(\nu)$ and $\lambda\not=\nu$, then $\Wa(c)[k]_{\lambda\nu}^{-1}=0$.
\end{enumerate}
\end{lemma}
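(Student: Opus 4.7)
The plan is to adapt the proof of \Cref{lemma:inv_matrix_a1} to the $\theta N \to c$ regime by invoking \Cref{thm:leadingorder_a2} in place of \Cref{thm:leadingorder_a}. Because $\theta N$ is bounded in this setting, only one diagonal scaling factor is required rather than the pair $D_1,D_2$ used in the $|\theta N|\to\infty$ argument.

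For part (a), first define the diagonal matrix $D \in \mathbb{C}^{\Gamma[k] \times \Gamma[k]}$ with $D_{\gamma\gamma} = N^{\ell(\gamma)}$ for $\gamma \in \Gamma[k]$. I claim that $D^{-1} M = \Wa(c)[k] + o_N(1)$ entrywise. For $\lambda, \nu \in \Gamma[k]$ with $\ell(\lambda) \leq \ell(\nu)$, \Cref{thm:leadingorder_a2} yields $M_{\lambda\nu} = N^{\ell(\lambda)}(\Wa(N\theta)[k]_{\lambda\nu} + o_N(1))$, since each summand of the remainder $R(N,\theta)$ has the form $N^a\theta^b$ with $a-b \leq \ell(\lambda)-1$ and hence contributes $N^{a-b}(\theta N)^b = O(N^{\ell(\lambda)-1})$ once $\theta N$ stays bounded. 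The polynomial $\Wa(N\theta)[k]_{\lambda\nu}$ converges to $\Wa(c)[k]_{\lambda\nu}$ because $\theta N \to c$, so $(D^{-1} M)_{\lambda\nu} = \Wa(c)[k]_{\lambda\nu} + o_N(1)$. For $\ell(\lambda) > \ell(\nu)$, symmetry of $M$ together with the previous case gives $M_{\lambda\nu} = M_{\nu\lambda} = O(N^{\ell(\nu)})$, so $(D^{-1} M)_{\lambda\nu} = O(N^{\ell(\nu)-\ell(\lambda)}) = o_N(1)$, which matches $\Wa(c)[k]_{\lambda\nu} = 0$ by \Cref{lemma:matrix_a2}.

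Since $c$ is not a negative integer, \Cref{lemma:matrix_a2} guarantees that $\Wa(c)[k]$ is upper triangular with nonzero diagonal once its rows and columns are ordered by increasing length, hence invertible. Inverting the entrywise convergence $D^{-1}M \to \Wa(c)[k]$ yields $M^{-1} = (\Wa(c)[k]^{-1} + o_N(1))\,D^{-1}$, so $M^{-1}_{\lambda\nu} = N^{-\ell(\nu)}(\Wa(c)[k]^{-1}_{\lambda\nu} + o_N(1))$. The identical formula for $M^{-1}_{\nu\lambda}$ follows from the symmetry of $M^{-1}$. For parts (b) and (c), following the argument in the proof of \Cref{lemma:inv_matrix_a1}, I would note that $\Wa(c)[k]$ is block upper triangular when partitions are grouped by length, and within each diagonal block \Cref{lemma:matrix_a2}(B) forces the block to be diagonal. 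Inverting a block triangular matrix preserves block triangularity and inverts the diagonal blocks, so each diagonal block of $\Wa(c)[k]^{-1}$ is the entrywise reciprocal of the corresponding diagonal block of $\Wa(c)[k]$, giving (b) and (c) at once.

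The main obstacle, if any, is verifying that the remainder $R(N,\theta)$ from \Cref{thm:leadingorder_a2} contributes only $o_N(N^{\ell(\lambda)})$ to $M_{\lambda\nu}$ under $\theta N\to c$. This is the only place the structural bound on $R$ is used, but it reduces to the elementary estimate $N^{a}\theta^b = N^{a-b}(\theta N)^b = O(N^{\ell(\lambda)-1})$ granted by the degree condition, so once this bookkeeping is dispatched the remainder of the argument is purely formal linear algebra.
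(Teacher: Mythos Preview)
Your proposal is correct and follows essentially the same approach as the paper: a single diagonal scaling $D$ with $D_{\gamma\gamma}=N^{\ell(\gamma)}$, the factorization $M = D(\Wa(c)[k]+o_N(1))$ obtained from \Cref{thm:leadingorder_a2}, and the block-triangular structure from \Cref{lemma:matrix_a2} for parts (b) and (c). You have in fact supplied more detail than the paper, which simply points to the proof of \Cref{lemma:inv_matrix_a1}; in particular your explicit verification that the remainder $R(N,\theta)$ contributes $o_N(N^{\ell(\lambda)})$ and your handling of the off-diagonal case $\ell(\lambda)>\ell(\nu)$ via symmetry are exactly the bookkeeping the paper leaves implicit.
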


\begin{proof}
(a): We can use the same argument as the proof of \Cref{lemma:inv_matrix_a1} and \Cref{thm:leadingorder_a2} to obtain that
\[
M=D (\mathcal{W}^A(c)[k] + o_N(1))
\]
where $D$ is a diagonal matrix with diagonal entry $(\gamma,\gamma)$ given by $N^{\ell(\gamma)}$ for $\gamma\in\Gamma[k]$. Hence,
\[
M^{-1} = D^{-1}(\mathcal{W}^A(c)[k]^{-1}+o_N(1)),
\]
which completes the proof since we can approximate $M^{-1}_{\lambda\nu}$ and therefore $M^{-1}_{\nu\lambda}$ as well.
\end{proof}

\begin{corollary}
\label{cor:bessel_a2}
If $\theta N\rightarrow c\in\mathbb{C}$ and $c$ is not a negative integer, then for $k\geq 1$ and $a,x\in\mathbb{C}^N$,
\begin{align*}
& J^{A^{N-1}(\theta)}_a[k](x) = \sum_{r\in \Gamma[k]} N^{-\ell(r)}\left(\pi(r)\prod_{l=1}^{\ell(r)}r_l W^A([r_l])(c)\right)^{-1}(1+o_N(1))r(a)r(x)\\
& + \sum_{r,s\in\Gamma[k],\, r\not=s}N^{-\max(\ell(r),\ell(s))}(\Wa(c)[k]^{-1}_{rs} + \Wa(c)[k]^{-1}_{sr}+ o_N(1))r(a)s(x).
\end{align*}
\end{corollary}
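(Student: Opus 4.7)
The plan is to mirror the proof of \Cref{cor:bessel_a1}, substituting the regime-$\theta N\rightarrow c$ asymptotics from \Cref{lemma:inv_matrix_a2} for the regime-$|\theta N|\rightarrow\infty$ asymptotics used there. The hypothesis that $c$ is not a negative integer plays two roles: it guarantees that $\Wa(c)[k]$ is invertible (so that \Cref{lemma:inv_matrix_a2} applies), and it ensures that the explicit diagonal entries $W^A([r_l])(c)$ appearing in the statement are nonzero so that the first sum is well-defined.

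First, I would apply \pref{eq:eigenval} to the $\Psi$-eigenvector $J_a^{A^{N-1}(\theta)}(x)$ of $\mathcal{D}_H(A^{N-1}(\theta))$ with $\Psi: r\mapsto r(a)$, whose existence is guaranteed by \Cref{thm:dunkl_eigenfunction_symmetry} for $N$ sufficiently large. For $N\geq k$, $\Gamma[k]=\Gamma_N[k]$ is a basis for the degree-$k$ symmetric polynomials, so with $M\triangleq([p_\lambda,p_\nu]_{A^{N-1}(\theta)})_{\lambda,\nu\in\Gamma[k]}$, \pref{eq:eigenval} yields the coefficient of $r(x)$ in $J_a^{A^{N-1}(\theta)}[k](x)$ as $\sum_{s\in\Gamma[k]} M^{-1}_{rs}\,s(a)$. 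After relabeling, we obtain
\[
J_a^{A^{N-1}(\theta)}[k](x)=\sum_{r,s\in\Gamma[k]} M^{-1}_{sr}\,r(a)s(x).
\]

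Next I would split this double sum into diagonal terms ($r=s$) and off-diagonal terms ($r\neq s$). For the diagonal, part (b) of \Cref{lemma:inv_matrix_a2} gives $M^{-1}_{rr}=N^{-\ell(r)}(\Wa(c)[k]^{-1}_{rr}+o_N(1))=N^{-\ell(r)}((\Wa(c)[k]_{rr})^{-1}+o_N(1))$. Substituting the explicit formula $\Wa(c)[k]_{rr}=\pi(r)\prod_{l=1}^{\ell(r)}r_l W^A([r_l])(c)$ from part (A) of \Cref{lemma:matrix_a2}, and using that each $W^A([r_l])(c)$ is nonzero (since $c$ is not a negative integer), we recover the first sum of the statement. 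For the off-diagonal part, part (a) of \Cref{lemma:inv_matrix_a2} gives that whenever $\ell(r)\leq\ell(s)$, both $M^{-1}_{rs}$ and $M^{-1}_{sr}$ equal $N^{-\ell(s)}(\Wa(c)[k]^{-1}_{rs}+o_N(1))$. Combining this with the analogous approximation obtained when $\ell(s)\leq\ell(r)$ and noting that part (c) of \Cref{lemma:inv_matrix_a2} forces $\Wa(c)[k]^{-1}_{sr}=0$ when $\ell(r)<\ell(s)$ (and symmetrically), the single nonzero contribution can be written in the symmetric form $N^{-\max(\ell(r),\ell(s))}(\Wa(c)[k]^{-1}_{rs}+\Wa(c)[k]^{-1}_{sr}+o_N(1))$ without double-counting, which produces the second sum of the statement.

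I do not expect a genuine obstacle in this argument, since all of the substantive input is already packaged in \Cref{lemma:inv_matrix_a2} and \Cref{lemma:matrix_a2}. The main points requiring care are bookkeeping: tracking the $r(a)$ versus $s(x)$ indexing under the re-labeling $M^{-1}_{rs}\,s(a)r(x)\mapsto M^{-1}_{sr}\,r(a)s(x)$, and justifying the symmetric off-diagonal form via the compatibility between the symmetry $M^{-1}_{rs}=M^{-1}_{sr}$ and the rank-type vanishing in part (c) of \Cref{lemma:inv_matrix_a2}. Once these are addressed, the result follows directly from substitution.
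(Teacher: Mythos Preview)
Your proposal is correct and follows essentially the same approach as the paper, which simply cites \pref{eq:eigenval}, \Cref{lemma:matrix_a2}, and \Cref{lemma:inv_matrix_a2}; you have merely unpacked these references in detail. One minor point: the vanishing $\Wa(c)[k]^{-1}_{sr}=0$ for $\ell(r)<\ell(s)$ that you attribute to part (c) of \Cref{lemma:inv_matrix_a2} actually comes from the block upper-triangularity of $\Wa(c)[k]$ (part (B) of \Cref{lemma:matrix_a2}), while part (c) handles only the equal-length case, but this does not affect the correctness of your argument.
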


\begin{proof}
See \pref{eq:eigenval}, \Cref{lemma:matrix_a2}, and \Cref{lemma:inv_matrix_a2}.
\end{proof}

\subsection{Coefficients for the type BC root system}

The framework of this subsection is analogous to that of the $A^{N-1}$ root system.

\begin{lemma}
\label{lemma:inv_matrix_bc1}
Suppose $|\theta_0 N|\rightarrow\infty$ and $\frac{\theta_1}{\theta_0 N} \rightarrow c\in\mathbb{C}$ such that $c\not=-1$. Suppose $k\geq 1$ and define $M\triangleq ([p_\lambda,p_\nu]_{BC^N(\theta_0,\theta_1)})_{\lambda,\nu\in\even[k]}$. Suppose $\lambda,\nu\in\even[k]$ and $\ell(\lambda)\leq \ell(\nu)$. 
\begin{enumerate}
\item[(a)] The entries $M^{-1}_{\lambda\nu}$ and $M^{-1}_{\nu\lambda}$ equal
\[
(\theta_0 N)^{-k+\ell(\lambda)}N^{-\ell(\nu)} (\mbc(c)[k]^{-1}_{\lambda\nu} + o_N(1)).
\]
\item[(b)] The diagonal entry $\mbc(c)[k]_{\lambda\lambda}^{-1}$ equals $(\mbc(c)[k]_{\lambda\lambda})^{-1}$.
\item[(c)] If $\ell(\lambda)=\ell(\nu)$ and $\lambda\not=\nu$, then $\mbc(c)[k]_{\lambda\nu}^{-1}=0$.
\end{enumerate}
\end{lemma}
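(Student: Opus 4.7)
The plan is to mirror the proof of \Cref{lemma:inv_matrix_a1} almost verbatim, replacing \Cref{thm:leadingorder_a} with \Cref{thm:leadingorder_bc} and $\ma$ with $\mbc(c)$. Since the structural features used in the type $A$ argument (the rescaling by diagonal matrices of powers of $N$ and $\theta N$, the upper-triangularity of $\ma[k]$ in the length ordering, and the block-preserving behavior of restriction to consecutive blocks) all have direct analogues for the type $BC$ matrix, no substantively new idea is required.

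For part (a), I would let $D_1$ and $D_2$ be the diagonal matrices indexed by $\even[k]$ with diagonal entries $N^{\ell(\gamma)}$ and $(\theta_0 N)^{\ell(\gamma)}$, respectively, and consider
\[
\mathcal{M}' \triangleq (\theta_0 N)^{-k} D_1^{-1} M D_2.
\]
For $\lambda',\nu'\in\even[k]$, entry $(\lambda',\nu')$ of $\mathcal{M}'$ equals $(\theta_0 N)^{-k} N^{-\ell(\lambda')} (\theta_0 N)^{\ell(\nu')} M_{\lambda'\nu'}$. When $\ell(\lambda')\le \ell(\nu')$, \Cref{thm:leadingorder_bc} together with $\theta_1/(\theta_0 N)\to c$ gives that this entry is $\mbc(c)_{\lambda'\nu'}+o_N(1)$, where the substitution $(1+\theta_1/(\theta_0 N))^{o(\pi)}\to (1+c)^{o(\pi)}$ accounts for the dependence on $\theta_1$ in the leading term and the three bounds in the remainder $R(N,\theta_0,\theta_1)$ ensure the remaining contribution is $o_N(1)$ once the degrees of $x$, $y$, and $z$ are tracked against the rescaling. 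When $\ell(\lambda')>\ell(\nu')$, the same bounds (specifically, condition (1) of \Cref{thm:leadingorder_bc} together with $|\theta_0 N|\to\infty$) force the entry to be $o_N(1)$, matching $\mbc(c)_{\lambda'\nu'}=0$ from part (B) of \Cref{lemma:matrix_bc1}. Hence $\mathcal{M}'=\mbc(c)[k]+o_N(1)$. By \Cref{lemma:matrix_bc1}, $\mbc(c)[k]$ is upper-triangular in the length ordering with diagonal entries $\pi(\nu)\prod_l 2^{\nu_l-1}\nu_l(1+c)^{o([\nu_l])}$, all nonzero because $c\not=-1$; so $\mathcal{M}'$ is invertible for $N$ large, and
\[
M^{-1} = (\theta_0 N)^{-k} D_2 (\mbc(c)[k]^{-1}+o_N(1)) D_1^{-1},
\]
from which the stated asymptotics for $M^{-1}_{\lambda\nu}$ and $M^{-1}_{\nu\lambda}$ follow.

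For parts (b) and (c), I would exploit the fact that, when rows and columns of $\mbc(c)[k]$ are ordered by increasing length, the matrix is block upper-triangular with diagonal blocks indexed by partitions of a common length. Concretely, if $I\subset\even[k]$ is a set of consecutive partitions in this ordering, then $\mbc(c)[k][I,I]^{-1}=\mbc(c)[k]^{-1}[I,I]$. Taking $I=\{\lambda\}$ gives (b), and taking $I$ to be the set of partitions of a fixed length containing $\lambda$ and $\nu$ (where the restricted matrix is diagonal by part (A) of \Cref{lemma:matrix_bc1}) gives (c).

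The only step that requires genuine care is verifying that each summand of the remainder $R$ in \Cref{thm:leadingorder_bc} is $o_N((\theta_0 N)^{k-\ell(\nu')}N^{\ell(\lambda')})$ after substituting $\theta_1$ with its asymptotic proportion to $\theta_0 N$; this is a direct but slightly delicate bookkeeping exercise using all three degree conditions on $R$ and the hypothesis $|\theta_0 N|\to\infty$, and it is exactly the type $BC$ analogue of the corresponding check made implicitly in the type $A$ argument of \Cref{lemma:inv_matrix_a1}.
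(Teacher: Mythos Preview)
Your proposal is correct and follows essentially the same approach as the paper, which simply refers to \Cref{thm:leadingorder_bc} and expects the reader to replay the proof of \Cref{lemma:inv_matrix_a1} with the obvious substitutions. You have spelled out precisely that adaptation, including the diagonal rescaling, the invertibility via \Cref{lemma:matrix_bc1} under $c\neq -1$, and the block-triangular argument for (b) and (c).
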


\begin{proof}
See \Cref{thm:leadingorder_bc}.
\end{proof}

\begin{corollary}
\label{cor:bessel_bc1}
If $|\theta_0 N|\rightarrow\infty$, $\frac{\theta_1}{\theta_0 N}\rightarrow c\in\mathbb{C}$, and $c\not=-1$, then for $k\geq 1$ and $a,x\in\mathbb{C}^N$,
\begin{align*}
& J^{BC^N(\theta_0,\theta_1)}_a[k](x) = \sum_{r\in \even[k]} (\theta_0 N)^{-k} \theta_0^{\ell(r)}\left(\pi(r)\prod_{l=1}^{\ell(r)}2^{r_l-1}r_l (1+c)^{o([r_l])}\right)^{-1} \times \\ 
&(1+o_N(1))r(a)r(x)+ \sum_{r,s\in\even[k],\, r\not=s}(\theta_0 N)^{-k}(\theta_0 N)^{\min(\ell(r), \ell(s))} N^{-\max(\ell(r),\ell(s))}\times \\
& (\mbc(c)[k]^{-1}_{rs} + \mbc(c)[k]^{-1}_{sr} + o_N(1))r(a)s(x).
\end{align*}
\end{corollary}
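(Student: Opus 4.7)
The plan is to mirror the proof of \Cref{cor:bessel_a1} and follow the three-step template: invoke the eigenvector formula \pref{eq:eigenval}, insert the asymptotic inverse from \Cref{lemma:inv_matrix_bc1}, and read off the diagonal coefficients from \Cref{lemma:matrix_bc1}. Setting $M\triangleq([p_\lambda,p_\nu]_{BC^N(\theta_0,\theta_1)})_{\lambda,\nu\in\even[k]}$ and $\Psi(p_r)\triangleq p_r(a)$, the symmetric eigenvector formula (applied with basis $\{p_\nu:\nu\in\even[k]\}$ of the $H(BC^N)$-invariant homogeneous polynomials of degree $k$) yields
\[
J_a^{BC^N(\theta_0,\theta_1)}[k](x)=\sum_{r,s\in\even[k]}M^{-1}_{sr}\,p_r(a)p_s(x)
\]
for all $N$ sufficiently large that $M$ is invertible, a property built into the very statement of \Cref{lemma:inv_matrix_bc1}.

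For the diagonal contributions ($r=s$), I will take $\lambda=\nu=r$ in \Cref{lemma:inv_matrix_bc1}(a) to obtain $M^{-1}_{rr}=(\theta_0 N)^{-k+\ell(r)}N^{-\ell(r)}(\mbc(c)[k]^{-1}_{rr}+o_N(1))$, then combine \Cref{lemma:inv_matrix_bc1}(b) with the closed form in \Cref{lemma:matrix_bc1}(A) to evaluate $\mbc(c)[k]^{-1}_{rr}=\bigl(\pi(r)\prod_{l=1}^{\ell(r)}2^{r_l-1}r_l(1+c)^{o([r_l])}\bigr)^{-1}$. The identity $(\theta_0 N)^{-k+\ell(r)}N^{-\ell(r)}=(\theta_0 N)^{-k}\theta_0^{\ell(r)}$ then produces the first sum in the stated formula.

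For the off-diagonal contributions ($r\neq s$), the symmetry of the Dunkl bilinear form makes $M$ and therefore $M^{-1}$ symmetric, so I may assume $\ell(r)\leq\ell(s)$; \Cref{lemma:inv_matrix_bc1}(a) then supplies the exponent pattern $(\theta_0 N)^{-k+\min(\ell(r),\ell(s))}N^{-\max(\ell(r),\ell(s))}$ and identifies the leading-order entry as $\mbc(c)[k]^{-1}_{rs}$. The upper-triangular vanishing in \Cref{lemma:matrix_bc1}(B) transfers to the inverse, forcing $\mbc(c)[k]^{-1}_{sr}=0$ whenever $\ell(s)>\ell(r)$, while \Cref{lemma:inv_matrix_bc1}(c) eliminates the equal-length case with $r\neq s$. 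Hence at most one of $\mbc(c)[k]^{-1}_{rs}$ and $\mbc(c)[k]^{-1}_{sr}$ can be nonzero, and their sum equals the correct leading coefficient of $M^{-1}_{sr}$ regardless of which of $r,s$ is longer, which recovers the off-diagonal portion of the formula. No step presents a genuine obstacle since every ingredient is already proved; the only item requiring care is the bookkeeping that rewrites $\mbc(c)[k]^{-1}_{rs}+\mbc(c)[k]^{-1}_{sr}$ as a single well-defined leading term, compatible with the symmetry $M^{-1}_{rs}=M^{-1}_{sr}$ and preventing double-counting when the sum is interpreted over ordered pairs.
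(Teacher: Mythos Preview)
Your proposal is correct and follows essentially the same approach as the paper, which simply cites \pref{eq:eigenval}, \Cref{lemma:matrix_bc1}, and \Cref{lemma:inv_matrix_bc1}. You have merely spelled out in detail how these three ingredients combine, including the bookkeeping that symmetrizes the off-diagonal coefficient as $\mbc(c)[k]^{-1}_{rs}+\mbc(c)[k]^{-1}_{sr}$.
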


\begin{proof}
See \pref{eq:eigenval}, \Cref{lemma:matrix_bc1}, and \Cref{lemma:inv_matrix_bc1}.
\end{proof}

\begin{lemma}
\label{lemma:inv_matrix_bc2}
Suppose $\theta_0 N\rightarrow c_0\in\mathbb{C}$, $\theta_1\rightarrow c_1\in\mathbb{C}$, $c_0$ is not a negative integer, and $2c_0+2c_1$ is not a negative odd integer. Suppose $k\geq 1$ and define $M\triangleq ([p_\lambda,p_\nu]_{BC^N(\theta_0,\theta_1)}\\)_{\lambda,\nu\in\even[k]}$. Suppose $\lambda,\nu\in\even[k]$ and $\ell(\lambda)\leq \ell(\nu)$. 
\begin{enumerate}
\item[(a)] The entries $M^{-1}_{\lambda\nu}$ and $M^{-1}_{\nu\lambda}$ equal
\[
N^{-\ell(\nu)}(\Wbc(c_0,c_1)[k]^{-1}_{\lambda\nu} + o_N(1)).
\]
\item[(b)] The diagonal entry $\Wbc(c_0, c_1)[k]_{\lambda\lambda}^{-1}$ equals $(\Wbc(c_0, c_1)[k]_{\lambda\lambda})^{-1}$.
\item[(c)] If $\ell(\lambda)=\ell(\nu)$ and $\lambda\not=\nu$, then $\Wbc(c_0, c_1)[k]_{\lambda\nu}^{-1}=0$.
\end{enumerate}
\end{lemma}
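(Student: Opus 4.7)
The plan is to mirror the proofs of Lemmas \ref{lemma:inv_matrix_a2} and \ref{lemma:inv_matrix_bc1}. Let $D$ be the diagonal matrix in $\mathbb{C}^{\even[k]\times\even[k]}$ with diagonal entries $D_{\gamma\gamma} = N^{\ell(\gamma)}$ for $\gamma\in\even[k]$. Using \Cref{thm:leadingorder_bc2} together with the hypotheses $\theta_0 N\to c_0$ and $\theta_1\to c_1$, I would show entrywise that $D^{-1}M = \Wbc(2c_0,2c_1)[k] + o_N(1)$. For $\ell(\lambda')\le\ell(\nu')$ this follows directly from the leading-order formula in \Cref{thm:leadingorder_bc2}: the prefactor $N^{\ell(\lambda')}$ cancels against $D^{-1}_{\lambda'\lambda'}$, the substitution $\theta_0 N\to c_0$, $\theta_1\to c_1$ produces $\Wbc(2c_0,2c_1)[k]_{\lambda'\nu'}$, and the boundedness of $\theta_0$ and $\theta_1$ together with the degree restriction on $R$ forces the remainder to be $o_N(N^{\ell(\lambda')})$. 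For $\ell(\lambda')>\ell(\nu')$, part (B) of \Cref{lemma:matrix_bc2} gives $\Wbc(2c_0,2c_1)[k]_{\lambda'\nu'}=0$, and the same degree bound on $R$ combined with $|\lambda'|=|\nu'|$ yields $(D^{-1}M)_{\lambda'\nu'} = O_N(N^{\ell(\nu')-\ell(\lambda')}) = o_N(1)$.

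Under the hypotheses that $c_0$ is not a negative integer and $2c_0+2c_1$ is not a negative odd integer, the matrix $\Wbc(2c_0,2c_1)[k]$ is invertible. This is precisely the argument used in the last paragraph of the proof of \Cref{thm:equivalence_bc2}: each factor $W^{BC}([l])(2c_0,2c_1)$ with $l\in 2\mathbb{N}$ is nonzero under these conditions, so part (A) of \Cref{lemma:matrix_bc2} shows that the diagonal of $\Wbc(2c_0,2c_1)[k]$ is nonzero; after ordering $\even[k]$ increasingly by $\ell(\cdot)$, part (B) of the same lemma makes the matrix upper-triangular, hence invertible. Inverting the asymptotic expansion $D^{-1}M = \Wbc(2c_0,2c_1)[k] + o_N(1)$ (valid for all sufficiently large $N$ since $\even[k]$ is finite) gives
\[
M^{-1} = \bigl(\Wbc(2c_0,2c_1)[k]^{-1} + o_N(1)\bigr)\,D^{-1},
\]
from which both $M^{-1}_{\lambda\nu}$ and $M^{-1}_{\nu\lambda}$ in part (a) can be read off.

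For parts (b) and (c), the argument follows the block-triangular reduction used at the end of the proof of \Cref{lemma:inv_matrix_a1}. With rows and columns of $\Wbc(2c_0,2c_1)[k]$ ordered by the length statistic, the matrix is block upper-triangular, and within each length-class \Cref{lemma:matrix_bc2} shows that the corresponding diagonal block is itself a diagonal matrix with nonzero entries. For any consecutive index set $I\subset\even[k]$ one has the standard identity $\Wbc(2c_0,2c_1)[k][I,I]^{-1} = \Wbc(2c_0,2c_1)[k]^{-1}[I,I]$; choosing $I$ to be the full length-class of $\lambda$ gives (b) (the $(\lambda,\lambda)$ entry of the inverse of a diagonal matrix is the reciprocal of $\Wbc(2c_0,2c_1)[k]_{\lambda\lambda}$), and when $\ell(\lambda)=\ell(\nu)$ with $\lambda\ne\nu$ the same choice gives (c) since the off-diagonal entries of a diagonal matrix's inverse vanish.

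The only delicate point is verifying that the entrywise convergence $D^{-1}M \to \Wbc(2c_0,2c_1)[k]$ suffices to pass to inverses, which is standard: since $\even[k]$ is finite, entrywise $o_N(1)$ upgrades to operator-norm $o_N(1)$, so the limit matrix being invertible implies that $D^{-1}M$ is invertible for all sufficiently large $N$ and that its inverse converges entrywise to $\Wbc(2c_0,2c_1)[k]^{-1}$. Apart from this bookkeeping, the main substantive input is the nonvanishing of each $W^{BC}([l])(2c_0,2c_1)$, which has already been handled in the proof of \Cref{thm:equivalence_bc2} under exactly the stated arithmetic conditions on $c_0$ and $c_1$.
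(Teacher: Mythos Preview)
Your proposal is correct and follows essentially the same approach as the paper: the paper's own proof is the one-line reference ``See \Cref{thm:leadingorder_bc2}'', and what you have written is exactly the unpacking of that reference along the lines of the proofs of \Cref{lemma:inv_matrix_a1} and \Cref{lemma:inv_matrix_a2}, including the factorization $M = D(\Wbc(2c_0,2c_1)[k]+o_N(1))$, the invertibility argument from the proof of \Cref{thm:equivalence_bc2}, and the block-triangular reasoning for parts (b) and (c).
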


\begin{proof}
See \Cref{thm:leadingorder_bc2}.
\end{proof}

\begin{corollary}
\label{cor:bessel_bc2}
If $\theta_0 N\rightarrow c_0\in\mathbb{C}$, $\theta_1\rightarrow c_1\in\mathbb{C}$, $c_0$ is not a negative integer, and $2c_0+2c_1$ is not a negative odd integer, then for $k\geq 1$ and $a,x\in\mathbb{C}^N$,
\begin{align*}
& J^{BC^N(\theta_0,\theta_1)}_a[k](x) = \sum_{r\in \even[k]} N^{-\ell(r)}\left(\pi(r)\prod_{l=1}^{\ell(r)}r_l W^{BC}([r_l])(c_0, c_1)\right)^{-1}\times \\
& (1+o_N(1))r(a)r(x)+ \sum_{r,s\in\even[k],\, r\not=s}N^{-\max(\ell(r),\ell(s))} \times \\
& (\Wbc(c_0,c_1)[k]^{-1}_{rs} + \Wbc(c_0,c_1)[k]^{-1}_{sr} + o_N(1))r(a)s(x).
\end{align*}
\end{corollary}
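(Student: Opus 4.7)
The plan is to follow the same three-step pattern used for \Cref{cor:bessel_a1,cor:bessel_a2,cor:bessel_bc1}, specializing the eigenvector formula \pref{eq:eigenval} to $\Psi: r\mapsto r(a)$ and then reading off the leading asymptotics from \Cref{lemma:inv_matrix_bc2}. Concretely, for each $k\geq 1$ the degree $2k$ part of $J_a^{BC^N(\theta_0,\theta_1)}(x)$ lives in the span of $\{p_\lambda:\lambda\in\evenN[2k]\}$, so if we choose the basis $\{p_\lambda:\lambda\in\even[2k]\}$ for both the rows and columns of $M^{2k;\,\mathcal{D}_H(BC^N(\theta_0,\theta_1))}$ (noting that for $N$ large enough every $\lambda\in\even[2k]$ actually lies in $\evenN[2k]$), then \pref{eq:eigenval} together with \Cref{lemma:eigenvectorformula} yields
\[
J_a^{BC^N(\theta_0,\theta_1)}[2k](x) \;=\; \sum_{\lambda,\nu\in\even[2k]} M^{-1}_{\lambda\nu}\, p_\nu(a)\, p_\lambda(x),
\]
where $M\triangleq([p_\lambda,p_\nu]_{BC^N(\theta_0,\theta_1)})_{\lambda,\nu\in\even[2k]}$. (By \Cref{cor:eigenfunc_parity}, only even degrees contribute, which is why only $\even$ appears.)

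Next I would invoke \Cref{lemma:inv_matrix_bc2}(a), which states $M^{-1}_{\lambda\nu}$ and $M^{-1}_{\nu\lambda}$ both equal $N^{-\ell(\nu)}(\Wbc(2c_0,2c_1)[2k]^{-1}_{\lambda\nu}+o_N(1))$ whenever $\ell(\lambda)\leq \ell(\nu)$. Splitting the double sum above into diagonal terms $r=s$ and off-diagonal terms $r\neq s$, and grouping each off-diagonal pair $\{r,s\}$ so that both orderings appear with the correct labeling by $\ell$, one obtains
\begin{align*}
J_a^{BC^N(\theta_0,\theta_1)}[2k](x) \;=\;& \sum_{r\in\even[2k]} N^{-\ell(r)}\bigl(\Wbc(2c_0,2c_1)[2k]^{-1}_{rr}+o_N(1)\bigr)r(a)r(x) \\
& + \sum_{\substack{r,s\in\even[2k]\\ r\neq s}} N^{-\max(\ell(r),\ell(s))}\bigl(\Wbc(2c_0,2c_1)[2k]^{-1}_{rs} \\
& \qquad\qquad + \Wbc(2c_0,2c_1)[2k]^{-1}_{sr}+o_N(1)\bigr)r(a)s(x).
\end{align*}
For the diagonal coefficient, \Cref{lemma:inv_matrix_bc2}(b) reduces it to $(\Wbc(2c_0,2c_1)[2k]_{rr})^{-1}$, and by \Cref{lemma:matrix_bc2}(A) this equals $(\pi(r)\prod_{l=1}^{\ell(r)} r_l\, W^{BC}([r_l])(2c_0,2c_1))^{-1}$, matching the stated formula.

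The one subtlety is ensuring the matrices above are genuinely invertible so that $J_a^{BC^N(\theta_0,\theta_1)}$ exists and \pref{eq:eigenval} applies; this is exactly the role of the hypotheses that $c_0$ is not a negative integer and $2c_0+2c_1$ is not a negative odd integer, as already used in the proof of \Cref{thm:equivalence_bc2}, and the asymptotic invertibility of $M$ (hence of $M^{-1}$) for all sufficiently large $N$ is guaranteed by \Cref{lemma:inv_matrix_bc2}. Since every quantitative ingredient---the leading order of $M$ via \Cref{thm:leadingorder_bc2}, the zero pattern of $\Wbc(2c_0,2c_1)$ via \Cref{lemma:matrix_bc2}, and the block structure of its inverse via \Cref{lemma:inv_matrix_bc2}(b)-(c)---has already been established, no substantial obstacle remains. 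The only bookkeeping point is to correctly symmetrize the off-diagonal contributions, which is what produces the $\Wbc(2c_0,2c_1)[2k]^{-1}_{rs}+\Wbc(2c_0,2c_1)[2k]^{-1}_{sr}$ factor.
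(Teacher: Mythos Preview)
Your proposal is correct and follows precisely the approach indicated by the paper's one-line proof, which simply refers to \pref{eq:eigenval}, \Cref{lemma:matrix_bc2}, and \Cref{lemma:inv_matrix_bc2}. You have merely unpacked these references: the eigenvector formula gives $J_a^{BC^N(\theta_0,\theta_1)}[k](x)=\sum_{r,s}M^{-1}_{rs}\,s(a)\,r(x)$, \Cref{lemma:inv_matrix_bc2} supplies the asymptotics of $M^{-1}$, and \Cref{lemma:matrix_bc2} identifies the diagonal entries explicitly; your handling of the off-diagonal symmetrization (one of $\Wbc^{-1}_{rs},\Wbc^{-1}_{sr}$ always vanishes by the triangular structure, so their sum recovers the single surviving entry) is exactly the bookkeeping the paper leaves implicit.
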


\begin{proof}
See \pref{eq:eigenval}, \Cref{lemma:matrix_bc2}, and \Cref{lemma:inv_matrix_bc2}.
\end{proof}

\subsection{Coefficients for the type D root system}

The framework of this subsection is analogous to that of the $A^{N-1}$ root system. By \cite{demni_type_d}, we have that 
\begin{equation}
\label{eq:type_d_simple}
J_a^{D^N(\theta)}(x) = J_a^{BC^N(\theta,0)}(x) + \frac{e(a)e(x)}{\prod_{i=1}^N (1+2(i-1)\theta)} J_a^{BC^N(\theta,1)}(x).
\end{equation}
Therefore, it suffices to consider the coefficients of the Bessel function for the $BC^N$ root system.

\section{Applications}
\label{sec:applications}

In this section, we discuss applications of the main results of this paper. Some settings that we consider include the weak convergence of the measures in \Cref{conjecture} to the free convolution and the uniform convergence of the Bessel functions in the $\theta N\rightarrow c\in\mathbb{C}$ regime and for Vershik-Kerov sequences.

\subsection{Weak convergence to the free convolution}
\label{subsec:conjecture_applications}

The results of this subsection are based on the assumption of \Cref{conjecture}. Afterwards, the goal is to show that if $a_1$ and $a_2$ converge weakly to some distributions, then $\mu_{a_1,a_2}^{\mathcal{R}(\theta)}$ also converges weakly to some distribution that can be described in terms of the free convolution, if we are working with the type A and D root systems. In particular, we prove \Cref{cor:freeconv}. For the type BC root system, we show weak converge to the rectangular free convolution, see \Cref{cor:freeconv_rectangular}.

The following result describes the support of $\mu_{a_1,a_2}^{\mathcal{R}(\theta)}$. As we mentioned earlier, \cite{trimeche} shows that there exists a solution for $\mu_{a_1,a_2}^{\mathcal{R}(\theta)}$ that is signed and is supported over $B(0, \norm{a_1}_2+\norm{a_2}_2)$. If we assume the conjecture, then we can reduce the domain of $\mu_{a_1,a_2}^{\mathcal{R}(\theta)}$ further, see \Cref{lemma:conjecture_support}. We conjecture that there always exists a signed solution for $\mu_{a_1,a_2}^{\mathcal{R}(\theta)}$ that has the same support as what is stated in the corollary.

\begin{lemma}
\label{lemma:conjecture_support}
Assume that $\theta\in\theta(\mathcal{R})$ is nonnegative. Suppose $a_1,a_2\in\mathbb{R}^N$ and $\mu_{a_1,a_2}^{\mathcal{R}(\theta)}$ satisfies the conditions of \Cref{conjecture}. Then, every element of $\text{supp}(\mu^{\mathcal{R}(\theta)}_{a_1,a_2})$ can be expressed as $u_1+u_2$, where $u_i$ is in the convex hull of $H(\mathcal{R})a_i$ for $i\in\{1,2\}$.
\end{lemma}

\begin{proof}
Consider the measures $\mu_a^{\text{sym}}$ for $a\in\mathbb{R}^N$ as they are defined in \Cref{thm:positivity}. Then, we have that
\[
\int_{\mathbb{R}^N}\int_{\mathbb{R}^N} e^{\sum_{i=1}^N x_i (\epsilon_i^1 + \epsilon_i^2)} d\mu_{a_1}^{\text{sym}}(\epsilon^1)d\mu_{a_2}^{\text{sym}}(\epsilon^2) = \int_{\mathbb{R}^N} \int_{\mathbb{R}^N}  e^{\sum_{i=1}^N x_i\epsilon_i}  d\mu_a^{\text{sym}}(\epsilon)d\mu^{\mathcal{R}(\theta)}_{a_1,a_2}(a).
\]
for all $x\in\mathbb{C}^N$, where we have used the fact that $\mu_a^{\text{sym}}$ is compactly supported for all $a\in\mathbb{R}^N$ to obtain the left hand side.

By considering $x\in \mathbf{i}\mathbb{R}^N$, we have the characteristic functions of the random variables $\epsilon^1+\epsilon^2$ where $\epsilon^1\sim \mu_{a_1}^{\text{sym}}$ and $\epsilon^2\sim\mu_{a_2}^{\text{sym}}$ and $\epsilon$ where $a\sim\mu^{\mathcal{R}(\theta)}_{a_1,a_2}$ and $\epsilon\sim\mu_a^{\text{sym}}$ are equal. Therefore, these two random variables have the same distribution.

Suppose $u\in\text{supp}(\mu^{\mathcal{R}(\theta)}_{a_1,a_2})$ and $r>0$. We show that the probability that $\epsilon$ is in $B(u,r)$ is positive to show that $u$ is in the support of $\epsilon$. First, observe that
\[
\Pr[\epsilon\in B(u,r)] \geq \Pr_{a\sim \mu_{a_1,a_2}^{\mathcal{R}(\theta)},\, \epsilon'\sim \mu_a^{\text{sym}}}\left[a\in B(u, \frac{r}{2}), \epsilon'\in B(a, \frac{r}{2})\right].
\]
For the sake of contradiction, assume that $\Pr_{a\sim \mu_{a_1,a_2}^{\mathcal{R}(\theta)},\, \epsilon'\sim \mu_a^{\text{sym}}}\left[a\in B(u, \frac{r}{2}), \epsilon'\in B(a, \frac{r}{2})\right]=0$. Then, there exists a sequence $\{z_i\}_{i\geq 1}$ of elements of $B(u,r)$ such that
\[
\lim_{i\rightarrow\infty} \mu_{z_i}^{\text{sym}}\left[B(z_i, \frac{r}{2})\right] = 0,
\]
since $\mu^{\mathcal{R}(\theta)}_{a_1,a_2}\left[B(u,\frac{r}{2})\right]>0$. Let $z$ be an element of the closed ball $\overline{B}(u,r)$ that is the limit of a subsequence $\{z_{i_j}\}_{j\geq 1}$ of $\{z_i\}_{i\geq 1}$.

Observe that for all $x\in\mathbb{R}^N$, $\lim_{j\rightarrow\infty} J_{z_{i_j}}^{\mathcal{R}(\theta)}(\mathbf{i}x) = J_z^{\mathcal{R}(\theta)}(\mathbf{i}x)$. Hence, it follows that the characteristic functions of $\mu^{\text{sym}}_{z_{i_j}}$ converge pointwise to the characteristic function of $\mu^{\text{sym}}_z$. By L\'{e}vy's continuity theorem, $\mu^{\text{sym}}_{z_{i_j}}$ converges weakly to $\mu^{\text{sym}}_z$. It then follows that 
\[
\mu^{\text{sym}}_z\left(B(z,\frac{r}{4})\right) = \lim_{j\rightarrow\infty} \mu^{\text{sym}}_{z_{i_j}}\left(B(z,\frac{r}{4})\right) \leq \lim_{j\rightarrow\infty} \mu^{\text{sym}}_{z_{i_j}}\left(B(z_{i_j}, \frac{r}{2})\right) = 0,
\]
which is a contradiction since $z\in\text{supp}(\mu_z^{\text{sym}})$ by \Cref{thm:positivity}. Thus, $u$ is in the support of $\epsilon$.

However, the distribution of $\epsilon$ is the same as that of $\epsilon^1+\epsilon^2$, so $u$ is in the support of $\epsilon^1+\epsilon^2$. Since the support of $\epsilon^i$ is a subset of the convex hull of $H(\mathcal{R})a_i$ for $i\in \{1,2\}$ by \Cref{thm:positivity}, the result follows.
\end{proof}

In the following corollary, we prove the convergence of the measures $\mu_{a,b}^{\mathcal{R}(\theta)}$ in terms of power sums after assuming that $a$ and $b$ converge in terms of power sums as $N\rightarrow\infty$. Afterwards, it is straightforward to deduce \Cref{cor:freeconv,cor:freeconv_rectangular}.

\begin{corollary}
\label{cor:product}
Suppose that for $N\geq 2$, $a(N),b(N)\in\mathbb{R}^N$. For all $N\geq 2$, assume that $\theta,\theta_0,\theta_1\geq 0$.

\begin{itemize}
\item[(A)] Suppose $\lim_{N\rightarrow\infty} \theta N=\infty$ and that $m_d^a$ and $m_d^b$ are real numbers such that for all $d\geq 1$,
\[
\lim_{N\rightarrow\infty} \frac{p_{(d)}(a(N))}{(\theta N)^{d}N}=m_d^a \text{ and } \lim_{N\rightarrow\infty} \frac{p_{(d)}(b(N))}{(\theta N)^{d}N^1}=m_d^b.
\]
Let $a_{(k)}$ and $b_{(k)}$ for $k\geq 1$ be real numbers that solve 
\[
m_d^a = \sum_{\pi\in NC(k)} \prod_{B\in \pi}|B| a_{(|B|)} \text{ and } m_d^b = \sum_{\pi\in NC(k)} \prod_{B\in \pi}|B| b_{(|B|)}
\]
for $d\geq 1$. Assume that for $N\geq 2$, $\mu_{a(N),\,b(N)}^{A^{N-1}(\theta)}$ satisfies the conditions of \Cref{conjecture}. Then, for all $\lambda\in\Gamma$, 
\[
\lim_{N\rightarrow\infty} \frac{\E_{c\sim\mu_{a(N),\,b(N)}^{A^{N-1}(\theta)}}\left[p_\lambda(c)\right]}{(\theta N)^{|\lambda|}N^{\ell(\lambda)}} = \prod_{i=1}^{\ell(\lambda)}\sum_{\pi\in NC(\lambda_i)} \prod_{B\in \pi} |B|(a_{(|B|)} + b_{(|B|)}).
\]
\item[(B)] Suppose $c\geq 0$, $\lim_{N\rightarrow\infty} \theta_0 N = \infty$, and $\lim_{N\rightarrow\infty}\frac{\theta_1}{\theta_0 N} = c$. Suppose $m_{2d}^a$ and $m_{2d}^b$ are real numbers such that 
\[
\lim_{N\rightarrow\infty} \frac{\sum_{i=1}^N a(N)_i^{2d}}{(\theta_0 N)^{2d}N}=m_{2d}^a \text{ and } \lim_{N\rightarrow\infty} \frac{\sum_{i=1}^N b(N)_i^{2d}}{(\theta_0 N)^{2d}N}=m_{2d}^b
\]
for $d\geq 1$. Let $a_{(2k)}$ and $b_{(2k)}$ for $k\geq 1$ be real numbers that solve 
\begin{align*}
& m_{2d}^a = \sum_{\pi\in \NCeven(2d)} (1+c)^{o(\pi)} \prod_{B\in \pi} 2^{|B|-1}|B|a_{(|B|)} \text{ and} \\
& m_{2d}^b = \sum_{\pi\in \NCeven(2d)} (1+c)^{o(\pi)}\prod_{B\in \pi} 2^{|B|-1}|B|b_{(|B|)}
\end{align*}
for $d\geq 1$. Assume that for $N\geq 2$, $\mu_{a(N),\,b(N)}^{BC^N(\theta_0,\theta_1)}$ satisfies the conditions of \Cref{conjecture}. Then, for all $\lambda\in\even$,
\begin{align*}
&\lim_{N\rightarrow\infty} \frac{\E_{c\sim\mu_{a(N),\,b(N)}^{BC^N(\theta_0, \theta_1)}}\left[p_\lambda(c)\right]}{(\theta_0 N)^{|\lambda|}N^{\ell(\lambda)}} \\
& = \prod_{i=1}^{\ell(\lambda)}\sum_{\pi\in \NCeven(\lambda_i)}(1+c)^{o(\pi)} \prod_{B\in \pi} 2^{|B|-1}|B|(a_{(|B|)} + b_{(|B|)}).
\end{align*}
\item[(C)] Suppose $\lim_{N\rightarrow\infty} \theta N = \infty$ and that $m_{2d}^a$ and $m_{2d}^b$ are real numbers such that 
\[
\lim_{N\rightarrow\infty} \frac{\sum_{i=1}^N a(N)_i^{2d}}{(\theta_0 N)^{2d}N}=m_{2d}^a \text{ and } \lim_{N\rightarrow\infty} \frac{\sum_{i=1}^N b(N)_i^{2d}}{(\theta_0 N)^{2d}N}=m_{2d}^b
\]
for $d\geq 1$. Let $a_{(2k)}$ and $b_{(2k)}$ for $k\geq 1$ be real numbers that solve 
\begin{align*}
& m_{2d}^a = \sum_{\pi\in \NCeven(2d)} \prod_{B\in \pi} 2^{|B|-1}|B| a_{(|B|)} \text{ and } \\ 
& m_{2d}^b = \sum_{\pi\in \NCeven(2d)} \prod_{B\in \pi} 2^{|B|-1}|B|b_{(|B|)}
\end{align*}
for $d\geq 1$. Assume that for $N\geq 2$, $\mu_{a(N),\,b(N)}^{D^N(\theta)}$ satisfies the conditions of \Cref{conjecture}. Then, for all $\lambda\in\even$,
\[
\lim_{N\rightarrow\infty} \frac{\E_{c\sim\mu_{a(N),\,b(N)}^{D^N(\theta)}}\left[p_\lambda(c)\right]}{(\theta N)^{|\lambda|}N^{\ell(\lambda)}} = \prod_{i=1}^{\ell(\lambda)}\sum_{\pi\in \NCeven(\lambda_i)} \prod_{B\in \pi} 2^{|B|-1}|B|(a_{(|B|)} + b_{(|B|)}).
\]
\end{itemize}
\end{corollary}

\begin{remark}
We note that the proof of \Cref{cor:product} is straightforward and that its framework has appeared previously, for example see \cite{matrix}*{Proof of Theorem 1.5}. The main contribution of the proof is the application of \Cref{cor:equivalence_a1}.
\end{remark}

\begin{proof}[Proof of \Cref{cor:product}]
(A): Suppose $N\geq 2$. Suppose $\Omega_N\triangleq B(0,r_N)\subset\mathbb{C}^N$ for $r_N$ sufficiently small such that $J_{a(N)}^{A^{N-1}(\theta)}$ and $J_{b(N)}^{A^{N-1}(\theta)}$ are nonzero over $\Omega_N$. Note that since 
\[
J_{a(N)}^{A^{N-1}(\theta)}(0) = J_{b(N)}^{A^{N-1}(\theta)}(0) = 1,
\]
$\Omega_N$ exists. Then, we can express $J_{a(N)}^{A^{N-1}(\theta)} = \exp\left(\sum_{\lambda\in\Gamma_N} a_\lambda(N)p_\lambda\right)$ and $J_{b(N)}^{A^{N-1}(\theta)} = \exp\left(\sum_{\lambda\in\Gamma_N} b_\lambda(N)p_\lambda\right)$ over $\Omega_N$.

For $N\geq 2$, $\mu_{a(N),\,b(N)}^{A^{N-1}(\theta)}$ is always compactly supported by \Cref{lemma:conjecture_support} and is therefore exponentially decaying, see \Cref{def:exponent_decay}. Thus, part (D) of \Cref{lemma:exponent_decay} implies that for all $\lambda\in\Gamma$,
\begin{equation}
\label{eq:sum_a1}
\begin{split}
\E_{c\sim\mu_{a(N),\,b(N)}^{A^{N-1}(\theta)}}[p_\lambda(c_1,\ldots,c_N)] &= \left[p_\lambda, J_{a(N)}^{A^{N-1}(\theta)}J_{b(N)}^{A^{N-1}(\theta)}\right]_{A^{N-1}(\theta)} \\
& = \left[p_\lambda, \exp\left(\sum_{\lambda\in\Gamma_N} (a_\lambda(N)+b_\lambda(N))p_\lambda\right)\right]_{A^{N-1}(\theta)};
\end{split}
\end{equation}
the second equality follows from the fact that $J_{a(N)}^{A^{N-1}(\theta)}J_{b(N)}^{A^{N-1}(\theta)}=\exp\big(\sum_{\lambda\in\Gamma_N} (a_\lambda(N)+\\b_\lambda(N))p_\lambda\big)$ over $\Omega_N$ and $0\in\Omega_N$.

By \Cref{cor:equivalence_a1}, we have that
\[
\lim_{N\rightarrow\infty} \frac{a_{(k)}(N)}{\theta N} = a_{(k)} \text{ and } \lim_{N\rightarrow\infty}\frac{b_{(k)}(N)}{\theta N} = b_{(k)}
\]
for $k\geq 1$ and
\[
\lim_{N\rightarrow\infty} \frac{a_\lambda(N)}{(\theta N)^{\ell(\lambda)}} = \lim_{N\rightarrow\infty} \frac{b_\lambda(N)}{(\theta N)^{\ell(\lambda)}} = 0
\]
for $\lambda\in\Gamma$ such that $\ell(\lambda)\geq 2$. Using \Cref{cor:equivalence_a1} again and \pref{eq:sum_a1} then gives that for all $\lambda\in\Gamma$, 
\[
\lim_{N\rightarrow\infty}\frac{\E_{c\sim\mu_{a(N),\,b(N)}^{A^{N-1}(\theta)}}[p_\lambda(c_1,\ldots,c_N)]}{N^{\ell(\lambda)}(\theta N)^{|\lambda|}} = \prod_{i=1}^{\ell(\lambda)} \sum_{\pi\in NC(\lambda_i)} \prod_{B\in \pi} (a_{(|B|)} + b_{(|B|)}). 
\]

(B) and (C): For (B) and (C), we can use \Cref{cor:equivalence_bc1,cor:equivalence_odd1}, respectively, and follow the same idea as the proof of (A).
\end{proof}

We also state the following results for the case where we are given a sequence $\{\mu_N\}_{N\geq 1}$ of measures whose even power sums converge and we need to analyze the convergence of $\{\frac{1}{2}\mu_N + \frac{1}{2}\mu_N(x\mapsto -x)\}_{N\geq 1}$ in terms of power sums. Observe that the convergence of the even power sums of $\{\mu_N\}_{N\geq 1}$ is equivalent to $\{\mu_N(x\mapsto x^2)\}_{N\geq 1}$ converging in terms of power sums. 

\begin{lemma}
\label{lemma:powersum_even}
Suppose $\{\mu_N\}_{N\geq 1}$ is a sequence of probability measures over $\mathbb{C}^N$. Assume that $\{\mu_N(x\mapsto x^2)\}_{N\geq 1}$ converges in terms of power sums. Then, $\{\frac{1}{2}\mu_N + \frac{1}{2}\mu_N(x\mapsto -x)\}_{N\geq 1}$ converges in terms of power sums.
\end{lemma}

\begin{proof}
It suffices to prove that for all $\nu\in\Gamma\backslash \even$, $\lim_{N\rightarrow\infty} \frac{\mathbb{E}_{a\sim \frac{1}{2}\mu_N + \frac{1}{2}\mu_N(x\mapsto -x)}[p_\nu(a)]}{N^{\ell(\nu)}} = 0$. Observe that the integral $\mathbb{E}_{a\sim \frac{1}{2}\mu_N + \frac{1}{2}\mu_N(x\mapsto -x)}[p_\nu(a)]$ converges if $N$ is significantly large, since this is the case for all $\nu\in\even$ by the assumption that $\{\mu_N(x\mapsto x^2)\}_{N\geq 1}$ converges in terms of power sums. Suppose we expand $p_\nu$ as a linear combination of monomials. Any summands that have an odd degree will not contribute to the expected value because $a$ is sampled from $\mu_N$ with probability $\frac{1}{2}$ and $\mu_N(x\mapsto -x)$ with probability $\frac{1}{2}$. Since $\nu$ has an odd part, it is clear that the sum of the summands with all even degrees can be expressed as a linear combination of $\{p_\lambda: \lambda\in \even[|\nu|], \ell(\lambda) < \ell(\nu)\}$. Due to the convergence of $\{\frac{1}{2}\mu_N + \frac{1}{2}\mu_N(x\mapsto -x)\}_{N\geq 1}$ in terms of power sums, the limit evaluates to zero.
\end{proof}

With this result, we are prepared to prove \Cref{cor:freeconv}.

\begin{proof}[Proof of \Cref{cor:freeconv}]
(A): By part (A) of \Cref{cor:product}, $\mu_{a(N),\,b(N)}^{A^{N-1}(\theta)}(x\mapsto\frac{x}{\theta N})$ converges in terms of power sums to $\mu$ as $N\rightarrow\infty$.

(B): The odd moments of $\frac{1}{2}(\mu_a+\mu_a^-)$ are always zero, so the odd free cumulants of the measure are also always zero, and similarly for $\frac{1}{2}(\mu_b+\mu_b^-)$. It follows that the odd free cumulants of $\tilde{\mu}$ are always zero and therefore that the odd moments of $\tilde{\mu}$ are always zero. Moreover, the even free cumulants of $\tilde{\mu}$ are given by adding the even free cumulants of $\frac{1}{2}(\mu_a+\mu_a^-)$ and $\frac{1}{2}(\mu_b+\mu_b^-)$. 

By part (C) of \Cref{cor:product}, the $2k$th free cumulants of $\frac{1}{2}(\mu_a+\mu_a^-)$ and $\frac{1}{2}(\mu_b+\mu_b^-)$ are $k2^{2k}a_{(2k)}$ and $k2^{2k}b_{(2k)}$, respectively, for $k\geq 1$. Therefore, the $2k$th free cumulant of $\tilde{\mu}$ is $k2^{2k}(a_{(2k)}+b_{(2k)})$ for $k\geq 1$. Using part (C) of \Cref{cor:product} and \Cref{lemma:powersum_even} allows for the deduction that $\frac{1}{2}\mu_{a(N),\,b(N)}^{D^N(\theta)}(x\mapsto \frac{x}{\theta N}) + \frac{1}{2}\mu_{a(N),\,b(N)}^{D^N(\theta)}(x\mapsto -\frac{x}{\theta N})$ converges in terms of power sums to $\tilde{\mu}$.
\end{proof}

We can use the same framework to prove \Cref{cor:freeconv_rectangular}.

\begin{proof}[Proof of \Cref{cor:freeconv_rectangular}]
We use part (B) of \Cref{cor:product}. Suppose $d\geq 1$ and $\pi\in \NCeven(2d)$. Recall that $o(\pi)$ is the number of $i\in [2d]$ such that $i$ is not the minimal element of a block of $\pi$ and $d(i;\,\pi)$ is odd. Suppose $\pi = B_1\sqcup \cdots \sqcup B_m$ such that $\min(B_j)<\min(B_{j+1})$ for $1\leq j\leq m-1$. Suppose $i\in B_j$ for $j\in [m]$. We have that 
\[
d(i;\,\pi) = \sum_{j'=1}^j |B_{j'}| - (i-1) \equiv i+1\pmod{2}.
\]
Hence, $d(i;\,\pi)$ is odd if and only if $i$ is even. It follows that $o(\pi)$ equals $d$ minus the number of even $i\in [2d]$ such that $b(i;\,\pi)=1$. If $E(\pi)$ denotes the number of even $i\in [2d]$ such that $b(i;\,\pi)=1$, then, 
\[
(1+c)^{o(\pi)}\prod_{B\in \pi} 2^{|B|-1}|B|a_{(|B|)} = (1+c)^{-E(\pi)}\prod_{B\in \pi} (1+c)^{\frac{|B|}{2}}2^{|B|-1}|B|a_{(|B|)};
\]
the equation also holds after replacing $a_{(|B|)}$ with $b_{(|B|)}$.

By the formula for the rectangular free convolution given in \cite{rectangular_free_convolution}*{Proposition 3.1}, the $2k$th rectangular free cumulants of $\frac{1}{2}(\mu_a+\mu_a^-)$ and $\frac{1}{2}(\mu_b+\mu_b^-)$ are $k(4(1+c))^ka_{(2k)}$ and $k(4(1+c))^kb_{(2k)}$, respectively, for $k\geq 1$. Hence, the $2k$th rectangular free cumulant of $\mu$ is $k(4(1+c))^k(a_{(2k)}+b_{(2k)})$ for $k\geq 1$. After applying \Cref{lemma:powersum_even}, it is evident that $\frac{1}{2}\mu_{a(N),\,b(N)}^{BC^N(\theta_0,\theta_1)}(x\mapsto \frac{x}{\theta_0 N}) + \frac{1}{2}\mu_{a(N),\,b(N)}^{BC^N(\theta_0,\theta_1)}(x\mapsto -\frac{x}{\theta_0 N})$ converges in terms of power sums to $\mu$, which satisfies the condition that $\mu(B)=\mu(-B)$ for all open subsets $B$ of $\mathbb{R}$.
\end{proof}

\subsection{Upper bounds of the magnitudes of Bessel functions}

In preparation for studying the uniform convergence of Bessel functions, we prove various upper bounds on them.

\begin{lemma}
\label{lemma:besselupper_1}
Suppose $\theta\in\Theta(\mathcal{R})$ and $\text{Re}(\theta(r))\geq 0$ for all $r\in\mathcal{R}$. Then,
\[
|J_a^{\mathcal{R}(\theta)}(x)| \leq \exp \left(\max_{h\in H(\mathcal{R})} \text{Re}(\product{ha, x})\right).
\]
\end{lemma}
\begin{proof}
By \cite{dunklbound}*{Theorem 3.1},
\begin{align*}
\left(\frac{1}{|H(\mathcal{R})|} \sum_{h\in H(\mathcal{R})} E_{ha}^{\mathcal{R}(\theta)}(x) \right)^2 \leq \frac{1}{|H(\mathcal{R})|} \sum_{h\in H(\mathcal{R})} E_{ha}^{\mathcal{R}(\theta)}(x)^2 \leq \exp\left(2\max_{h\in H(\mathcal{R})} \text{Re}(\product{ha, x})\right),
\end{align*}
which finishes the proof.
\end{proof}

\begin{lemma}
\label{lemma:besselupper_2}
Suppose $\theta\in\theta(\mathcal{R})$ is nonnegative. Suppose $a\in\mathbb{R}^N$ and $x\in\mathbb{C}^N$. Then,
\[
\norm{J^{\mathcal{R}(\theta)}_a(x)}_2 \leq \frac{1}{|H|} \sum_{h\in H} e^{\sum_{i=1}^N \text{Re}(x_i)ha_i}.
\]
\end{lemma}

\begin{proof}
First, observe that from \Cref{thm:positivity},
\[
J^{\mathcal{R}(\theta)}_a(x) = \int_{\mathbb{R}^N} e^{\sum_{i=1}^N x_i\epsilon_i}d\nu_a^{\text{sym; $\mathcal{R}(\theta)$}}(\epsilon)
\]
for a measure $\nu_a^{\text{sym; $\mathcal{R}(\theta)$}}$ over $\mathbb{R}^N$ which is supported over the convex hull of $H(\mathcal{R})a$ and is invariant with respect to the action of $H(\mathcal{R})$. In particular, 
\begin{align*}
\norm{J^{\mathcal{R}(\theta)}_a(x)}_2 & = \norm{\int_{\mathbb{R}^N} \sum_{h'\in H} \frac{1}{|H|}e^{\sum_{i=1}^N x_i h'\epsilon_i}d\nu_a^{\text{sym; $\mathcal{R}(\theta)$}}(\epsilon)}_2 \\ 
& \leq \int_{\mathbb{R}^N} \sum_{h'\in H} \frac{1}{|H|}e^{\sum_{i=1}^N \text{Re}(x_i) h'\epsilon_i}d\nu_a^{\text{sym; $\mathcal{R}(\theta)$}}(\epsilon).
\end{align*}
Suppose $\epsilon=\sum_{h\in H}c_h ha$, where $c_h\in [0,1]$ for $h\in H$ and $\sum_{h\in H} c_h = 1$. Then, for $h'\in H$,
\[
e^{\sum_{i=1}^N \text{Re}(x_i) h'\epsilon_i} = e^{\sum_{i=1}^N \text{Re}(x_i) \sum_{h\in H} c_h h'ha_i} \leq \sum_{h\in H} c_h e^{\sum_{i=1}^N \text{Re}(x_i)h'ha_i}
\]
by Jensen's inequality. It follows that 
\[
\sum_{h'\in H} \frac{1}{|H|}e^{\sum_{i=1}^N \text{Re}(x_i) h'\epsilon_i} \leq \sum_{h\in H} \frac{c_h}{|H|} \sum_{h'\in H} e^{\sum_{i=1}^N \text{Re}(x_i)h'ha_i} = \frac{1}{|H|} \sum_{h\in H} e^{\sum_{i=1}^N \text{Re}(x_i)ha_i}.
\]
This completes the proof.
\end{proof}

\begin{lemma}
\label{lemma:besselupper_3}
Suppose $N\geq 1$, $a\in\mathbb{R}^N$, and $x\in\mathbb{C}^N$. Suppose $R$ is a nonnegative real number. Assume that $H(\mathcal{R})$ contains the reflections which permute the entries of $x$. Furthermore, assume that $\theta\in\theta(\mathcal{R})$ is nonnegative.

\begin{enumerate}
\item[(A)]
Assume that for all $d\in\mathbb{N}$, $\frac{1}{N}\sum_{i=1}^N |a_i|^d \leq R^d$. Then,
\[
\norm{J_a^{\mathcal{R}(\theta)}(x)}_2 \leq e^{R\sum_{i=1}^N |\text{Re}(x_i)|}.
\]

\item[(B)] Assume that for all $d\in\mathbb{N}$, $\frac{1}{N}\sum_{i=1}^N |a_i|^d \leq d!R^d$. Then,
\[
\norm{J_a^{\mathcal{R}(\theta)}(x)}_2 \leq \sum_{d=0}^\infty \left(R\sum_{i=1}^N |\text{Re}(x_i)|\right)^d.
\]

\end{enumerate}
\end{lemma}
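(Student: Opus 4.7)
The plan is to follow the template of the proof of \Cref{lemma:besselupper_2}, invoking \Cref{thm:positivity} for the integral representation
\[
J_a^{\mathcal{R}(\theta)}(x) = \int e^{\sum_i x_i\epsilon_i}\,d\mu_a^{\text{sym}}(\epsilon),
\]
but this time bounding $|e^{\langle x,\epsilon\rangle}|$ directly via the Cauchy--Schwarz-type inequality $|\sum_i x_i\epsilon_i|\leq \sum_i \norm{x_i}_2\norm{\epsilon_i}_2$ rather than separating real and imaginary parts. This yields
\[
\norm{J_a^{\mathcal{R}(\theta)}(x)}_2 \leq \int e^{\sum_i \norm{x_i}_2\norm{\epsilon_i}_2}\,d\mu_a^{\text{sym}}(\epsilon).
\]

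Next, since $H(\mathcal{R})$ contains all permutations of coordinates, $\mu_a^{\text{sym}}$ is permutation-invariant in $\epsilon$, so expanding the exponential in monomials and grouping by symmetry class gives
\[
\int e^{\sum_i\norm{x_i}_2\norm{\epsilon_i}_2}\,d\mu_a^{\text{sym}} = 1+\sum_{\nu\in\Gamma}\frac{M_\nu(\norm{x_1}_2,\ldots,\norm{x_N}_2)}{M_\nu(1,\ldots,1)\,\nu!}\int M_\nu(\norm{\epsilon_1}_2,\ldots,\norm{\epsilon_N}_2)\,d\mu_a^{\text{sym}}(\epsilon),
\]
exactly as in \pref{eq:upperbound1}. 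Applying Muirhead's inequality bounds $M_\nu(\norm{\epsilon_\cdot}_2) \leq \tfrac{M_\nu(1,\ldots,1)}{N}\sum_i\norm{\epsilon_i}_2^{|\nu|}$. For $\epsilon$ in the support of $\mu_a^{\text{sym}}$, write $\epsilon=\sum_{g\in H}c_g\,ga$ as a convex combination; by the triangle and Jensen inequalities,
\[
\sum_{i=1}^N \norm{\epsilon_i}_2^{|\nu|} \leq \sum_{i=1}^N\Bigl(\sum_{g\in H}c_g\norm{(ga)_i}_2\Bigr)^{|\nu|}\leq \sum_{g\in H}c_g\sum_{i=1}^N\norm{(ga)_i}_2^{|\nu|} = \sum_{i=1}^N\norm{a_i}_2^{|\nu|},
\]
where the final equality uses that each $g\in H(\mathcal{R})$ acts on coordinates by a signed permutation (this is precisely the setting of the root systems $A^{N-1}$, $BC^N$, and $D^N$ considered throughout the paper), so it preserves the multiset $\{\norm{a_i}_2:i\in[N]\}$.

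Combining these bounds with the hypotheses $\tfrac{1}{N}\sum_i\norm{a_i}_2^d\leq M^d$ for part (A) and $\leq d!M^d$ for part (B) yields, respectively,
\[
\int M_\nu(\norm{\epsilon_\cdot}_2)\,d\mu_a^{\text{sym}} \leq M_\nu(1,\ldots,1)\,M^{|\nu|}\qquad\text{or}\qquad\leq M_\nu(1,\ldots,1)\,|\nu|!\,M^{|\nu|}.
\]
Plugging into the expansion and using the multinomial identity $\sum_{\nu:\,|\nu|=d}\frac{M_\nu(\norm{x_\cdot}_2)}{\nu!}=\frac{(\sum_i\norm{x_i}_2)^d}{d!}$ collapses the sums to $e^{M\sum_i\norm{x_i}_2}$ in case (A) and to $\sum_{d\geq 0}(M\sum_i\norm{x_i}_2)^d$ in case (B), completing the proof.

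The main subtlety is the step $\sum_i\norm{(ga)_i}_2^{|\nu|}=\sum_i\norm{a_i}_2^{|\nu|}$: it is the signed-permutation structure of $H(\mathcal{R})$ on $\mathbb{C}^N$ (not merely that $H$ contains the symmetric group) that makes each $g$ preserve the coordinatewise $2$-norm multiset, which is exactly why the cleaner bound via $\norm{\cdot}_2$ improves on the real/imaginary decomposition of \Cref{lemma:besselupper_2}.
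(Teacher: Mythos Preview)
Your proof is correct and follows essentially the same approach as the paper's: bound $|e^{\langle x,\epsilon\rangle}|\le e^{\sum_i\norm{x_i}_2\norm{\epsilon_i}_2}$ using \Cref{thm:positivity}, then rerun the Muirhead--Jensen argument from \Cref{lemma:besselupper_2} with $\norm{\cdot}_2$ in place of the real and imaginary parts. The paper's proof is in fact just a two-line sketch pointing to this same route, so you have filled in exactly what was intended; your remark that the equality $\sum_i\norm{(ga)_i}_2^{|\nu|}=\sum_i\norm{a_i}_2^{|\nu|}$ relies on $H(\mathcal{R})$ acting by signed permutations (not merely containing $S_N$) is a valid caveat about the stated hypotheses, though it is satisfied for all the root systems the paper actually uses.
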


\begin{proof}
(A): We use \Cref{thm:positivity}. Note that for $z\in\mathbb{C}^N$, we let $\text{Re}(z)$ denote $(\text{Re}(z_i))_{i\in[N]}$ and for $r\in\mathbb{R}^N$, we let $|r|$ denote $(|r_i|)_{i\in[N]}$. First, observe that 
\begin{align*}
\norm{J_a^{\mathcal{R}(\theta)}(x)}_2 &= \norm{\int_{\mathbb{R}^N} e^{\product{x,\epsilon}}d\nu_a^{\text{sym; $\mathcal{R}(\theta)$}}(\epsilon)}_2 \leq \int_{\mathbb{R}^N} e^{\product{\text{Re}(x),\epsilon}}d\nu_a^{\text{sym; $\mathcal{R}(\theta)$}}(\epsilon) \\
& \leq \int_{\mathbb{R}^N} e^{\product{|\text{Re}(x)|,|\epsilon|}}d\nu_a^{\text{sym; $\mathcal{R}(\theta)$}}(\epsilon).
\end{align*}

Next, note that since $\nu_a^{\text{sym; $\mathcal{R}(\theta)$}}$ is permutation invariant because $H(\mathcal{R})$ contains the symmetric group,
\[
\int_{\mathbb{R}^N} e^{\product{|\text{Re}(x)|,|\epsilon|}}d\nu_a^{\text{sym; $\mathcal{R}(\theta)$}}(\epsilon) = \int_{\mathbb{R}^N}1+ \sum_{\nu\in\Gamma} \frac{M_\nu(|\text{Re}(x)|)M_\nu(|\epsilon|)}{M_\nu(1,\ldots,1)\nu!}d\nu_a^{\text{sym; $\mathcal{R}(\theta)$}}(\epsilon).
\]
Suppose $\epsilon=\sum_{h\in H} c_h ha$, where $c_h\in [0,1]$ for $h\in H$ satisfy $\sum_{h\in H} c_h=1$. Furthermore, suppose $\nu\in\Gamma$. By Muirhead's inequality and the triangle inequality,
\[
\frac{M_\nu(|\epsilon|)}{M_\nu(1,\ldots,1)} \leq \frac{\sum_{i=1}^N |\epsilon_i|^{|\nu|}}{N} \leq \frac{\sum_{i=1}^N \left(\sum_{h\in H} c_h|ha_i|\right)^{|\nu|}}{N}.
\]
By Jensen's inequality, we have that 
\[
\frac{\sum_{i=1}^N \left(\sum_{h\in H} c_h|ha_i|\right)^{|\nu|}}{N} \leq \frac{1}{N}\sum_{h\in H} c_h \sum_{i=1}^N |ha_i|^{|\nu|} = \frac{\sum_{i=1}^N |a_i|^{|\nu|}}{N} \leq R^{|\nu|}.
\]
Hence,
\begin{equation}
\label{eq:upperbound1}
\int_{\mathbb{R}^N} e^{\product{|\text{Re}(x)|,|\epsilon|}}d\nu_a^{\text{sym; $\mathcal{R}(\theta)$}}(\epsilon) \leq 1+\sum_{\nu\in\Gamma} R^{|\nu|}\frac{M_\nu(|\text{Re}(x)|)}{\nu!} = e^{R\sum_{i=1}^N |\text{Re}(x)_i|}.
\end{equation}

(B): The equation \pref{eq:upperbound1} becomes 
\[
\int_{\mathbb{R}^N} e^{\product{|\text{Re}(x)|,|\epsilon|}}d\nu_a^{\text{sym; $\mathcal{R}(\theta)$}}(\epsilon) \leq 1+\sum_{\nu\in\Gamma} |\nu|!R^{|\nu|}\frac{M_\nu(|\text{Re}(x)|)}{\pi(\nu)} = \sum_{d=0}^\infty \left(R\sum_{i=1}^N |\text{Re}(x_i)|\right)^d,
\]
which completes the proof.
\end{proof}

Parts (A) and (B) of the following lemma are from \cite{rank_infinity}, where they are proved using results from \cites{baker2,baker,okounkov_olshanki,bessel_convolution,forrester}. We state a proof of part (A) which does not involve Jack polynomials. Furthermore, we use a continuity argument to remove the requirement that the multiplicity function is nonnegative, which extends upon previous results.

\begin{lemma}
\label{lemma:onevar}
Suppose $N\geq 2$, $a\in\mathbb{C}^N$, and $z\in\mathbb{C}$. For $k\geq 1$, suppose $c_k$ is the homogeneous degree $k$ polynomial such that $c_k(a)$ is the coefficient of $z^k$ in $\exp\\\left(\theta\sum_{m=1}^\infty p_{(m)}(a)\frac{z^m}{m}\right)$.
\begin{enumerate}
\item[(A)] Assume that $\theta\in\mathbb{C}$ such that $\mathcal{D}(A^{N-1}(\theta))$ is invertible. The value of $J_a^{A^{N-1}(\theta)}(z,0,\\\ldots,0)$ is $1+\sum_{k=1}^\infty\frac{c_k(a)}{(\theta N)_k}z^k$.
\item[(B)] Assume that $\theta_0,\theta_1\in\mathbb{C}$ such that $\mathcal{D}(BC^N(\theta_0,\theta_1))$ is invertible. The value of $J_a^{BC^N(\theta_0,\theta_1)}(z,0,\ldots,0)$ is $1+\sum_{k=1}^\infty\frac{c_k(a^2)}{4^k(\theta_1+(N-1)\theta_0+\frac{1}{2})_k(\theta_0 N)_k}z^{2k}$.
\item[(C)] Assume that $\theta\in\mathbb{C}$ such that $\mathcal{D}(D^N(\theta))$ is invertible. The value of $J_a^{D^N(\theta)}(z,0,\ldots,\\0)$ is $1+\sum_{k=1}^\infty\frac{c_k(a^2)}{4^k((N-1)\theta+\frac{1}{2})_k(\theta N)_k}z^{2k}$.
\end{enumerate}
\end{lemma}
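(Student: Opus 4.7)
My plan is to prove (A) by verifying the Bessel eigenvalue equations in the $a$-variable and invoking uniqueness of eigenfunctions. By the $a\leftrightarrow x$ symmetry from \Cref{thm:dunkl_eigenfunction_symmetry}, $J^{A^{N-1}(\theta)}_a(z,0,\ldots,0)=J^{A^{N-1}(\theta)}_{(z,0,\ldots,0)}(a)$, which by \Cref{lemma:eigenvectorformula} applied to $\mathcal{D}_H(A^{N-1}(\theta))$ is the unique symmetric formal power series $F(a)$ with $F(0)=1$ and $\mathcal{D}_a(p_\nu)F=p_\nu(z,0,\ldots,0)F=z^{|\nu|}F$ for every symmetric $p_\nu$. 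Since power-sums generate the symmetric algebra, it suffices to set $h(z;a)=\sum_{k\geq 0}c_k(a)z^k/(\theta N)_k$ and check $\mathcal{D}_a(p_{(m)})h=z^m h$ for $m=1,\ldots,N$; matching the coefficient of $z^k$, this reduces to the algebraic identity
\[
\mathcal{D}_a(p_{(m)})\,c_k(a)=(\theta N+k-m)(\theta N+k-m+1)\cdots(\theta N+k-1)\,c_{k-m}(a),\quad k\geq m.
\]

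The main computation uses the generating function $F(w;a)=\prod_{i=1}^N(1-a_iw)^{-\theta}=\sum_k c_k(a)w^k$. For $m=1$, differentiating gives $\sum_j\partial_{a_j}F=\theta w\bigl(\sum_j(1-a_jw)^{-1}\bigr)F=\theta w\bigl(\sum_l p_l(a)w^l\bigr)F$ with $p_0=N$, and combining with the Euler-type recursion $k c_k=\theta\sum_{l\geq 1}p_l(a)c_{k-l}(a)$ (extracted from $\partial_w F/F$) yields $\sum_j\partial_{a_j}c_k=(\theta N+k-1)c_{k-1}$. For $m\geq 2$, I express $\sum_j\mathcal{D}_{a_j}^m$ acting on a symmetric function as an iterated composition of $\partial_{a_j}$'s and divided differences $(\partial_{a_j}-\partial_{a_l})/(a_j-a_l)$, using the identity $s_{jl}\partial_{a_j}f=\partial_{a_l}f$ for symmetric $f$; applying this to $F$ and using the partial-fraction identity $\frac{(1-a_jw)^{-1}-(1-a_lw)^{-1}}{a_j-a_l}=\frac{w}{(1-a_jw)(1-a_lw)}$ produces an expression whose coefficient of $w^{k-m}$ collapses, after repeated application of the Euler recursion, to the claimed Pochhammer factor.

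For (B), the same plan applies to the $BC^N(\theta_0,\theta_1)$ Dunkl operator, working with BC-symmetric functions (even in each $a_j$) and the generating function $\prod_i(1-a_i^2 w)^{-\theta_0}=\sum_k c_k(a^2)w^k$. The extra sign-flipping contributions in $\mathcal{D}_{a_j}(BC^N(\theta_0,\theta_1))$---the type-$1$ switches $\theta_1(1-\tau_j)/a_j$ and the type-$0$ switches $\theta_0(1-\tau_j\tau_l s_{jl})/(a_j+a_l)$---after summation over $j$ and the analogous divided-difference manipulations for even series, combine to produce the additional Pochhammer factor $(\theta_1+(N-1)\theta_0+\frac{1}{2})_k$, while the $(\theta_0 N)_k$ factor arises as in (A). Part (C) is immediate from (B) with $\theta_1=0$: by part (A) of \Cref{cor:eigenfunc_parity_d}, any monomial in $J^{D^N(\theta)}_a(x)$ with all-odd $x$-degrees has positive exponent on every $x_i$ and so vanishes at $x=(z,0,\ldots,0)$ for $N\geq 2$, and part (B) of the same corollary then yields $J^{D^N(\theta)}_a(z,0,\ldots,0)=J^{BC^N(\theta,0)}_a(z,0,\ldots,0)$.

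Finally, to remove the nonnegativity hypothesis, both sides of each identity are holomorphic in the multiplicity parameters over the open set where the Dunkl algebra is invertible and the Pochhammer denominators are nonzero (holomorphy of the left-hand side comes from \Cref{thm:dunkl_eigenfunction_symmetry}); since the identities hold on the nonnegative-real locus, analytic continuation extends them to the full domain. The main obstacle is the $m\geq 2$ case of the algebraic identity $\mathcal{D}_a(p_{(m)})c_k=\prod_j(\theta N+k-j)\,c_{k-m}$ in (A), and its BC analogue in (B), which requires careful tracking of the divided-difference operators applied to the rational building blocks $(1-a_iw)^{-1}$ or $(1-a_i^2 w)^{-1}$.
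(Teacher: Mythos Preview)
Your overall strategy---verify the symmetric eigenvalue equations in the $a$-variable and invoke uniqueness---is sound and is a legitimate alternative to the paper's route, but the execution has a real gap at precisely the point you flag. You need $\mathcal{D}_a(p_{(m)})\,c_k=(\theta N+k-m)\cdots(\theta N+k-1)\,c_{k-m}$ for all $m\le k$, and for $m\ge 2$ you only sketch a divided-difference computation. The difficulty with that sketch is that after one application of $\partial_{a_j}$ the result is no longer symmetric, so the reduction $s_{jl}\partial_{a_j}f=\partial_{a_l}f$ cannot be iterated as written; tracking the Dunkl action on the rational pieces $(1-a_iw)^{-1}$ through several layers is genuinely messy and you have not shown it closes up.

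The paper avoids this entirely by working in the monomial-symmetric basis, where evaluation at $(z,0,\ldots,0)$ kills every $M_\epsilon$ with $\ell(\epsilon)\ge 2$. Thus the coefficient $r_k$ is characterized by just two conditions: $[r_k,M_\epsilon]=0$ for $\ell(\epsilon)\ge 2$ and $[r_k,p_{(k)}]=1$. The first reduces to a single clean computation $\mathcal{D}_2\mathcal{D}_1 F=0$ for $F=\exp\bigl(\theta\sum_m p_{(m)}(a)z^m/m\bigr)$; the second is handled by a short combinatorial argument for $(\sum_j\partial_j)^k F$. In fact this same identity $\mathcal{D}_i\mathcal{D}_j F=0$ ($i\ne j$) is exactly what would rescue your approach: by commutativity of Dunkl operators it kills every cross term, so $\sum_j\mathcal{D}_j^m F=(\sum_j\mathcal{D}_j)^m F=(\sum_j\partial_j)^m F$, and your Pochhammer recursion then follows from the paper's computation. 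Your divided-difference route is not needed once you have this.

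Two smaller points. Your analytic-continuation paragraph is superfluous for your own argument, which is purely algebraic and never imposes nonnegativity; the paper invokes continuation only for (B), where it cites the normalization identity from the literature for $\theta_0,\theta_1\ge 0$ and extends by polynomiality. Your treatment of (C) via \Cref{cor:eigenfunc_parity_d} and $\theta_1=0$ matches the paper's.
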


\begin{proof}
(A): Suppose $k\geq 1$. Let $r_k$ be the homogeneous degree $k$ polynomial such that $r_k(a)$ is the coefficient of $z^k$ in $J_a^{A^{N-1}(\theta)}(z,0,\ldots,0)$. Then,
\[
J_a^{A^{N-1}(\theta)}(z,0,\ldots,0) = 1+\sum_{k=1}^\infty r_k(a)z^k.
\]
The goal is to show that $r_k=\frac{c_k}{(\theta N)_k}$.

Observe that $r_k$ is the unique solution to $[r_k, m_\epsilon]_{A^{N-1}(\theta)}=0$ for $\epsilon\in\Gamma_N[k]$ such that $\ell(\epsilon)\geq 2$ and $[r_k,p_{(k)}]_{A^{N-1}(\theta)}=1$. Hence, it suffices to show that $r_k=\frac{c_k}{(\theta N)_k}$ satisfies these properties.

First, observe that $\mathcal{D}_2\mathcal{D}_1\exp\left(\theta\sum_{m=1}^\infty p_{(m)}(a)\frac{z^m}{m}\right)=0$; note that the Dunkl operators are applied to the variable $a$. This is because
\[
\mathcal{D}_1\exp\left(\theta\sum_{m=1}^\infty p_{(m)}(a)\frac{z^m}{m}\right)= \theta\sum_{m=1}^\infty a_1^{m-1}z^m\exp\left(\theta\sum_{m=1}^\infty p_{(m)}(a)\frac{z^m}{m}\right).
\]
Applying $\mathcal{D}_2$ to this expression gives
\[
\theta^2\left[\left(\sum_{m=1}^\infty a_1^{m-1}z^m\right)\left(\sum_{m=1}^\infty a_2^{m-1}z^m\right) - \sum_{m=1}^\infty z^m\sum_{i=0}^{m-2} a_1^{m-2-i}a_2^i\right]\exp\left(\theta\sum_{m=1}^\infty p_{(m)}(a)\frac{z^m}{m}\right) = 0.
\]
Hence, $[c_k,m_\epsilon]_{A^{N-1}(\theta)}=0$ for $\epsilon\in\Gamma_N[k]$ such that $\ell(\epsilon)\geq 2$.

To finish the proof, we must show that $[c_k, p_{(k)}]_{A^{N-1}(\theta)}=(\theta N)_k$. Equivalently, we must show that 
\[
\left[p_{(k)}(a), \exp\left(\theta\sum_{m=1}^\infty p_{(m)}(a)\frac{z^m}{m}\right)\right]_{A^{N-1}(\theta)}=(\theta N)_kz^k,
\]
since the left hand side equals $[c_k, p_{(k)}]_{A^{N-1}(\theta)}z^k$. However, note that this expression equals
\begin{align*}
\mathcal{D}(p_{(k)})\exp\left(\theta\sum_{m=1}^\infty p_{(m)}(a)\frac{z^m}{m}\right) &= \left(\sum_{i=1}^N \mathcal{D}_i\right)^k \exp\left(\theta\sum_{m=1}^\infty p_{(m)}(a)\frac{z^m}{m}\right) \\ 
& = \left(\sum_{i=1}^N \partial_i\right)^k \exp\left(\theta\sum_{m=1}^\infty p_{(m)}(a)\frac{z^m}{m}\right)
\end{align*}
Hence, $[c_k,p_{(k)}]_{A^{N-1}(\theta)}z^k = [1] \left(\sum_{i=1}^N \partial_i\right)^k \exp\left(\theta\sum_{m=1}^\infty p_{(m)}(a)\frac{z^m}{m}\right)$.

First, note that 
\[
[1] e^{\sum_{i=1}^N\partial_i} \exp\left(\theta\sum_{m=1}^\infty p_{(m)}(a)\frac{z^m}{m}\right) = \exp\left(N\theta\sum_{m=1}^\infty \frac{z^m}{m}\right).
\]
Hence, the value of $[1] \left(\sum_{i=1}^N \partial_i\right)^k \exp\left(\theta\sum_{m=1}^\infty p_{(m)}(a)\frac{z^m}{m}\right)$ is $k!$ times the term of degree $k$ in $z$ of $\exp\left(N\theta\sum_{m=1}^\infty \frac{z^m}{m}\right)$. That is, $[c_k,p_{(k)}]_{A^{N-1}(\theta)}=\partial_z^k \exp\left(N\theta\sum_{m=1}^\infty \frac{z^m}{m}\right)|_{z=0}$.

It is straightforward to deduce that this quantity is $(\theta N)_k$. Initiate $c=1$. At each of $k$ steps, we can decide to either multiply $c$ by $N\theta\sum_{m=1}^\infty z^m$ or differentiate $c$. Label the terms that we multiply $c$ by starting from $1$. We must multiply $c$ by $N\theta\sum_{m=1}^\infty z^m$ at the first step, and we label this term as $1$. If we do not multiply $c$ by $N\theta\sum_{m=1}^\infty z^m$ at a step, then we must replace one of the previously labeled terms with its derivative. Therefore, we always have that $c$ is the product of the labeled terms, which may have been differentiated.

Suppose we are currently at step $i\in [k]$ such that $i\geq 2$. Suppose $L\geq 1$ terms have been labeled. For $j\in [L]$, let $d_j$ be the number of times that term $j$ has been differentiated; note that $d_j$ does not include the first derivative applied to $N\theta\sum_{m=1}^\infty \frac{z^m}{m}$. Then, we have that
\[
\sum_{j\in L} d_j + L = i-1.
\]
Furthermore, the constant term of term $j$ is $d_j!$ for all $j\in [L]$ so the constant term of $c$ is $\prod_{j=1}^L d_j!$. For step $j$, we can decide to either differentiate term $j$ for some $j\in [L]$ or multiply $c$ by $N\theta\sum_{m=1}^\infty z^m$. If we differentiate term $j$ for some $j\in [L]$, we will multiply the constant term of $c$ by $d_j+1$. Otherwise, if we multiply $c$ by $N\theta\sum_{m=1}^\infty z^m$, we will multiply the constant term by $N\theta$. By adding these choices, we will multiply the constant term of $c$ by 
\[
\sum_{j=1}^L (d_j+1) + N\theta = N\theta + i-1.
\]
When $i=1$, we multiply the constant term by $N\theta$. Therefore,
\[
[c_k,p_{(k)}]_{A^{N-1}(\theta)}=\partial_z^k \exp\left(N\theta\sum_{m=1}^\infty \frac{z^m}{m}\right)\bigg|_{z=0}=\prod_{i=1}^k (N\theta + i - 1) = (N\theta)_k.
\]

(B): We follow the same method as (A). Suppose $k\geq 1$. Let $r_{2k}$ be the homogeneous degree $k$ polynomial such that $r_{2k}(a)$ is the coefficient of $z^{2k}$ in $J_a^{BC^N(\theta_0,\theta_1)}(z,0,\ldots,0)$. It suffices to show that $r_{2k}(a)=\frac{c_{k}(a^2)}{4^k(\theta_1+(N-1)\theta_0+\frac{1}{2})_k(\theta_0 N)_k}$. 

It suffices to show that $[c_k(a^2), m_\epsilon(a^2)]_{BC^N(\theta_0,\theta_1)}=0$ for all $\epsilon\in\Gamma_N[k]$ such that $\ell(\epsilon)\geq 2$ and that $[c_k(a^2), p_{(2k)}(a)]_{BC^N(\theta_0,\theta_1)}=4^k(\theta_1+(N-1)\theta_0+\frac{1}{2})_k(\theta_0 N)_k$. For the first statement, we similarly have that $\mathcal{D}_2\mathcal{D}_1 \exp\left(\theta_0 \sum_{m=1}^{\infty}p_{(2m)}(a)\frac{z^m}{m}\right)=0$. Observe that
\[
\mathcal{D}_1  \exp\left(\theta_0 \sum_{m=1}^{\infty}p_{(2m)}(a)\frac{z^m}{m}\right)= 2\theta_0\left(\sum_{m=1}^\infty a_1^{2m-1}z^m\right)\exp\left(\theta_0 \sum_{m=1}^{\infty}p_{(2m)}(a)\frac{z^m}{m}\right).
\]
Applying $\mathcal{D}_2$ to this expression gives 
\begin{align*}
& 4\theta_0^2 \left[\left(\sum_{m=1}^\infty a_1^{2m-1}z^m\right)\left(\sum_{m=1}^\infty a_2^{2m-1}z^m\right)-\sum_{m=1}^\infty z^m\sum_{i=0}^{m-2} a_1^{2m-3-2i}a_2^{2i+1}\right] \\ 
& \exp\left(\theta_0 \sum_{m=1}^{\infty}p_{(2m)}(a)\frac{z^m}{m}\right)=0.
\end{align*}

Hence, it suffices to show that $[c_k(a^2), p_{(2k)}(a)]_{BC^N(\theta_0,\theta_1)}=4^k(\theta_1+(N-1)\theta_0+\frac{1}{2})_k(\theta_0 N)_k$. It should be possible to do so using a similar argument as in (A). However, we use a different argument that proves the identity for all $\theta_0,\theta_1\in\mathbb{C}$ to avoid the computational details. First, we note that $[c_k(a^2), p_{(2k)}(a)]_{BC^N(\theta_0,\theta_1)}$ is a polynomial in $\theta_0$ and $\theta_1$ with real coefficients, where we assume that $N$ is fixed. Since the expression is true whenever $\theta_0,\theta_1\geq 0$ from \cite{baker2} or \cite{rank_infinity}, it must be true for all $\theta_0,\theta_1\in\mathbb{C}$. 

(C): This follows from (B) with $\theta_1$ set to be zero and \pref{eq:type_d_simple}.
\end{proof}

\subsection[Bessel functions in the high temperature regime]{The uniform convergence of Bessel functions in the high temperature regime}

\label{subsec:uniformconverge}

First, we introduce the following well-known result; we include the proof, which is also well-known, for completeness. A similar argument is used in \cite{rank_infinity}.

\begin{lemma}
\label{lemma:uniformconverge}
Suppose $r\geq 1$ and that $\Omega\subset\mathbb{C}^r$ is open and simply connected. Let $\{f_N\}_{N\geq 1}$ be a sequence of holomorphic functions over $\Omega$.
\begin{enumerate}
    \item[(A)] Assume that $f:\Omega\rightarrow\mathbb{C}$ and $\{f_N\}_{N\geq 1}$ converges to $f$ uniformly over compact subsets of $\Omega$. Then, $f$ is holomorphic over $\Omega$ and the partial derivatives of $\{f_N\}_{N\geq 1}$ converge to the partial derivatives of $f$ uniformly over compact subsets of $\Omega$.
    \item[(B)] Assume that $\{f_N\}_{N\geq 1}$ is uniformly bounded over compact subsets of $\Omega$. Suppose $z\in\Omega$, $\lim_{N\rightarrow\infty} f_N(z)=f(z)$, and the limits of the partial derivatives of $\{f_N\}_{N\geq 1}$ evaluated at $z$ equal the partial derivatives of $f$ evaluated at $z$. Then, $\{f_N\}_{N\geq 1}$ converges to $f$ uniformly over compact subsets of $\Omega$.
\end{enumerate}
\end{lemma}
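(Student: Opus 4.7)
The plan is to prove both parts using standard tools from several complex variables: part (A) via the multivariate Cauchy integral formula, and part (B) via Montel's theorem combined with part (A) and the identity theorem.

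For part (A), I would first cover $\Omega$ by open polydisks whose closures lie in $\Omega$. Given a compact subset $K\subset\Omega$, I would select a finite subcover and thicken slightly to produce a compact set $K'\subset\Omega$ containing $K$ in its interior together with the associated distinguished boundary tori. The multivariate Cauchy integral formula expresses $f_N(w)$ and $\partial^\alpha f_N(w)$, for any multi-index $\alpha\in\mathbb{Z}_{\geq 0}^r$ and any $w$ in one of the original polydisks, as $\frac{\alpha!}{(2\pi i)^r}\int_T \frac{f_N(\zeta)}{\prod_{j=1}^r(\zeta_j-w_j)^{\alpha_j+1}}d\zeta$, where $T$ is a torus contained in $K'$. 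Since $f_N\to f$ uniformly on $K'$ and the integrand denominators are bounded away from zero for $w\in K$, I can pass to the limit inside these integrals to conclude that $f$ is represented by the same Cauchy integral (hence holomorphic) and that $\partial^\alpha f_N\to \partial^\alpha f$ uniformly on $K$.

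For part (B), I would apply Montel's theorem: the uniform boundedness of $\{f_N\}_{N\geq 1}$ on compact subsets makes $\{f_N\}_{N\geq 1}$ a normal family, so every subsequence contains a further subsequence $\{f_{N_k}\}_{k\geq 1}$ converging uniformly on compact subsets of $\Omega$ to some holomorphic function $g:\Omega\to\mathbb{C}$. By part (A), $\partial^\alpha f_{N_k}\to \partial^\alpha g$ uniformly on compact subsets as well, for every multi-index $\alpha$. Evaluating at $z$ and invoking the hypotheses (with $\alpha=0$ giving the function value itself), I would obtain $\partial^\alpha g(z)=\partial^\alpha f(z)$ for all $\alpha$. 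Reading the hypothesis on partial derivatives of $f$ at $z$ as implicitly requiring $f$ to be holomorphic on $\Omega$, I would conclude via comparison of Taylor series at $z$ that $g-f$ vanishes on some polydisk around $z$, and then by the identity theorem applied to the connected set $\Omega$ that $g\equiv f$ throughout $\Omega$. Since every subsequence of $\{f_N\}_{N\geq 1}$ thus has a further subsequence converging uniformly on compacts to $f$, the entire sequence must converge to $f$ uniformly on compact subsets of $\Omega$.

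The main subtlety lies in part (B), where one must either assume or deduce that $f$ is holomorphic on $\Omega$ in order to apply the identity theorem; given that the statement only references partial derivatives of $f$ at the single point $z$, the cleanest reading is that $f$ is already a holomorphic function on $\Omega$, after which the argument proceeds routinely. The simple connectedness of $\Omega$ is stronger than what the identity theorem requires (connectedness alone suffices), but this hypothesis is harmless.
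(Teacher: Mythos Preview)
Your proposal is correct and follows essentially the same strategy as the paper: part (A) via Cauchy's integral formula and part (B) via Montel's theorem plus the identity theorem. One minor difference is that the paper invokes Morera's theorem (using simple connectedness) to conclude $f$ is holomorphic, whereas you obtain holomorphicity directly from the Cauchy integral representation on polydisks; your route avoids any genuine use of simple connectedness, confirming your remark that connectedness alone suffices.
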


\begin{proof}
(A): To show that $f$ is holomorphic, note that because $\Omega$ is simply connected, we may apply Cauchy's integral theorem and Morera's theorem. Afterwards, we can apply Cauchy's integral formula to deduce that the partial derivatives of $\{f_N\}_{N\geq 1}$ converge to the partial derivatives of $f$ uniformly over compact subsets of $\Omega$. 

Suppose the closed ball $\overline{B}(a, b)$ is a subset of $\Omega$. For the sake of contradiction, assume that for all $\epsilon>0$, the closed ball $\overline{B}(a,b+\epsilon)$ is not a subset of $\Omega$. Then, for $\epsilon>0$, let $x_\epsilon$ be an element of $\overline{B}(a,b)$ such that $\overline{B}(x_\epsilon, \epsilon)\not\subset \Omega$. By the compactness of $\overline{B}(a,b)$, $\{x_\epsilon\}_{\epsilon>0}$ has a convergent subsequence as $\epsilon\rightarrow 0$. Assume that the limit is $x$, which is an element of $\overline{B}(a,b)$ because the set is closed. Then, for all $\delta>0$, $B(x,\delta)\not\subset\Omega$, because there exists sufficiently small $\epsilon$ such that $\overline{B}(x_\epsilon,\epsilon)\subset B(x,\delta)$. This is a contradiction to $\Omega$ being open.

Assume that $\epsilon>0$ and $\overline{B}(a,b+\epsilon)\subset\Omega$. Since $\{f_N\}_{N\geq 1}$ is uniformly converging over $\overline{B}(a,b+\epsilon)$, by applying the Cauchy integral formula with the contour set as the boundary of $\overline{B}(a,b+\epsilon)$, we have that $\{\partial_i f_N\}_{N\geq 1}$ is uniformly converging over $\overline{B}(a,b)$ to $\partial_i f$ for all $i\in [r]$. Then, we can proceed with induction to obtain that all partial derivatives of $\{f_N\}_{N\geq 1}$ are uniformly converging over $\overline{B}(a,b)$.

Next, let $K\subset\Omega$ be a compact set. By the same argument that we mentioned earlier, there exists $\epsilon>0$ such that $\overline{B}(x,\epsilon)\subset\Omega_r$ for all $x\in K$. Then, $K\subset \bigcup_{x\in K} B(x,\epsilon)$; note that the union is over open balls rather than closed balls. Let $F$ be a finite subcover so that $K\subset\bigcup_{x\in F} B(x,\epsilon)$. Then, the partial derivatives of $\{f_N\}_{N\geq 1}$ are uniformly converging over $B(x,\epsilon)$ for each $x\in F$, so the partial derivatives are uniformly converging over $K$ to the partial derivatives of $f$.

(B): Suppose a subsequence of $\{f_N\}_{N\geq 1}$ converges uniformly to $g$ over compact subsets of $\Omega$. By (A), $g$ is holomorphic and the partial derivatives of the subsequence converge to the partial derivatives of $g$ uniformly over compact subsets of $\Omega$. Therefore, the partial derivatives of $f$ and $g$ evaluated at $z$ are equal. Furthermore, since $\lim_{N\rightarrow\infty} f_N(z)=f(z)$, we have that $f(z)=g(z)$. This implies that $f=g$ over $\Omega$ by the identity theorem. 

Afterwards, applying Montel's theorem implies that $\{f_N\}_{N\geq 1}$ converges to $f$ uniformly over compact subsets of $\Omega$. For the sake of contradiction, assume that $K$ is a compact subset of $\Omega$ such that $\{f_N\}_{N\geq 1}$ does not converge uniformly to $f$ over $K$. Then, suppose $\epsilon>0$ and $\{N_j\}_{j\geq 1}$ is an increasing sequence of positive integers such that for all $j\geq 1$, $\sup_{z\in K} |f_{N_j}(z)-f(z)|\geq \epsilon$. By Montel's theorem, since $\{f_{N_j}\}_{j\geq 1}$ is uniformly bounded over compact subsets of $\Omega$, it has a uniformly converging subsequence over compact subsets of $\Omega$. However, this subsequence must uniformly converge to $f$ over $K$, which is a contradiction to $\sup_{z\in K} |f_{N_j}(z)-f(z)|\geq \epsilon$ for all $j\geq 1$.
\end{proof}

We use the previous result to justify the uniform convergence of the Bessel functions $J_{a(N)}^{\mathcal{R}(\theta)}(z_1,\ldots,z_r,0,\ldots,0)$ as $N\rightarrow\infty$, similarly to what is proved in the papers \cites{assiotis,rank_infinity}. While the papers consider when $\theta\in\mathbb{R}_{\geq 0}$ is fixed, we consider when $\theta N \rightarrow c\in\mathbb{R}_{\geq 0}$. Furthermore, we focus on when we know some uniform bounds on the moments of $\{a(N)\}_{N\geq 1}$. In particular, if the bounds are strong enough, then we can deduce uniform convergence over compact subsets of $\mathbb{C}^r$.

\begin{theorem}
\label{thm:converge1}
Suppose $m_d\in\mathbb{R}$ for $d\geq 1$. Assume that for $N\geq 2$, $a(N)\in\mathbb{R}^N$ such that $\lim_{N\rightarrow\infty} \frac{1}{N}\sum_{i=1}^N a(N)_i^d=m_d$ for all $d\geq 1$. Suppose $M$ is a positive real number. Consider the following two conditions.
\begin{enumerate}
\item[(C1)] For sufficiently large $N\in\mathbb{N}$, it is the case that for all $d\in\mathbb{N}$, $\frac{1}{N}\sum_{i=1}^N |a(N)_i|^d \leq M^d$.
\item[(C2)] For sufficiently large $N\in\mathbb{N}$, it is the case that for all $d\in\mathbb{N}$, $\frac{1}{N}\sum_{i=1}^N |a(N)_i|^d \leq d!M^d$.
\end{enumerate}
Assume that $\theta, \theta_0, \theta_1\geq 0$ for all $N\geq 2$. Suppose $r\geq 1$ and define $\Omega_r\triangleq\{z\in\mathbb{C}: \sum_{i=1}^r \norm{z_i}_2<\frac{1}{M}\}$.
\begin{itemize} 
\item[(A)] Suppose that as $N\rightarrow\infty$, $\theta N\rightarrow c\in\mathbb{R}_{\geq 0}$. Also, suppose $c_{(k)}$ for $k\geq 1$ solve
\[
m_d=\sum_{\pi\in NC(d)} W^A(\pi)(c)\prod_{B\in \pi} |B|c_{(|B|)}
\]
for $d\geq 1$.

As $N\rightarrow\infty$, $J^{A^{N-1}(\theta)}_{a(N)}(z_1,\ldots,z_r,0,\ldots,0)$ converges to $\exp\left(\sum_{k\geq 1}c_{(k)} p_{(k)}\right)$ uniformly over compact subsets of $\Omega$, where $\Omega=\mathbb{C}^r$ if (C1) is satisfied and $\Omega=\Omega_r$ if (C2) is satisfied.

\item[(B)] Suppose that as $N\rightarrow\infty$, $\theta_0 N \rightarrow c_0\in\mathbb{R}_{\geq 0}$ and $\theta_1\rightarrow c_1\in\mathbb{R}_{\geq 0}$. Also, suppose $c_{(2k)}$ for $k\geq 1$ solve
\[
m_{2d}=\sum_{\pi\in \NCeven(2d)} W^{BC}(\pi)(c_0, c_1)\prod_{B\in \pi} |B|c_{(|B|)}
\]
for $d\geq 1$.

As $N\rightarrow\infty$, $J^{BC^N(\theta_0,\theta_1)}_{a(N)}(z_1,\ldots,z_r,0,\ldots,0)$ converges to $\exp\left(\sum_{k\geq 1}c_{(2k)} p_{(2k)}\right)$ uniformly over compact subsets of $\Omega$, where $\Omega=\mathbb{C}^r$ if (C1) is satisfied and $\Omega=\Omega_r$ if (C2) is satisfied.

\item[(C)] Suppose that as $N\rightarrow\infty$, $\theta N\rightarrow c\in\mathbb{R}_{\geq 0}$. Also, suppose $c_{(2k)}$ for $k\geq 1$ solve
\[
m_{2d}=\sum_{\pi\in \NCeven(2d)} W^A(\pi)(2c)\prod_{B\in \pi} |B|c_{(|B|)}
\]
for $d\geq 1$.

As $N\rightarrow\infty$, $J^{D^N(\theta)}_{a(N)}(z_1,\ldots,z_r,0,\ldots,0)$ converges to $\exp\left(\sum_{k\geq 1}c_{(2k)} p_{(2k)}\right)$ uniformly over compact subsets of $\Omega$, where $\Omega=\mathbb{C}^r$ if (C1) is satisfied and $\Omega=\Omega_r$ if (C2) is satisfied.
\end{itemize}
\end{theorem}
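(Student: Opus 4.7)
The plan is to prove each of (A), (B), (C) by the same three-step argument: (i) show that every Taylor coefficient at $z=0$ of $G_N(z) \triangleq J_{a(N)}^{\mathcal{R}(\theta)}(z_1,\ldots,z_r,0,\ldots,0)$ converges to the corresponding coefficient of $\exp(\sum_{k\geq 1} c_{(k)} p_{(k)}(z))$; (ii) show that $\{G_N\}_{N\geq 1}$ is locally uniformly bounded on $\Omega$ using the integral representation; and (iii) combine (i) and (ii) via \Cref{lemma:uniformconverge}(B).

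For Step (i), on a (possibly $N$-dependent) neighborhood of $0$ where $J_{a(N)}^{\mathcal{R}(\theta)}$ is nonzero, write $J_{a(N)}^{\mathcal{R}(\theta)} = \exp(\sum_{\lambda} c_\lambda(N) p_\lambda)$ as a formal power series. By \Cref{thm:dunkl_eigenfunction_symmetry}, $[p_\nu, J_{a(N)}^{\mathcal{R}(\theta)}]_{\mathcal{R}(\theta)} = [1]\mathcal{D}(p_\nu)J_{a(N)}^{\mathcal{R}(\theta)}(x) = p_\nu(a(N))$, so the moment hypothesis yields
\[
\frac{[p_\nu, J_{a(N)}^{\mathcal{R}(\theta)}]_{\mathcal{R}(\theta)}}{N^{\ell(\nu)}} = \prod_{i=1}^{\ell(\nu)} \frac{\sum_{j=1}^N a(N)_j^{\nu_i}}{N} \longrightarrow \prod_{i=1}^{\ell(\nu)} m_{\nu_i}.
\]
By the defining equations for $c_{(k)}$, this limit equals the right-hand side of condition (b) in the relevant equivalence theorem: \Cref{cor:equivalence_a2} for (A), \Cref{cor:equivalence_bc2} for (B), and the equivalence of (a) and (c) in \Cref{cor:equivalence_odd2} for (C). In the type D case the odd-degree part of $J_a^{D^N(\theta)}$ carries a factor of $e(x)$ and vanishes at $(z_1,\ldots,z_r,0,\ldots,0)$ as soon as $r < N$, so only the even part is relevant. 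Since $c$, $c_0$, and $2c_0+2c_1$ are nonnegative reals (hence not negative integers nor negative odd integers), the reverse implications in these corollaries apply and give $c_{(k)}(N) \to c_{(k)}$ together with $c_\lambda(N) \to 0$ for $\ell(\lambda) \geq 2$. Coefficient-wise convergence of the formal log yields coefficient-wise convergence of the formal exponential, and hence of all Taylor coefficients of $G_N$ at $z = 0$.

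For Steps (ii) and (iii), the symmetric group lies in $H(\mathcal{R})$ for each of our root systems, so \Cref{lemma:besselupper_3} applies. Under (C1), $\|G_N(z)\|_2 \leq \exp(M \sum_{i=1}^r \|z_i\|_2)$, uniformly bounded on compact subsets of $\mathbb{C}^r$; under (C2), $\|G_N(z)\|_2 \leq \sum_{d \geq 0}(M \sum_{i=1}^r \|z_i\|_2)^d$, uniformly bounded on compact subsets of the open convex (hence simply connected) domain $\Omega_r$. \Cref{lemma:uniformconverge}(B), applied at the base point $z = 0$ where $G_N(0) = 1$ and every partial derivative of $G_N$ converges to the corresponding derivative of $\exp(\sum c_{(k)} p_{(k)})$ by Step (i), produces uniform convergence of $G_N$ on compact subsets of $\Omega$ to a holomorphic limit $G$; matching Taylor series at $0$ then shows that the formal series $\exp(\sum_{k\geq 1} c_{(k)} p_{(k)}(z))$ converges on $\Omega$ and equals $G$. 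The main obstacle is conceptual rather than computational: the coefficients $c_\lambda(N)$ must be treated as purely formal, since the neighborhood of holomorphy of $\log J_{a(N)}^{\mathcal{R}(\theta)}$ may shrink with $N$, while still extracting genuine Taylor coefficient convergence of $J_{a(N)}^{\mathcal{R}(\theta)}$ at $0$. This is legitimate because the equivalence theorems depend only on formal coefficients, and conversions between the $\{p_\lambda\}$-basis and the monomial basis at each fixed degree are finite polynomial operations, and the actual convergence in $\Omega$ is then supplied by the Montel-type argument of Step (iii).
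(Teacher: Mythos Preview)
Your proof is correct and follows the same overall architecture as the paper: establish coefficient convergence at the origin, bound the family uniformly via \Cref{lemma:besselupper_3}, and conclude with the Montel argument of \Cref{lemma:uniformconverge}(B).

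The one genuine difference is in how you obtain coefficient convergence. The paper works directly with the explicit asymptotic expansions of $J_a^{\mathcal{R}(\theta)}[k](x)$ from \Cref{sec:bessel_coeff} (namely \Cref{cor:bessel_a2}, \Cref{cor:bessel_bc2}, \Cref{cor:bessel_d2}), computing $\lim_{N\to\infty}[p_r]J_{a(N)}^{\mathcal{R}(\theta)}$ as $\sum_s \mathcal{W}(c)[k]^{-1}_{rs}\prod_i m_{s_i}$ and then identifying this with the coefficient of $p_r$ in $\exp(\sum c_{(k)}p_{(k)})$ via invertibility of $\mathcal{W}(c)[k]$. You instead feed the eigenvalue identity $[p_\nu,J_{a(N)}^{\mathcal{R}(\theta)}]=p_\nu(a(N))$ into the reverse implications of \Cref{cor:equivalence_a2}, \Cref{cor:equivalence_bc2}, and \Cref{cor:equivalence_odd2} to obtain convergence of the formal log-coefficients $c_\lambda(N)$, and then pass to the exponential. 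Both routes hinge on the same invertibility (which the nonnegativity assumptions on $c,c_0,c_1$ guarantee), so the difference is one of packaging: the paper redoes a small matrix computation, while you invoke the already-proved equivalence theorems and pay the price of tracking the formal logarithm. Your handling of the type D odd part (killed by the zero coordinates once $r<N$) matches the paper's one-line observation.
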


\begin{proof} 
(A): By \Cref{lemma:besselupper_3}, $\zeta\triangleq \{J^{A^{N-1}(\theta)}_{a(N)}(z_1,\ldots,z_r,0,\ldots,0)\}_{N\geq r}$ is uniformly bounded over compact subsets of $\Omega$, where $\Omega=\mathbb{C}^r$ if (C1) is satisfied and $\Omega=\Omega_r$ if (C2) is satisfied. 

Suppose $k\geq 1$ and $r\in\Gamma[k]$. We compute the coefficient of $r(x)$ in $J_{a(N)}^{A^{N-1}(\theta)}$ as $N\rightarrow\infty$. First, note that for $s\in\Gamma$,
\[
\lim_{N\rightarrow\infty} N^{-\ell(s)} s(a(N)) = \prod_{i=1}^{\ell(s)} m_{s_i}.
\]
Then, using \Cref{cor:bessel_a2} gives that it is evident that the coefficient of $r(x)$ as $N\rightarrow\infty$ is 
\[
\sum_{s\in\Gamma[k]} \mathcal{W}^A(c)[k]^{-1}_{rs} \prod_{i=1}^{\ell(s)}m_{s_i}.
\]
To show that this quantity is $\frac{\prod_{i=1}^{\ell(r)} c_{(r_i)}}{\pi(r)}$, it suffices to show that 
\[
\mathcal{W}^A(c)[k]\left[\frac{\prod_{i=1}^{\ell(r)} c_{(r_i)}}{\pi(r)}\right]_{r\in\Gamma[k]}^T = \left[\prod_{i=1}^{\ell(r)}m_{r_i}\right]_{r\in\Gamma[k]}^T,
\]
since $\mathcal{W}^A(c)[k]$ is invertible. However, this is evident based on the formula for $\mathcal{W}^A(c)$ and the definition of the $c_{(k)}$; for example, see the proof of \Cref{cor:equivalence_a1}. Thus, the $N\rightarrow\infty$ limit of the coefficient of $r(x)$ in $J_{a(N)}^{A^{N-1}(\theta)}$ is the coefficient of $r(x)$ in $\exp\left(\sum_{k\geq 1} c_{(k)}p_{(k)}\right)$.

Afterwards, by part (B) of \Cref{lemma:uniformconverge}, we have that $\zeta$ converges to $\exp\left(\sum_{k\geq 1} c_{(k)}p_{(k)}\right)$ uniformly over compact subsets of $\Omega$.

(B) and (C): We can use the same method as (A); to replace \Cref{cor:bessel_a2}, we use \Cref{cor:bessel_bc2} for (B) and \pref{eq:type_d_simple} and \Cref{cor:bessel_bc2} for (C). For (C), observe that the terms with all odd degrees are eliminated over $\Omega$.
\end{proof}

\subsection[Bessel functions for Vershik-Kerov sequences]{The uniform convergence of Bessel functions for Vershik-Kerov\texorpdfstring{\\}{}sequences}
\label{subsec:vk}

In this subsection, we reprove the results of the papers \cites{assiotis,rank_infinity} and extend these results to the $D^N$ root system. The setting that we consider is equivalent to the setting of Vershik-Kerov sequences that the papers consider, see \Cref{remark:vk}. First, we prove the analogues of \Cref{thm:equivalence_a1,thm:equivalence_bc1,thm:equivalence_odd1}.

\begin{theorem} 
\label{thm:vk_a}
Suppose $F_N(x_1,\ldots,x_N) = \exp\left(\sum_{\lambda\in\Gamma_N} c_{\lambda}(N)p_\lambda\right)$ for all $N\geq 2$. Assume that $\theta\in\mathbb{C}$ is fixed. Consider the following statements.
\begin{enumerate}
\item[(a)] For all $\lambda\in\Gamma$, $\lim_{N\rightarrow\infty} c_\lambda(N)=c_\lambda\in\mathbb{C}$. 
\item[(b)] For all $\nu\in\Gamma$,
\[
\lim_{N\rightarrow\infty} \frac{1}{N^{|\nu|}} [p_\nu, F_N]_{A^{N-1}(\theta)} = \theta^{|\nu|-\ell(\nu)} \pi(\nu)\prod_{l=1}^{\ell(\nu)}\nu_l[p_\nu] \exp\left(\sum_{\gamma\in\Gamma}c_\gamma p_\gamma\right).
\]
\end{enumerate}
Then, (a) implies (b) and if $\theta\not=0$, then (b) implies (a).
\end{theorem}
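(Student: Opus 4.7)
The plan is to mirror \Cref{thm:equivalence_a1}, replacing the normalization $(\theta N)^{|\nu|}N^{\ell(\nu)}$ by $N^{|\nu|}$ and treating $\theta$ as a fixed complex parameter rather than a sequence. The combinatorial input is \Cref{thm:leadingorder_a} itself, used to produce an asymptotic diagonalization at the new scale.

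The first step is to analyze entrywise the asymptotics of $(N^{-|\nu|}[p_\nu, p_\lambda]_{A^{N-1}(\theta)})_{\nu,\lambda \in \Gamma[|\nu|]}$. For any pair with $\ell(\lambda) \leq \ell(\nu)$, \Cref{thm:leadingorder_a} produces a main term of order $\theta^{|\nu|-\ell(\nu)} N^{|\nu|+\ell(\lambda)-\ell(\nu)}$, which matches $N^{|\nu|}$ only when $\ell(\lambda) = \ell(\nu)$, in which case \Cref{lemma:matrix_a1}(B) forces the coefficient $\ma_{\nu\lambda}$ to vanish unless $\lambda = \nu$; condition (2) of the remainder bound simultaneously confines the remainder to $N$-degree at most $|\nu|-1$ in this regime. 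For $\ell(\lambda) > \ell(\nu)$, applying \Cref{thm:leadingorder_a} with the roles of $\nu$ and $\lambda$ swapped (so that $[p_\nu, p_\lambda]$ directly meets the theorem's length hypothesis) produces $O(N^{|\nu|+\ell(\nu)-\ell(\lambda)}) = o(N^{|\nu|})$. The limit matrix is therefore diagonal with entries $\theta^{|\nu|-\ell(\nu)}\pi(\nu)\prod_l \nu_l$, and it is invertible precisely when $\theta \neq 0$.

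Transporting this to $F_N$, for fixed $\nu$ and $N \geq |\nu|$ the decomposition
\begin{equation*}
N^{-|\nu|}[p_\nu, F_N]_{A^{N-1}(\theta)} = \sum_{\lambda \in \Gamma_N[|\nu|]} N^{-|\nu|}[p_\nu, p_\lambda]_{A^{N-1}(\theta)} \cdot [p_\lambda] F_N
\end{equation*}
is a finite sum over a stable index set. Under (a), each $[p_\lambda] F_N$ is a fixed polynomial in $\{c_\gamma(N) : |\gamma| \leq |\lambda|\}$ obtained by expanding the exponential, and therefore converges to the corresponding coefficient of $\exp(\sum_\gamma c_\gamma p_\gamma)$; combined with the diagonalization above this yields (b). For the converse when $\theta \neq 0$, I would induct on $k = |\nu|$. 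The identity $[p_\lambda] F_N = c_\lambda(N) + Q_\lambda(c_\gamma(N) : |\gamma| < k)$ for a polynomial $Q_\lambda$ (read off from the exponential) shows that under the inductive hypothesis, convergence of $[p_\lambda] F_N$ is equivalent to convergence of $c_\lambda(N)$. Reading the finite-dimensional system $\vec{v}_N = M_N \vec{c}_N$ with $N^{-k} M_N$ tending to the invertible diagonal matrix above and $N^{-k}\vec{v}_N$ converging by (b), it follows that $\vec{c}_N = (N^{-k}M_N)^{-1}(N^{-k}\vec{v}_N)$ converges, completing the induction. The main point to verify carefully is the off-diagonal vanishing at scale $N^{|\nu|}$, which is exactly the combination of the sharp remainder bound in condition (2) of \Cref{thm:leadingorder_a} and the triangular vanishing in \Cref{lemma:matrix_a1}(B); beyond this, no new combinatorial input past \Cref{thm:equivalence_a1}'s framework is required.
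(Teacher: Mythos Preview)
Your proposal is correct and takes essentially the same approach as the paper: both rest on the observation, drawn from \Cref{thm:leadingorder_a}, that at the scale $N^{|\nu|}$ only the diagonal entries $\lambda=\nu$ survive, after which the argument is the template of \Cref{thm:equivalence_a1}. The paper's proof is a one-line reference to this observation, and you have simply written out the details it leaves implicit.
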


\begin{proof}
See \Cref{thm:leadingorder_a}. The main idea is that for $\lambda,\nu\in\Gamma$, the leading order term of the expansion of $[p_\lambda,p_\nu]_{A^{N-1}(\theta)}$ given in the theorem is nonzero if and only if $\lambda=\nu$.
\end{proof}

\begin{theorem} 
\label{thm:vk_bc}
Suppose $F_N(x_1,\ldots,x_N) = \exp\left(\sum_{\lambda\in\evenN} c_{\lambda}(N)p_\lambda\right)$ for all $N\geq 2$. Assume that $\theta_0\in\mathbb{C}$ is fixed and $\lim_{N\rightarrow\infty} \frac{\theta_1}{\theta_0N}=c\in\mathbb{C}$. Consider the following statements.
\begin{enumerate}
\item[(a)] For all $\lambda\in\even$, $\lim_{N\rightarrow\infty} c_\lambda(N)=c_\lambda\in\mathbb{C}$. 
\item[(b)] For all $\nu\in\even$,
\[
\lim_{N\rightarrow\infty} \frac{1}{N^{|\nu|}} [p_\nu, F_N]_{BC^N(\theta_0,\theta_1)} = (2\theta_0)^{|\nu|-\ell(\nu)} \pi(\nu) \prod_{l=1}^{\ell(\nu)}\nu_l(1+c)^{o([\nu_l])}[p_\nu] \exp\left(\sum_{\gamma\in\even}c_\gamma p_\gamma\right).
\]
\end{enumerate}
Then, (a) implies (b) and if $\theta_0\not=0$ and $c\not=-1$, then (b) implies (a).
\end{theorem}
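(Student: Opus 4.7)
The plan is to adapt the proof of Theorem~\ref{thm:vk_a} to the present setting, using Theorem~\ref{thm:leadingorder_bc} in place of Theorem~\ref{thm:leadingorder_a}. The decisive observation is that in the regime where $\theta_0$ is fixed and $\theta_1/(\theta_0 N)\to c$, the variable $\theta_0$ has $N$-weight $0$ while $\theta_1$ has $N$-weight $1$, so a monomial $N^a\theta_0^b\theta_1^c$ carries $N$-weight $a+c$. I would first check, using conditions (1)--(3) on the remainder $R$ in Theorem~\ref{thm:leadingorder_bc}, that every summand of $R$ has $N$-weight strictly less than $k+\ell(\lambda)-\ell(\nu)$; condition~(3) is exactly what rules out the borderline case in which $\deg_x=\deg_y+\ell(\lambda)$ and $\deg_y+\deg_z=k-\ell(\nu)$ could combine to reach the leading $N$-weight. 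The explicit leading term then contributes $\theta_0^{k-\ell(\nu)}\mbc_{\lambda\nu}(c)\,N^{k+\ell(\lambda)-\ell(\nu)}(1+o(1))$, so $\frac{1}{N^{|\nu|}}[p_\lambda,p_\nu]_{BC^N(\theta_0,\theta_1)}=o(1)$ whenever $\lambda\ne\nu$ and $|\lambda|=|\nu|=k$: either $\ell(\lambda)\ne\ell(\nu)$, in which case the leading $N$-weight is strictly below $k$, or $\ell(\lambda)=\ell(\nu)$, in which case Lemma~\ref{lemma:matrix_bc1}(B) forces $\mbc_{\lambda\nu}(c)=0$, leaving only the subleading $R$.

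Next I would expand the formal power series $F_N = 1 + \sum_{\gamma\in\even} A_\gamma(N)\,p_\gamma$, where $A_\gamma(N)$ is the polynomial in $\{c_\mu(N)\}_\mu$ arising from the exponential expansion. The singleton multiset $M=\{\gamma\}$ contributes $c_\gamma(N)$, and every multiset of size at least two involves only partitions $\mu$ with $|\mu|<|\gamma|$, so $A_\gamma(N)=c_\gamma(N)+P_\gamma\bigl(\{c_\mu(N)\}_{|\mu|<|\gamma|}\bigr)$ for a fixed polynomial $P_\gamma$. Combining with the previous paragraph and applying Lemma~\ref{lemma:matrix_bc1}(A) to evaluate the diagonal entry $\mbc_{\nu\nu}(c)$, I would conclude
\[
\lim_{N\to\infty}\frac{[p_\nu,F_N]_{BC^N(\theta_0,\theta_1)}}{N^{|\nu|}} \;=\; (2\theta_0)^{|\nu|-\ell(\nu)}\pi(\nu)\prod_{l=1}^{\ell(\nu)}\nu_l\,(1+c)^{o([\nu_l])}\cdot\lim_{N\to\infty}A_\nu(N)
\]
whenever the latter limit exists.

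The implication $(a)\Rightarrow(b)$ is then immediate: convergence $c_\lambda(N)\to c_\lambda$ forces $A_\nu(N)\to [p_\nu]\exp(\sum_\gamma c_\gamma p_\gamma)$, yielding~(b). For the converse under $\theta_0\ne 0$ and $c\ne -1$, the prefactor $(2\theta_0)^{|\nu|-\ell(\nu)}\pi(\nu)\prod_l\nu_l(1+c)^{o([\nu_l])}$ is nonzero for every $\nu\in\even$, so~(b) lets me extract $\lim A_\nu(N)=[p_\nu]\exp(\sum c_\gamma p_\gamma)=:A_\nu^\infty$. I would then induct on $|\nu|$: the base case $\nu=(2)$ gives $c_{(2)}(N)=A_{(2)}(N)\to c_{(2)}$, and the inductive step uses the triangular relation $c_\nu(N)=A_\nu(N)-P_\nu(\{c_\mu(N)\}_{|\mu|<|\nu|})$ together with the inductive hypothesis to conclude $c_\nu(N)\to A_\nu^\infty-P_\nu(\{c_\mu\}_{|\mu|<|\nu|})=c_\nu$. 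The main obstacle will be the $N$-weight bookkeeping for $R$ in the first paragraph, since condition~(3) is exactly what distinguishes the BC analysis from the simpler type~A case, and one must be careful to verify that it suffices to eliminate every contribution to the leading $N$-weight.
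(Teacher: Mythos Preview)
Your approach matches the paper's (which simply points to Theorem~\ref{thm:leadingorder_bc}): the $N$-weight bookkeeping via conditions (1)--(3) is exactly right, and the limiting matrix $\bigl(N^{-k}[p_\nu,p_\lambda]_{BC^N(\theta_0,\theta_1)}\bigr)_{|\nu|=|\lambda|=k}$ is indeed diagonal with the entries from Lemma~\ref{lemma:matrix_bc1}(A).

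There is one subtlety in your (b)$\Rightarrow$(a) step. From the single equation indexed by $\nu$ you cannot directly divide by the diagonal prefactor to read off $\lim_N A_\nu(N)$, because the off-diagonal contributions $o(1)\cdot A_\lambda(N)$ are not a~priori $o(1)$: at this stage of the argument the $A_\lambda(N)$ for $\lambda\neq\nu$ with $|\lambda|=k$ are not yet known to be bounded. The remedy---used in the proof of Theorem~\ref{thm:equivalence_a1}---is to treat all $\nu$ with $|\nu|=k$ together as a linear system with coefficient matrix $D+o(1)$, where $D$ is the diagonal limit (invertible precisely when $\theta_0\neq 0$ and $c\neq -1$); inverting for large $N$ yields $\lim_N A_\lambda(N)$ for every $\lambda$ of size $k$ simultaneously, after which your triangular induction on $|\nu|$ for the $c_\lambda$ goes through verbatim.
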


\begin{proof}
See \Cref{thm:leadingorder_bc}.
\end{proof}

\begin{theorem} 
\label{thm:vk_d}
Suppose $F_N(x_1,\ldots,x_N) = \exp\left(\sum_{\lambda\in\evenN} c_{\lambda}(N)p_\lambda\right)+ed_0(N)+e\exp\\\left(\sum_{\lambda\in\evenN}d_\lambda(N)p_\lambda\right)$ for all $N\geq 2$. Assume that $\theta\in\mathbb{C}$ is fixed. Consider the following statements.
\begin{enumerate}
\item[(a)] For all $\lambda\in\even$, $\lim_{N\rightarrow\infty} c_\lambda(N)=c_\lambda\in\mathbb{C}$.
\item[(b)] It is the case that $\lim_{N\rightarrow\infty}d_0(N)=d_0\in\mathbb{C}$ and for all $\lambda\in\even$, $\lim_{N\rightarrow\infty} d_\lambda(N)=d_\lambda\in\mathbb{C}$.
\item[(c)] For all $\nu\in\even$,
\[
\lim_{N\rightarrow\infty} \frac{1}{N^{|\nu|}} [p_\nu, F_N]_{D^N(\theta)} = (2\theta)^{|\nu|-\ell(\nu)}  \pi(\nu)\prod_{l=1}^{\ell(\nu)}\nu_l[p_\nu] \exp\left(\sum_{\gamma\in\even}c_\gamma p_\gamma\right).
\]
\item[(d)] It is the case that 
\[
\lim_{N\rightarrow\infty} \frac{1}{\prod_{j=1}^N(1+2(j-1)\theta)} [e, F_N]_{D^N(\theta)} = d_0+1
\]
and for all $\nu\in\even$
\begin{align*}
&\lim_{N\rightarrow\infty} \frac{1}{\prod_{j=1}^N(1+2(j-1)\theta)N^{|\nu|}} [ep_\nu, F_N]_{D^N(\theta)} \\
& = (2\theta)^{|\nu|-\ell(\nu)}  \pi(\nu)\prod_{l=1}^{\ell(\nu)}\nu_l[p_\nu] \exp\left(\sum_{\gamma\in\even}d_\gamma p_\gamma\right).
\end{align*}
\end{enumerate}
Then, (a) implies (c) and if $\theta\not=0$, then (c) implies (a).

Assume that $0\notin \{1+2j\theta: j\in\mathbb{N}\}$. Then, (b) implies (d) and if $\theta\not=0$, then (d) implies (b).
\end{theorem}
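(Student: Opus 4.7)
The plan is to split the theorem into the two equivalences (a)$\Leftrightarrow$(c) and (b)$\Leftrightarrow$(d), in each case first using the parity orthogonality of Corollary~\ref{cor:parity_d} to isolate the relevant component of $F_N$, and then applying the leading-order expansions from Theorem~\ref{thm:leadingorder_d1} (for the former) and Theorem~\ref{thm:leadingorder_odd} (for the latter) with $\theta$ held fixed. Since $ep_\nu$ has every variable appearing with odd degree whereas $\exp(\sum c_\lambda(N) p_\lambda)$ has every variable appearing with even degree, and symmetrically for $p_\nu$ versus $e\exp(\sum d_\lambda(N) p_\lambda)$, Corollary~\ref{cor:parity_d} yields
\[
[p_\nu, F_N]_{D^N(\theta)} = \sum_{\mu \in \evenN[|\nu|]} [p_\mu]\exp\!\left(\sum_{\gamma} c_\gamma(N) p_\gamma\right) \cdot [p_\nu, p_\mu]_{D^N(\theta)},
\]
and analogously $[ep_\nu, F_N]_{D^N(\theta)} = \sum_{\mu \in \evenN[|\nu|]} [p_\mu]\exp(\sum_\gamma d_\gamma(N) p_\gamma) \cdot [ep_\nu, ep_\mu]_{D^N(\theta)}$.

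For (a)$\Leftrightarrow$(c), I would apply Theorem~\ref{thm:leadingorder_d1} with $\theta$ fixed. The remainder $R(N, 2\theta)$ has degree in $N$ at most $|\nu| + \ell(\nu) - \ell(\mu) - 1$, strictly lower than the leading-term degree, so $[p_\nu, p_\mu]_{D^N(\theta)}/N^{|\nu|}$ converges to $(2\theta)^{|\nu|-\ell(\mu)} \ma_{\nu\mu}$ when $\ell(\nu) \leq \ell(\mu)$; by symmetry of the bilinear form, the limit vanishes when $\ell(\nu) > \ell(\mu)$. Lemma~\ref{lemma:matrix_a1} then forces the limit matrix on $\evenN[|\nu|]$ to be diagonal with diagonal entries $(2\theta)^{|\nu|-\ell(\nu)} \pi(\nu) \prod_l \nu_l$, which are nonzero exactly when $\theta \neq 0$. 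The forward direction (a)$\Rightarrow$(c) follows by inserting the limits $c_\lambda(N) \to c_\lambda$ into the expansion. For the reverse direction under $\theta \neq 0$, inverting the limiting diagonal matrix yields convergence of $[p_\mu]\exp(\sum c_\gamma(N) p_\gamma)$ for every $\mu \in \evenN[|\nu|]$; then induction on $|\mu|$ extracts the individual limits of $c_\mu(N)$ via the identity $[p_\mu]\exp(\sum c_\gamma p_\gamma) = c_\mu + P(c_\gamma : |\gamma| < |\mu|)$, where $P$ is a fixed polynomial in strictly lower-order coefficients.

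For (b)$\Leftrightarrow$(d), the argument is parallel but invokes Theorem~\ref{thm:leadingorder_odd}. The essential step is to factor $\prod_{j=1}^N (1+2(j-1)\theta) = \prod_{j=1}^{N-k}(1+2(j-1)\theta) \cdot \prod_{j=N-k+1}^N (1+2(j-1)\theta)$ with $k = |\nu|$, and observe that for fixed $\theta \neq 0$ and fixed $k$ one has $\prod_{j=N-k+1}^N (1+2(j-1)\theta) = (2\theta N)^k (1 + O(1/N))$. Cancelling against the factor $\prod_{j=1}^{N-k}(1+2(j-1)\theta)$ extracted by Theorem~\ref{thm:leadingorder_odd} and combining with the leading term $N^{\ell(\nu)}(2N\theta)^{2k-\ell(\mu)} \ma_{\nu\mu}$ gives the limit
\[
\frac{[ep_\nu, ep_\mu]_{D^N(\theta)}}{\prod_{j=1}^N (1+2(j-1)\theta)\, N^{|\nu|}} \longrightarrow (2\theta)^{|\nu|-\ell(\nu)} \pi(\nu) \prod_l \nu_l \cdot \delta_{\nu,\mu}
\]
by the same appeal to Lemma~\ref{lemma:matrix_a1}, where the nonvanishing of $\prod_{j=1}^N(1+2(j-1)\theta)$ is guaranteed by the assumption $0 \notin \{1+2j\theta : j \in \mathbb{N}\}$. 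The remaining steps---forward direction by substitution, reverse direction by inversion of the diagonal limit matrix and induction on $|\mu|$---are verbatim repeats of the previous case. The main obstacle is the careful bookkeeping needed to verify that the remainder $R(N, \theta)$ in Theorem~\ref{thm:leadingorder_odd} contributes only $O(N^{\ell(\nu) - \ell(\mu) - 1})$ after normalization when $\theta$ is fixed, which requires using condition (2) on $R$ (degree of $x$ at most $2|\nu| + \ell(\nu) - \ell(\mu) - 1$) rather than the condition (1) used in the $|\theta N| \to \infty$ regime.
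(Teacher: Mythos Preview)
Your proposal is correct and follows essentially the same approach as the paper, which simply cites Theorems~\ref{thm:leadingorder_d1} and~\ref{thm:leadingorder_odd} and relies on the observation (made explicit in the proof of the companion Theorem~\ref{thm:vk_a}) that when $\theta$ is held fixed the leading-order term in $N$ is nonzero only on the diagonal $\lambda=\nu$. Your write-up supplies considerably more detail than the paper's one-line proof, and the bookkeeping you describe---the use of Corollary~\ref{cor:parity_d} to decouple the even and odd parts, the appeal to condition~(2) on $R$ rather than condition~(1) in the fixed-$\theta$ regime, and the factorization of $\prod_{j=1}^N(1+2(j-1)\theta)$---is all accurate. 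One small gap: your asymptotic $\prod_{j=N-k+1}^N(1+2(j-1)\theta)=(2\theta N)^k(1+O(1/N))$ requires $\theta\neq 0$, but (b)$\Rightarrow$(d) must also cover $\theta=0$ (which satisfies the hypothesis $0\notin\{1+2j\theta\}$); in that case both sides of~(d) vanish trivially since $|\nu|>\ell(\nu)$ for $\nu\in\even$ and condition~(1) on $R$ bounds the $N$-degree by $\ell(\nu)$ when $\theta=0$.
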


\begin{proof}
See \Cref{thm:leadingorder_bc} with $c=0$ and \Cref{lemma:leading_order_odd}.
\end{proof}

\Cref{thm:converge2} is an example of an application of the results of this paper. Part (A) of the theorem appears in \cite{assiotis} while parts (A) and (B) appear in \cite{rank_infinity}. Part (C) has not appeared previously, although it is straightforward to deduce. See \Cref{remark:vk} for discussion on how the setting of the theorem is equivalent to the setting of Vershik-Kerov sequences.

\begin{theorem}
\label{thm:converge2}
Suppose $m_d\in\mathbb{R}$ for $d\geq 1$. Assume that for $N\geq 2$, $a(N)\in\mathbb{R}^N$ such that $\lim_{N\rightarrow\infty} \frac{\sum_{i=1}^N a(N)_i^d}{N^d}=m_d$ for all $d\geq 1$. Assume that $M$ is a positive real number such that for sufficiently large $N\in\mathbb{N}$, it is the case that for all $d\in\mathbb{N}$, $\frac{1}{N^d}\left|\sum_{i=1}^N a(N)_i^d\right|\leq M^d$. Also, assume that $\theta, \theta_0>0$ are fixed and that $\theta_1\geq 0$ for all $N\geq 2$. Suppose $r\geq 1$.

\begin{itemize} 
\item[(A)] As $N\rightarrow\infty$, $J^{A^{N-1}(\theta)}_{a(N)}(z_1,\ldots,z_r,0,\ldots,0)$ converges to $\exp\left(\sum_{k\geq 1}\frac{m_kp_{(k)}\theta^{-k+1}}{k}\right)$ uniformly over compact subsets of $\left\{z\in\mathbb{C}^r: |\text{Re}(z_i)| < \frac{\theta}{rM}\,\forall i\in [r]\right\}$.
\item[(B)] Suppose $\lim_{N\rightarrow\infty}\frac{\theta_1}{\theta_0 N}=c\geq 0$. As $N\rightarrow\infty$, $J^{BC^N(\theta_0,\theta_1)}_{a(N)}(z_1,\ldots,z_r,0,\ldots,0)$ converges to $\exp\left(\sum_{k\geq 1}\frac{m_{2k}p_{(2k)}(2\theta_0)^{-2k+1}(1+c)^{-k}}{2k}\right)$ uniformly over compact subsets of $\left\{z\in\mathbb{C}^r: |\text{Re}(z_i)| < \frac{2\theta_0\sqrt{1+c}}{rM}\,\,\forall i\in [r]\right\}$. \item[(C)] As $N\rightarrow\infty$, $J^{D^N(\theta)}_{a(N)}(z_1,\ldots,z_r,0,\ldots,0)$ converges to $\exp\left(\sum_{k\geq 1}\frac{m_{2k}p_{(2k)}(2\theta)^{-2k+1}}{2k}\right)$ uniformly over compact subsets of $\left\{z\in\mathbb{C}^r: |\text{Re}(z_i)| < \frac{2\theta}{rM}\,\,\forall i\in [r]\right\}$.
\end{itemize}
\end{theorem}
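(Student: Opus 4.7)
The plan is to combine the Vershik-Kerov style coefficient-convergence theorems \Cref{thm:vk_a}, \Cref{thm:vk_bc}, and \Cref{thm:vk_d} with the explicit one-variable formulas in \Cref{lemma:onevar}, and then invoke the Montel-Vitali mechanism packaged in \Cref{lemma:uniformconverge}(B). I describe part (A) in detail; parts (B) and (C) follow the same three-step template with obvious substitutions.

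First I establish pointwise convergence of all Taylor coefficients at the origin. On an $N$-dependent neighborhood of $0 \in \mathbb{C}^N$ where $J^{A^{N-1}(\theta)}_{a(N)}$ is nonvanishing, write $J^{A^{N-1}(\theta)}_{a(N)} = \exp\left(\sum_{\lambda \in \Gamma_N} c_\lambda(N) p_\lambda\right)$. The symmetric eigenfunction identity (\Cref{thm:dunkl_eigenfunction_symmetry}) evaluated at the origin yields $[p_\nu, J^{A^{N-1}(\theta)}_{a(N)}]_{A^{N-1}(\theta)} = p_\nu(a(N))$ for every $\nu \in \Gamma$, and the moment hypothesis gives $p_\nu(a(N))/N^{|\nu|} \to \prod_{i=1}^{\ell(\nu)} m_{\nu_i}$. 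Since $\theta > 0$, invoking (b)$\Rightarrow$(a) of \Cref{thm:vk_a} gives $c_\lambda(N) \to c_\lambda$, and expanding the target $\exp(\sum c_\gamma p_\gamma)$ pins down the unique solution as $c_{(k)} = m_k \theta^{-k+1}/k$ and $c_\lambda = 0$ for $\ell(\lambda) \geq 2$. In particular all Taylor coefficients of $J^{A^{N-1}(\theta)}_{a(N)}(z_1, \ldots, z_r, 0, \ldots, 0)$ at the origin converge to those of the claimed limit, and hence so do all partial derivatives at the origin.

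Next I establish a uniform bound on compact subsets of $\Omega \triangleq \{z \in \mathbb{C}^r : |\text{Re}(z_i)| < \theta/(rM) \text{ for all } i\}$. Since $\theta \geq 0$, the positivity theorem (\Cref{thm:positivity}) applies and gives $|J^{A^{N-1}(\theta)}_{a(N)}(z_1, \ldots, z_r, 0, \ldots, 0)| \leq \int e^{\sum_i \text{Re}(z_i) \epsilon_i} d\mu^{\text{sym}}_{a(N)}(\epsilon)$. Hölder's inequality applied with equal exponents $(r, \ldots, r)$, combined with the $H(A^{N-1})$-invariance of $\mu^{\text{sym}}_{a(N)}$, gives
\[
\int e^{\sum_i \text{Re}(z_i) \epsilon_i} d\mu^{\text{sym}}_{a(N)} \leq \prod_{i=1}^r \left( \int e^{r \text{Re}(z_i) \epsilon_1} d\mu^{\text{sym}}_{a(N)} \right)^{1/r} = \prod_{i=1}^r J^{A^{N-1}(\theta)}_{a(N)}(r \text{Re}(z_i), 0, \ldots, 0)^{1/r}.
\]
Each one-variable factor is controlled by \Cref{lemma:onevar}(A): the hypothesis bound $|p_{(m)}(a(N))| \leq (MN)^m$ propagates through $c_k(a) = \sum_{\lambda \in \Gamma[k]} \frac{\theta^{\ell(\lambda)}}{\pi(\lambda) \prod \lambda_i} p_\lambda(a)$ to give $|c_k(a(N))| \leq (MN)^k \binom{\theta+k-1}{k}$, and then $(\theta N)_k \geq (\theta N)^k$ forces $|J^{A^{N-1}(\theta)}_{a(N)}(w, 0, \ldots, 0)| \leq \sum_k \binom{\theta + k - 1}{k}(Mw/\theta)^k$, a quantity that is finite and independent of $N$ whenever $w < \theta/M$, i.e.~whenever $|\text{Re}(z_i)| < \theta/(rM)$.

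Finally, \Cref{lemma:uniformconverge}(B) applied with the open simply connected set $\Omega$, reference point $z = 0$, uniform boundedness from the previous step, and pointwise convergence of Taylor coefficients at $0$ from the first step, produces uniform convergence on compact subsets of $\Omega$ to $\exp(\sum_{k \geq 1} m_k \theta^{-k+1} p_{(k)}/k)$, completing (A). In (B) substitute \Cref{thm:vk_bc} and \Cref{lemma:onevar}(B); the denominator estimate $4^k(\theta_1 + (N-1)\theta_0 + 1/2)_k(\theta_0 N)_k \geq (1-\varepsilon)^k (2\theta_0 \sqrt{1+c} \cdot N)^{2k}$, valid for any $\varepsilon > 0$ and sufficiently large $N$, yields the radius $2\theta_0\sqrt{1+c}/(rM)$; in (C) use \Cref{thm:vk_d} and \Cref{lemma:onevar}(C) with the analogous estimate (set $c = 0$), recovering $2\theta/(rM)$. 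The main obstacle is not the Hölder factorization, which is standard once the $H$-symmetry of $\mu^{\text{sym}}_{a(N)}$ is noted, but rather ensuring the one-variable Pochhammer estimates from \Cref{lemma:onevar} yield the correct uniform-in-$N$ radii across all three root systems and capture the $(1+c)$-factor in (B).
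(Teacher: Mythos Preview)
Your proof is correct and follows essentially the same three-step template as the paper: establish uniform boundedness via \Cref{thm:positivity}, H\"older, and the one-variable formulas in \Cref{lemma:onevar}; verify convergence of all Taylor coefficients at the origin; then conclude via \Cref{lemma:uniformconverge}(B). The only difference is that for the coefficient-convergence step the paper invokes the explicit inverse-matrix asymptotics in \Cref{cor:bessel_a1}, \Cref{cor:bessel_bc1}, \Cref{cor:bessel_d1}, whereas you run the equivalence theorems \Cref{thm:vk_a}, \Cref{thm:vk_bc}, \Cref{thm:vk_d} in the direction (b)$\Rightarrow$(a) on $F_N=J_{a(N)}$; both routes rest on the same leading-order computations and yield the same limits.
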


\begin{proof}
(A): We include the proof of this result from \cite{rank_infinity} for completeness. Using \Cref{thm:positivity} and H\"older's inequality gives that
\begin{align*}
\left|J^{A^{N-1}(\theta)}_{a(N)}(x_1,\ldots,x_r,0,\ldots,0)\right| & = \left|\int_{\mathbb{R}^N} e^{\sum_{i=1}^r x_i\epsilon_i} d\mu_a(\epsilon)\right| \\ & \leq \prod_{i=1}^r \left|\int_{\mathbb{R}^N} e^{r\text{Re}(x_i\epsilon_i)} d\mu_a(\epsilon)\right|^{\frac{1}{r}} \\
&= \prod_{i=1}^r J^{A^{N-1}(\theta)}_{a(N)}(r\text{Re}(x_i),0,\ldots,0)^{\frac{1}{r}}.
\end{align*}
Afterwards, using \Cref{lemma:onevar} gives that 
\begin{align*}
J^{A^{N-1}(\theta)}_{a(N)}(r\text{Re}(x_i),0,\ldots,0) &\leq 1+\sum_{k=1}^\infty \frac{|c_k(a)|}{(\theta N)^k} |r\text{Re}(x_i)|^k \\
& = 1+\sum_{k=1}^\infty \left|c_k\left(\frac{a}{\theta N}\right)r\text{Re}(x_i)\right|^k \\
& \leq \exp\left(\theta\sum_{m=1}^\infty \left|p_{(m)}\left(\frac{a}{N}\right)\right|\frac{|r\text{Re}(x_i)|^m}{\theta^mm}\right) \\
& \leq \exp\left(\theta\sum_{m=1}^\infty M^m \frac{|r\text{Re}(x_i)|^m}{\theta^mm}\right).
\end{align*}

It is then is clear that $\zeta\triangleq \{J_{a(N)}^{A^{N-1}(\theta)}(x_1,\ldots,x_r,0,\ldots,0)\}_{N\geq r}$ is uniformly bounded over compact subsets of $\{z\in\mathbb{C}^r:|\text{Re}(z_i)|<\frac{\theta}{rM}\,\forall i\in[r]\}$. Following this, we can use \Cref{cor:bessel_a1} to deduce that the limit of the coefficient of $r(x)$ in $\zeta$ equals its coefficient in $\exp\left(\sum_{k\geq 1}\frac{m_kp_{(k)}\theta^{-k+1}}{k}\right)$; the application of \Cref{cor:bessel_a1} is the difference between this proof and that of \cite{rank_infinity}. We can then apply part (B) of \Cref{lemma:uniformconverge} to conclude the proof.

(B) and (C): We follow the same method as the proof of (A). For (B), we use part (B) of \Cref{lemma:onevar} and \Cref{cor:bessel_bc1} and for (C), we use part (C) of \Cref{lemma:onevar}, \pref{eq:type_d_simple}, and \Cref{cor:bessel_bc1}.
\end{proof}

\begin{remark}
\label{remark:vk}
From \cite{assiotis}*{Proposition 2.3}, the condition that $\lim_{N\rightarrow\infty} \frac{\sum_{i=1}^N a(N)_i^d}{N^d}=m_d$ for all $d\geq 1$ implies that $\{a(N)\}_{N\geq 1}$ is a Vershik-Kerov sequence after it is reordered. Then, from \cite{rank_infinity}, we can set $M=\alpha+\epsilon$ in \Cref{thm:converge2} for any $\epsilon>0$, where $\alpha=\lim_{N\rightarrow\infty} \max_{i\in[N]} \left|\frac{a(N)_i}{N}\right|$. This will imply uniform convergence over compact subsets of $\{z\in\mathbb{C}^r: |\text{Re}(z_i)|<\frac{\theta}{r\alpha}\}$, which is the version of the result that appears in \cites{assiotis,rank_infinity}.
\end{remark}

\subsection[Bessel generating functions]{Bessel generating functions for exponentially decaying probability measures}
\label{subsec:expdecay}
Recall that the Bessel generating function is defined in \Cref{def:bgf}. We must place restrictions on $\mu$ so that the expected value converges. In particular, we consider the class of exponentially decaying probability measures studied in \cite{limits_prob_measures}, which is a modification of the class of measures studied in \cite{matrix}.

\begin{definition}[Definition 1.2 of \cite{limits_prob_measures}]
\label{def:exponent_decay}
A Borel probability measure $\mu$ over $\mathbb{C}^N$ \textit{exponentially decaying} at rate $R>0$ if $\int_{\mathbb{C}^N} e^{R\norm{a}_2} d\mu(a)$ is finite.
\end{definition}

If $\mu$ is exponentially decaying, then we have that $G_\mu^{\mathcal{R}(\theta)}$ is holomorphic in a neighborhood of the origin, which is required to apply the results of this paper. This implication as well as other essential implications are included in the following lemma.

\begin{lemma}[(A) is \cite{limits_prob_measures}*{Lemma 1.4} and (D) is \cite{matrix}*{Proposition 2.11}]
\label{lemma:exponent_decay}
Suppose the Borel probability measure $\mu$ over $\mathbb{C}^N$ is exponentially decaying at rate $R>0$. Furthermore, assume that $\theta\in\theta(\mathcal{R})$ satisfies $\text{Re}(\theta(r))\geq 0$ for all $r\in\mathcal{R}$.
\begin{enumerate}
\item[(A)] The Bessel generating function $G_\mu^{\mathcal{R}(\theta)}(x)$ converges over the closed ball of radius $R$ centered at the origin and is holomorphic over the interior of this domain.
\item[(B)] Let $\{1\}\cup\mathcal{B}$ be a basis of $\mathbb{C}^{H(\mathcal{R})}[x_1,\ldots,x_N]$. There exist unique constants $c_b^{\mathcal{R}(\theta)}(\mu)\in\mathbb{C}$ for $b\in\mathcal{B}$ such that 
\[
G_\mu^{\mathcal{R}(\theta)}(x) = \exp\left(\sum_{b\in\mathcal{B}} c_b^{\mathcal{R}(\theta)}(\mu)b\right)
\]
in a neighborhood of the origin.
\item[(C)] For all $p\in\mathbb{C}^{H(\mathcal{R})}[x_1,\ldots,x_N]$, the function $\mathcal{D}(\mathcal{R}(\theta))(p) G_\mu^{\mathcal{R}(\theta)}(x)$ is holomorphic and equals
\[
\int_{\mathbb{C}^N} p(a) J_a^{\mathcal{R}(\theta)}(x) d\mu(a)
\]
over the open ball of radius $R$ centered at the origin.
\item[(D)] For all $p\in\mathbb{C}^{H(\mathcal{R})}[x_1,\ldots,x_N]$,
\[
\mathcal{D}(\mathcal{R}(\theta))(p)G_\mu^{\mathcal{R}(\theta)}(0)  = \E_{a\sim\mu}[p(a)].
\]
\end{enumerate}
\end{lemma}

\begin{proof}
For (B), we note that $G_\mu^{\mathcal{R}(\theta)}(x)$ is holomorphic in a neighborhood of the origin and $G_\mu^{\mathcal{R}(\theta)}(0)=1$.

(C): We include a detailed proof of this statement for completeness. We prove this result for the nonsymmetric eigenfunction $E_a^{\mathcal{R}(\theta)}(x)$. It is straightforward to deduce the result for the Bessel function afterwards using the expression relating the two functions in \Cref{thm:dunkl_eigenfunction_symmetry}. Note that some of the ideas of this proof have previously appeared in \cite{limits_prob_measures}*{Remark 4.6}.

Suppose $x\in \overline{B}_R(0)$, which is the closed ball of radius $R$ centered at the origin. By part (e) of \Cref{lemma:dunkl_properties}, the integral
\[
\int_{\mathbb{C}^N} E_a^{\mathcal{R}(\theta)}(x) d\mu(a)
\]
converges because $|E_a^{\mathcal{R}(\theta)}(x)| \leq \sqrt{|H(\mathcal{R})|} \exp(\norm{a}_2\norm{x}_2)$ and $\mu$ exponentially decays at rate $R$. Furthermore, we can show that the integral is holomorphic over $B_R(0)$ by following \cite{matrix}*{Proof of Lemma 2.9}. Note that we have proved the nonsymmetric version of (A), which can be used to prove (A).

The goal is to prove using induction that for any sequence $\{i_j\}_{1\leq j\leq m}$ of elements of $[N]$, we have that for $x\in B_R(0)$,
\[
\prod_{j=1}^m \mathcal{D}_{i_j} \int_{\mathbb{C}^N} E_a^{\mathcal{R}(\theta)}(x) d\mu(a) = \int_{\mathbb{C}^N} \prod_{j=1}^m a_{i_j}  E_a^{\mathcal{R}(\theta)}(x) d\mu(a)
\]
and both sides of the equation converge to a holomorphic function over $B_R(0)$, which is the open ball of radius $R$ centered at the origin. We have already shown that this statement is true when the sequence is empty. Assume that the statement is true for the sequence $\{i_j\}_{1\leq j\leq m}$. We show that the statement is true for the sequence $\{i_j\}_{1\leq j\leq m+1}$.

It suffices to prove that for $x\in B_R(0)$,
\begin{equation}
\label{eq:exp_decay_induct}
\mathcal{D}_{i_{j+1}} \int_{\mathbb{C}^N} \prod_{j=1}^m a_{i_j}  E_a^{\mathcal{R}(\theta)}(x) d\mu(a) = \int_{\mathbb{C}^N} \prod_{j=1}^{m+1} a_{i_{j+1}} E_a^{\mathcal{R}(\theta)}(x) d\mu(a)
\end{equation}
and that the right hand side is holomorphic over $B_R(0)$. Note that $\norm{x}_2<R$, so applying part (e) of \Cref{lemma:dunkl_properties} gives that 
\[
\left|\prod_{j=1}^{m+1} a_{i_{j+1}} E_a^{\mathcal{R}(\theta)}(x)\right| \leq \sqrt{|H(\mathcal{R})}|\prod_{j=1}^{m+1} a_{i_{j+1}} \exp(\norm{a}_2\norm{x}_2),
\]
which implies that the right hand side converges. Afterwards, we can use the method used in \cite{matrix}*{Proof of Lemma 2.9} to show that the right hand side is holomorphic. To complete the proof, it suffices to show that \pref{eq:exp_decay_induct} is true. Without loss of generality, assume that $i_{m+1}=1$.

Suppose $x\in B_R(0)$. Observe that
\begin{equation}
\label{eq:exp_decay_integral}
\partial_1 \int_{\mathbb{C}^N} \prod_{j=1}^m a_{i_j}  E_a^{\mathcal{R}(\theta)}(x) d\mu(a)= \lim_{\epsilon\rightarrow 0} \int_{\mathbb{C}^N} \frac{\prod_{j=1}^m a_{i_j} (E_a(x_1+\epsilon, x_2,\ldots, x_N) - E_a(x))}{\epsilon} d\mu(a).
\end{equation}
By \cite{dunklbound}*{Lemma 3.5},
\[
\frac{|E_a(x_1+\epsilon, x_2,\ldots, x_N) - E_a(x)|}{\epsilon} \leq e\sqrt{|H(\mathcal{R})|}\norm{a}_2\exp(\norm{a}_2(\norm{x}_2 + \epsilon)).
\]
If we assume that $\epsilon < R - \norm{x}_2$, then the absolute value of the integrand of the right hand side of \pref{eq:exp_decay_integral} is upper bounded by an integrable function, since $\mu$ is exponentially decaying at rate $R$. Then, we can apply the dominated convergence theorem to deduce that 
\[
\partial_1 \int_{\mathbb{C}^N} \prod_{j=1}^m a_{i_j}  E_a^{\mathcal{R}(\theta)}(x) d\mu(a)=\int_{\mathbb{C}^N} \partial_1 \prod_{j=1}^m a_{i_j}E_a(x) d\mu(a).
\]

Suppose $\alpha\in\mathcal{R}^+$. Then, it is clear that 
\begin{align*}
&\int_{\mathbb{C}^N} \prod_{j=1}^m a_{i_j}  E_a^{\mathcal{R}(\theta)}(x) d\mu(a) - \int_{\mathbb{C}^N} \prod_{j=1}^m a_{i_j}  E_a^{\mathcal{R}(\theta)}(r_\alpha x) d\mu(a) \\ 
& = \int_{\mathbb{C}^N} \prod_{j=1}^m a_{i_j}  (E_a^{\mathcal{R}(\theta)}(x) -   E_a^{\mathcal{R}(\theta)}(r_\alpha x)) d\mu(a)
\end{align*}
since $r_\alpha x \in B_R(0)$. After using the expression given in $\mathcal{D}_1$ in \Cref{subsec:dunkl}, we deduce that 
\[
\mathcal{D}_1 \int_{\mathbb{C}^N} \prod_{j=1}^m a_{i_j} E_a^{\mathcal{R}(\theta)}(x) d\mu(a) = \int_{\mathbb{C}^N} \mathcal{D}_1 \prod_{j=1}^m a_{i_j} E_a^{\mathcal{R}(\theta)}(x) d\mu(a) = \int_{\mathbb{C}^N} a_1\prod_{j=1}^m a_{i_j} E_a^{\mathcal{R}(\theta)}(x) d\mu(a),
\]
which completes the induction.

(D): This is straightforward to deduce from (C).
\end{proof}

\begin{remark}
The papers \cites{matrix,limits_prob_measures} only consider the $A^{N-1}$ root system. However, (A) and (D) are generalizable to any finite root system after using the method discussed in \cite{limits_prob_measures} that involves applying the results of \cite{dunklbound}. Furthermore, observe that (A) implies (B). However, we require the exponentially decaying condition to prove (C) and (D).
\end{remark}

Using \Cref{lemma:exponent_decay} and part (A) of \Cref{thm:main}, we are able to deduce the following corollary for the $A^{N-1}$ root system. It is also straightforward to deduce the analogous results for the $BC^N$ and $D^N$ root systems. The corollary generalizes the results of \cite{limits_prob_measures}.

\begin{corollary}
\label{cor:lln_satisfaction_a}
Suppose $\mu_N$ is an exponentially decaying Borel probability measure over $\mathbb{C}^N$ for all $N\geq 2$. Assume that $\theta\in\mathbb{C}$ has nonnegative real part for all $N\geq 2$ and $\lim_{N\rightarrow\infty} |\theta N| = \infty$.

For $\mathcal{B}=\{p_\lambda:\lambda\in\Gamma_N\}$, define the coefficients $c_{p_\lambda}^{A^{N-1}(\theta)}(\mu_N)\in\mathbb{C}$ as they are defined in part (B) of \Cref{lemma:exponent_decay}. Then, the following are equivalent.

\begin{enumerate}
\item[(a)] For all $\lambda\in\Gamma$, $\lim_{N\rightarrow\infty} \frac{c_{p_\lambda}^{A^{N-1}(\theta)}(\mu_N)}{\theta N} = c_\lambda \in \mathbb{C}$ if $\ell(\lambda)=1$ and $\lim_{N\rightarrow\infty} \frac{c_{p_\lambda}^{A^{N-1}(\theta)}(\mu_N)}{(\theta N)^{\ell(\lambda)}}=0$ if $\ell(\lambda)\geq 2$. 
\item[(b)] For all $\nu\in\Gamma$,
\[
\lim_{N\rightarrow\infty} \frac{1}{N^{\ell(\nu)}}\E_{a\sim\mu_N}\left[p_\nu\left(\frac{a}{\theta N}\right)\right]  = \prod_{i=1}^{\ell(\nu)} \sum_{\pi\in NC(\nu_i)} \prod_{B\in \pi} |B|c_{(|B|)}.
\]
\end{enumerate}
\end{corollary}

Condition (b) of \Cref{cor:lln_satisfaction_a} has been studied in previous works such as \cites{llnclt,matrix,limits_prob_measures,rectangularmatrix,jackgenfunc}. When the condition is satisfied, the sequence of probability measures \textit{satisfies a law of large numbers}. Next, we state the analogous result for the $BC^N$ root system.

\begin{corollary}
\label{cor:lln_satisfaction_bc}
Suppose $\mu_N$ is an exponentially decaying Borel probability measure over $\mathbb{C}^N$ for all $N\geq 2$. Assume that $\theta_0,\theta_1\in\mathbb{C}$ have nonnegative real part for all $N\geq 2$, $\lim_{N\rightarrow\infty} |\theta_0 N| = \infty$, and $\lim_{N\rightarrow\infty} \frac{\theta_1}{\theta_0 N} = c\in\mathbb{C}\backslash\{-1\}$.

For $\mathcal{B}=\{p_\lambda:\lambda\in\evenN\}$, define the coefficients $c_{p_\lambda}^{BC^N(\theta_0,\theta_1)}(\mu_N)\in\mathbb{C}$ for $\lambda\in\evenN$ as they are defined in part (B) of \Cref{lemma:exponent_decay}. Then, the following are equivalent.

\begin{enumerate}
\item[(a)] For all $\lambda\in\even$, $\lim_{N\rightarrow\infty} \frac{c_{p_\lambda}^{BC^N(\theta_0,\theta_1)}(\mu_N)}{\theta N} = c_\lambda \in \mathbb{C}$ if $\ell(\lambda)=1$ and $\lim_{N\rightarrow\infty}\\\frac{c_{p_\lambda}^{BC^N(\theta_0,\theta_1)}(\mu_N)}{(\theta N)^{\ell(\lambda)}}=0$ if $\ell(\lambda)\geq 2$. 
\item[(b)] For all $\nu\in\even$,
\[
\lim_{N\rightarrow\infty} \frac{1}{N^{\ell(\nu)}}\E_{a\sim\mu_N}\left[p_\nu\left(\frac{a}{\theta_0 N}\right)\right]  = \prod_{i=1}^{\ell(\nu)} \sum_{\pi\in \NCeven(\nu_i)} (1+c)^{o(\pi)}\prod_{B\in \pi}2^{|B|-1} |B|c_{(|B|)}.
\]
\end{enumerate}
\end{corollary}

For the type D analogue of \Cref{cor:lln_satisfaction_a}, we adjust the notation used to match the notation of part (C) of \Cref{thm:main}. Therefore, we first prove the following lemma, which is analogous to part (B) of \Cref{lemma:exponent_decay}.

\begin{lemma}
\label{lemma:exponent_decay_type_D}
Suppose the Borel probability measure $\mu$ over $\mathbb{C}^N$ is exponentially decaying at rate $R>0$. Furthermore, assume that $\theta\in\mathbb{C}$ has nonnegative real part. 

Let $\{1\}\cup\{\mathcal{B}\}$ be a basis of $\mathbb{C}^{H(BC^N)}[x_1,\ldots,x_N]$. There exist unique constants $d_0^{D^N(\theta)}(\mu)\\\in\mathbb{C}$ and $c_b^{D^N(\theta)}(\mu),d_b^{D^N(\theta)}(\mu)\in\mathbb{C}$ for $b\in\mathcal{B}$ such that
\[
G_\mu^{D^N(\theta)} = \exp\left(\sum_{b\in\mathcal{B}} c_b^{D^N(\theta)}(\mu)b\right) + ed_0^{D^N(\theta)}(\mu) + e\exp\left(\sum_{b\in\mathcal{B}} d_b^{D^N(\theta)}(\mu)b\right)
\]
over a neighborhood of the origin.
\end{lemma}

\begin{proof}
We have that $\frac{1}{2}(G_\mu^{D^N(\theta)}(x) + G_\mu^{D^N(\theta)}(-x_1,x_2,\ldots,x_N))$ is even and evaluates to one at the origin, from which the uniqueness of $c_b^{D^N(\theta)}(\mu)$ for $b\in\mathcal{B}$ follows. Furthermore, we have that $\frac{1}{2}(G_\mu^{D^N(\theta)}(x)-G_\mu^{D^N(\theta)}(-x_1,x_2,\ldots,x_N))$ is odd and $\frac{G_\mu^{D^N(\theta)}(x) - G_\mu^{D^N(\theta)}(-x_1,x_2,\ldots,x_N)}{2e(x)}$ is even. It is clear that $d_0^{D^N(\theta)}(\mu) = \frac{G_\mu^{D^N(\theta)}(x) - G_\mu^{D^N(\theta)}(-x_1,x_2,\ldots,x_N)}{2e(x)} - 1$. Next, we have that $\frac{G_\mu^{D^N(\theta)}(x) - G_\mu^{D^N(\theta)}(-x_1,x_2,\ldots,x_N)}{2e(x)} - d_0^{D^N(\theta)}(\mu)$ evaluates to one at the origin, from which the uniqueness of $d_b^{D^N(\theta)}(\mu)$ for $b\in\mathcal{B}$ follows.
\end{proof}

\begin{corollary}
\label{cor:lln_satisfaction_d}
Suppose $\mu_N$ is an exponentially decaying Borel probability measure over $\mathbb{C}^N$ for all $N\geq 2$. Assume that $\theta\in\mathbb{C}$ has nonnegative real part for all $N\geq 2$ and $\lim_{N\rightarrow\infty} |\theta N| = \infty$.

Define the coefficients $d_0^{D^N(\theta)}(\mu)$ and $c_{p_\lambda}^{D^N(\theta)}(\mu_N), d_{p_\lambda}^{D^N(\theta)}(\mu_N)\in\mathbb{C}$ for $\lambda\in\evenN$ as they are defined in \Cref{lemma:exponent_decay_type_D} for $\mathcal{B}=\{p_\lambda:\lambda\in\evenN\}$. Consider the following statements.

\begin{enumerate}
\item[(a)] For all $\lambda\in\even$, $\lim_{N\rightarrow\infty} \frac{c_{p_\lambda}^{D^N(\theta)}(\mu_N)}{\theta N} = c_\lambda \in \mathbb{C}$ if $\ell(\lambda)=1$ and $\lim_{N\rightarrow\infty} \frac{c_{p_\lambda}^{D^N(\theta)}(\mu_N)}{(\theta N)^{\ell(\lambda)}}\\=0$ if $\ell(\lambda)\geq 2$. 
\item[(b)] It is the case that $\lim_{N\rightarrow\infty}d_0^{D^N(\theta)}(\mu_N)=d_0\in\mathbb{C}$ and for all $\lambda\in\evenN$, $\lim_{N\rightarrow\infty} \\\frac{d_{p_\lambda}^{D^N(\theta)}(\mu_N)}{\theta N} = d_\lambda \in \mathbb{C}$ if $\ell(\lambda)=1$ and $\lim_{N\rightarrow\infty} \frac{d_{p_\lambda}^{D^N(\theta)}(\mu_N)}{(\theta N)^{\ell(\lambda)}}=0$ if $\ell(\lambda)\geq 2$. 
\item[(c)] For all $\nu\in\even$,
\[
\lim_{N\rightarrow\infty} \frac{1}{N^{\ell(\nu)}}\E_{a\sim\mu_N}\left[p_\nu\left(\frac{a}{\theta N}\right)\right]  = \prod_{i=1}^{\ell(\nu)} \sum_{\pi\in \NCeven(\nu_i)} \prod_{B\in \pi}2^{|B|-1} |B|c_{(|B|)}.
\]
\item[(d)] It is the case that 
\[
\lim_{N\rightarrow\infty} \frac{\E_{a\sim\mu_N}\left[e(a)\right]}{\prod_{j=1}^N(1+2(j-1)\theta)} = d_0+1
\]
and for all $\nu\in\even$,
\[
\lim_{N\rightarrow\infty} \frac{\E_{a\sim\mu_N}\left[e(a)p_\nu\left(\frac{a}{\theta N}\right)\right]}{N^{\ell(\nu)}\prod_{j=1}^N(1+2(j-1)\theta)}  = \prod_{i=1}^{\ell(\nu)} \sum_{\pi\in \NCeven(\nu_i)} \prod_{B\in \pi}2^{|B|-1} |B|d_{(|B|)}.
\]
\end{enumerate}
Then, (a) and (c) are equivalent and (b) and (d) are equivalent.
\end{corollary}

\begin{remark}
It is straightforward to establish analogues of \Cref{cor:lln_satisfaction_a,cor:lln_satisfaction_bc,cor:lln_satisfaction_d} in the $\theta N\rightarrow c\in\mathbb{C}$ regime.
\end{remark}

Furthermore, we can generalize \Cref{cor:freeconv} and \Cref{cor:freeconv_rectangular} to when $a(N)$ and $b(N)$ are randomly distributed. We first state the following generalization of \Cref{lemma:conjecture_support}.

\begin{lemma}
\label{lemma:conjecture_support_2}
Suppose $\theta\in\theta(\mathcal{R})$ is nonnegative. Suppose $\mu_1$, $\mu_2$, and $\mu$ are exponentially decaying Borel probability measures over $\mathbb{R}^N$ such that $G_\mu^{\mathcal{R}(\theta)} = G_{\mu_1}^{\mathcal{R}(\theta)}G_{\mu_2}^{\mathcal{R}(\theta)}$ over an open subset of $\mathbf{i}\mathbb{R}^N$. Then, every element of $\text{supp}(\mu)$ can be expressed as $u_1+u_2$, where $u_i$ is in the convex hull of $H(\mathcal{R})a_i$ for some $a_i\in \text{supp}(\mu_i)$ for $i\in \{1,2\}$.
\end{lemma}

\begin{proof}
We follow the method of the proof of \Cref{lemma:conjecture_support}. We have that for $x$ in an open subset of $\mathbf{i}\mathbb{R}^N$, the characteristic function of the random variables $\epsilon_1+\epsilon_2$ where $a_i\sim \mu_i$ and $\epsilon_i\sim\mu_{a_i}^{\text{sym}}$ for $i\in \{1,2\}$ and $\epsilon$ where $a\sim \mu$ and $\epsilon\sim\mu_a^{\text{sym}}$. Then, since the characteristic function is defined based on its values in an open set, the characteristic functions of the two random variables are equal everywhere, so the two random variables have the same distribution.

We can similarly deduce that for all $u\in\text{supp}(\mu)$, $u$ is in the support of $\epsilon$. Additionally, the support of $\epsilon$ is the same as the support of $\epsilon^1+\epsilon^2$.

Suppose $i\in \{1,2\}$, $u\in \text{supp}(\epsilon_i)$, and $r>0$. Observe that
\begin{equation}
\label{eq:support_computation}
\int_{\mathbb{R}^N} \Pr_{\epsilon_i\sim \mu_{a_i}^{\text{sym}}}[\epsilon_i \in B(u,r)] d\mu_i(a_i) = \Pr[\epsilon_i\in B(u,r)]> 0.
\end{equation}
For the sake of contradiction, assume that the distance between $u$ and $\text{supp}(\mu_{a_i}^{\text{sym}})$ for any element $a_i\in\text{supp}(\mu_i)$ is greater than $2r$. Then, for all $a_i\in\text{supp}(\mu_i)$, $\Pr_{\epsilon_i\sim \mu_{a_i}^{\text{sym}}}[\epsilon_i \in B(u,r)]=0$, which is a contradiction to \pref{eq:support_computation}. Recall from \Cref{thm:positivity} that for $a\in\mathbb{R}^N$, the support of $\mu_a^{\text{sym}}$ is a subset of the convex hull of $H(\mathcal{R})a$. By considering when $r\rightarrow 0$, we have that $u$ must be in the convex hull of $H(\mathcal{R}) a_i$ for some $a_i\in\text{supp}(\mu_i)$.
\end{proof}

\begin{corollary}
\label{cor:lr}

\begin{enumerate}
\item[(A)] Suppose $\theta\in\mathbb{C}\backslash\{0\}$ has nonnegative real part and $|\theta N|\rightarrow\infty$. Suppose $\mu_a^N$ and $\mu_b^N$ are exponentially decaying Borel probability measures over $\mathbb{C}^N$ for $N\geq 2$ such that:
\begin{enumerate}
\item[1.] The measure $\mu_a^N(x\mapsto \frac{x}{\theta N})$ converges to the Borel probability measure $\mu_a$ over $\mathbb{C}$ with finite moments in terms of power sums.
\item[2.] The measure $\mu_b^N(x\mapsto \frac{x}{\theta N})$ converges to the Borel probability measure $\mu_b$ over $\mathbb{C}$ with finite moments in terms of power sums.
\end{enumerate}
For $N\geq 2$, suppose $\mu^N$ is an exponentially decaying Borel probability measure such that $G_{\mu^N}^{A^{N-1}(\theta)} = G_{\mu_a^N}^{A^{N-1}(\theta)}G_{\mu_b^N}^{A^{N-1}(\theta)}$ in a neighborhood of the origin. Then, the measure $\mu_N(x\mapsto \frac{x}{\theta N})$ converges to the free convolution of $\mu_a$ and $\mu_b$ with finite moments in terms of power sums.
\item[(B)] Suppose $\theta\in\mathbb{C}$ is positive. Suppose $\mu_a^N$ and $\mu_b^N$ are compactly supported Borel probability measures over $\mathbb{R}^N$ for $N\geq 2$ and $\mu_a$ and $\mu_b$ are Borel probability measures over $\mathbb{R}$ with finite moments such that conditions 1 and 2 from (A) are true. For $N\geq 2$, suppose $\mu^N$ is a Borel probability measure over $\mathbb{R}^N$ such that $G_{\mu^N}^{A^{N-1}(\theta)} = G_{\mu_a^N}^{A^{N-1}(\theta)}G_{\mu_b^N}^{A^{N-1}(\theta)}$ in a neighborhood of the origin. Then, the measure $\mu_N(x\mapsto \frac{x}{\theta N})$ converges to the free convolution of $\mu_a$ and $\mu_b$ in terms of power sums.
\end{enumerate}
\end{corollary}

\begin{proof}
(A): We can follow the proof of \Cref{cor:freeconv}. Instead of using \Cref{thm:main}, we use \Cref{cor:lln_satisfaction_a}.

(B): We use \Cref{lemma:conjecture_support_2} to deduce that for $N\geq 2$, $\mu^N$ is compactly supported and therefore exponentially decaying. Afterwards, we follow the proof of (A).
\end{proof}

In the following two corollaries, we use notation defined in \Cref{cor:freeconv_rectangular}. The proofs of these corollaries are straightforward and use the same techniques as those of \Cref{cor:freeconv,cor:freeconv_rectangular,cor:lr}.

\begin{corollary}
\label{cor:lr_bc}
Suppose $\theta_0\in\mathbb{C}\backslash\{0\}$ and $\theta_1\in\mathbb{C}$ have nonnegative real part such that $|\theta_0 N|\rightarrow\infty$ and $\lim_{N\rightarrow\infty}\frac{\theta_1}{\theta_0 N} = c \in\mathbb{C}$. 

\begin{enumerate}
\item[(A)] Suppose $\mu_a^N$ and $\mu_b^N$ are exponentially decaying Borel probability measures over $\mathbb{C}^N$ for $N\geq 2$ such that:
\begin{enumerate}
\item[1.] The measure $\mu_a^N(x\mapsto \frac{x}{\theta_0 N})$ converges to the Borel probability measure $\mu_a$ over $\mathbb{C}$ with finite moments in terms of power sums.
\item[2.] The measure $\mu_b^N(x\mapsto \frac{x}{\theta_0 N})$ converges to the Borel probability measure $\mu_b$ over $\mathbb{C}$ with finite moments in terms of power sums.
\end{enumerate}
For $N\geq 2$, suppose $\mu^N$ is an exponentially decaying Borel probability measure such that $G_{\mu^N}^{BC^N(\theta_0,\theta_1)} = G_{\mu_a^N}^{BC^N(\theta_0,\theta_1)}G_{\mu_b^N}^{BC^N(\theta_0,\theta_1)}$ in a neighborhood of the origin.

Then, the measure $\frac{1}{2}\mu^N(x\mapsto \frac{x}{\theta_0 N}) + \frac{1}{2}\mu^N(x\mapsto -\frac{x}{\theta_0 N})$ converges to the rectangular free convolution of $\frac{1}{2}(\mu_a+\mu_a^-)$ and $\frac{1}{2}(\mu_b+\mu_b^-)$ with $\lambda$ set as $\frac{1}{1+c}$ in terms of power sums.
\item[(B)] Suppose $\theta_0$ and $\theta_1$ are nonnegative. Suppose $\mu_a^N$ and $\mu_b^N$ are Borel probability measures over $\mathbb{R}^N$ for $N\geq 2$ and $\mu_a$ and $\mu_b$ are Borel probability measures over $\mathbb{R}$ with finite moments such that conditions 1 and 2 from (A) are true. For $N\geq 2$, suppose $\mu^N$ is a Borel probability measure such that $G_{\mu^N}^{BC^N(\theta_0,\theta_1)} = G_{\mu_a^N}^{BC^N(\theta_0,\theta_1)}G_{\mu_b^N}^{BC^N(\theta_0,\theta_1)}$ in a neighborhood of the origin. 

Then, the measure $\frac{1}{2}\mu^N(x\mapsto \frac{x}{\theta_0 N}) + \frac{1}{2}\mu^N(x\mapsto -\frac{x}{\theta_0 N})$ converges to the rectangular free convolution of $\frac{1}{2}(\mu_a+\mu_a^-)$ and $\frac{1}{2}(\mu_b+\mu_b^-)$ with $\lambda$ set as $\frac{1}{1+c}$ in terms of power sums.
\end{enumerate}
\end{corollary}

\begin{remark}
Note that in part (A) of the previous corollary, the rectangular free convolution is not defined in \cite{rectangular_free_convolution} when $c$ is complex. In this case, we define it using part (B) of \Cref{cor:product}.
\end{remark}

\begin{corollary}
\label{cor:lr_d}

\begin{enumerate}
\item[(A)] Suppose $\theta\in\mathbb{C}\backslash\{0\}$ has nonnegative real part. Suppose $\mu_a^N$ and $\mu_b^N$ are exponentially decaying Borel probability measures over $\mathbb{C}^N$ for $N\geq 2$ such that:
\begin{enumerate}
\item[1.] The measure $\mu_a^N(x\mapsto \frac{x}{\theta N})$ converges to the Borel probability measure $\mu_a$ over $\mathbb{C}$ with finite moments in terms of power sums.
\item[2.] The measure $\mu_b^N(x\mapsto \frac{x}{\theta N})$ converges to the Borel probability measure $\mu_b$ over $\mathbb{C}$ with finite moments in terms of power sums.
\end{enumerate}
For $N\geq 2$, suppose $\mu^N$ is an exponentially decaying Borel probability measure such that $G_{\mu^N}^{D^N(\theta)} = G_{\mu_a^N}^{D^N(\theta)}G_{\mu_b^N}^{D^N(\theta)}$ in a neighborhood of the origin. Then, the measure $\frac{1}{2}\mu^N(x\mapsto \frac{x}{\theta N}) + \frac{1}{2}\mu^N(x\mapsto -\frac{x}{\theta N})$ converges to the free convolution of $\frac{1}{2}(\mu_a+\mu_a^-)$ and $\frac{1}{2}(\mu_b+\mu_b^-)$ in terms of power sums.
\item[(B)] Suppose $\theta\in\mathbb{C}$ is positive. Suppose $\mu_a^N$ and $\mu_b^N$ are compactly supported Borel probability measures over $\mathbb{R}^N$ for $N\geq 2$ and $\mu_a$ and $\mu_b$ are Borel probability measures over $\mathbb{R}$ with finite moments such that conditions 1 and 2 from (A) are true. For $N\geq 2$, suppose $\mu^N$ is a Borel probability measure over $\mathbb{R}^N$ such that $G_{\mu^N}^{D^N(\theta)} = G_{\mu_a^N}^{D^N(\theta)}G_{\mu_b^N}^{D^N(\theta)}$ in a neighborhood of the origin. Then, the measure $\frac{1}{2}\mu^N(x\mapsto \frac{x}{\theta N}) + \frac{1}{2}\mu^N(x\mapsto -\frac{x}{\theta N})$ converges to the free convolution of $\frac{1}{2}(\mu_a+\mu_a^-)$ and $\frac{1}{2}(\mu_b+\mu_b^-)$ in terms of power sums.
\end{enumerate}
\end{corollary}

\begin{example}
\label{example:dbm}
As an example, we consider the \textit{Dyson Brownian motion} with a random initial condition. For $\theta>0$, consider the stochastic differential equation given by initial values $(a_1(0),\ldots,a_N(0))$ and
\begin{equation}
\label{eq:dbm}
da_i(t) = \theta\sum_{j\in [N]\backslash\{i\}} \frac{1}{a_i(t) - a_j(t)}dt + dB_i(t), 
\end{equation}
where $B_i$, $1\leq i\leq N$ are independent standard Brownian motions. Let $\mathcal{W}(A^{N-1})$ denote the Weyl chamber $\{x\in\mathbb{R}^N: x_1> \cdots > x_N\}$. We add the assumption that $(a_1(t),\ldots,a_N(t))\in\overline{\mathcal{W}(A^{N-1})}$ for all $t\geq 0$ and denote the resulting SDE by $DBM(A^{N-1}(\theta))$. 

The solution to $DBM(A^{N-1}(\theta))$ is unique if we assume that it is a continuous semimartingale, although we do not require this assumption for uniqueness if $\theta\geq \frac{1}{2}$, see \cites{skew-product,AGZ}. Moreover, by the results of \cites{cepa}, a continuous semimartingale solution exists for all $\theta>0$.

For $a\in\overline{\mathcal{W}(A^{N-1})}$, let $\left\{X_t^{A^{N-1}(\theta)}(a)\right\}_{t\geq 0}$ denote the continuous Markov process that takes values in $\overline{\mathcal{W}(A^{N-1})}$ such that $X_0^{A^{N-1}}(a)=a$ and the transition probability from an observation $x\in\overline{\mathcal{W}(A^{N-1})}$ at time $t_1\geq 0$ to an observation $y\in\overline{\mathcal{W}(A^{N-1})}$ at time $t_2> t_1$ is proportional to
\begin{equation}
\label{eq:dbm_transition1}
\frac{1}{(t_2-t_1)^{\gamma_{A^{N-1}(\theta)}+\frac{N}{2}}}e^{-\frac{\norm{x}_2^2+\norm{y}_2^2}{2(t_2-t_1)}}J_x^{A^{N-1}(\theta)}\left(\frac{y}{t_2-t_1}\right) h_{A^{N-1}(\theta)}(y)^2,
\end{equation}
where $\gamma_{A^{N-1}(\theta)}\triangleq \sum_{r\in (A^{N-1})^+} \theta(r) = \frac{N(N-1)\theta}{2}$. For the definition of $h_{A^{N-1}(\theta)}$, see \Cref{thm:product}.

The Markov process is studied in \cites{rosler_markov,gallardo_yor,skew-product,airy_beta} and \cites{skew-product} shows that it is the unique solution to $DBM(A^{N-1}(\theta))$ initiated at $a\in \overline{\mathcal{W}(A^{N-1})}$ if we assume that $\theta\geq \frac{1}{2}$.

For $t\geq 0$, suppose $\mu_t^{A^{N-1}(\theta)}(a)$ is the probability measure over $\overline{\mathcal{W}(A^{N-1})}$ induced by $X_t^{A^{N-1}(\theta)}(a)$. From \cite{airy_beta}*{Lemma 3.8},
\begin{equation}
\label{eq:dbm_lln}
G_{\mu_t^{A^{N-1}(\theta)}(a)}^{A^{N-1}(\theta)}(x) = J_a^{A^{N-1}(\theta)}(x)\exp\left(\frac{t}{2}\sum_{i=1}^N x_i^2\right)
\end{equation}
for all $x\in\mathbb{C}^N$. 

Suppose $\eta$ is a probability measure over $\overline{\mathcal{W}(A^{N-1})}$. Then, for $t\geq 0$, define 
\[
\mu^{A^{N-1}(\theta)}_t(\eta) \triangleq \E_{a\sim \eta}\left[\mu^{A^{N-1}(\theta)}_t(a)\right].
\]
We prove the following generalization of \pref{eq:dbm_lln}.

\begin{lemma}
\label{lemma:bgf_dbm}
Assume that $\theta>0$ and $\eta$ is a Borel probability measure over $\mathbb{R}^N$ that exponentially decays at rate $R>0$. Then, for all $t\geq 0$, $\mu^{A^{N-1}(\theta)}_t(\eta)$ exponentially decays at any rate less than $R$ and
\[\
G_{\mu^{A^{N-1}(\theta)}_t(\eta)}^{A^{N-1}(\theta)}(x) = \exp\left(\frac{t}{2}\sum_{i=1}^N x_i^2\right)G_\eta^{A^{N-1}(\theta)}(x)
\]
over $B_R(0)$.
\end{lemma}
\begin{proof}
First, we show that for all $t\geq 0$, $\mu^\theta_t(\eta)$ exponentially decays at any rate less than $R$. This is obviously true if $t=0$, so assume that $t>0$.

Suppose $y\in\mathbb{R}^N$. By \pref{eq:dbm_transition1}, we have that the density of $y$ for $DBM(A^{N-1}(\theta))$ at time $t$ conditioned on the initial value $a$ is at most $\exp\left(-\frac{(\norm{a}_2-\norm{y}_2)^2}{2t}\right)y^{O(1)}$, where we have applied the upper bound $J_a^{A^{N-1}(\theta)}\left(\frac{y}{\alpha N}\right) \leq \sqrt{N!}\exp\left(\frac{\norm{a}_2\norm{y}_2}{\alpha N}\right)$ that we deduce from part (e) of \Cref{lemma:dunkl_properties}. Then, we have that for all $\epsilon>0$,
\begin{align*}
& \E_{\substack{a\sim \eta \\ y\sim \mu_t^{A^{N-1}(\theta)}(a)}}\left[\exp\left(\frac{R}{1+\epsilon}\norm{y}_2\right)\mathbf{1}\{\norm{y}_2 \geq (1+\epsilon)\norm{a}_2\}\right] \\
& \leq \E_{\substack{a\sim \eta, \\ y\sim \mu_t^{A^{N-1}(\theta)}(a)}}\left[\exp\left(-\frac{((1-\frac{1}{1+\epsilon})\norm{y}_2)^2}{2t} + \frac{R}{1+\epsilon}\norm{y}_2\right)y^{O(1)}\right] < \infty
\end{align*}
and
\begin{align*}
& \E_{\substack{a\sim \eta, \\ y\sim \mu_t^{A^{N-1}(\theta)}(a)}}\left[\exp\left(\frac{R}{1+\epsilon}\norm{y}_2\right)\mathbf{1}\{\norm{y}_2 < (1+\epsilon)\norm{a}_2\}\right] \\
& < \E_{\substack{a\sim \eta, \\ y\sim \mu_t^{A^{N-1}(\theta)}(a)}}\left[\exp(R\norm{a}_2)\mathbf{1}\{\norm{y}_2 < (1+\epsilon)\norm{a}_2\}\right] < \infty.
\end{align*}
This implies that $\mu_t^{A^{N-1}(\theta)}(\eta)$ exponentially decays at any rate less than $R$.

For the computation of the Bessel generating function $G_{\mu_t^{A^{N-1}(\theta)}(\eta)}^{A^{N-1}(\theta)}$, we have that
\[
G_{\mu_t^{A^{N-1}(\theta)}(\eta)}^{A^{N-1}(\theta)}(x) = \int_{\mathbb{R}^{2N}} J_{a}^{A^{N-1}(\theta)}(x) d\mu_t^\theta(\eta)(a).
\]
for $x\in\mathbb{C}^N$. Because $\mu_t^{A^{N-1}(\theta)}(\eta)$ exponentially decays at any rate less than $R$, we can use part (e) of \Cref{lemma:dunkl_properties} and Fubini's theorem to deduce that for $x\in B_R(0)$,
\[
G_{\mu_t^{A^{N-1}(\theta)}(\eta)}^{A^{N-1}(\theta)}(x) = \int_{\mathbb{R}^N}\left(\int_{\mathbb{R}^N} J_{a(t)}^{A^{N-1}(\theta)}(x)d\mu_t^{A^{N-1}(\theta)}(a)(a(t))\right) d\eta(a).
\]
Afterwards, applying \pref{eq:dbm_lln} and part (A) of \Cref{lemma:exponent_decay} implies that this expression equals
\[
\int_{\mathbb{R}^N} J_a^{A^{N-1}(
\theta)}(x)\exp\left(\frac{t}{2}\sum_{i=1}^Nx_i^2\right) d\eta(a) = \exp\left(\frac{t}{2}\sum_{i=1}^Nx_i^2\right)G_\eta^{A^{N-1}(\theta)}(x).
\]
\end{proof}

Furthermore, for $\theta>0$, define the \textit{Hermite $\theta$-ensemble} to be the random variable $H_\theta$ over $\mathbb{R}^N$ with density at $a$ proportional to $\prod_{1\leq i<j\leq N} |a_i-a_j|^{2\theta} \prod_{i=1}^N e^{-\frac{\theta a_i^2}{2}}$. By \cite{betaprocess}*{Proposition 4.2}, 
\[
G_{H_\theta}^{A^{N-1}(\theta)}(x) = \exp\left(\frac{1}{2\theta}\sum_{i=1}^N x_i^2\right)
\]
over $\mathbb{C}^N$; this formula also follows from \Cref{thm:product}.

For $t>0$, the pushforward $H_\theta(x\mapsto tx)$ of $H_\theta$ with respect to multiplication by $t$ is a random variable over $\mathbb{R}^N$ with density at $a$ proportional to $\prod_{1\leq i<j\leq N} |a_i-a_j|^{2\theta}\prod_{i=1}^N e^{-\frac{\theta a_i^2}{2t^2}}$. This random variable is studied in the paper \cite{airy_beta} and Lemma 3.7 of the paper computes that
\[
G_{H_\theta(x\mapsto tx)}^{A^{N-1}(\theta)}(x) = \exp\left(\frac{t^2}{2\theta} \sum_{i=1}^N x_i^2\right)
\]
over $\mathbb{C}^N$. 

Observe that $H_\theta(x\mapsto tx)$ clearly exponentially decays at any rate. Correspondingly, the measure's Bessel generating function converges over $\mathbb{C}^N$. Furthermore, \Cref{lemma:bgf_dbm} implies that 
\begin{equation}
\label{eq:random_product}
G_{\mu_t^{A^{N-1}(\theta)}(\eta)}^{A^{N-1}(\theta)}(x) = G_\eta^{A^{N-1}(\theta)}(x)G^{A^{N-1}(\theta)}_{H_\theta(x\mapsto \sqrt{\theta t} x)}(x)
\end{equation}
over a neighborhood of the origin for exponentially decaying Borel probability measures $\eta$. This example satisfies \Cref{conjecture} for randomly distributed $a_1$ and $a_2$.

\begin{corollary}
\label{cor:lln_dbm1}
Suppose $\theta N\rightarrow\infty$ and $\eta_N$ is an exponentially decaying Borel probability measure over $\mathbb{R}^N$ for $N\geq 2$ such that $\eta_N(x\mapsto\frac{x}{\theta N})$ converges in terms of power sums to the distribution $\eta$ over $\mathbb{R}$ with finite moments. Then, for $\alpha \geq 0$, $\mu_{\alpha \theta N}^{A^{N-1}(\theta)}(\eta_N)(x\mapsto \frac{x}{\theta N})$ converges in terms of power sums to the free convolution of $\eta$ and the semicircle law multiplied by $\sqrt{\alpha}$ as $N\rightarrow\infty$.
\end{corollary}

\begin{proof}
Assume that $\alpha>0$, since the result is trivial if $\alpha=0$. We can follow the method of the proof of \Cref{cor:lr}. First, since condition (b) of \Cref{cor:lln_satisfaction_a} is satisfied when we set $\mu_N$ to be $\eta_N$ and $c_{(k)}$ to be the $k$th free cumulant of $\eta$ for all $k\geq 1$, we have that condition (a) is satisfied as well. The next step is to deduce that $\mu_{\alpha N}^\theta(\eta_N)$ is exponentially decaying for all $N\geq 2$. This does not follow from \pref{eq:random_product} and \Cref{lemma:conjecture_support_2} because $H_\theta(x\mapsto \sqrt{\alpha\theta N}x)$ is not compactly supported.

Assume that $\eta_N$ exponentially decays at rate $R$. By \Cref{lemma:bgf_dbm}, $\mu^{A^{N-1}(\theta)}_{\alpha \theta N}(\eta_N)(x\mapsto\frac{x}{\theta N})$ exponentially decays at any rate less than $R$. Then, because 
\[
G_{\mu_{\alpha \theta N}^{A^{N-1}(\theta)}(\eta_N)}^{A^{N-1}(\theta)}(x) = \exp\left(\frac{\alpha\theta N}{2}\sum_{i=1}^N x_i^2\right)G_{\eta_N}^{A^{N-1}(\theta)}(x)
\]
over $B_R(0)$ by \Cref{lemma:bgf_dbm}, applying \Cref{cor:lln_satisfaction_a} again gives that the $k$th free cumulant of $\mu^{A^{N-1}(\theta)}_{\alpha \theta N}(\eta_N)(x\mapsto\frac{x}{\theta N})$ converges to $c_{(k)}+\mathbf{1}\{k=2\} \alpha$ as $N\rightarrow\infty$. Hence, the probability measures $\mu^{A^{N-1}(\theta)}_{\alpha \theta N}(\eta_N)(x\mapsto\frac{x}{\theta N})$ converges to the free convolution of $\eta$ and the semicircle law multiplied by $\alpha$ in terms of power sums as $N\rightarrow\infty$.
\end{proof}

\begin{corollary}
\label{cor:lln_dbm2}
Suppose $\theta>0$, $\theta N\rightarrow c \geq 0$, and $\eta_N$ is an exponentially decaying Borel probability measure over $\mathbb{R}^N$ for $N\geq 2$ such that $\eta_N$ converges in terms of power sums as $N\rightarrow\infty$. Then, for $\alpha \geq 0$, $ \mu_\alpha^{A^{N-1}(\theta)}(\eta_N)$ converges in terms of power sums as $N\rightarrow\infty$.
\end{corollary}

\begin{proof}
Follow the proof of \Cref{cor:lln_dbm1} and replace \Cref{cor:lln_satisfaction_a} with its high temperature regime version, which can be proved using \Cref{cor:equivalence_a2} and \Cref{lemma:exponent_decay}.
\end{proof}
\end{example}

\begin{example}
We consider the type D Dyson Brownian motion. For $\theta>0$, consider the stochastic differential equation given by initial values $(a_1(0),\ldots,a_N(0))$ and
\begin{equation}
\label{eq:dbm2}
da_i(t) = \theta\sum_{j\in [N]\backslash\{i\}} \frac{1}{a_i(t) - a_j(t)} + \frac{1}{a_i(t)+a_j(t)} dt + dB_i(t), 
\end{equation}
where $B_i$, $1\leq i\leq N$ are independent standard Brownian motions. Let $\mathcal{W}(D^N)$ denote the Weyl chamber $\{x\in\mathbb{R}^N: x_1> \cdots > x_N, x_{N-1}+x_N> 0\}$. We add the assumption that $(a_1(t),\ldots,a_N(t))\in\overline{\mathcal{W}(D^N)}$ for all $t\geq 0$ and denote the resulting SDE as $DBM(D^N(\theta))$. 

The solution to $DBM(D^N(\theta))$ is unique if we assume that it is a continuous semimartingale, although we do not require this assumption for uniqueness if $\theta\geq \frac{1}{2}$. Moreover, by the results of \cites{demni_sde}, a continuous semimartingale solution exists for all $\theta>0$.

We define the Markov process $\left\{X^{D^N(\theta)}_t(a)\right\}_{t\geq 0}$ similarly to how $\left\{X^{A^{N-1}(\theta)}_t(a)\right\}_{t\geq 0}$ is defined, where its transition probability is given by replacing $A^{N-1}(\theta)$ with $D^N(\theta)$ in \pref{eq:dbm_transition1}. Observe that $\gamma_{D^N(\theta)}$ is defined to be $\sum_{r\in (D^N)^+} \theta(r) = N(N-1)\theta$. 

The Markov process $\left\{X^{D^N(\theta)}_t(a)\right\}_{t\geq 0}$ is studied in \cites{rosler_markov,gallardo_yor,skew-product} and \cite{skew-product} shows that it is the unique solution to $DBM(D^N(\theta))$ initiated at $a\in\overline{\mathcal{W}(D^N)}$ if we assume that $\theta \geq \frac{1}{2}$. For $t\geq 0$ and a probability measure $\eta$ over $\overline{\mathcal{W}(D^N)}$, we define $\mu_t^{D^N(\theta)}(\eta)$ to be the probability measure over $\overline{\mathcal{W}(D^N)}$ that is induced by $X^{D^N(\theta)}_t(a)$ if $a$ is independently sampled from $\eta$.

It is straightforward to deduce that \Cref{lemma:bgf_dbm} with $A^{N-1}(\theta)$ replaced with $D^N(\theta)$ is true. We generalize \Cref{def:converge_powersum} to signed Borel measures as follows. 

\begin{definition}
\label{def:converge_powersum2}
For $N\geq 1$, suppose $\eta_N$ is a signed Borel measure over $\mathbb{C}^N$. Then, $\{\eta_N\}_{N\geq 1}$ converges in terms of power sums if $\eta_N(\mathbb{C}^N)$ converges and there exists $m_k\in \mathbb{C}$ for $k\geq 1$ such that for all $\nu\in\Gamma$,
\[
\lim_{N\rightarrow\infty}\frac{1}{N^{\ell(\nu)}}\int_{\mathbb{C}^N} p_\nu(a)d\eta_N(a) = \prod_{i=1}^{\ell(\nu)} m_{\nu_i}.
\]
\end{definition}

\begin{remark}
If $\{\eta_N\}_{N\geq 1}$ converges in terms of power sums, since $\eta_N(\mathbb{C}^N)$ converges as $N\rightarrow\infty$, we must have that $\eta_N$ has finite total variation for sufficiently large $N$. Similarly, we require that for all $\nu\in\Gamma$, $p_\nu(a)d\eta_N(a)$ is absolutely integrable for sufficiently large $N$.
\end{remark}

Using the notion of convergence in terms of power sums for signed Borel measures, we obtain the following two corollaries.

\begin{corollary}
\label{cor:lln_dbm_d1}
Suppose $\theta N\rightarrow\infty$ and $\eta_N$ is an exponentially decaying Borel probability measure over $\mathbb{R}^N$ for $N\geq 2$. Define $\hat{\eta}_N$ to be the signed measure $\frac{e(a)}{\prod_{i=1}^N (1+2(i-1)\theta)}d\eta_N(a)$. Assume that $\hat{\eta}_N\left(x\mapsto \frac{x}{\theta N}\right)$ converges in terms of power sums. Suppose $\alpha\geq 0$. Define $\hat{\mu}_N$ to be the signed measure $\frac{e(a)}{\prod_{i=1}^N (1+2(i-1)\theta)} d\mu_{\alpha \theta N}^{D^N(\theta)}(\eta_N)(a)$. Then, $\hat{\mu}_N\left(x\mapsto\frac{x}{\theta N}\right)$ converges in terms of power sums.
\end{corollary}

\begin{proof}
Follow the proof of \Cref{cor:lln_dbm1}. We use the modification of \Cref{lemma:bgf_dbm} that applies to the type D root system to deduce that $\mu_{\alpha\theta N}^{D^N(\theta)}(\eta_N)$ exponentially decays. Afterwards, we use the equivalence of conditions (b) and (d) of \Cref{cor:lln_satisfaction_d} rather than \Cref{cor:lln_satisfaction_a}.
\end{proof}

\begin{corollary}
\label{cor:lln_dbm_d2}
Suppose $\theta>0$, $\theta N\rightarrow c \geq 0$, and $\eta_N$ is an exponentially decaying Borel probability measure over $\mathbb{R}^N$ for $N\geq 2$. Define $\hat{\eta}_N$ to be the signed measure $\frac{e(a)}{\prod_{i=1}^N (1+2(i-1)\theta)}d\eta_N(a)$. Assume that $\hat{\eta}_N$ converges in terms of power sums. Suppose $\alpha\geq 0$. Define $\hat{\mu}_N$ to be the signed measure $\frac{e(a)}{\prod_{i=1}^N (1+2(i-1)\theta)} d\mu_\alpha^{D^N(\theta)}(\eta_N)(a)$. Then, $\hat{\mu}_N$ converges in terms of power sums.
\end{corollary}

\begin{proof}
Follow the proof of \Cref{cor:lln_dbm_d1} and replace \Cref{cor:lln_satisfaction_d} with its high temperature regime version, which can be proved using \Cref{cor:equivalence_odd2} and \Cref{lemma:exponent_decay}.
\end{proof}

\end{example}

\begin{example}
Next, we study the law of large numbers for the $\beta$-Laguerre ensemble. First, we compute the type BC Bessel generating function of the Chiral ensemble, which has the same distribution as the square roots of the $\beta$-Laguerre ensemble, where $\beta=2\theta$. We discuss its construction for $\theta\in\{\frac{1}{2}, 1, 2\}$ and its definition for general values of $\theta\geq 0$ from \cite{forrester}*{Section 3.1}. Suppose $M\geq N$ such that $M-N+1-\frac{1}{2\theta}\geq 0$. Let $X$ denote a random $M\times N$ matrix whose independent entries are real, complex, or real quaternion numbers with densities: \begin{itemize}
\item $\frac{1}{\sqrt{2\pi}}e^{-\frac{x^2}{2}}$ if $\theta=\frac{1}{2}$, 
\item $\frac{1}{\pi} e^{-|x|^2}$ if $\theta=1$,
\item $\frac{4}{\pi^2}e^{-2|x|^2-2|y|^2}$ if $\theta=2$. Note that we parameterize a quaternion as $(x,y)\in\mathbb{C}^2$. 
\end{itemize}
The Chiral ensemble is defined as the positive eigenvalues of
\[
H = \begin{bmatrix} 0_{M\times M} & X \\ X^H & 0_{N\times N}\end{bmatrix}.
\]
This matrix almost surely has $N$ negative eigenvalues, $M-N$ zero eigenvalues, and $N$ positive eigenvalues.

Define the probability distribution $C_{\theta,M,t}^N$ over $\mathbb{R}_{\geq 0}^N$ to have the density of $a$ proportional to 
\[
\prod_{i=1}^N a_i^{2\theta(M-N+1) - 1} e^{-\frac{a_i^2}{2t}} \prod_{1\leq i<j\leq N} |a_i^2-a_j^2|^{2\theta}.
\]
Then, $C_{\theta,M,t}^N$ is the density function of the positive eigenvalues of $H$ after they are rescaled when $\theta\in\{\frac{1}{2}, 1, 2\}$. The following lemma is a straightforward generalization of \cite{rectangularmatrix}*{Proposition 5.21}.

\begin{lemma}
\label{lemma:bgf_chiral}
Suppose $\theta \geq 0$, $M\geq N$ such that $M-N+1-\frac{1}{2\theta}\geq 0$, and $t>0$. Then,
\[
G_{C^N_{\theta, M, t}}^{BC^N(\theta, \theta(M-N+1)-\frac{1}{2})}(x) = \exp\left(\frac{t}{2}\sum_{i=1}^N x_i^2\right).
\]
\end{lemma}

Next, we discuss the $\beta$-Laguerre ensemble, which has the same distribution as the squares of the eigenvalues of the Chiral ensemble. Suppose $M\geq N$ such that $M-N+1-\frac{1}{\theta}\geq 0$. Define the probability distribution $L_{\theta,M,t}^N$ over $\mathbb{R}^N_{\geq 0}$ to have the density of $a$ proportional to
\[
\prod_{i=1}^N a_i^{\theta (M-N+1)-1}e^{-\frac{a_i}{2t}}\prod_{1\leq i<j\leq N} |a_i-a_j|^{2\theta}.
\]
Although we cannot generalize the random matrix that we previously defined for $\theta\in\{\frac{1}{2},1,2\}$, \cite{betaensembles} computes a tridiagonal matrix whose eigenvalue distribution is given by $L_{\theta,M,t}^N$ for all $\theta>0$. Moreover, the distribution is the image of $C_{\theta,M,t}^N$ under the pushforward map $x\mapsto x^2$. Despite this relation, it remains challenging to compute the Bessel generating function for the $\beta$-Laguerre ensemble. 

Afterwards, we obtain \Cref{cor:laguerre_moments}, which has been previously proved in Theorem 1.1 of the paper \cite{betalaguerre}. The paper proves that the $\beta$-Laguerre ensemble weakly converges to the Marchenko-Pastur law. It is straightforward to verify that the moments appearing in the corollary are the same as those of the Marchenko-Pastur law. 

For $k\geq 1$, let $\mathfrak{D}_k$ denote the set of Dyck paths of length $2k$ and for $p\in\mathfrak{D}_k$, let $E(p)$ denote the number of even $i\in [2k]$ such that a descent of $p$ is located at its $i$th position. 

\begin{corollary}
\label{cor:laguerre_moments}
Assume that $\lim_{N\rightarrow\infty} \theta N=\infty$, $M\geq N$, $M-N+1-\frac{1}{2\theta}\geq 0$, $\lim_{N\rightarrow\infty} \frac{M}{N} = c \geq 1$, and $\lim_{N\rightarrow\infty} t = \alpha\geq 0$. Then, for all $\nu\in\Gamma$,
\[
\lim_{N\rightarrow\infty} \frac{\mathbb{E}_{a\sim L^N_{\theta,M, t}}[p_\nu(a)]}{(\theta N)^{|\nu|}N^{\ell(\nu)}} = (2\alpha)^{|\nu|}\prod_{i=1}^{\ell(\nu)}\sum_{p\in\mathfrak{D}_{\nu_i}}c^{E(p)}.
\]
\end{corollary}

\begin{proof}
Use \Cref{cor:lln_satisfaction_bc}, \Cref{lemma:bgf_chiral}, and the fact that $L_{\theta,M,t}^N$ is the image of $C_{\theta,M,\theta N t}^N$ under the pushforward map $x\mapsto \frac{x^2}{\theta N}$.
\end{proof}
\end{example}

\section{Computing the Dunkl bilinear form with combinatorics}
\label{sec:combinatorics}

For any $\theta,\theta_0,\theta_1\in\mathbb{C}$ and $\lambda,\nu\in\Gamma$, we set 
\[
[p_\lambda,p_\nu]_{A^0(\theta)},\,[p_\lambda,p_\nu]_{BC^1(\theta_0,\theta_1)},\,[p_\lambda,p_\nu]_{D^1(\theta)}\triangleq [1]\partial_1^{\lambda}x_1^{\nu}=\mathbf{1}\{|\lambda|=|\nu|\} |\lambda|!.
\]
Note that $A^0$, $B^1$, $C^1$, and $D^1$ are not actually root systems; we use this notation for simplicity.

The following result computes combinatorial expressions for $[p_\lambda,p_\nu]_{A^{N-1}(\theta)}$, $[p_\lambda,p_\nu\\]_{BC^N(\theta_0,\theta_1)}$, and $[p_\lambda,p_\nu]_{D^N(\theta)}$ in terms of the values of the quantities for small values of $N$, including $N=1$.

\begin{theorem}
\label{thm:summation}
Suppose $\lambda,\nu\in\Gamma$ and $|\lambda|=|\nu|$. For (B) and (C), assume that $\lambda,\nu\in\even$. Let $k=|\lambda|$ and suppose $N\geq 2$. Furthermore, assume that $k+\ell(\lambda)-\ell(\nu)<N$. For all $\theta,\theta_0,\theta_1\in\mathbb{C}$, the following expressions are true:
\begin{align*}
&(A)\,\,\,[p_\lambda, p_\nu]_{A^{N-1}(\theta)} && = \sum_{i=1}^{k+\ell(\lambda)-\ell(\nu)} (-1)^{k+\ell(\lambda)-\ell(\nu)-i}[p_\lambda,p_\nu]_{A^{i-1}(\theta)} \times\\
& && \binom{N}{i}\binom{N-i-1}{k+\ell(\lambda)-\ell(\nu)-i} \\
&(B)\,\,\,[p_\lambda, p_\nu]_{BC^N(\theta_0,\theta_1)} && = \sum_{i=1}^{k+\ell(\lambda)-\ell(\nu)} (-1)^{k+\ell(\lambda)-\ell(\nu)-i}[p_\lambda,p_\nu]_{BC^i(\theta_0,\theta_1)}\times \\ & &&\binom{N}{i}\binom{N-i-1}{k+\ell(\lambda)-\ell(\nu)-i} \\
&(C)\,\,\,[p_\lambda, p_\nu]_{D^N(\theta)} && = \sum_{i=1}^{k+\ell(\lambda)-\ell(\nu)} (-1)^{k+\ell(\lambda)-\ell(\nu)-i}[p_\lambda,p_\nu]_{D^i(\theta)} \times \\
& && \binom{N}{i}\binom{N-i-1}{k+\ell(\lambda)-\ell(\nu)-i}
\end{align*}
\end{theorem}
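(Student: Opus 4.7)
The plan is to show, by an $S_N$-equivariance argument, that for each of the three root systems the value $g(N) \triangleq [p_\lambda, p_\nu]_{\mathcal{R}(\theta)}$ (with all other data fixed and $N$ varying) admits a Newton-style expansion
\begin{equation}
\label{eq:proposal-newton}
g(N) = \sum_{m=1}^d \binom{N}{m} F(m),
\end{equation}
where $d = k + \ell(\lambda) - \ell(\nu)$ and the coefficients $F(m)$ are independent of $N$. Given \eqref{eq:proposal-newton}, the identity in the theorem collapses to a single Chu--Vandermonde calculation.

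First I will establish \eqref{eq:proposal-newton} for part (A). Expanding $\mathcal{D}(p_\lambda)p_\nu$ via the path decomposition from the second proof of \Cref{thm:leadingorder_a} in \Cref{subsec:proofa}, one writes $\mathcal{D}(p_\lambda)p_\nu = \sum_\sigma T_\sigma$, where a path $\sigma$ records the outer indices $j_1,\ldots,j_{\ell(\lambda)}\in[N]$, a derivative/switch choice at each step together with the switch target $j''\in [N]$, and a factorization $p_\nu = \sum_{j'}\prod_r x_{j'_r}^{\nu_r}$. Let $S(\sigma)\subset[N]$ be the (necessarily nonempty) set of all indices appearing in $\sigma$. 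Since $\mathcal{D}(A^{N-1}(\theta))$ is $S_N$-equivariant (\Cref{lemma:equivariance}) and $[1]\,\cdot$ is $S_N$-invariant, $[1]\,T_{\pi\sigma} = [1]\,T_\sigma$ for every $\pi\in S_N$. Each $S_N$-orbit on paths with $|S(\sigma)|=m$ contains $N(N-1)\cdots(N-m+1) = m!\binom{N}{m}$ concrete realizations, all contributing the same scalar; summing first within orbits and then over orbits yields $g(N) = \sum_{m\ge 1}\binom{N}{m} F(m)$. The degree bound in \Cref{thm:leadingorder_a} (applied after swapping $\lambda$ and $\nu$ if necessary to secure $\ell(\lambda)\le\ell(\nu)$) forces $F(m)=0$ for $m>d$, proving \eqref{eq:proposal-newton}; there is no $m=0$ term because $|S(\sigma)|\ge 1$ always.

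Substituting $g(i) = \sum_{m=1}^d\binom{i}{m}F(m)$ into the proposed right-hand side of (A) and interchanging the order of summation reduces the identity to
\[
\sum_{i=m}^d (-1)^{d-i}\binom{i}{m}\binom{N}{i}\binom{N-i-1}{d-i} = \binom{N}{m}, \qquad 1\le m\le d.
\]
I will verify this directly: rewriting $\binom{i}{m}\binom{N}{i} = \binom{N}{m}\binom{N-m}{i-m}$ and applying the upper-negation $\binom{N-i-1}{d-i} = (-1)^{d-i}\binom{d-N}{d-i}$, the alternating sign cancels, and the substitution $j = i-m$ turns the sum into $\binom{N}{m}\sum_{j=0}^{d-m}\binom{N-m}{j}\binom{d-N}{d-m-j} = \binom{N}{m}\binom{d-m}{d-m} = \binom{N}{m}$ by Chu--Vandermonde.

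Parts (B) and (C) follow by the same template with \Cref{thm:leadingorder_bc} and \Cref{thm:leadingorder_d1} replacing \Cref{thm:leadingorder_a}: the subgroup of $H(\mathcal{R})$ permuting coordinate indices is $S_N$ in both cases, and the remainders in those leading-order theorems enforce the same degree bound $\le d$ in $N$, so the path decomposition, orbit counting, and Chu--Vandermonde collapse all transfer verbatim. The main bookkeeping obstacle is auditing the path expansion to verify that type-1 switches (for $BC^N$) and sign-flip switches (for $D^N$) still respect the $S_N$-orbit structure used to derive \eqref{eq:proposal-newton}; this is implicit in the proofs of \Cref{thm:leadingorder_bc,thm:leadingorder_d1}.
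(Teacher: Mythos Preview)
Your proof is correct and follows essentially the same approach as the paper: both decompose $[p_\lambda,p_\nu]$ over operator sequences (paths), group by the number $m$ of distinct indices used, observe that the contribution depends only on $m$ by $S_N$-symmetry, and then reduce to a binomial identity. The paper frames this as showing each path $s$ with $i(s)=z$ is counted exactly once in the right-hand side and proves the resulting identity $\sum_{i=z}^{d}(-1)^{d-i}\binom{N-z}{i-z}\binom{N-i-1}{d-i}=1$ by induction on $z$; you instead package the same data as a Newton expansion $g(N)=\sum_{m=1}^{d}\binom{N}{m}F(m)$ and verify the equivalent identity via upper negation and Chu--Vandermonde, which is slicker. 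One small remark: you invoke \Cref{thm:leadingorder_a} to get the degree bound $\deg_N g\le d$, whereas the paper argues directly (and more simply) that any nonvanishing path must carry at least $\ell(\nu)$ derivatives, hence at most $k-\ell(\nu)$ switches, hence at most $d$ distinct indices; either route works, and yours has the virtue of making the argument uniform across (A), (B), (C) by citing the corresponding leading-order theorems.
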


\begin{proof}
We confirm expression (A). Expressions (B) and (C) follow similarly. Let $\Delta = k+\ell(\lambda)-\ell(\nu)$ and $k=|\lambda|$. Suppose $\mathcal{S}$ is the set of sequences of operators appearing in $\mathcal{D}(A^{N-1}(\theta))(p_\lambda)$, so that
\[
[p_\lambda, p_\nu]_{A^{N-1}(\theta)} = \sum_{s\in\mathcal{S}} sp_\nu,
\]
where for $s\in\mathcal{S}$, $sp_\nu\triangleq s_k\circ\cdots\circ s_1 p_\nu$.

For $s\in\mathcal{S}$, let $i(s)$ be the number of distinct indices in $s$. Observe that if $s\in\mathcal{S}$ and $s_k \circ\cdots\circ s_1 p_\nu\not=0$, then we must have that $i(s)\leq k+\ell(\lambda)-\ell(\nu)$, by the argument that $s$ must have at least $\ell(\nu)$ derivatives. Thus, let $\mathcal{S}'$ be the set of $s\in\mathcal{S}$ such that $i(s)\leq k+\ell(\lambda)-\ell(\nu)$, so that 
\[
[p_\lambda, p_\nu]_{A^{N-1}(\theta)} = \sum_{s\in\mathcal{S}'} sp_\nu.
\]

Note that for $i\geq 1$, $[p_\lambda,p_\nu]_{A^{i-1}(\theta)}$ corresponds to picking a sequence of operators in $\mathcal{D}(A^{N-1}(\theta))(p_\lambda)$ with all indices in $[i]$. Then, $[p_\lambda,p_\nu]_{A^{i-1}(\theta)}\binom{N}{i}$ corresponds to picking this sequence as well as replacing $[i]$ by any subset of $[N]$ with size $i$.

Suppose $s\in\mathcal{S}'$. Assume that the set of distinct indices in $s$ is $[i(s)]$. Then, it is clear that
\[
s p_\nu(x_1,\ldots,x_N) = sp_\nu(x_1,\ldots,x_{i(s)},0,\ldots,0).
\]
For $i\geq i(s)$, the number of times that $sp_\nu$ is counted in $[p_\lambda,p_\nu]_{A^{i-1}(\theta)}\binom{N}{i}$ is $\binom{N-i(s)}{i-i(s)}$. Hence, the total number of times that $sp_\nu$ is counted is 
\[
\sum_{i=i(s)}^{k+\ell(\lambda)-\ell(\nu)} (-1)^{k+\ell(\lambda)-\ell(\nu)-i}\binom{N-i(s)}{i-i(s)}\binom{N-i-1}{k+\ell(\lambda)-\ell(\nu)-i}.
\]
It suffices to show that this quantity equals one. Equivalently, it suffices to show that
\begin{equation}
\label{eq:combinatorics_1}
\sum_{i=0}^{\Delta-z} (-1)^{i}\binom{N-z}{\Delta-z-i}\binom{N-\Delta-1+i}{i}=1
\end{equation}
for $\Delta\geq 1$, $z\in [\Delta]$, and $N\in\mathbb{N}$.

We prove \pref{eq:combinatorics_1} using induction on $z$ from $z=\Delta$ to $1$. The base case $z=\Delta$ is clear. Assume that the inductive hypothesis is true for $z=m$, where $m\in \{2,\ldots,\Delta\}$. We prove that it is true for $z=m-1$. We have that 
\begin{align*}
& \sum_{i=0}^{\Delta-m+1} (-1)^{i}\binom{N-m+1}{\Delta-m+1-i}\binom{N-\Delta-1+i}{i} \\
&= \sum_{i=0}^{\Delta-m+1}(-1)^i \left(\binom{N-m}{\Delta-m-i} + \binom{N-m}{\Delta-m+1-i}\right)\binom{N-\Delta-1+i}{i}.
\end{align*}
Next, observe that 
\begin{align*}
& \sum_{i=0}^{\Delta-m+1}(-1)^i \binom{N-m}{\Delta-m-i} \binom{N-\Delta-1+i}{i} \\
& = \sum_{i=0}^{\Delta-m}(-1)^i \binom{N-m}{\Delta-m-i} \binom{N-\Delta-1+i}{i} = 1
\end{align*}
by the inductive hypothesis for $z=m$. Hence, it suffices to show that 
\[
\sum_{i=0}^{\Delta-m+1}(-1)^i \binom{N-m}{\Delta-m+1-i}\binom{N-\Delta-1+i}{i} = 0.
\]
This expression evaluates to
\[
\frac{(N-m)!}{(N-\Delta-1)!}\sum_{i=0}^{\Delta-m+1}\frac{(-1)^i}{(\Delta-m+1-i)!i!}= 0.
\]
\end{proof}

\begin{remark}
The previous result is not applicable when $k+\ell(\lambda)-\ell(\nu)\geq N$, but if this is the case, then we must consider when the number of distinct indices is $N$. Then, to obtain a summation formula for $[p_\lambda,p_\nu]_{A^{N-1}(\theta)}$ in (A), we must set $[p_\lambda,p_\nu]_{A^{N-1}(\theta)}$ to be one of the summands. Similarly, we cannot obtain analogous formulas for (B) and (C). 

Furthermore, for (B) and (C) we require that $\lambda,\nu\in\even$ so that $i(s)\leq k + \ell(\lambda)-\ell(\nu)$. If this is not the case, then $s$ does not necessarily need to have at least $\ell(\nu)$ derivatives and the operators at the locations $1+\lambda_1+\cdots + \lambda_i$ for $0\leq i\leq \ell(\lambda)-1$ do not necessarily have to be derivatives, so we can have that $i(s)>k+\ell(\lambda)-\ell(\nu)$. Afterwards, we cannot ensure that the order of the Dunkl bilinear form is $N^{k+\ell(\lambda)-\ell(\nu)}$, if we assume that $\lambda$ and $\nu$ are fixed.
\end{remark}

Assuming that $\theta$ is fixed, the order of the expressions in \Cref{thm:summation} matches the order of the expressions in \Cref{thm:leadingorder_a,thm:leadingorder_bc}, which is $N^{k+\ell(\lambda)-\ell(\nu)}$. However, the leading order coefficients are not apparent from the formulas.

\bibliography{mybib.bib}

\end{document}